\definecolor{MyDarkBlue}{rgb}{0.15,0.25,0.45}
\newcommand{\triend}{\parbox{2mm}{\hfill} \hfill\text{\hspace{0.2mm}}\hfill$\triangle$}
\newcommand{\ocend}{\parbox{2mm}{\hfill} \hfill\text{\hspace{0.2mm}}\hfill$\oslash$}
\newtheorem{theorem}{Theorem}
\newtheorem{proposition}[theorem]{Proposition}
\newtheorem{lemma}[theorem]{Lemma}
\newtheorem{corollary}[theorem]{Corollary}
\newtheorem{corollary*}{Corollary}
\newtheorem*{theorem*}{Theorem}
\newtheorem*{proposition*}{Proposition}
\newtheorem*{conjecture*}{Conjecture}
\numberwithin{equation}{section}
\numberwithin{theorem}{section}
\theoremstyle{remark}
\newtheorem{ex}[theorem]{Example}
\newenvironment{example}{\begin{ex}}{\triend\end{ex}}
\theoremstyle{remark}
\newtheorem{rem}[theorem]{Remark}
\newenvironment{remark}{\begin{rem}}{\triend\end{rem}}
\theoremstyle{definition}
\newtheorem{defin}[theorem]{Definition}
\newenvironment{definition}{\begin{defin}}{\ocend\end{defin}}
\newcommand{\fH}{{\mathbf{H}}}
\newcommand{\fI}{{\mathbf{1}}}
\newcommand{\fU}{{\mathbf{U}}}
\newcommand{\Sh}{\mathsf{Sh}}
\newcommand{\semi}{\mathsf{ss}}
\newcommand{\bC}{\ensuremath{\mathbb{C}}}
\newcommand{\bF}{\ensuremath{\mathbb{F}}}
\newcommand{\bP}{\ensuremath{\mathbb{P}}}
\newcommand{\bQ}{\ensuremath{\mathbb{Q}}}
\newcommand{\bR}{\ensuremath{\mathbb{R}}}
\newcommand{\bV}{\ensuremath{\mathbb{V}}}
\newcommand{\bZ}{\ensuremath{\mathbb{Z}}}
\renewcommand{\SS}{{\operatorname{SS}}}
\newcommand{\cSh}{\operatorname{\bf{Sh}}^c}
\newcommand{\relmid}{\mathrel{}\middle|\mathrel{}}
\newcommand{\la}{\left\langle}
\newcommand{\ra}{\right\rangle}
\newcommand{\lc}{\left\{}
\newcommand{\rc}{\right\}}
\newcommand{\Xcal}{{\mathcal X}}
\newcommand{\Acal}{{\mathcal A}}
\newcommand{\Hcal}{{\mathcal H}}
\newcommand{\Ecal}{{\mathcal E}}
\newcommand{\Bcal}{{\mathcal B}}
\newcommand{\Kcal}{{\mathcal K}}
\newcommand{\Ncal}{{\mathcal N}}
\newcommand{\Mcal}{\mathcal{M}}
\newcommand{\Ccal}{{\mathcal C}}
\newcommand{\Ocal}{{\mathcal O}}
\newcommand{\Gcal}{{\mathcal G}}
\newcommand{\Fcal}{{\mathcal F}}
\newcommand{\Ical}{{\mathcal I}}
\newcommand{\Qcal}{{\mathcal Q}}
\newcommand{\Rcal}{{\mathcal R}}
\newcommand{\Lcal}{{\mathcal L}}
\newcommand{\Tcal}{{\mathcal T}}
\newcommand{\Ucal}{\mathcal{U}}
\newcommand{\Scal}{\mathcal{S}}
\newcommand{\slfrak}{\mathfrak{sl}}
\newcommand{\glfrakhat}{\widehat{\mathfrak{gl}}}
\newcommand{\slfrakhat}{\widehat{\mathfrak{sl}}}
\newcommand{\gfrak}{\mathfrak{g}}
\newcommand{\glfrak}{\mathfrak{gl}}
\newcommand{\Sfrak}{\mathfrak{S}}
\newcommand{\K}{{\mathbb{K}}}
\newcommand{\R}{{\mathbb{R}}}
\newcommand{\N}{{\mathbb{N}}}
\newcommand{\C}{{\mathbb{C}}}
\newcommand{\Z}{{\mathbb{Z}}}
\newcommand{\Q}{{\mathbb{Q}}}
\newcommand{\PP}{{\mathbb{P}}}
\newcommand{\A}{{\mathbb{A}}}
\newcommand{\G}{{\mathbb{G}}}
\newcommand{\F}{{\mathbb{F}}}
\newcommand{\V}{{\mathbb{V}}}
\newcommand{\Xscr}{{\mathscr{X}}}
\newcommand{\Dscr}{{\mathscr{D}}}
\newcommand{\Bscr}{{\mathscr{B}}}
\newcommand{\Osf}{\mathsf{O}}
\newcommand{\Csf}{\mathsf{C}}
\newcommand{\tildeDHbf}{\widetilde{\mathbf{D}}\mathbf{H}}
\newcommand{\DHbf}{\mathbf{DH}}
\newcommand{\DUbf}{\mathbf{DU}}
\newcommand{\DCbf}{\mathbf{DC}}
\newcommand{\Kbf}{\mathbf{K}}
\newcommand{\Hbf}{\mathbf{H}}
\newcommand{\kbf}{\mathbf{k}}
\newcommand{\onebf}{\mathbf{1}}
\newcommand{\Ubf}{\mathbf{U}}
\newcommand{\Pbf}{\mathbf{P}}
\newcommand{\abf}{\mathbf{a}}
\newcommand{\Sbf}{\mathbf{S}}
\newcommand{\Cbf}{\mathbf{C}}
\newcommand{\Zbf}{\mathbf{Z}}
\newcommand{\Bbf}{\mathbf{B}}
\newcommand{\bbf}{\mathbf{b}}
\newcommand{\rk}{\operatorname{rk}}
\newcommand{\Spec}{\operatorname{Spec}}
\newcommand{\rootlines}{\sqrt[n]{(\mathcal{L},s)/\mathscr{X}}}
\newcommand{\rootline}{\sqrt[n]{\mathcal{L}/\mathscr{X}}}
\newcommand{\rootdiv}{\sqrt[n]{\mathscr{D}/\mathscr{X}}}
\newcommand{\schemerootdiv}{\sqrt[n]{D/X}}
\newcommand{\Rep}{\mathsf{Rep}}
\newcommand{\Tor}{\mathsf{Tor}}
\newcommand{\tor}{\mathsf{tor}}
\newcommand{\Coh}{\mathsf{Coh}}
\newcommand{\bun}{\mathsf{bun}}
\newcommand{\QCoh}{\mathsf{QCoh}}
\newcommand{\Db}{\mathsf{D}^{\mathsf{b}}}
\newcommand{\Ksf}{\mathsf{K}_0}
\newcommand{\Ksfnum}{\mathsf{K}_0^{\mathsf{num}}}
\newcommand{\nintpart}[1]{{}_n \lfloor #1 \rfloor}
\newcommand{\nfrapart}[1]{{}_n \{ #1 \}}
\newcommand{\Hom}{\mathsf{Hom}}
\newcommand{\Ext}{\mathsf{Ext}}
\newcommand{\Aut}{\mathsf{Aut}}
\newcommand{\End}{\mathsf{End}}
\newcommand{\Pic}{\mathsf{Pic}}
\title[$\Ubf_\upsilon(\slfrak(S^1))$ and the infinite root stack of a curve]{The circle quantum group and the infinite root stack of a curve}
\author[F.~Sala]{Francesco Sala}
\address[Francesco Sala]{Università di Pisa, Dipartimento di Matematica, Largo Bruno Pontecorvo 5, 56127 Pisa (PI), Italy}
\address{Kavli IPMU (WPI), UTIAS, The University of Tokyo, Kashiwa, Chiba 277-8583, Japan}
\curraddr{}
\email{\href{mailto:francesco.sala@unipi.it}{francesco.sala@unipi.it}}
\author[O.~Schiffmann]{Olivier Schiffmann}
\address[Olivier Schiffmann]{Laboratoire de Math\'ematiques, Faculté des Sciences d'Orsay, Université Paris-Sud, B\^at. 307, 91405 Orsay Cedex, France}
\curraddr{}
\email{\href{mailto:olivier.schiffmann@math.u-psud.fr}{olivier.schiffmann@math.u-psud.fr}}
\thanks{The work of the first-named author is partially supported by World Premier International Research Center Initiative (WPI), MEXT, Japan, by JSPS KAKENHI Grant number JP17H06598 and by JSPS KAKENHI Grant number JP18K13402.}
\subjclass[2010]{Primary: 17B37, 17B67, 22E65; Secondary: 14A20}
\keywords{Hall algebras, quantum groups, shuffle algebras, root stacks}
\begin{document}

\begin{flushright}
IPMU--18-0105
\end{flushright}

\maketitle\thispagestyle{empty}

\vspace{-1cm}

\begin{center}
{\footnotesize Appendix~\ref{sec:tatsuki} by Tatsuki Kuwagaki}
\end{center}

\vskip 1cm

\begin{abstract}
In the present paper, we give a definition of the quantum group $\Ubf_\upsilon(\slfrak(S^1))$ of the circle $S^1\coloneqq \R/\Z$, and its fundamental representation. Such a definition is motivated by a realization of a quantum group $\Ubf_\upsilon(\slfrak(S^1_\Q))$ associated to the rational circle $S^1_\Q\coloneqq \Q/\Z$ as a direct limit of $\Ubf_\upsilon(\slfrakhat(n))$'s, where the order is given by divisibility of positive integers. The quantum group $\Ubf_\upsilon(\slfrak(S^1_\Q))$ arises as a subalgebra of the Hall algebra of coherent sheaves on the infinite root stack $X_\infty$ over a fixed smooth projective curve $X$ defined over a finite field. Via this Hall algebra approach, we are able to realize geometrically the fundamental and the tensor representations, and a family of symmetric tensor representations, depending on the genus $g_X$, of $\Ubf_\upsilon(\slfrak(S^1_\Q))$. Moreover, we show that $\Ubf_\upsilon(\slfrakhat(+\infty))$ and $\Ubf_\upsilon(\slfrakhat(\infty))$ are subalgebras of $\Ubf_\upsilon(\slfrak(S^1_\Q))$. As proved by T. Kuwagaki in the appendix, the quantum group $\Ubf_\upsilon(\slfrak(S^1))$ naturally arises as well in the mirror dual picture, as a Hall algebra of constructible sheaves on the circle $S^1$.
\end{abstract}

\epigraph{\textit{Once you want to free your mind about\newline a concept of harmony and music being correct,\newline you can do whatever you want.}}{Giorgio Moroder}

\tableofcontents

\bigskip\section{Introduction}

The present paper is the first of a series aimed at studying the Hall algebra of the category of coherent sheaves on the infinite root stack $X_\infty$ over a fixed smooth projective curve $X$ defined over a finite field $\F_q$. 

To make this introduction more accessible, we will start by defining algebraically the resulting Hopf algebras $\Ubf_\upsilon(\slfrak(S^1))$ and $\Ubf_\upsilon(\slfrak(S^1_\Q))$ associated, respectively, with the circle $S^1\coloneqq \R/\Z$ and the rational circle $S^1_\Q\coloneqq \Q/\Z$.
There are similar Hopf algebras associated to $\R$ and $\Q$. We will motivate our definitions by showing that $\Ubf_\upsilon(\slfrak(S^1_\Q))$ is also obtained as a certain direct limit of $\Ubf_\upsilon(\slfrakhat(n))$. In addition, we will introduce the fundamental representations of these algebras.

The discussion of the choice of such a limit, as that of the construction of a family of symmetric tensor representations of $\Ubf_\upsilon(\slfrak(S^1_\Q))$, is postponed to the second part of the introduction, where we will show that the algebraic constructions stem naturally from the study of the Hall algebra of coherent sheaves on the infinite root stack. 

\subsection{Definition of $\Ubf_\upsilon(\slfrak(S^1))$}

Let us introduce some notation. We call \textit{interval} of $S^1$ a half-open interval $J=[a,b[\subseteq S^1$. We say that an interval $J$ is \textit{rational} if $J\subseteq S^1_\Q$. We say that an interval $J$ (resp.\ rational interval $J$) is \textit{strict} if $J\neq S^1$ (resp.\ $J\neq S^1_\Q$). For an interval $J$, we denote by $\chi_J$ its characteristic function and for two characteristic functions $f, g$, we define
\begin{align}\label{eq:eulerform-introduction}
\langle f, g\rangle \coloneqq\sum_x\, f_{-}(x)(g_{-}(x)-g_{+}(x))\ ,
\end{align}
where we have set $h_{\pm}(x)\coloneqq\lim_{t \to 0, t >0} h(x\pm t)$, and $(f,g)\coloneqq\langle f, g\rangle+\langle g, f\rangle$. Finally, let $\widetilde \Q \coloneqq\Q[\upsilon, \upsilon^{-1}]/(\upsilon^2-q)$.

\begin{definition}\label{def:circlequantumgroup}
$\Ubf_\upsilon(\slfrak(S^1))$ is the topological $\widetilde \Q$-Hopf algebra generated by elements $E_J, F_J, K_{J'}^{\pm 1}$, where $J$ (resp.\ $J'$) runs over all strict intervals (resp.\ intervals), modulo the following set of relations:
\begin{itemize}\itemsep0.4cm
\item \textit{Drinfeld-Jimbo relations}:
\begin{itemize}\itemsep0.2cm
\item for any intervals $I, I_1, I_2$ and strict  interval $J$,
\begin{align}
[K_{I_1},K_{I_2}] &=0\ ,\\
K_{I}\, E_{J}\, K_{I}^{-1} &=\upsilon^{( \chi_{I},\chi_{J})}\, E_{J}\ ,\\[2pt]
K_{I}\, F_{J}\, K_{I}^{-1} &=\upsilon^{-( \chi_{I},\chi_{J})}\, F_{J} \ ;
\end{align}
\item if $J_1, J_2$ are strict intervals such that $J_1 \cap J_2 = \emptyset$, 
\begin{align}
[E_{J_1},F_{J_2}]=0\ ;
\end{align}
\item for any strict interval $J$,
\begin{align}
[E_J,F_J]=\frac{K_J-K_J^{-1}}{\upsilon-\upsilon^{-1}} \ ;
\end{align} 
\end{itemize}
\item \textit{join relations}:
\begin{itemize}\itemsep0.2cm
\item if $J_1,J_2$ are strict intervals of the form $J_1=[a,b[$ and $J_2=[b,c[$ such that $J_1 \cup J_2$ is again a interval,
\begin{align}
K_{J_1}\, K_{J_2}=K_{J_1\cup J_2}\ ;
\end{align}
\item if $J_1, J_2$ are strict intervals of the form $J_1=[a,b[$ and $J_2=[b,c[$ such that $J_1 \cup J_2$ is again a strict interval,
\begin{align}
\begin{aligned}
E_{J_1\cup J_2} &=\upsilon^{1/2}\,E_{J_1}\,E_{J_2} - \upsilon^{-1/2}\,E_{J_2}\, E_{J_1}\ ,  \\[2pt]
F_{J_1\cup J_2} &=\upsilon^{-1/2}\,F_{J_2}\,F_{J_1} - \upsilon^{1/2}\, F_{J_1}\, F_{J_2}  \ ;
\end{aligned}
\end{align}
\end{itemize}
\item \textit{nest relations}:
\begin{itemize}\itemsep0.2cm
\item if $J_1, J_2$ are strict intervals such that $\overline{J_1} \cap \overline{J_2} =\emptyset$,
\begin{align}
[E_{J_1},E_{J_2}]=0\quad\text{and}\quad [F_{J_1}, F_{J_2}]=0\ ;
\end{align}
\item if $J_1, J_2$ are strict intervals such that $J_1\subseteq J_2$,
\begin{align}
\begin{aligned}
\upsilon^{\langle \chi_{J_1}, \chi_{J_2}\rangle} \, E_{J_1}\, E_{J_2} &=\upsilon^{\langle \chi_{J_2}, \chi_{J_1}\rangle} \, E_{J_2}\, E_{J_1} \ ,\\[2pt]
\upsilon^{\langle \chi_{J_1}, \chi_{J_2}\rangle} \, F_{J_1}\, F_{J_2} &=\upsilon^{\langle \chi_{J_2}, \chi_{J_1}\rangle} \, F_{J_2}\, F_{J_1} \ .
\end{aligned}
\end{align}
\end{itemize}
\end{itemize}
The coproduct is:
\begin{align}\label{E:coprod}
\begin{aligned}
\tilde \Delta(K_J)&=K_J\otimes K_J\ ,\\[2pt]
\tilde \Delta (E_{[a,\, b[})&=E_{[a,\, b[} \otimes 1 + \sum_{a<c<b} \upsilon^{-1/2}\, (\upsilon-\upsilon^{-1})\, E_{[a,\, c[}\, K_{[c,\, b[} \otimes E_{[c,\, b[} + K_{[a,\, b[} \otimes E_{[a,\, b[} \ ,\\[2pt]
\tilde \Delta (F_{[a,\, b[})&=1\otimes F_{[a, \, b[} - \sum_{a<c<b} \upsilon^{-1/2}\, (\upsilon-\upsilon^{-1})\, F_{[c,\, b[}\otimes F_{[a,\, c[}\, K_{[c,\, b[}^{-1}  + F_{[a,\, b[}\otimes K_{[a,\, b[}^{-1}\ .
\end{aligned}
\end{align}
Here the sums on the right-hand-side run over all possible real values $c\in [a, b[$.
\end{definition}
The quantum group $\Ubf_\upsilon(\slfrak(S^1))$ has some interesting unusual features; for example it is neither Noetherian nor finitely generated, but it is a quadratic algebra, i.e. the usual Serre relations are not present. In addition, there is no notion of a \textit{simple} root space. The formulas for the coproduct  involve some wildly infinite (uncountable) sums hence take values in some very large product space; nevertheless
$\Ubf_\upsilon(\slfrak(S^1))$ is a Hopf algebra in this topological sense, see Section~\ref{sec:Hallalgebrainfty} for the case of $\Ubf_\upsilon(\slfrak(S^1_{\Q}))$.
\begin{defin}
$\Ubf_\upsilon(\slfrak(S^1_\Q))$ is the subalgebra of $\Ubf_\upsilon(\slfrak(S^1))$ generated by elements $E_J, F_J, K_{J'}^\pm$ such that $J$ (resp.\ $J'$) is a strict rational interval (resp.\ rational interval). It is a topological Hopf algebra if we define the coproduct by the same formula as \eqref{E:coprod} in which $c$ is in addition required to belong to $\mathbb{Q}$. \hfill$\oslash$
\end{defin}

Now, let us motivate algebraically the above definitions. A reasonable way to define a quantum group whose Dynkin diagram would be the circle is as a limit. More precisely, we approximate the circle by the cyclic quiver $A_{n-1}^{(1)}$ with $n$ vertices and we consider the quantum enveloping algebra $\Ubf_\upsilon(\slfrakhat(n))$ of the Kac-Moody Lie algebra\footnote{To be precise, here we are considering the quantum enveloping algebra of the \textit{derived algebra} of $\slfrakhat(n)$. Such a quantum group is the correct one to consider from the point of view of Hall algebras. See for example \cite[Section~3.3]{book:schiffmann2012} for details.} of type $A_{n-1}^{(1)}$. Then, we decide a way to make the approximation more and more fine, passing from $n$ vertices to $m$ vertices, with $m>n$. This corresponds to considering embeddings $\Ubf_\upsilon(\slfrakhat(n))\to \Ubf_\upsilon(\slfrakhat(m))$: by \textit{letting $n$ tend to infinity}, one gets the quantum group of the circle as a limit (with respect to the directed system given by these embeddings). In the past, similar direct limits were also considered, but rather for the real line instead of the circle. For instance, we may increase by one the number of vertices at each step of the approximation, by inserting one extra vertex (on the right) to the underlying finite quiver $A_{n-1}$ of $A_{n-1}^{(1)}$,  thereby obtaining $\Ubf_\upsilon(\slfrakhat(+\infty))$. Alternatively, we may increase by two the number of vertices at each step by adding two vertices (one on the left, one on the right) to $A_{n-1}$, thereby obtaining $\Ubf_\upsilon(\slfrakhat(\infty))$ (see \cite{art:hernandez2011} for a different definition of $\Ubf_\upsilon(\slfrakhat(\infty))$).

In this paper, at each step of the approximation, we pass from $n$ vertices to $kn$ vertices, by ``subdividing" any vertex into $k$ vertices. More precisely, we consider a directed system
\begin{align}
\big(\Ubf_\upsilon(\slfrakhat(n)), \Omega_{kn,n}\colon \Ubf_\upsilon(\slfrakhat(n))\to\Ubf_\upsilon(\slfrakhat(kn))\big)\ ,
\end{align}
where the directed set is the positive integers with the order given by divisibility, and the definition of $\Omega_{kn, n}$ is given in Theorem \ref{thm:circle-hall} below. We stress here that the direct systems giving $\Ubf_\upsilon(\slfrakhat(+\infty))$ and $\Ubf_\upsilon(\slfrakhat(\infty))$ are direct systems of Hopf algebras while $\Omega_{kn,n}$ is only a morphism of algebras and \textit{not} of coalgebras. 
\begin{theorem}
The algebra $\Ubf_\upsilon(\slfrak(S^1_\Q))$ is isomorphic to the direct limit of algebras
\begin{align}
\lim_{\genfrac{}{}{0pt}{}{\to}{n}}\, \Ubf_\upsilon(\slfrakhat(n))
\end{align}
where the order is given by divisibility of positive integers. Both $\Ubf_\upsilon(\slfrakhat(+\infty))$ and $\Ubf_\upsilon(\slfrakhat(\infty))$ are subalgebras of $\Ubf_\upsilon(\slfrak(S^1_\Q))$.
\end{theorem}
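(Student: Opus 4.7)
The plan is to build compatible algebra morphisms $\iota_n \colon \Ubf_\upsilon(\slfrakhat(n)) \to \Ubf_\upsilon(\slfrak(S^1_\Q))$ sending the standard Chevalley generator $E_i^{(n)}$ of $\Ubf_\upsilon(\slfrakhat(n))$ to the elementary interval generator $E_{[i/n,\, (i+1)/n[}$, and likewise $F_i^{(n)}\mapsto F_{[i/n,\, (i+1)/n[}$, $K_i^{(n)}\mapsto K_{[i/n,\, (i+1)/n[}$, for $i = 0,\ldots, n-1$ (indices read modulo $n$), and then to identify the resulting direct limit with $\Ubf_\upsilon(\slfrak(S^1_\Q))$. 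To see that $\iota_n$ is well defined I verify the Drinfeld--Jimbo plus quantum Serre presentation of $\Ubf_\upsilon(\slfrakhat(n))$: the Cartan and $[E,F]$ relations are direct transcriptions of the Drinfeld--Jimbo relations of Definition~\ref{def:circlequantumgroup}; commutation of non-adjacent Chevalley generators is a nest relation for pairs of intervals with disjoint closures; and for adjacent generators the quantum Serre relation follows by combining the join identity $E_{J_i\cup J_{i+1}} = \upsilon^{1/2} E_{J_i} E_{J_{i+1}} - \upsilon^{-1/2} E_{J_{i+1}} E_{J_i}$ with the nest identity between $J_i = [i/n,(i+1)/n[$ and $J_i\cup J_{i+1}$, which via the pairings $\langle \chi_{J_i}, \chi_{J_i\cup J_{i+1}}\rangle = 0$ and $\langle \chi_{J_i\cup J_{i+1}}, \chi_{J_i}\rangle = 1$ reads $E_{J_i} E_{J_i\cup J_{i+1}} = \upsilon\, E_{J_i\cup J_{i+1}} E_{J_i}$; a short substitution yields the standard Serre relation, and the same argument handles the $F$'s.

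Next I check that $\iota_{kn}\circ \Omega_{kn, n} = \iota_n$. By construction (see Theorem~\ref{thm:circle-hall} below), $\Omega_{kn,n}$ sends $E_i^{(n)}$ to the iterated $\upsilon$-commutator of the Chevalley generators $E_{ki}^{(kn)}, \ldots, E_{k(i+1)-1}^{(kn)}$ realising the decomposition $[i/n, (i+1)/n[ = \bigcup_{j=0}^{k-1} [(ki+j)/(kn),\, (ki+j+1)/(kn)[$, and iterating the join relation in $\Ubf_\upsilon(\slfrak(S^1_\Q))$ expresses $E_{[i/n,(i+1)/n[}$ by the very same iterated $\upsilon$-commutator of $E_{[j/(kn),\,(j+1)/(kn)[}$'s; similarly for $F$ and $K$. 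The universal property of colimits then produces a single algebra morphism
\begin{equation}
\iota\colon\ \lim_{\genfrac{}{}{0pt}{}{\to}{n}}\, \Ubf_\upsilon(\slfrakhat(n))\ \longrightarrow\ \Ubf_\upsilon(\slfrak(S^1_\Q))\ .
\end{equation}
Surjectivity is immediate: any rational strict interval has endpoints in $\tfrac{1}{N}\Z/\Z$ for some $N$, so its associated generators lie in the image of $\iota_N$ by iterated join.

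The main technical hurdle is injectivity: one must show that no additional relation is forced in $\Ubf_\upsilon(\slfrak(S^1_\Q))$ beyond those coming from finite levels. The plan here is that any single relation in the presentation of Definition~\ref{def:circlequantumgroup} mentions only finitely many intervals, which admit a common rational denominator $N$; one then bookkeeps carefully that the full list of join and nest relations restricted to intervals with endpoints in $\tfrac{1}{N}\Z/\Z$ is mutually derivable from the Drinfeld--Jimbo plus Serre relations of $\Ubf_\upsilon(\slfrakhat(N))$ transported through $\iota_N$. The Serre derivation above is the archetypal case; for nest relations coming from deeper inclusions $J_1 \subsetneq J_2$ of larger relative length, one obtains $\upsilon$-analogues of iterated commutator identities in $\Ubf_\upsilon(\slfrakhat(N))$, which can be verified by induction on the length of $J_2 \setminus J_1$ by reapplying the join relation. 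This bookkeeping, rather than any single conceptual step, is the real obstacle.

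For the subalgebra statements, fix a rational base point $p \in S^1_\Q$. The rational strict intervals avoiding $p$ naturally identify $S^1_\Q \setminus \{p\}$ with $\Q$, and the subalgebra of $\Ubf_\upsilon(\slfrak(S^1_\Q))$ they generate is isomorphic to $\Ubf_\upsilon(\slfrakhat(\infty))$ by the analogous divisibility direct-limit argument applied to the type-$A_\infty$ quantum groups, which embed into the cyclic ones by excising the vertex at $p$. Restricting further to intervals confined to a fixed rational sub-interval of $S^1_\Q$ having $p$ as its right boundary yields the analogous embedding of $\Ubf_\upsilon(\slfrakhat(+\infty))$. Well-definedness of both embeddings reduces to the circle case already handled.
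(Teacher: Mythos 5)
Your construction of the maps $\iota_n$, the derivation of the quantum Serre relations from the join and nest relations, the compatibility with $\Omega_{kn,n}$, and the surjectivity argument all agree with what the paper does in the proof of Theorem~\ref{thm:Cinfty}. The problem is the injectivity step, which you defer to ``bookkeeping'' but which is in fact the entire mathematical content of the theorem. To define an inverse map out of $\Ubf_\upsilon(\slfrak(S^1_\Q))$ along the lines you propose, you must verify that every join, nest and Drinfeld--Jimbo relation among level-$N$ intervals holds in $\Ubf_\upsilon(\slfrakhat(N))$ once each $E_J$ is replaced by the corresponding iterated $\upsilon$-commutator of Chevalley generators. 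These are Levendorskii--Soibelman--type commutation identities among quantum affine root vectors (for instance $\upsilon^{\langle\chi_{J_1},\chi_{J_2}\rangle}E_{J_1}E_{J_2}=\upsilon^{\langle\chi_{J_2},\chi_{J_1}\rangle}E_{J_2}E_{J_1}$ for a deep inclusion $J_1\subsetneq J_2$, or $[E_J,F_J]=(K_J-K_J^{-1})/(\upsilon-\upsilon^{-1})$ for a composite interval $J$), and deriving them from the Serre presentation alone essentially amounts to reconstructing the PBW/convex-order structure of $\Ubf_\upsilon(\slfrakhat(N))$; it is not mere bookkeeping. The paper avoids this entirely: it realizes $\Cbf_N\simeq\Ubf^+_\upsilon(\slfrakhat(N))\otimes\widetilde\Q[\delta_N]$ inside the Hall algebra of $\Tor_{p_N}(X_N)\simeq\Rep_k^{\mathsf{nil}}(A_{N-1}^{(1)})$, where $E_J=\upsilon^{1/2}1_{\Scal_J}$ and all the relations are short Hom/Ext computations with indecomposable nilpotent representations; injectivity of the map $\phi$ from the presented algebra $\Acal$ to the concrete direct limit $\Cbf_\infty$ is then obtained by producing a map $\Cbf_N\to\Acal_N$ (well defined because the Serre relations hold in $\Acal_N$) whose composite with $\phi$ is the canonical, manifestly injective, inclusion $\Cbf_N\hookrightarrow\Cbf_\infty$. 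If you want a purely presentation-theoretic proof you must either carry out the root-vector computations or import them from a reference; as written this is a genuine gap.

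The treatment of the second sentence of the theorem is wrong, not merely incomplete. In this paper $\Ubf_\upsilon(\slfrakhat(+\infty))$ and $\Ubf_\upsilon(\slfrakhat(\infty))$ are \emph{not} the type-$A_{\pm\infty}$ quantum groups obtained by excising the vertex at $p$ from the cycle: they are defined in Section~\ref{sec:comparisons} as direct limits of the full affine algebras $\DCbf_n\simeq\Ubf_\upsilon(\slfrakhat(n))$ along the vertex-insertion maps $\imath_{n,n+1}$ (resp.\ $\jmath_{n,n+2}$) coming from Serre quotients. The subalgebra of $\Ubf_\upsilon(\slfrak(S^1_\Q))$ generated by intervals avoiding a fixed point $p$ has a totally ordered ``Dynkin diagram'' and contains no affine subalgebra at all; it is (a rational version of) $\Ubf_\upsilon(\slfrak(\infty))$, without the hat. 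The paper's embeddings $\Phi_{\pm\infty}$ instead use a sequence of rational points $\alpha_k$ accumulating at $0\sim 1$ and send the $n$ simple root vectors of $\Ubf_\upsilon(\slfrakhat(n))$ to $n$ intervals tiling the \emph{whole} circle, the ``affine node'' being the long interval through the accumulation point, which shrinks as $n$ grows. Your construction of these subalgebras would have to be replaced by something of this shape.
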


The existence and definition of the coproduct of $\Ubf_\upsilon(\slfrak(S^1_\Q))$ and $\Ubf_\upsilon(\slfrak(S^1))$ is best seen from the Hall algebra picture, which we explain in the next section. We conclude this part of the introduction by defining the fundamental representations of both algebras. The \textit{fundamental representation} of $\Ubf_\upsilon(\slfrak(S^1))$ is the $\widetilde{\Q}$-vector space
\begin{align}
\V_{S^1}\coloneqq\bigoplus_{y \in \R}\, \widetilde \Q\, \vec u_y \ ,
\end{align} 
with the action given by
\begin{align}
F_{[a,b[}\bullet \vec u_y =& \delta_{\{b+y\}, 0}\, \upsilon^{1/2}\,\vec u_{y+b-a}\ , \\[4pt]
E_{[a,b[}\bullet \vec u_y =& \delta_{\{a+y\}, 0}\, \upsilon^{-1/2}\,\vec u_{y+a-b}\ , \\[4pt]
K_{[a',b'[}^\pm\bullet \vec u_y =& \upsilon^{\pm(\delta_{\{b'+y\},0}-\delta_{\{a'+y\},0})}\, \vec u_y\ ,
\end{align}
for intervals $[a,b[, [a', b'[\subseteq S^1$, with $[a,b[$ strict, and $y\in \Q$. Similarly, the \textit{fundamental representation} of $\Ubf_\upsilon(\slfrak(S^1_\Q))$ is as a $\widetilde{\Q}$-vector space
\begin{align}
\V_{S^1_\Q}\coloneqq\bigoplus_{y \in \Q}\, \widetilde \Q\, \vec u_y \subset \V_{S^1}
\end{align}  
and the $\Ubf_\upsilon(\slfrak(S^1_\Q))$-action is induced by the above one.

\begin{remark}
	We want to emphasize that, in the main body of the paper, we deal only with the Hall algebra realization of $\Ubf_\upsilon(\slfrak(S^1_\Q))$. This choice is due to the study, from the Hall algebra perspective, of the category of coherent sheaves on the so-called \textit{infinite root stack} of Talpo and Vistoli over a curve (which is equivalent to the category of parabolic sheaves over the curve with rational weights). There is a similar Hall algebra realization of the quantum group  
$\Ubf_\upsilon(\slfrak(S^1))$ involving a suitable category of parabolic torsion sheaves with \textit{real} weights \footnote{Most of the results and techniques used in this paper can be exported to the real setting using a real-to-rational weights approximation procedure à la Mehta--Seshadri \cite{art:mehtaseshadri1980}; we thank Mattia Talpo for suggesting such an approach.}. Nevertheless, we decided to focus on the rational case and the infinite root stack in the main body of this paper. The appendix written by Kuwagaki contains Hall algebra realizations of both $\Ubf_\upsilon(\slfrak(S^1))$ and $\Ubf_\upsilon(\slfrak(S^1_\Q))$ by means of the category of constructible sheaves on $S^1$ with prescribed singular support; in that setting the passage from rational weights to real weights is very natural, see Section~\ref{sec:S1-mirror}. The two Hall algebra realizations of $\Ubf_\upsilon(\slfrak(S^1_\Q))$ are related by a mirror symmetry derived equivalence; as suggested by the referee, it is very likely that the same holds for $\Ubf_\upsilon(\slfrak(S^1))$ as well.
\end{remark}

\subsubsection*{Definition of $\Ubf_\upsilon(\slfrak(\R))$} Replacing the circle $\R/\Z$ by $\R$, we can define in an entirely similar fashion a
quantum group $\Ubf_\upsilon(\slfrak(\R))$. The generators $E_I,F_I, K^{\pm 1}_{I}$ are now indexed by half-open intervals $I=[a,b[$ in $\R$, but all relations remain the same as in Definition~\ref{def:circlequantumgroup}. The quantum group $\Ubf_\upsilon(\slfrak(\Q))$ is the subalgebra of $\Ubf_\upsilon(\slfrak(\R))$ generated by elements associated to rational intervals $[a,b[\subset \Q$; it is a topological Hopf algebra by defining the coproduct by the same formula as \eqref{E:coprod} in which $c$ is in addition required to belong to $\mathbb{Q}$. 

Similarly, one can define the quantum group $\Ubf_\upsilon(\slfrak(\Z))$ generated by elements associated to integral intervals $[a,b[\subset \Z$. It is easy to see that this quantum group coincides with the quantum enveloping algebra $\Ubf_\upsilon(\slfrak(\infty))$ of the Lie algebra $\slfrak(\infty)$.

\subsubsection*{Fock spaces}

In \cite{art:salaschiffmann2020} we provide an algebraic construction of the Fock space $\Fcal_\K$ for $\Ubf_\upsilon(\slfrak(\K))$, for $\K\in \{\Z, \Q, \R\}$, as the vector space generated by $\K$-pyramids. For $\K=\Z$ this construction coincides with the action of $\Ubf_\upsilon(\slfrak(\infty))$ on the usual Fock space generated by partitions, since $\Ubf_\upsilon(\slfrak(\infty))\simeq \Ubf_\upsilon(\slfrak(\Z))$ and partitions correspond to $\Z$-pyramids. In addition, by adapting to our setting the ``folding procedure" of Hayashi \cite{art:hayashi1990} and Misha-Miwa \cite{art:misramiwa1990} we obtain an action of $\Ubf_\upsilon(\slfrak(S^1_\K))$ on $\Fcal_\K$ for $\K\in \{\Q, \R\}$.

\subsubsection*{Continuum Kac--Moody algebras and continuum quantum groups}

By taking the semi-classical limit of the quantum groups defined above, we  obtain the Lie algebra $\slfrak(\K)$ with $\K\in \{\Z, \Q, \R, S^1_\Q, S^1\}$. Together with Andrea Appel, in \cite{art:appelsalaschiffmann2020}, by mimicking the construction of Kac--Moody algebras, we define Lie algebras associated to a large class of oriented one-manifolds, which we called \textit{continuum Kac--Moody algebras}, which generalize $\slfrak(\K)$ (i.e., the continuum Kac--Moody algebra of $\K$ is simply $\slfrak(\K)$). In \cite{art:appelsala2020}, Appel and the first-named author prove that there is a structure of a topological bialgebra on the continuum Kac--Moody algebras and construct the corresponding quantizations, which go under the name of continuum quantum groups. These bialgebras generalize $\Ubf_\upsilon(\slfrak(\K))$.

\subsection{$\Ubf_\upsilon(\slfrak(S^1_\Q))$  and Hall algebras}

\subsubsection*{Hall algebras}

The study of Hall algebras associated to quivers goes back to the foundational work of Ringel \cite{art:ringel1990} and Green \cite{art:green1995}, who in particular proved that Hall algebra
of the category of representations over some finite field $\F_q$ of a quiver $\Qcal$ 
contains the Drinfeld-Jimbo quantum group $\Ubf^+_{\upsilon}(\mathfrak{g}_{\Qcal})$ of the Kac-Moody algebra $\gfrak_{\Qcal}$ associated to $\Qcal$. Here $\upsilon=\sqrt{q}$. The study of Hall algebras of (smooth, projective) curves was later initiated by Kapranov \cite{art:kapranov1997}, who 
in particular showed that these were related to quantum loop algebras. For instance, the Hall algebra of the category of coherent sheaves over $\mathbb{P}^1$ contains Drinfeld's new realization of the quantized enveloping algebra $\Ubf^+_\upsilon(\slfrakhat(2))$. On the other hand, the Hall algebra of coherent sheaves on a elliptic curve over a finite field realizes a positive part of the quantum toroidal algebra of type $\glfrak(1)$ as exploited by Burban and the second-named author in \cite{art:burbanschiffmann2012}. 

Another important family of Hall algebras consists of those associated with the categories of coherent sheaves on weighted projective lines, introduced by \cite{art:geiglelenzing1987}. For instance, weighted projective lines of tame and tubular type provide quantized enveloping algebras of affine or toroidal algebras (in the latter case, of type D and E) --- see \cite{art:schiffmann2004,art:burbanschiffmann2013}. 

\subsubsection*{Root stacks}

Weighted projective lines are examples of \textit{root stacks} over $\PP^1$. Root stacks were introduced independently by \cite{art:cadman2007} and \cite{art:abramovichgrabervistoli2008} in great generality. The study of the Hall algebra associated with the abelian category of coherent sheaves on a root stack over a curve has been initiated by \cite{art:lin2014}\footnote{\cite{art:lin2014} deals with the so-called \textit{spherical} Hall algebras of abelian categories of parabolic coherent sheaves on a curve. As proved by \cite{art:bornevistoli2012}, such an abelian category is equivalent to that of a root stack over the curve.}, where in particular a shuffle presentation was obtained.

In this paper we fix a smooth projective curve $X$ defined over a finite field $\F_q$.  We consider the $n$-th root stacks $X_n$ obtained from the original curve $X$ by --- roughly speaking --- ``replacing" a fixed rational point $p$ by the trivial gerbes $p_n\coloneqq\Bcal\mu_n$ for any positive integer $n$. The Serre subcategory $\Tor_{p_n}(X_n)$ of torsion sheaves on $X_n$ \textit{set-theoretically} supported at $p_n$
 is isomorphic to the category $\mathsf{Rep}_k^{\mathsf{nil}}(A_{n-1}^{(1)})$ of nilpotent representation of the cyclic quiver $A_{n-1}^{(1)}$ with $n$ vertices. Hence, by the result of Ringel and Green mentioned before, $\Ubf^+_\upsilon\big(\slfrakhat(n)\big)$ is a subalgebra of the Hall algebra $\Hbf^{\tor, \mathsf{tw}}_{n, p_n}$ of $\Tor_{p_n}(X_n)$. The Hall multiplication between torsion sheaves and vector bundles yields an action of $\Ubf_\upsilon(\slfrakhat(n))$ on the  Hall subalgebra $\Ubf^>_n$ of the category of vector bundles on $X_n$ (the action by \textit{Hecke operators}). By construction, the action of $\Ubf_\upsilon(\slfrakhat(n))$ on $\Ubf^>_n$ preserves the graded subspaces $\Ubf^>_n[r]$, corresponding to vector bundles on $X_n$ of fixed rank $r$. In particular, $\Ubf^>_n[1]$ is identified with the standard representation of $\Ubf_\upsilon(\slfrakhat(n))$ (in the space $\C^n[x^{\pm 1}]$) and $\Ubf^>_n[r]$ is a certain quotient of the tensor representation $\Ubf^>_n[1]^{\widehat{\otimes}\, r}$. Such a representation depends on the genus $g_X$ of $X$ and we call the \textit{$r$-th symmetric tensor representation} of $\Ubf_\upsilon(\slfrakhat(n))$.

For any positive integers $n, k$, we have morphisms $\pi_{kn, n}\colon X_{kn}\to X_n$, which form an inverse system. Now, \textit{let $n$ tend to infinity}. More precisely, consider the inverse limit of the $X_n$'s with respect to divisibility of positive integers: the resulting stack is the \textit{infinite root stack} $X_\infty$, introduced by Talpo and Vistoli \cite{art:talpovistoli2014}. Again, intuitively, one can imagine that $X_\infty$ is obtained from $X$ by ``replacing" a fixed rational point $p$ by the trivial gerbe $p_\infty\coloneqq\Bcal\mu_\infty$. The abelian category $\Coh(X_\infty)$ of coherent sheaves on $X_\infty$ is equivalent to the direct limit of categories $\Coh(X_n)$ with respect to the fully faithful functors $\pi_{kn, n}^\ast$. Thanks to this, $\Coh(X_\infty)$ exhibits some interesting new features; for instance, we can associate with any coherent sheaf on $X_\infty$ a rank and a degree, which is a linear combination of characteristic functions of intervals in $S^1_\Q$. In particular, line bundles on $X_\infty$ have \textit{rational} degrees. Moreover, it is worth mentioning that $\Coh(X_\infty)$ is equivalent to the category of parabolic sheaves with rational weights.

Let $\Tor_{p_\infty}(X_\infty)$ be the category of torsion sheaves on $X_\infty$ set-theoretically supported at $p_\infty$. Such a category is obtained as a direct limit of $\Tor_{p_n}(X_n)$ with respect to the functors $\pi_{kn, n}^\ast$. Thanks to this realization, one can show that any torsion sheaf in $\Tor_{p_\infty}(X_\infty)$ decomposes as a direct sum of torsion sheaves naturally associated with rational intervals of $S^1_\Q$, and the Euler form restricted to the Grothendieck group of $\Tor_{p_\infty}(X_\infty)$ coincides exactly with \eqref{eq:eulerform-introduction}. 

In addition, for any $k, n$ the functors $\pi_{kn, n}^\ast$ give rise to algebra embeddings $\Omega_{kn, n}\colon \Hbf^{\tor, \mathsf{tw}}_{n, p_n}\to \Hbf^{\tor, \mathsf{tw}}_{kn, p_{kn}}$, which induce a directed system. The limit of such a directed system is isomorphic (as an algebra) to the Hall algebra $\Hbf^{\tor, \mathsf{tw}}_{\infty, p_{\infty}}$ of the category $\Tor_{p_\infty}(X_\infty)$. 
\begin{theorem}\label{thm:circle-hall}
The embeddings $\Omega_{kn,n}$ restricts to embeddings 
\begin{align}
\Omega_{kn, n}\big\vert_{\Ubf^+_\upsilon(\slfrakhat(n))}\colon \Ubf^+_\upsilon\big(\slfrakhat(n))\big)\to \Ubf^+_\upsilon\big(\slfrakhat(kn)\big)\ .
\end{align}
These embeddings induce a directed system whose  limit, denoted $\Cbf_\infty$, is canonically identified with a topological sub $\widetilde \Q$-Hopf algebra of $\Hbf^{\tor, \mathsf{tw}}_{\infty, p_{\infty}}$. Moreover $\Cbf_\infty$ is isomorphic $\Ubf^+_\upsilon(\slfrak(S^1_\Q))$.
\end{theorem}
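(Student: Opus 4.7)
The plan is to proceed in three stages matching the three assertions of the theorem.

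Stage 1 (restriction of $\Omega_{kn,n}$). By Ringel-Green, $\Ubf^+_\upsilon(\slfrakhat(n))$ sits inside $\Hbf^{\tor,\mathsf{tw}}_{n,p_n}$ as the subalgebra generated by the classes $[S_i]$ of the simple objects of $\Tor_{p_n}(X_n)\simeq\Rep^{\mathsf{nil}}_\kbf(A^{(1)}_{n-1})$. The map $\Omega_{kn,n}$ is induced by $\pi_{kn,n}^\ast$, which pulls back a simple torsion sheaf $S_i$ on $X_n$ to an indecomposable torsion sheaf of length $k$ on $X_{kn}$ supported at $p_{kn}$. I would analyse its Jordan-H\"older filtration and use the finite-level join relation (established in Stage 2) to express $\Omega_{kn,n}([S_i])$ as an iterated $\upsilon$-bracket of simples on $X_{kn}$. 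This both proves that $\Omega_{kn,n}$ sends $\Ubf^+_\upsilon(\slfrakhat(n))$ into $\Ubf^+_\upsilon(\slfrakhat(kn))$ and identifies the directed system to be compared with $\Ubf^+_\upsilon(\slfrak(S^1_\Q))$.

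Stage 2 (construction of $\Phi$). For each rational interval $J\subset S^1_\Q$ let $\Ocal_J\in \Tor_{p_\infty}(X_\infty)$ denote the corresponding indecomposable torsion sheaf. I define $\Phi(E_J)\coloneqq[\Ocal_J]$ and $\Phi(K_J)$ as the Cartan element attached to the class of $\Ocal_J$ in the twisted Hall algebra, and verify each family of relations generating the subalgebra of $\Ubf_\upsilon(\slfrak(S^1_\Q))$ spanned by the $E_J$ and $K_{J'}^{\pm 1}$. The Drinfeld-Jimbo relations among Cartan elements and the $K$-$E$ commutations follow from identifying the symmetrised Euler form on $\Ksf(\Tor_{p_\infty}(X_\infty))$ with the pairing $(\,\cdot\,,\,\cdot\,)$ of \eqref{eq:eulerform-introduction} already observed in the introduction. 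The join relations come from the canonical short exact sequence $0\to\Ocal_{[b,c[}\to\Ocal_{[a,c[}\to\Ocal_{[a,b[}\to 0$ in $\Tor_{p_\infty}(X_\infty)$: under Ringel-Green this yields precisely the claimed $\upsilon$-commutator identity for $[\Ocal_{[a,c[}]$. The nest relations reduce to the computation of $\Hom$ and $\Ext^1$ between $\Ocal_{J_1}$ and $\Ocal_{J_2}$: when $\overline{J_1}\cap\overline{J_2}=\emptyset$ both spaces vanish, giving commuting classes, while for $J_1\subseteq J_2$ a one-dimensional extension space together with the Euler form produces the displayed $\upsilon$-commutation.

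Stage 3 (isomorphism with $\Cbf_\infty$). Since each $\Ocal_J$ already appears on some $X_n$ (any $n$ with the endpoints of $J$ in $\tfrac{1}{n}\Z/\Z$), the image of $\Phi$ lies in the directed union $\Cbf_\infty$. Surjectivity follows because Stage 1 shows $\Cbf_\infty$ is generated by these classes. For injectivity I would exhibit a PBW-type basis of $\Ubf^+_\upsilon(\slfrak(S^1_\Q))$ indexed by unordered finite multisets of rational intervals (matching the indecomposable decomposition of objects in $\Tor_{p_\infty}(X_\infty)$) and check that $\Phi$ sends it to a linearly independent family in $\Hbf^{\tor,\mathsf{tw}}_{\infty,p_\infty}$; this reduces to a standard Jordan-H\"older lower-triangularity argument. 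The topological Hopf subalgebra structure on $\Cbf_\infty$ then transports across $\Phi$ to the coproduct \eqref{E:coprod}.

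The main obstacle will be the compatibility of the coproducts: the uncountable sums in \eqref{E:coprod} require that Green's coproducts on the $\Hbf^{\tor,\mathsf{tw}}_{n,p_n}$ assemble, after passing to an appropriate completion, into a well-defined topological coproduct on the direct limit, and that this limit coproduct reproduces \eqref{E:coprod} term by term. A secondary difficulty is the reduction of the nest relations for overlapping but non-nested intervals to the nested case via the join relations, which demands a careful bookkeeping over all relative positions of rational intervals on $S^1_\Q$.
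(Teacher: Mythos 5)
Your Stages 1 and 2 are sound and essentially match the paper: $\Omega_{kn,n}$ sends $1_{{}_n\Scal_i}$ to $1_{{}_{kn}\Scal_{ki}^{(k)}}$, which lies in $\Cbf_{kn}$ by the iterated $\upsilon$-commutator identity (the finite-level join relation), and the Drinfeld--Jimbo, join and nest relations in the Hall algebra follow from the Euler form and the $\Hom$/$\Ext^1$ computations between the sheaves $\Scal_J$. The genuine gap is in Stage 3, in your injectivity argument. A PBW-type basis of $\Ubf_\upsilon^+(\slfrak(S^1_\Q))$ indexed by unordered finite multisets of strict rational intervals does not exist: the composition algebra of a cyclic quiver is a \emph{proper} subalgebra of the full Hall algebra of $\Tor_{p_n}(X_n)$ (one has $\Hbf^{\mathsf{tw},\tor}_{n,p_n}\simeq \Zbf_n\otimes\Cbf_n$ with $\Zbf_n$ a nontrivial central polynomial algebra), so isomorphism classes of objects — i.e.\ multisets of intervals — index a basis of the wrong algebra. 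Concretely, already in degree $\delta=\chi_{S^1_\Q}$ the only multiset of strict intervals refining the partition $\{[0,\tfrac12[,[\tfrac12,1[\}$ is that partition itself, yet $E_{[0,1/2[}E_{[1/2,1[}$ and $E_{[1/2,1[}E_{[0,1/2[}$ are linearly independent in $\Cbf_2\subset\Cbf_\infty$ (no join relation applies, since $[0,\tfrac12[\cup[\tfrac12,1[$ is not strict, and no nest relation applies either); so a single multiset must account for a two-dimensional space, and the Jordan--H\"older triangularity argument cannot rescue this. Establishing any correct monomial basis of the abstractly presented algebra is essentially as hard as the theorem itself.

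The paper's route to injectivity avoids this entirely and you should adopt it: first show that the quadratic join and nest relations \emph{imply} the cubic Serre relations $E_{J_1}^2E_{J_2}-[2]_\upsilon E_{J_1}E_{J_2}E_{J_1}+E_{J_2}E_{J_1}^2=0$ for adjacent intervals (multiply the join relation on the left and right by $E_{J_1}$ and use the nest relation for $J_1\subset J_1\cup J_2$). Then filter the abstract algebra $\Acal$ as $\bigcup_n\Acal_n$, where $\Acal_n$ is generated by the $E_J,K_J^{\pm}$ with endpoints in $\tfrac1n\Z$; by the join relations $\Acal_n$ is generated by the $n$ elementary generators $E_{[p/n,(p+1)/n[}$, which by the derived Serre relations satisfy the Drinfeld--Jimbo presentation of $\Ubf^+_\upsilon(\slfrakhat(n))$. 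Ringel's theorem ($\Cbf_n\simeq\Ubf_\upsilon^+(\slfrakhat(n))\otimes_{\widetilde\Q}\widetilde\Q[\delta_n]$) then furnishes a section $\Cbf_n\to\Acal_n$ of $\phi\vert_{\Acal_n}$, proving injectivity level by level. Your remaining concerns about the topological coproduct are legitimate but are handled by the graded completions $\Hbf_\infty\widehat\otimes\Hbf_\infty$ of Section~\ref{sec:Hallalgebrainfty} together with the finiteness statements of Corollaries~\ref{cor:finiteness-infty-one} and \ref{cor:finiteness-infty-two}.
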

The Hopf algebra $\Cbf_\infty$ is endowed with a non degenerate Hopf pairing (the Green pairing); the reduced Drinfeld double of $\Cbf_\infty$ is isomorphic to the whole $\Ubf_\upsilon(\slfrak(S^1_\Q))$, as topological $\widetilde \Q$-Hopf algebras.

As before, the Hall multiplication between torsion sheaves and vector bundles yields an action of $\Ubf_\upsilon(\slfrak(S^1_\Q))$ on the (spherical) Hall subalgebra $\Ubf^>_\infty$ of the category of vector bundles on $X_\infty$. We have the following.
\begin{theorem} 
As a $\Ubf_\upsilon(\slfrak(S^1_\Q))$-module, $\Ubf^>_\infty[1]$ is isomorphic to $\V_{S^1_\Q}$. Moreover, the (surjective) multiplication map $\Ubf^>_\infty[1]^{\otimes\, r} \to \Ubf^>_\infty[r]$ is a $\Ubf_\upsilon(\slfrak(S^1_\Q))$-intertwiner.
\end{theorem}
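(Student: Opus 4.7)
\emph{Module isomorphism $\Ubf^>_\infty[1] \cong \V_{S^1_\Q}$.} The strategy is to produce an explicit basis of $\Ubf^>_\infty[1]$ indexed by $y \in \Q$ and then to match the action of the generators $E_J, F_J, K_J^{\pm 1}$ of $\Cbf_\infty \simeq \Ubf^+_\upsilon(\slfrak(S^1_\Q))$ with the formulas defining $\V_{S^1_\Q}$. A line bundle on $X_\infty$ has a well-defined rational degree at $p_\infty$, and after passing to the spherical Hall subalgebra this degree becomes the sole invariant in rank one; combined with $\Coh(X_\infty) = \varinjlim \Coh(X_n)$ and the standard description of $\Pic(X_n)$, this yields a basis $\{\mathbf{1}_{1,y}\}_{y \in \Q}$ of $\Ubf^>_\infty[1]$, where $\mathbf{1}_{1,y}$ is the characteristic function of the stratum of line bundles of rational degree $y$; these should correspond to the basis vectors $\vec u_y$ of $\V_{S^1_\Q}$.

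\emph{Computation of the Hecke action.} The image of $E_J$ under $\Cbf_\infty \hookrightarrow \Hbf^{\tor, \mathsf{tw}}_{\infty, p_\infty}$ is, up to a normalization, the class of a simple torsion sheaf $\Scal_J$ associated with the rational interval $J = [a, b[$. Computing the Hall product $[\Scal_J] \cdot \mathbf{1}_{1, y}$ amounts to enumerating extensions
\begin{align*}
0 \to \Lcal \to \Lcal' \to \Scal_J \to 0
\end{align*}
with $\Lcal$ of degree $y$. Reducing to finite level $X_n$ (for any $n$ such that $a, b \in \frac{1}{n}\Z$), this becomes the classical computation of the Hecke action of a simple of $\mathsf{Rep}^{\mathsf{nil}}_k(A_{n-1}^{(1)})$ on a line bundle: $\Ext^1(\Scal_J, \Lcal) \neq 0$ if and only if the isotropy character of $\Lcal$ at $p_n$ agrees with the endpoint $b$ of $J$, i.e.\ $\{b+y\} = 0$; when this holds, there is a canonical extension $\Lcal'$ of degree $y + (b-a)$. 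Taking the direct limit over $n$ yields the formula for $F_J \bullet \vec u_y$; the formula for $E_J$ is dual (via the short exact sequence with $\Scal_J$ as a quotient), and the Cartan generators $K_J^{\pm 1}$ act by the character $\upsilon^{\pm(\chi_J, -)}$ determined by the Euler form \eqref{eq:eulerform-introduction}.

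\emph{Intertwining property.} This is a general consequence of the fact that $\Cbf_\infty$ is a topological sub Hopf algebra of $\Hbf^{\tor, \mathsf{tw}}_{\infty, p_\infty}$ and that it acts on $\Ubf^>_\infty$ by Hall multiplication inside the whole Hall algebra of $\Coh(X_\infty)$. Green's theorem, applied in this topological setting, yields for any $a \in \Cbf_\infty$ and $x_1, \ldots, x_r \in \Ubf^>_\infty[1]$,
\begin{align*}
a \cdot (x_1 \cdots x_r) = \sum\, (a^{(1)} \cdot x_1)\, (a^{(2)} \cdot x_2) \cdots (a^{(r)} \cdot x_r)\ ,
\end{align*}
where $\tilde\Delta^{(r)}(a) = \sum a^{(1)} \otimes \cdots \otimes a^{(r)}$ denotes the iterated coproduct. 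This is precisely the statement that the multiplication map $\Ubf^>_\infty[1]^{\otimes r} \to \Ubf^>_\infty[r]$ is a morphism of $\Ubf^+_\upsilon(\slfrak(S^1_\Q))$-modules, and the extension to the full Drinfeld double $\Ubf_\upsilon(\slfrak(S^1_\Q))$ is automatic since the first part already identifies the action of $F_J$ and $K^{\pm 1}_J$ on $\V_{S^1_\Q}$.

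\emph{Main obstacle.} The technically delicate point is the compatibility of the Hecke action with the direct limit. One must verify that, under $\Omega_{kn, n}\colon \Hbf^{\tor, \mathsf{tw}}_{n, p_n} \hookrightarrow \Hbf^{\tor, \mathsf{tw}}_{kn, p_{kn}}$, the class of the simple supported at $[i/n, (i+1)/n[$ in the cyclic quiver refines correctly to a class in the larger cyclic quiver, and that the Kronecker delta condition $\{b+y\} = 0$ is preserved throughout. The rational-interval formalism is precisely engineered so that this stability holds; once this is verified, the finite-level Hecke calculations assemble into the formulas defining $\V_{S^1_\Q}$.
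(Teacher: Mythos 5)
Your proposal follows essentially the same route as the paper: the identification $\vec u_y \mapsto \onebf^{\mathbf{ss}}_{1,\,y}$, the finite-level Hecke computation of $\omega_n\big(1_{{}_n\Scal_i^{(j)}}\cdot{}_n\onebf^{\mathbf{ss}}_{1,\,d}\big)$ (the paper's Corollary on Hecke actions, proved via exactly the extension count you describe) passed to the direct limit, and the intertwining property deduced from the compatibility of Hall multiplication with the iterated coproduct. The only point worth noting is that your computation, like the paper's, actually identifies the module with $\V_{S^1_\Q}$ only after the automorphism swapping $E_J\leftrightarrow F_J$ and $\upsilon\mapsto\upsilon^{-1}$, which the body of the paper makes explicit.
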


We may identify $\V_{S^1_\Q}$ with $\C[x_1^{\pm 1}]\otimes V_\infty$, where $V_\infty$ is an infinite-dimensional vector space with basis $\{\vec v_x\, \vert\, x\in S^1_\Q\}$, via the assignment $\vec u_y \mapsto x^{\lfloor y \rfloor}\, \vec v_{\{y\}}$. Using this identification, we realize the above multiplication map explicitly as an appropriate shuffle product
\begin{align}
\V^{\otimes\, r} \to \mathbb{V}^{\widehat \otimes\, r}\simeq
\C[x_1^{\pm 1}, \ldots, x_r^{\pm 1}][[x_1/x_2,\ldots, x_{r-1}/x_r]]\otimes V_\infty^{\otimes\, r}\ ,
\end{align}
whose kernel involves the zeta function of $X$ as a main ingredient. We call the representation of $\Ubf_\upsilon(\slfrak(S^1_\Q))$ on $\Ubf^>_\infty[r]$ the \textit{$r$-th symmetric tensor representation of genus $g_X$}.

\subsubsection*{Some remarks concerning $\Ubf_\upsilon(\glfrak(S^1_\Q))$}

As proved by the second-named author \cite{art:schiffmann2002}, for any positive integer $n$, the whole $\Hbf^{\tor,\mathsf{tw}}_{n, p_n}$ is isomorphic to the positive part of $\Ubf_\upsilon(\widehat{\glfrak}(n))$, where the positive part of the quantum Heisenberg algebra corresponds to the center of $\Hbf^{\tor,\mathsf{tw}}_{n, p_n}$. In the limit $n \to \infty$ this quantum Heisenberg algebra still exists, but it does not lie in the center anymore. Indeed, as shown in Proposition~\ref{prop:trivialcenter}, $\Hbf^{\tor,\mathsf{tw}}_{\infty, p_\infty}$ fails to have a center. As an attempt to recover a center, we introduce in Section \ref{sec:completion} a metric completion of $\Hbf_{\infty, p_\infty}^{\mathsf{tw}, \tor}$. This can be interpreted as a first step towards the definition of $\Ubf_\upsilon(\glfrak(S^1_\Q))$.

\subsubsection*{Higher genus Fock spaces}

As we will show in a sequel to this paper, when $X=\PP^1$ the representations $\Ubf^>_n[r]$ themselves admit a stable limit as $r$ tends to infinity, which is isomorphic to the level one Fock space of $\Ubf_\upsilon(\slfrakhat(n))$, as studied in \cite{art:kashiwaramiwastern1995}; the extension of this procedure to arbitrary curves and to the stable limit as $n$ tends to infinity thus produces a family of ``higher genus" Fock spaces for $\Ubf_\upsilon(\slfrakhat(n))$ and $\Ubf_\upsilon(\slfrak(S^1_\Q))$. In the genus zero case, we expect to compare such a construction with the algebraic construction of the Fock space given in \cite{art:salaschiffmann2020}.

\subsection*{Contents of the paper}

This paper is organised as follows. In Section~\ref{sec:rootstacks} we recall the theory of root stacks and coherent sheaves over them. In Section~\ref{sec:preliminariesorbcurves}, we restrict ourselves to root stacks over a smooth projective curve $X$: we provide a characterization of the category of coherent sheaves and its (numerical) Grothendieck group. Section~\ref{sec:hallalgebran} is devoted to the study of the Hall algebra of $X_n$, in particular its Hecke algebra and its spherical subalgebra: part of the results overlaps those in \cite{art:lin2014}. In Section~\ref{sec:hallalgebramn} we compare Hall algebras of $X_n$ for different $n$'s. In Section~\ref{sec:hallalgebrainfinite}, we study in the detail the Hall algebra of $X_\infty$, its Hecke algebra and spherical subalgebra: we define $\Ubf_\upsilon(\slfrak(S^1_\Q))$ and its representations, and a metric completion of $\Hbf_{\infty, p_\infty}^{\tor, \mathsf{tw}}$. Finally, in Section~\ref{sec:comparisons}, we compare our algebras with $\Ubf_\upsilon(\slfrakhat(+\infty))$ and $\Ubf_\upsilon(\slfrakhat(\infty))$. We finish the paper with an appendix in which for completeness we reprove some of the results in \cite{art:lin2014} used in the main body of the paper. The last appendix, due to T.~Kuwagaki, is devoted to a geometric realization of $\Ubf_\upsilon(\slfrak(S^1))$.

\subsection*{Acknowledgements}

We are grateful to Mattia Talpo for many enlightening conversations about root stacks and their categories of coherent sheaves, to Jyun-Ao Lin for helpful explanations about his work and to the referee for a careful reading and useful comments. We also thank Andrea Appel and Mikhail Kapranov for helpful discussions and comments; and David Hernandez for pointing out his paper \cite{art:hernandez2011}. Last but not least, we thank Tatsuki Kuwagaki for writing the appendix \ref{sec:tatsuki}.

\subsection*{Notations and Convention}

Let $k$ be a field. Schemes and stacks will always be over $k$. Our main reference for the theory of stacks is \cite{book:laumonmoretbailly2000}. A morphism of stacks is \textit{representable} if the base change of an algebraic space is an algebraic space. An \textit{algebraic stack} is a stack (in groupoids) with a ``smooth presentation" (i.e., a representable smooth epimorphism from an algebraic space) and representable quasi-compact and separated diagonal. An algebraic stack is \textit{Deligne-Mumford} if it has a presentation which is moreover \'etale. We always assume that our algebraic stacks are of finite type over $k$.

We say that an algebraic stack $\Xscr$ has \textit{finite inertia} if the natural morphism from the inertia stack $\Ical(\Xscr)\to \Xscr$ is finite. If $\Xscr$ has finite inertia, by \cite{art:keelmori1997} it admits a coarse moduli space $\pi\colon \Xscr\to X$, which is an algebraic space; moreover $\pi$ is a proper morphism and $\pi_\ast \Ocal_{\Xscr}=\Ocal_X$. We say that $\Xscr$ is \textit{tame} if $\pi_\ast \colon \QCoh(\Xscr)\to \QCoh(X)$ is an exact functor.  In this case, $\pi$ is cohomologically affine and therefore $\pi$ is a \textit{good moduli space morphism} in the sense of Alper \cite{art:alper2013}. In particular, for any quasi-coherent sheaf $F$ on $X$ the adjunction morphism $F\to \pi_\ast\pi^\ast F$ is an isomorphism and hence the projection formula holds for $\pi$.
 
Since we will work with coherent sheaves on Noetherian algebraic stacks, we use the notions of support of a coherent sheaf, purity and torsion filtration defined in \cite[Section~2.2.6.3]{art:lieblich2007}. We shall call a zero-dimensional coherent sheaf a \textit{torsion sheaf} and a pure coherent sheaf of maximal dimension a \textit{torsion-free sheaf}. We use the letters $\Ecal, \Fcal, \Gcal, \ldots,$ for sheaves on a algebraic stack, and the letters $E, F, G, \ldots,$ for sheaves on a scheme.
 
We shall use the following group schemes:
\begin{align}
\mu_r=\mu_r(k)\coloneqq\{a\in k\,\vert\, a^r=1\} \qquad\text{and}\qquad \G_m=\G_m(k)\coloneqq\{a\in k\, \vert \, a\text{ is a unit} \}\ .
\end{align} 
 
We will fix a finite field $\F_q$ with $q$ elements. Set $\widetilde \Q =\Q[\upsilon, \upsilon^{-1}]/(\upsilon^2-q)$, and for any integer $d$, define $[d]_\upsilon\coloneqq(\upsilon^d-\upsilon^{-d})/(\upsilon-\upsilon^{-1})$. 

Finally, fix a positive integer $n$. Given an integer $d$, we define
\begin{align}
\nintpart{d}\coloneqq \left\lfloor \frac{d}{n}\right\rfloor\quad\text{and}\quad \nfrapart{d}\coloneqq n\left\{\frac{d}{n}\right\}\ ,
\end{align}
which are, respectively, the integer part and $n$ times the fractional part of $d/n$.

\bigskip\section{Root stacks}\label{sec:rootstacks}

We shall give a brief survey of the theory of root stacks over algebraic stacks as presented in \cite{art:cadman2007} (see also \cite[Appendix~B]{art:abramovichgrabervistoli2008}, \cite[Section~2.1]{art:bayercadman2010}, and \cite[Section~1.3]{art:fantechimannnironi2010}). We will give also an overview of the relation between quasi-coherent sheaves on root stacks (resp.\ the infinite root stack) and parabolic sheaves (resp.\ with parabolic sheaves with rational weights) following \cite{art:borne2007, art:bornevistoli2012, art:talpovistoli2014, art:talpo2017} (see also \cite{phd:talpo2015}).

\subsection{Preliminaries}

Let $\Xscr$ be an algebraic stack. Recall that there is an equivalence of categories between the category of line bundles on $\Xscr$ and the category of morphisms $\Xscr\to \Bscr\G_m$, where the morphisms in the former category are taken to be isomorphisms of line bundles. There is also an equivalence between the category of pairs $(\Lcal,s)$, with $\Lcal$ a line bundle on $\Xscr$ and $s$ a global section of $\Lcal$, and the category of morphisms $\Xscr\to [\A^1/\G_m]$, where $\G_m$ acts on $\A^1$ by multiplication \cite[Example~5.13]{art:olsson2003}.

Throughout this section, $\Xscr$ will be an algebraic stack, $\Lcal$ a line bundle on $\Xscr$, $s$ a global section of $\Lcal$, and $n$ a positive integer. The pair $(\Lcal,s)$ defines a morphism $\Xscr\to [\A^1/\G_m]$ as above. Let $\theta_n\colon [\A^1/\G_m]\to [\A^1/\G_m]$ be the morphism of stacks, induced by the morphisms
\begin{align}
x\in\A^1 \ \longmapsto \ x^n\in\A^1 \qquad \text{and} \qquad t\in\G_m \ \longmapsto \ t^n\in\G_m\ ,
\end{align}
which sends a pair $(\Lcal,s)$ to its $n$-th tensor power $(\Lcal^{\otimes\, n}, s^{\otimes\, n})$.
\begin{definition}\label{def:rootlines}
Let $\rootlines$ be the algebraic stack obtained as the fibre product
\begin{align}
  \begin{tikzpicture}[xscale=1.5,yscale=-1.2]
    \node (A0_0) at (0, 0) {$\rootlines$};
    \node (A0_2) at (2, 0) {$[\A^1/\G_m]$};
    \node (A1_1) at (1, 1) {$\square$};
    \node (A2_0) at (0, 2) {$\Xscr$};
    \node (A2_2) at (2, 2) {$[\A^1/\G_m]$};
    \node (Comma) at (2.7, 1) {$.$};
    \path (A0_0) edge [->]node [left] {$\scriptstyle{\pi_n}$} (A2_0);
    \path (A0_0) edge [->]node [auto] {$\scriptstyle{}$} (A0_2);
    \path (A0_2) edge [->]node [auto] {$\scriptstyle{\theta_n}$} (A2_2);
    \path (A2_0) edge [->]node [auto] {$\scriptstyle{}$} (A2_2);
  \end{tikzpicture} 
\end{align}
We say that $\rootlines$ is the \textit{root stack} obtained from $\Xscr$ by the $n$-th root construction.
\end{definition}
 
\begin{remark}
By \cite[Example~2.4.2]{art:cadman2007}, if $s$ is a nowhere vanishing section then $\rootlines\simeq \Xscr$. This shows that all of the ``new" stacky structure in $\rootlines$ is concentrated at the zero locus of $s$.
\end{remark}
\begin{definition}\label{def:rootline}
Let $\rootline$ be the algebraic stack obtained as the fibre product
\begin{align}
  \begin{tikzpicture}[xscale=1.5,yscale=-1.2]
    \node (A0_0) at (0, 0) {$\rootline$};
    \node (A0_2) at (2, 0) {$\Bscr\G_m$};
    \node (A1_1) at (1, 1) {$\square$};
    \node (A2_0) at (0, 2) {$\Xscr$};
    \node (A2_2) at (2, 2) {$\Bscr\G_m$};
    \node (Comma) at (2.6, 1) {$,$};
    \path (A0_0) edge [->]node [left] {$\scriptstyle{\rho_n}$} (A2_0);
    \path (A0_0) edge [->]node [auto] {$\scriptstyle{}$} (A0_2);
    \path (A0_2) edge [->]node [auto] {$\scriptstyle{}$} (A2_2);
    \path (A2_0) edge [->]node [auto] {$\scriptstyle{}$} (A2_2);
  \end{tikzpicture} 
\end{align}
where $\Xscr\to \Bscr\G_m$ is determined by $\Lcal$ and $\Bscr\G_m\to \Bscr\G_m$ is given by the map $t\in\G_m\mapsto t^n\in\G_m$.
\end{definition}
By construction, the morphism $\rootlines\to [\A^1/\G_m]$ corresponds to a line bundle $\Lcal_n$ on $\rootlines$ with a global section $s_n$. Moreover, there is an isomorphism $\Lcal_n^{\otimes\, n}\simeq \pi_n^\ast\Lcal$ which sends $s_n^{\otimes\, n}$ to $\pi_n^\ast s$. Similarly, the morphism $\rootline\to \Bscr\G_m$ corresponds to a line bundle $\Lcal_n$ on $\rootline$ such that $\Lcal_n^{\otimes\, n}\simeq \pi_n^\ast\Lcal$.
\begin{remark}
As described in \cite[Example~2.4.3]{art:cadman2007}, $\rootline$ is a closed substack of $\sqrt[n]{(\Lcal,0)/\Xscr}.$ In general, let $\Dscr$ be (reduced) the vanishing locus of $s\in \Gamma(\Xscr,\Lcal)$.  One has a chain of inclusions of closed substacks
\begin{align}
\sqrt[n]{\Lcal_{\vert \Dscr}/\Dscr}\ \subset \
\sqrt[n]{(\Lcal_{\vert \Dscr},0)/\Dscr}\ \subset \ \rootlines\ .
\end{align}
In addition, $\sqrt[n]{\Lcal_{\vert \Dscr}/\Dscr}$ is isomorphic to the reduced stack $\big(\, \sqrt[n]{(\Lcal_{\vert \Dscr},0)/\Dscr}\, \big)_{\mathsf{red}}$. So, $\sqrt[n]{\Lcal_{\vert \Dscr}/\Dscr}$ is an effective Cartier divisor of $\rootlines$.
\end{remark}
Locally, $\rootline$ is a quotient of $\Xscr$ by a trivial action of $\mu_n$, though this is not true globally. In general, $\rootline$ is a $\mu_n$-gerbe over $\Xscr$. Its cohomology class in the \'etale cohomology group $H^2(\Xscr;\mu_n)$ is obtained from the class $[\Lcal] \in H^1(\Xscr; \G_m)$ via the boundary homomorphism $\delta\colon$ $H^1(\Xscr; \G_m)\to H^2(\Xscr; \mu_n)$ given by the Kummer exact sequence
\begin{align}
1\ \longrightarrow \ \mu_n\ \longrightarrow \ \G_m\ \xrightarrow{(-)^n}
\ \G_m\ \longrightarrow \ 1\ . 
\end{align}
Since the class $\delta([\Lcal])$ has trivial image in $H^2(\Xscr; \G_m)$, the gerbe $\rootline$ is called \textit{essentially trivial} \cite[Definition~2.3.4.1 and Lemma~2.3.4.2]{art:lieblich2007}. The gerbe is trivial if and only if $\Lcal$ has a $n$-th root in $\Pic(\Xscr)$. More generally, the gerbe $\rootline$ is isomorphic, as a $\mu_n$-banded gerbe, to $\sqrt[n]{\Lcal'/\Xscr}$ if and only if $[\Lcal]=[\Lcal']$ in $\Pic(\Xscr)/n\Pic(\Xscr)$.

In the following we state some of the results proved in \cite[Sections~2 and 3]{art:cadman2007} and in \cite[Section~4]{art:bornevistoli2012} (see also \cite[Proposition~3.3]{art:berghluntsschnurer2016}).
\begin{proposition}
The root constructions described in Definitions~\ref{def:rootlines} and \ref{def:rootline} have the following properties:
\begin{itemize}\itemsep0.2em
\item The projections $\pi_n\colon \rootlines\to\Xscr$ and $\pi_n\colon \rootline\to\Xscr$ are faithfully flat and quasi-compact.
\item The pushforwards $(\pi_n)_\ast\colon \QCoh(\rootlines)\to \QCoh(\Xscr)$ and $(\rho_n)_\ast\colon \QCoh(\rootline)\to \QCoh(\Xscr)$ are exact; $\rootlines$ and $\rootline$ are tame provided that the same holds for $\Xscr$.
\item If $\Xscr$ is an algebraic space, then it is a coarse moduli space for both $\rootlines$ and $\rootline$ under the projections to $\Xscr$. More generally, if $\Xscr$ is an algebraic stack having a coarse moduli space $\Xscr\to X$, then the compositions $\rootlines \to \Xscr\to X$ and $\rootline\to \Xscr\to X$ are coarse moduli spaces for $\rootlines$ and $\rootline$ respectively. 
\item If $\Xscr$ is a Deligne-Mumford stack and $n$ is coprime with the characteristic of $k$, then $\rootlines$ and $\rootline$ are also Deligne-Mumford stacks. 
\end{itemize}
\end{proposition}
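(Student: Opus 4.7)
The plan is to reduce every claim to a local computation on a smooth atlas and then invoke standard descent properties. All four assertions — faithful flatness and quasi-compactness, exactness of pushforward and tameness, the coarse moduli space identification, and the Deligne-Mumford property — may be checked after smooth base change on $\Xscr$. I would therefore start by choosing a smooth affine atlas $U\to\Xscr$ on which the line bundle $\Lcal$ trivializes; then the section $s$ becomes a function $f\in\Gamma(U,\Ocal_U)$, and a direct calculation from the fibre product definition shows that $\rootlines\times_\Xscr U\simeq [\Spec(A)/\mu_n]$, where $A=\Gamma(U,\Ocal_U)[x]/(x^n-f)$ with $\mu_n$ acting by $\zeta\cdot x=\zeta x$, while $\rootline\times_\Xscr U\simeq U\times\Bscr\mu_n$. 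With this local model in hand, $\pi_n$ becomes the canonical morphism $[\Spec(A)/\mu_n]\to U$ and $\rho_n$ becomes the projection $U\times\Bscr\mu_n\to U$.

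For the first two bullets, I would observe that $\Gamma(U,\Ocal_U)\to A$ is finite locally free of rank $n$, while the presentation $\Spec(A)\to[\Spec(A)/\mu_n]$ is faithfully flat and quasi-compact; composing these yields the faithful flatness and quasi-compactness of $\pi_n$, and the analogous argument settles $\rho_n$. The pushforward $(\pi_n)_\ast$ can be computed locally as the functor of $\mu_n$-invariants on the underlying $A$-modules, which is exact because $\mu_n$ is linearly reductive; the identical reasoning, via $\Bscr\mu_n$, deals with $(\rho_n)_\ast$. To inherit tameness from $\Xscr$, I would inspect the stabilizer at a geometric point of $\rootlines$ or $\rootline$: it sits in an extension of a stabilizer of $\Xscr$ by a subgroup of $\mu_n$, hence remains linearly reductive, so tameness descends.

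For the coarse moduli space claim, the local computation gives $A^{\mu_n}=\Gamma(U,\Ocal_U)$, so the coarse moduli of $[\Spec(A)/\mu_n]$ is exactly $U$, and the coarse moduli of $U\times\Bscr\mu_n$ is obviously $U$; combined with the fact that coarse moduli space formation commutes with smooth base change in the tame setting, this establishes the statement when $\Xscr$ is an algebraic space, and the general case follows by composing the coarse moduli morphisms $\rootlines\to\Xscr\to X$ (and similarly for $\rootline$). Finally, if $n$ is coprime with $\mathrm{char}(k)$, then $\mu_n$ is étale over $\Spec(k)$, so $\Spec(A)\to[\Spec(A)/\mu_n]$ and $\Spec(k)\to\Bscr\mu_n$ are étale atlases; composing with an étale atlas of $\Xscr$ produces étale atlases of $\rootlines$ and $\rootline$, yielding the Deligne-Mumford property. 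The main obstacle will be carefully justifying that the local analysis globalizes for the tameness and coarse moduli assertions; this relies on the good behaviour of linearly reductive good moduli spaces under étale base change, in the sense of \cite{art:alper2013}, but once that formalism is invoked the remainder of the argument is essentially bookkeeping.
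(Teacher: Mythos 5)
The paper does not actually prove this proposition: it is stated as a summary of known results, with the proofs delegated to \cite[Sections~2 and 3]{art:cadman2007}, \cite[Section~4]{art:bornevistoli2012} and \cite[Proposition~3.3]{art:berghluntsschnurer2016}. Your argument is the standard one underlying those references and is correct in outline: the local presentations $\rootlines\times_\Xscr U\simeq[\Spec(R[x]/(x^n-f))/\mu_n]$ and $\rootline\times_\Xscr U\simeq U\times\Bscr\mu_n$ over an atlas trivializing $\Lcal$, the facts that $R\to R[x]/(x^n-f)$ is finite free of rank $n$, that $(R[x]/(x^n-f))^{\mu_n}=R$, that taking $\mu_n$-invariants is exact by linear reductivity of $\mu_n$, and that $\mu_n$ is \'etale when $n$ is prime to the characteristic of $k$, are exactly the computations on which the cited proofs rest.

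Two remarks on the write-up rather than the substance. First, flatness of $[\Spec A/\mu_n]\to U$ is not obtained by ``composing'' flat maps; the correct statement is that the composite $\Spec A\to U$ is flat and that flatness of a morphism out of a quotient stack may be checked after precomposition with the fppf atlas $\Spec A\to[\Spec A/\mu_n]$ --- a descent argument, which you should phrase as such. Second, your tameness argument via stabilizers silently invokes the Abramovich--Olsson--Vistoli characterization of tame stacks; with the paper's definition (exactness of pushforward to the coarse moduli space) it is slightly more economical to establish the third bullet first and then observe that the coarse moduli map of $\rootlines$ is the composite $\rootlines\to\Xscr\to X$ of two exact pushforwards. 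Either route is valid. The genuinely delicate step --- globalizing the local identification of the coarse space, which rests on compatibility of coarse space formation with flat base change --- is the one you correctly flag, and it is carried out in the references above.
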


\begin{theorem}[{\cite[Theorem~3.1.1 and Corollary~3.1.2]{art:cadman2007}}]
Let us assume that the zero locus of $s$ is nonempty and connected. Then any line bundle $\Fcal$ on $\rootlines$ is of the form
\begin{align}
\Fcal \simeq \pi_n^\ast \Mcal \otimes \Lcal_n^{\otimes \, k}\ ,
\end{align}
where $\Mcal$ is a line bundle on $\Xscr$ and $0\leq k\leq n-1$ an integer. Moreover $s$ is unique and $\Mcal$ is unique up to isomorphism. Furthermore,
\begin{align}
(\pi_n)_\ast \big(\pi_n^\ast \Mcal \otimes \Lcal_n^{\otimes \, k} \big) \simeq \Mcal\otimes \Lcal^{\otimes\, \nintpart{k}}\ .
\end{align}
\end{theorem}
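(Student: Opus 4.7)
The plan is to establish, in sequence, uniqueness of the pair $(\Mcal, k)$, existence of the decomposition, and the pushforward formula, relying on the local structure of the root stack and the projection formula.

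For uniqueness, I would restrict $\Fcal$ to the reduced vanishing locus $\sqrt[n]{\Lcal_{\vert \Dscr}/\Dscr} \hookrightarrow \rootlines$, which is a $\mu_n$-gerbe over the connected base $\Dscr$. Any line bundle on a $\mu_n$-gerbe carries a well-defined $\mu_n$-character, which by connectedness of $\Dscr$ is a single element of $\Z/n\Z$. Pullbacks along $\pi_n$ have trivial character, while $\Lcal_n$ has character $1$, so $\pi_n^\ast \Mcal \otimes \Lcal_n^{\otimes k}$ has character $k \bmod n$; this pins down $k \in \{0,\ldots,n-1\}$ uniquely. Once $k$ is fixed, the formula $\Mcal \simeq (\pi_n)_\ast(\Fcal \otimes \Lcal_n^{-k})$---a direct consequence of the pushforward statement in the case $k = 0$---recovers $\Mcal$ up to isomorphism.

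For existence, I would work étale-locally on $\Xscr$ where $\Lcal$ trivialises and $s$ becomes a regular function $f \in A$. There $\rootlines$ is presented as the quotient stack $[\Spec R/\mu_n]$ with $R = A[t]/(t^n - f)$ and $\mu_n$ acting by $\zeta \cdot t = \zeta t$, and a line bundle corresponds to a $\mu_n$-equivariant invertible $R$-module. Decomposing such a module into $\mu_n$-weight pieces, and using that the regular representation $R$ contains exactly one copy of each character, one finds that any invertible equivariant module is isomorphic to $M \cdot e_{-k}$ for some invertible $A$-module $M$ and a unique $0 \leq k \leq n-1$, where $e_{-k}$ is the weight-$(-k)$ generator. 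Connectedness of $\Dscr$ ensures $k$ is the same on every chart meeting $\Dscr$, so the local data glue to a global line bundle $\Mcal$ on $\Xscr$ realising $\Fcal \simeq \pi_n^\ast \Mcal \otimes \Lcal_n^{\otimes k}$.

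For the pushforward formula, the projection formula reduces the computation to $(\pi_n)_\ast \Lcal_n^{\otimes k}$. Writing $k = nq + r$ with $0 \leq r < n$ and using $\Lcal_n^{\otimes n} \simeq \pi_n^\ast \Lcal$, a second application of the projection formula gives $(\pi_n)_\ast \Lcal_n^{\otimes k} \simeq \Lcal^{\otimes q} \otimes (\pi_n)_\ast \Lcal_n^{\otimes r}$. In the local model, the canonical morphism $\Ocal_\Xscr \to (\pi_n)_\ast \Lcal_n^{\otimes r}$ induced by multiplication by $s_n^r$ sends $1$ to the invariant generator $t^r e_{-r}$, hence is an isomorphism for $0 \leq r \leq n-1$; étale descent globalises this to $(\pi_n)_\ast \Lcal_n^{\otimes r} \simeq \Ocal_\Xscr$. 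Combining with $q = \nintpart{k}$ yields the stated formula. The main technical hurdle lies in the existence step: one must verify that the weight decomposition of a $\mu_n$-equivariant invertible $R$-module produces an invertible $A$-module in exactly one grade and that the shift $k$ is globally constant, both of which rely crucially on the connectedness assumption on $\Dscr$.
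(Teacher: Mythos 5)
The paper does not actually prove this statement: it is quoted directly from Cadman \cite{art:cadman2007}, so there is no internal proof to compare against. Your argument is essentially Cadman's original one, and it is correct in outline: uniqueness of $k$ via the $\mu_n$-character of the restriction to the gerbe over the nonempty connected zero locus, existence via the local presentation $[\Spec(A[t]/(t^n-f))/\mu_n]$ and the weight decomposition of equivariant invertible modules (valid in all characteristics, since $\mu_n$ is diagonalizable), and the pushforward formula via the projection formula together with the computation of invariants in the local model. Two steps are compressed and worth making explicit. First, the claim that every weight piece of an invertible equivariant $R$-module $N$ is an invertible $A$-module does not follow merely from $R$ containing each character exactly once; one should observe that at points of the zero locus $N\otimes_R R/(t)$ is one-dimensional over the residue field, hence of pure weight, so a homogeneous lift of a generator generates $N$ by Nakayama, and away from the zero locus multiplication by the unit $t$ identifies all weight pieces. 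Second, on charts not meeting $\Dscr$ the integer $k$ is not pinned down at all (there $s_n$ trivializes $\Lcal_n$, so every $k$ works); the cleanest way to glue is to fix $k$ once and for all by the character on the gerbe, set $\Mcal \coloneqq (\pi_n)_\ast\big(\Fcal\otimes \Lcal_n^{\otimes\, -k}\big)$ globally, and verify chart by chart that the counit $\pi_n^\ast\Mcal \to \Fcal\otimes\Lcal_n^{\otimes\, -k}$ is an isomorphism --- which is exactly what your local computation establishes. With these two points spelled out the proof is complete.
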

We have the following morphism of short exact sequences (cf.\ \cite[Formulas~(3) and (4)]{art:fantechimannnironi2010})
\begin{align}\label{eq:diagramPic}
\begin{aligned}
  \begin{tikzpicture}[xscale=3.5,yscale=-1.2]
    \node (A0_0) at (0.5, 0) {$0$};
    \node (A0_1) at (1, 0) {$\Z$};
    \node (A0_2) at (2,0) {$\Z$};
    \node (A0_3) at (3,0) {$\Z_n$};
    \node (A0_4) at (3.5,0)  {$0$};
    \node (A1_0) at (0.5, 1) {$0$};
    \node (A1_1) at (1, 1) {$\Pic(\Xscr)$};
    \node (A1_2) at (2,1) {$\Pic(\rootlines)$};
    \node (A1_3) at (3,1) {$\Z_n$};
    \node (A1_4) at (3.5,1)  {$0$};
   \path (A0_0) edge [->]node [auto] {$\scriptstyle{}$} (A0_1);
      \path (A0_1) edge [->]node [auto] {$\scriptstyle{\cdot n}$} (A0_2);
      \path (A0_2) edge [->]node [left] {$\scriptstyle{}$} (A0_3);
         \path (A0_3) edge [->]node [left] {$\scriptstyle{}$} (A0_4);
   \path (A1_0) edge [->]node [left] {$\scriptstyle{}$} (A1_1);
      \path (A1_1) edge [->]node [auto] {$\scriptstyle{\pi_n^\ast}$} (A1_2);
      \path (A1_2) edge [->]node [auto] {$\scriptstyle{q}$} (A1_3);
         \path (A1_3) edge [->]node [left] {$\scriptstyle{}$} (A1_4);
    \path (A0_1) edge [->]node [auto] {$\scriptstyle{}$} (A1_1);
    \path (A0_2) edge [->]node [auto] {$\scriptstyle{}$} (A1_2);
    \path (A0_3) edge [->]node [auto] {$\scriptstyle{}$} (A1_3);
  \end{tikzpicture} 
  \end{aligned}
\end{align}
where the first and the second vertical morphisms are defined by $1\mapsto \Lcal$ and $1\mapsto \Lcal_n$. Here $q$ is the map which associate with a line bundle of the form $\pi_n^\ast \Mcal \otimes \Lcal_n^{\otimes \, k}$ the \textit{remainder} $k$. 

\begin{remark}
Note that any line bundle $\Fcal$ on $\rootline$ is of the form
\begin{align}
\Fcal \simeq \pi_n^\ast \Mcal \otimes \Lcal_n^{\otimes \, k}\ ,
\end{align}
where $\Mcal$ is a line bundle on $\Xscr$ and $0\leq k\leq n-1$ an integer. Moreover $k$ is unique and $\Mcal$ is unique up to isomorphism. A diagram similar to \eqref{eq:diagramPic} holds for $\rootline$.
\end{remark}

\begin{defin}\label{def:rootdiv}
Let $\Xscr$ be a smooth algebraic stack, $\Dscr\subset \Xscr$ an effective Cartier divisor and $n$ a positive integer. We denote by $\rootdiv$ the root stack $\sqrt[n]{(\Ocal_\Xscr(\Dscr),s)/\Xscr}$, where $s$ is the \textit{canonical section} of $\Ocal_\Xscr(\Dscr)$, i.e., it is the image of $1$ with respect to the natural morphism $\Ocal_{\Xscr}\to \Ocal_{\Xscr}(\Dscr)$. \hfill$\oslash$
\end{defin}
The morphism $\rootdiv \to [\A^1/\G_m]$ corresponds to an effective divisor $\Dscr_n$, i.e., the reduced closed substack $\pi_n^{-1}(\Dscr)_{\mathsf{red}}$, where $\pi_n\colon\rootdiv \to \Xscr$ is the natural projection morphism. Moreover
\begin{align}
\Ocal_{\rootdiv}(n\, \Dscr_n)\simeq \pi_n^\ast\Ocal_{\Xscr}(\Dscr) \ .
\end{align}
Let $\Xscr$ and $\Dscr$ be smooth. By \cite[Proposition~3.9]{art:berghluntsschnurer2016}, $\rootdiv$ and $\Dscr_n$ are smooth; $\Dscr_n$ is the root stack $\sqrt[n]{\Ocal_{\Xscr}(\Dscr)\vert_{\Dscr}/\Dscr}$ and hence is a $\mu_{n}$-gerbe over $\Dscr$.

\subsection{Root stacks and logarithmic schemes}

Let $X$ be a scheme over $k$. Denote by $\mathfrak{Div}(X)$ the category whose objects are pairs $(L,s)$, consisting of a line bundle $L$ on $X$ and a global section $s$ of $L$. An arrow between $(L,s)$ and $(L',s')$ is an isomorphism of $\Ocal_X$-modules from $L$ to $L'$ carrying $s$ into $s'$. The category $\mathfrak{Div}(X)$ also has a symmetric monoidal structure given by tensor products: $(L,s)\otimes (L',s')=(L\otimes L', s\otimes s')$. The neutral element is $(\Ocal_X, 1)$. Denote by $\mathfrak{Pic}(X)$ the category of line bundles on $X$, with symmetric monoidal structure given by tensor product. In contrast with standard usage, the arrows in $\mathfrak{Pic}(X)$ are arbitrary morphisms of $\Ocal_X$-modules.

From now on, let $X$ be a Noetherian scheme over $k$, $D\subset X$ an effective Cartier divisor and $n$ a positive integer number. Moreover, let $s$ be the global section of $\Ocal_X(D)$ given as image of $1$ with respect to the natural morphism $\Ocal_X\to \Ocal_X(D)$. By using the pair $(\Ocal_X(D), s)$ and $n$ we can construct the $n$-th root stack $\schemerootdiv$ (cf.\ Definition~\ref{def:rootdiv}). By using the same data, we can define a \textit{logarithmic structure} on $X$. The assignment
\begin{align}
L_D\colon &\N \to \mathfrak{Div}(X) \ , \\
& 1 \mapsto (\Ocal_X(D), s)
\end{align}
is a symmetric monoidal functor from a monoid to a symmetric monoidal category \cite[Definition~2.1]{art:bornevistoli2012}. Here, $\N$ is considered as a discrete symmetric monoidal category: the arrows are all identities while the tensor product is the sum. By \cite[Proposition~3.21]{art:bornevistoli2012} $L_D$ defines a canonical \textit{Deligne-Faltings structure $(A_\N,L_D)$} on $X$ (i.e., a \textit{logarithmic structure} on $X$). Moreover, the inclusion $\N\to \frac{1}{n}\, \N$ is a \textit{Kummer homomorphism} \cite[Definition~4.1]{art:bornevistoli2012}, hence it defines a \textit{system of denominators} $B_{\frac{1}{n}\, \N}/A_{\N}$ \cite[Definition~4.3 and Remark~4.8]{art:bornevistoli2012}. In \cite[Section~4]{art:bornevistoli2012} the authors define the stack of roots $X(B_{\frac{1}{n}\, \N}/A_{\N})$ associated with $(A_{\N}, B_{\frac{1}{n}\, \N}, L_D)$; one has $X(B_{\frac{1}{n}\, \N}/A_{\N})\simeq \schemerootdiv$. This alternative nature of $\schemerootdiv$ allows us to describe the abelian category $\QCoh(\schemerootdiv\,)$ by means of parabolic sheaves on $X$ associated with $(A_{\N}, B_{\frac{1}{n}\, \N}, L_D)$. 

Let us introduce the category $\mathfrak{Wt}(\frac{1}{n}\N)$ of \textit{weights} associated with $\frac{1}{n}\, \N$: objects are elements of $\frac{1}{n}\, \Z$, and an arrow $q\colon a\to b$ is an element $q\in \frac{1}{n}\,\N$ such that $a+q=b$. In the following, we will consider $X$ with its logarithmic structure induced by $L_D$.
\begin{definition}
A \textit{parabolic sheaf on $X$ with denominators in $\frac{1}{n}\, \N$} is a pair $(E, \rho^E)$:
\begin{itemize}
\item[(a)] a functor $E\colon \mathfrak{Wt}(\frac{1}{n}\, \N)\to \QCoh(X)$. We denote it by $v\mapsto E_v$ at the level of objects, $a\mapsto E_a$ at the level of arrows. 
\item[(b)] for any $u\in \Z$ and $v\in \frac{1}{n}\,\Z$, an isomorphism of $\Ocal_X$-modules 
\begin{align}
\rho^E_{u,v}\colon E_{u+v}\xrightarrow{\sim} \Ocal_X(u\,D)\otimes_{\Ocal_X} E_v\ ,
\end{align}
which we will call \textit{pseudo-period isomorphism}. Moreover, these data are required to satisfy the conditions stated in \cite[Definition~5.6]{art:bornevistoli2012}.
\end{itemize}
\end{definition}
We can describe a parabolic sheaf by the following diagram
\begin{align}
\begin{aligned}
  \begin{tikzpicture}[xscale=1.5,yscale=-1.2]
    \node (A0_1) at (1, 0) {$0$};
    \node (A0_2) at (2, 0) {$\frac{1}{n}$};
    \node (A0_3) at (3, 0) {$\frac{2}{n}$};
    \node (A0_4) at (4, 0) {$\cdots$};
   \node (A0_5) at (5, 0) {$\frac{n-1}{n}$};
     \node (A0_6) at (6, 0) {$1$};    
    \node (A1_1) at (1, 1) {$E_0$};
    \node (A1_2) at (2, 1) {$E_\frac{1}{n}$};
    \node (A1_3) at (3, 1) {$E_\frac{2}{n}$};
    \node (A1_4) at (4, 1) {$\cdots$};
   \node (A1_5) at (5, 1) {$E_\frac{n-1}{n}$};
     \node (A1_6) at (7, 1) {$E_1\simeq \Ocal_X(D)\otimes_{\Ocal_X}E_0$};        
  \path (A1_1) edge [->]node [left] {$\scriptstyle{}$} (A1_2);
 \path (A1_2) edge [->]node [left] {$\scriptstyle{}$} (A1_3);
  \path (A1_5) edge [->]node [left] {$\scriptstyle{}$} (A1_6);
  \end{tikzpicture} 
  \end{aligned}
\end{align}
\begin{definition}
A parabolic sheaf $(E, \rho^E)$ on $X$ is \textit{coherent} if $E_v$ is coherent for any $v$.
\end{definition}
Parabolic sheaves form the abelian category $\mathsf{Par}(X, D, n)$ (cf.\ \cite[Section~5]{art:bornevistoli2012}). Moreover, those which are coherent form an abelian subcategory.
\begin{theorem}[{\cite[Theorem~6.1]{art:bornevistoli2012} and \cite[Proposition~1.3.8]{art:talpo2017}}]\label{thm:rootvsparabolic}
There is a canonical tensor equivalence $\Phi$ of abelian categories between the category $\QCoh(\schemerootdiv\,)$ of quasi-coherent sheaves on $\schemerootdiv$ and the category $\mathsf{Par}(X,D,n)$ of parabolic sheaves on $X$ with denominators in $\frac{1}{n}\, \N$. Moreover, the equivalence restricts to a canonical tensor equivalence between the category $\Coh(\schemerootdiv\,)$ of coherent sheaves on $\schemerootdiv$ and the category of parabolic coherent sheaves on $X$.
\end{theorem}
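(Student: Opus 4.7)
The plan is to construct the equivalence $\Phi$ together with an explicit quasi-inverse $\Psi$, and then verify the tensor compatibility. The key organising principle is that, locally on $X$, the root stack $\schemerootdiv$ admits a very concrete description as a quotient by $\mu_n$, so the equivalence becomes a character decomposition; the global statement is obtained by étale descent.

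First I would construct the functor $\Phi$. Given $\Fcal\in\QCoh(\schemerootdiv)$, let $\pi_n\colon \schemerootdiv\to X$ denote the projection and $(\Lcal_n,s_n)$ the tautological pair satisfying $\Lcal_n^{\otimes n}\simeq \pi_n^\ast \Ocal_X(D)$ with $s_n^{\otimes n}=\pi_n^\ast s$. For $0\le k\le n-1$, set
\begin{align}
E_{k/n} \coloneqq (\pi_n)_\ast\bigl(\Fcal\otimes \Lcal_n^{\otimes k}\bigr)\ ,
\end{align}
and extend to $v=u+k/n$ with $u\in\Z$ by declaring $E_v\coloneqq \Ocal_X(uD)\otimes_{\Ocal_X} E_{k/n}$. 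The arrows $\rho^E_{1/n}\colon E_{k/n}\to E_{(k+1)/n}$ are induced by multiplication by $s_n$ after pushforward, and the pseudo-period isomorphism $\rho^E_{u,v}\colon E_{u+v}\xrightarrow{\sim}\Ocal_X(uD)\otimes E_v$ comes from the identification $\Lcal_n^{\otimes n}\simeq \pi_n^\ast\Ocal_X(D)$ combined with the projection formula for $\pi_n$ (which holds since $\pi_n$ is a good moduli space morphism, as recalled at the end of the notation section). Exactness of $\Phi$ follows from exactness of $(\pi_n)_\ast$ on quasi-coherent sheaves for tame stacks.

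Next I would construct the quasi-inverse $\Psi$. Étale-locally on $X$, we may trivialise $\Ocal_X(D)\simeq \Ocal_X\cdot e$ with $s=f\cdot e$ for a local equation $f$ of $D$; then $\schemerootdiv$ is presented by $[\Spec(A[t]/(t^n-f))/\mu_n]$, and a quasi-coherent sheaf on it is the same as a $\Z/n\Z$-graded $A[t]/(t^n-f)$-module. Given a parabolic sheaf $(E,\rho^E)$, on such a chart I would assign the graded module $\bigoplus_{k=0}^{n-1} E_{k/n}$, with the multiplication-by-$t$ maps given by the arrows $\rho^E_{k/n,1/n}$ and with the relation $t^n=f$ enforced precisely by the pseudo-period isomorphism $\rho^E_{1,0}$. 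The coherence axioms in the definition of a parabolic sheaf ensure that this construction descends to define a quasi-coherent sheaf globally on $\schemerootdiv$, via étale descent along a cover trivialising $(\Lcal,s)$.

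Finally, I would check that $\Phi$ and $\Psi$ are mutually inverse and that they intertwine tensor products. In the local model, $\Phi\circ\Psi\simeq \id$ is the statement that pushing forward a $\Z/n\Z$-graded module along the $\mu_n$-quotient recovers its isotypic components, which is transparent; $\Psi\circ\Phi\simeq \id$ amounts to reassembling a $\mu_n$-equivariant sheaf from its isotypic components, which holds because $\pi_n$ is faithfully flat and the line bundles $\Lcal_n^{\otimes k}$ for $0\le k\le n-1$ generate $\QCoh(\schemerootdiv)$ over $\QCoh(X)$ (by the structure theorem for line bundles on $\schemerootdiv$ recalled earlier). Compatibility with tensor products reduces, under $\Phi$, to the observation that tensoring sheaves on the root stack corresponds to the graded tensor product of parabolic sheaves twisted by the pseudo-period isomorphisms, and this is checked directly on the local models. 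The restriction to the coherent subcategory is immediate since $(\pi_n)_\ast$ preserves coherence on tame stacks of finite type. The main obstacle is the global gluing step: one must verify that the locally defined $\Psi$ is canonically independent of the chosen trivialisation of $(\Lcal,s)$, which is precisely where the pseudo-period data $\rho^E_{u,v}$ (not merely the filtered system $E_\bullet$) is indispensable, so that the étale descent data patch unambiguously.
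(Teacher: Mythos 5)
The paper does not prove this statement: it is quoted directly from Borne--Vistoli and Talpo, with only the explicit formula $\Phi(\Fcal)_v=(\pi_n)_\ast\big(\Fcal\otimes\Lcal_n^{\otimes vn}\big)$ recorded in the remark that follows. Your sketch reconstructs exactly the standard argument behind those references --- the same pushforward formula for $\Phi$, the local presentation of the root stack as $[\Spec(A[t]/(t^n-f))/\mu_n]$ with quasi-coherent sheaves as $\Z/n\Z$-graded modules, and descent glued by the pseudo-period data --- and it is correct, so there is nothing to add beyond noting that the paper itself offers no proof to compare against.
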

\begin{remark}
The equivalence $\Phi\colon \QCoh(\schemerootdiv\,)\to \mathsf{Par}(X,D,n)$ is given by
\begin{align}
\Phi(\Fcal)_v\coloneqq(\pi_n)_\ast\big(\Fcal \otimes_{\Ocal_{\schemerootdiv}} \Lcal_{n}^{\otimes\, vn}\big)
\end{align}
for $v\in \frac{1}{n}\,\Z$.
\end{remark}
\begin{definition}
A coherent parabolic sheaf $(E, \rho^E)$ on $X$ is said to be \textit{pure of dimension $d$} if $E_v$ is pure of dimension $d$ for any $v\in \frac{1}{n}\, \Z$.
\end{definition}
\begin{proposition}[{see \cite[Proposition~3.1.14]{art:talpo2017}}]
The equivalence $\Phi$ induces a canonical tensor equivalence between pure coherent sheaves of dimension $d$ on $\schemerootdiv$ and pure coherent parabolic sheaves on $X$ of dimension $d$.
\end{proposition}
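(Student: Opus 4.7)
The plan is to exploit the equivalence of abelian categories provided by Theorem~\ref{thm:rootvsparabolic}, under which subobjects correspond bijectively, and to check that purity of dimension $d$ transports along $\Phi$. The key preliminary observation is that for every coherent sheaf $\Fcal$ on $\schemerootdiv$,
\begin{align}
\dim \supp \Fcal \ = \ \max_{v \in \frac{1}{n}\Z}\, \dim \supp E_v\ ,
\end{align}
where $E_v = \Phi(\Fcal)_v = (\pi_n)_\ast (\Fcal \otimes \Lcal_n^{\otimes vn})$. This is a local statement on $X$: over an affine open $U\subset X$ on which $D$ is principal, $\schemerootdiv|_U \simeq [V/\mu_n]$ with $V\to U$ a finite flat morphism, so $\Fcal|_U$ decomposes into $\mu_n$-isotypic components whose pushforwards along the finite map $V\to U$ coincide (up to the twist by $\Lcal_n^{\otimes vn}$) with the various $E_v|_U$, and finite morphisms preserve support dimension.

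Assume first that $\Fcal$ is pure of dimension $d$. Fix $v \in \frac{1}{n}\Z$ and a nonzero subsheaf $G \hookrightarrow E_v$. Starting from $G$ placed at index $v$, I would generate a parabolic subsheaf $(E'_w)_w \hookrightarrow (E_w)_w$ by propagating $G$ along the transition maps $E_w \to E_{w'}$ (for $w\leq w'$) and the pseudo-period isomorphisms $E_{u+w}\simeq \Ocal_X(uD)\otimes E_w$ (for $u\in \Z$); each $E'_w$ is then a subquotient of a twist $G \otimes_{\Ocal_X} \Ocal_X(m D)$ with $m\in \Z$, so $\dim \supp E'_w \leq \dim \supp G$ for every $w$. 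Via $\Phi^{-1}$, $(E'_w)_w$ corresponds to a nonzero subsheaf $\Fcal' \hookrightarrow \Fcal$, and the preliminary observation applied to $\Fcal'$ yields $\dim \supp \Fcal' \leq \dim \supp G$. Purity of $\Fcal$ forces $\dim \supp \Fcal' = d$, hence $\dim \supp G = d$; this proves that $E_v$ is pure of dimension $d$.

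Conversely, assume that each $E_v$ is pure of dimension $d$, and let $\Fcal' \hookrightarrow \Fcal$ be a nonzero coherent subsheaf, corresponding under $\Phi$ to a nonzero parabolic subsheaf $(E'_v)_v \hookrightarrow (E_v)_v$. Some $E'_{v_0}$ is then a nonzero subsheaf of the pure sheaf $E_{v_0}$, and hence has support of dimension exactly $d$; the preliminary observation applied to $\Fcal'$ gives $\dim\supp\Fcal' \geq \dim\supp E'_{v_0} = d$, while $\dim\supp\Fcal' \leq \dim\supp\Fcal = d$. Hence every nonzero subsheaf of $\Fcal$ has support of dimension $d$, so $\Fcal$ is pure of dimension $d$. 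The main technical step is the generation of the parabolic subsheaf $(E'_w)_w$ from a subsheaf $G$ at a single index in the previous paragraph, but this is a routine manipulation with $\frac{1}{n}\Z$-indexed diagrams of subsheaves, using only the transition maps and pseudo-period isomorphisms.
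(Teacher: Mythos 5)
The paper does not prove this statement at all: it is imported verbatim from Talpo's work (the bracketed citation to Proposition~3.1.14 of \cite{art:talpo2017} is the entire justification), so there is no in-paper argument to compare yours against. Judged on its own, your proof is correct and self-contained. The two load-bearing ingredients are sound: (i) the support identity $\dim\supp\Fcal=\max_v\dim\supp E_v$, which holds because $\pi_n$ is a homeomorphism onto the coarse space (giving $\leq$ for each $E_v$) while $\bigoplus_{v}E_v$ is the pushforward of $\Fcal$ twisted by a generating sheaf, which detects the full support (giving $\geq$); and (ii) the fact that a nonzero subsheaf $G\subseteq E_v$ at a single weight generates a nonzero parabolic subsheaf all of whose components are quotients of twists $G\otimes\Ocal_X(mD)$. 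Combined with the levelwise computation of kernels/subobjects in $\mathsf{Par}(X,D,n)$ and the exactness of $\Phi$, the two implications follow exactly as you write them. The only step you leave implicit that genuinely requires a check is that the propagated family $(E'_w)_w$ is compatible with the pseudo-period isomorphisms, i.e.\ that the composite of the transition maps over a full period restricted to $E'_w$ agrees with multiplication by the canonical section of $\Ocal_X(D)$; this is forced by the corresponding axiom for $(E_w)_w$ and is routine, as you claim. A cosmetic remark: with the paper's definition, components $E_v=0$ should be regarded as vacuously pure (this already occurs for the torsion sheaves ${}_n\Scal_i$), so no case is lost in your forward direction when some twist pushes forward to zero.
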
\label{prop:injectivity}
\begin{proposition}[{\cite[Proposition~3.1.18]{art:talpo2017}}]\label{prop:parabolicvectorbundle}
Let $(E, \rho^E)$ be a torsion free parabolic sheaf on $X$. Then for any arrow $a\to b$ in $\mathfrak{Wt}(\frac{1}{n}\, \N)$ the corresponding morphism $E_a\to E_b$ is injective.
\end{proposition}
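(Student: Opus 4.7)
The plan is to exploit the pseudo-period isomorphisms to reduce the injectivity of an arbitrary transition morphism $E_a \to E_b$ to the injectivity of the canonical ``multiplication by $s^m$'' morphism $E_a \to \Ocal_X(mD)\otimes_{\Ocal_X} E_a$, where $s\colon \Ocal_X \to \Ocal_X(D)$ is the canonical section of the effective Cartier divisor $D$. The latter is injective by a standard torsion-freeness argument, and functoriality then forces the first factor of the composition to be injective as well.

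Concretely, given an arrow $a \to b$ in $\mathfrak{Wt}(\tfrac{1}{n}\N)$, i.e.\ with $q \coloneqq b-a \in \tfrac{1}{n}\N$, I would first pick a positive integer $m$ large enough that $m-q$ also lies in $\tfrac{1}{n}\N$. One then has a further arrow $b \to a+m$ in $\mathfrak{Wt}(\tfrac{1}{n}\N)$, and by functoriality of the underlying functor $E$ the composition
\[
E_a \longrightarrow E_b \longrightarrow E_{a+m}
\]
coincides with the morphism attached to the integer-shift arrow $a \to a+m$. Using the pseudo-period isomorphism $\rho^E_{m,a}\colon E_{a+m} \xrightarrow{\sim} \Ocal_X(mD)\otimes_{\Ocal_X} E_a$ together with the compatibility axioms of a parabolic sheaf in \cite[Definition~5.6]{art:bornevistoli2012}, this composition is identified with the canonical morphism
\[
\id_{E_a} \otimes s^m \colon E_a \longrightarrow \Ocal_X(mD)\otimes_{\Ocal_X} E_a
\]
obtained by tensoring $E_a$ with the $m$-th power of $s$.

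Since $D$ is an effective Cartier divisor, $s$ is locally a non-zero-divisor in $\Ocal_X$, so $\id_{E_a}\otimes s^m$ is locally multiplication by $s^m$ on $E_a$; its kernel is a subsheaf of $E_a$ whose set-theoretic support is contained in $D$, hence of dimension at most $\dim X - 1$. Because $E_a$ is torsion-free, i.e.\ pure of maximal dimension, no such nonzero subsheaf can exist, and the composite $E_a \to E_{a+m}$ is injective. A fortiori $E_a \to E_b$ is injective, as required.

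The step I expect to be the main obstacle is the identification of the composite $E_a \to E_{a+m}$ with $\id_{E_a}\otimes s^m$ via the pseudo-period isomorphism: this is a purely formal consequence of the compatibility axioms between the functor $E$ and the isomorphisms $\rho^E_{u,v}$, but it requires careful bookkeeping of the indices $(u,v)$ appearing in those axioms. Once this identification is in place, the torsion-freeness hypothesis enters the proof only through the well-known fact that a coherent torsion-free sheaf on a Noetherian scheme admits no nonzero subsheaf supported on a proper divisor.
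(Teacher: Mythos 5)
The paper does not prove this statement itself; it simply imports it from \cite[Proposition~3.1.18]{art:talpo2017}. Your argument is correct and is essentially the proof given in that reference: reduce to an integer shift $a\to a+m$, use the compatibility axioms of \cite[Definition~5.6]{art:bornevistoli2012} to identify the composite with $\id_{E_a}\otimes s^m$, and conclude from the fact that the kernel of multiplication by the locally regular section $s^m$ is supported on $D$ and hence vanishes by purity of $E_a$.
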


\subsection{Infinite root stack}\label{sec:infinity}

Let $X$ be a Noetherian scheme over $k$ and $D\subset X$ an effective Cartier divisor. For any two positive integers $n,m$ with $n\vert m$, we have a morphism $\pi_{m,n}\colon \sqrt[m]{D/X} \to \schemerootdiv$. 
\begin{proposition}[{\cite[Proposition~2.2.10]{art:talpo2017}}]\label{prop:projectionformula-finite}
Let $n$ be a positive integer and let $\pi_{m, n}\colon \sqrt[m]{D/X} \to \schemerootdiv$ be the natural projection. Then
\begin{itemize}\itemsep0.2em
\item the functor ${\pi_{m, n}}_\ast\colon \QCoh(\sqrt[m]{D/X}\,)\to\QCoh(\schemerootdiv\,)$ is exact.
\item ${\pi_{m,n}}_\ast\Ocal_{\sqrt[m]{D/X}}\simeq \Ocal_{\schemerootdiv}$.
\item If $\Fcal\in \QCoh(\schemerootdiv\,)$ and $\Ecal\in \QCoh(\sqrt[m]{D/X}\,)$, we have a functorial isomorphism $\Fcal\otimes {\pi_{m, n}}_\ast \Ecal\simeq {\pi_{m, n}}_\ast\big(\pi_{m,n}^\ast \Fcal\otimes \Ecal\big)$.
\item ${\pi_{m, n}}_\ast$ sends coherent sheaves to coherent sheaves.
\end{itemize}
\end{proposition}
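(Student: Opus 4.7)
The strategy is to reduce all four assertions to the corresponding statements for a single root-stack projection, which were already recorded in the proposition preceding Definition~\ref{def:rootdiv}.

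The key geometric step is the identification
$$\sqrt[m]{D/X}\ \simeq\ \sqrt[k]{(\Lcal_n, s_n)/\sqrt[n]{D/X}}, \qquad k = m/n,$$
under which $\pi_{m,n}$ becomes the canonical projection of the $k$-th root construction on $\sqrt[n]{D/X}$. This follows from the universal property of root stacks: a morphism $T \to \sqrt[m]{D/X}$ is the datum of $T \to X$ together with an $m$-th root $(\Ncal, t)$ of $(\Ocal_X(D), s)$. Its $k$-th tensor power $(\Ncal^{\otimes\, k}, t^{\otimes\, k})$ is an $n$-th root, producing a factorization $T \to \sqrt[n]{D/X} \to X$, relative to which $(\Ncal, t)$ is precisely a $k$-th root of the tautological pair $(\Lcal_n, s_n)$; the inverse assignment is composition of universal roots.

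Granting this identification, assertion (i) is immediate from the cited proposition applied to the $k$-th root construction on $\sqrt[n]{D/X}$, and assertion (ii) follows from the pushforward formula for line bundles on root stacks recalled earlier in this section, applied with $\Mcal = \Ocal_{\sqrt[n]{D/X}}$ and twist exponent $r = 0$: the result is $(\pi_{m,n})_\ast \Ocal_{\sqrt[m]{D/X}} \simeq \Ocal_{\sqrt[n]{D/X}}$. The projection formula (iii) then follows from the exactness of $(\pi_{m,n})_\ast$ combined with its $\Ocal_{\sqrt[n]{D/X}}$-linearity; one may verify it fppf-locally, where the root construction becomes a quotient by a trivial $\mu_k$-action and the formula is classical. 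Finally, (iv) reduces to properness of $\pi_{m,n}$: both source and target have coarse moduli space $X$, and $\pi_{m,n}$ fits into the obvious commutative triangle with the two coarse moduli morphisms, so it inherits properness from the identity of $X$; coherence of the pushforward of a coherent sheaf then follows from the standard Noetherian properness statement for tame stacks.

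The main obstacle is the iterated root-stack identification displayed above; once it is available, the remainder of the proof is a mechanical translation of previously-stated properties of single root-stack projections. A minor additional verification is compatibility of this identification with the two tautological pairs involved (the pair $(\Lcal_m, s_m)$ on $\sqrt[m]{D/X}$ on one side, and the pair produced by composing the inner and outer root constructions on the other), so that no structure is lost in the reduction.
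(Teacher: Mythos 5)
The paper does not actually prove this statement: it is quoted verbatim from Talpo's work (\cite[Proposition~2.2.10]{art:talpo2017}) and used as a black box, so there is no internal proof to compare against. Your reconstruction is nevertheless the standard one and is essentially sound. The key identification $\sqrt[m]{D/X}\simeq\sqrt[k]{(\Lcal_n,s_n)/\sqrt[n]{D/X}}$ with $k=m/n$ is correct (the paper itself invokes exactly this in Section~\ref{sec:different-coh} for $X_{kn}\simeq\sqrt[k]{(\Lcal_n,s_n)/X_n}$), and reducing (i), (ii) to the single-root-construction properties and the Cadman pushforward formula with trivial twist is the right move. Two small caveats. First, your fppf-local description of the root construction as a quotient by a \emph{trivial} $\mu_k$-action is inaccurate: locally one has $[\Spec(A[t]/(t^k-f))/\mu_k]$ with $\mu_k$ scaling $t$, and the action is trivial only away from the zero locus of $s$; the projection formula still holds because the pushforward is the degree-zero (invariant) part of a $\mu_k$-graded module and tensoring with a degree-zero module commutes with taking that part, but the justification should be phrased that way rather than via triviality of the action. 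Second, the appeal to the Cadman theorem for (ii) formally requires its hypothesis that the vanishing locus of the section be nonempty and connected; it is cleaner to get $(\pi_{m,n})_\ast\Ocal\simeq\Ocal$ directly from the same local invariants computation (the $\mu_k$-invariants of $A[t]/(t^k-f)$ are $A$), which also yields (iv) without invoking properness.
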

\begin{corollary}{\cite[Corollary~2.2.22]{art:talpo2017}}
The functor $\pi_{m,n}^\ast \colon \QCoh(\schemerootdiv\,)\to\QCoh(\sqrt[m]{D/X}\,)$ is fully faithful.
\end{corollary}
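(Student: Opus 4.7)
The plan is to deduce full faithfulness of $\pi_{m,n}^{\ast}$ from the three properties of $(\pi_{m,n})_{\ast}$ established in Proposition~\ref{prop:projectionformula-finite}, via the standard adjunction argument. Since $\pi_{m,n}^{\ast}$ is left adjoint to $(\pi_{m,n})_{\ast}$ at the level of quasi-coherent sheaves, full faithfulness of $\pi_{m,n}^{\ast}$ is equivalent to the unit
\begin{align}
\eta_{\Fcal}\colon \Fcal \longrightarrow (\pi_{m,n})_{\ast}\,\pi_{m,n}^{\ast}\Fcal
\end{align}
being an isomorphism for every $\Fcal\in\QCoh(\schemerootdiv\,)$. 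Once the unit is an isomorphism, the natural map
\begin{align}
\Hom_{\schemerootdiv}(\Fcal,\Gcal) \longrightarrow \Hom_{\sqrt[m]{D/X}}(\pi_{m,n}^{\ast}\Fcal,\pi_{m,n}^{\ast}\Gcal)
\end{align}
is the composition of the adjunction isomorphism $\Hom(\pi_{m,n}^{\ast}\Fcal,\pi_{m,n}^{\ast}\Gcal)\simeq \Hom(\Fcal,(\pi_{m,n})_{\ast}\pi_{m,n}^{\ast}\Gcal)$ with postcomposition by $\eta_{\Gcal}^{-1}$, hence is itself an isomorphism.

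To show that $\eta_{\Fcal}$ is an isomorphism, I would apply the projection formula of Proposition~\ref{prop:projectionformula-finite}(3) with $\Ecal=\Ocal_{\sqrt[m]{D/X}}$, which gives a canonical isomorphism
\begin{align}
(\pi_{m,n})_{\ast}\,\pi_{m,n}^{\ast}\Fcal \simeq \Fcal \otimes_{\Ocal_{\schemerootdiv}} (\pi_{m,n})_{\ast}\Ocal_{\sqrt[m]{D/X}}\ .
\end{align}
Combined with the second bullet of the same proposition, namely $(\pi_{m,n})_{\ast}\Ocal_{\sqrt[m]{D/X}}\simeq \Ocal_{\schemerootdiv}$, this yields $(\pi_{m,n})_{\ast}\,\pi_{m,n}^{\ast}\Fcal \simeq \Fcal$. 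One must then verify that this identification is compatible with the unit map $\eta_{\Fcal}$; this is a standard naturality check, which reduces to the case $\Fcal=\Ocal_{\schemerootdiv}$ where both sides are tautologically $\Ocal_{\schemerootdiv}$ and the unit is the identity.

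The main (mild) obstacle is precisely this compatibility: one has to check that the projection formula isomorphism, when evaluated at $\Ecal=\Ocal_{\sqrt[m]{D/X}}$, matches the unit of the $(\pi_{m,n}^{\ast},(\pi_{m,n})_{\ast})$-adjunction. This is a formal matter that can be handled by unwinding the construction of the projection formula from the adjunction itself; the exactness statement in the first bullet of Proposition~\ref{prop:projectionformula-finite} is what allows one to perform these manipulations at the level of abelian categories rather than only derived categories. Once the compatibility is verified, full faithfulness of $\pi_{m,n}^{\ast}$ follows immediately.
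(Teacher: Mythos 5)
Your argument is correct and is essentially the standard one underlying this statement: the paper itself gives no proof but cites \cite[Corollary~2.2.22]{art:talpo2017}, and the argument there is precisely to deduce from the projection formula applied to $\Ecal=\Ocal_{\sqrt[m]{D/X}}$ together with $(\pi_{m,n})_\ast\Ocal_{\sqrt[m]{D/X}}\simeq\Ocal_{\schemerootdiv}$ that the unit $\Fcal\to(\pi_{m,n})_\ast\pi_{m,n}^\ast\Fcal$ is an isomorphism, which for a left adjoint is equivalent to full faithfulness. Your identification of the only delicate point — that the projection-formula isomorphism really computes the unit, which one checks by unwinding how that isomorphism is built from the adjunction — is accurate, so nothing is missing.
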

Let us see $\Z_{>0}$ as a filtered partially ordered set with the ordering given by divisibility. Then the set $\{\schemerootdiv\}_{n\in \Z_{>0}}$ of root stacks forms an \textit{inverse system} of stacks over the category $(\mathsf{Sch}/X)$ in the sense of \cite[Section~2.2]{art:talpovistoli2014}, therefore one can take the \textit{canonical inverse limit} in the sense of \cite[Definition~2.2]{art:talpovistoli2014}. 
\begin{definition}\label{def:infinitestack}{\cite[Definition~3.3 and Proposition~3.5]{art:talpovistoli2014}}
The \textit{infinite root stack} $\sqrt[\infty]{D/X}$ associated with the logarithmic structure $(A_{\N}, L_D)$ on $X$ is the inverse limit 
\begin{align}
\sqrt[\infty]{D/X} \coloneqq\lim_{\genfrac{}{}{0pt}{}{\longleftarrow}{n}}\, \sqrt[n]{D/X} \ .
\end{align}
\end{definition}
\begin{remark}
By \cite[Proposition~2.1.9]{phd:talpo2015} $\sqrt[\infty]{D/X}$ is a stack over the category $(\mathsf{Sch}/X)$ (with the fpqc topology or any coarser one). Moreover, it has a fpqc presentation $U\to \sqrt[\infty]{D/X}$.

In addition, there exists a projection morphism $\pi_\infty\colon \sqrt[\infty]{D/X} \to X$ such that for any positive integer $n$ factorizes as $\pi_\infty = \pi_n\circ \pi_{\infty, n}$, where $\pi_{\infty, n}\colon \sqrt[\infty]{D/X}\to \schemerootdiv$ is the natural projection to $\schemerootdiv$.
\end{remark}
As explained in \cite[Section~4]{art:talpovistoli2014}, since $\sqrt[\infty]{D/X}$ is only a fpqc stack with a fpqc presentation $U\to \sqrt[\infty]{D/X}$, one has to be very careful in order to define quasi-coherent sheaves on it. The definition given in \textit{loc.cit.} makes use of the fqpc presentation: a quasi-coherent sheaf on $\sqrt[\infty]{D/X}$ is a quasi-coherent sheaf on $U$ together with descent data with respect to the groupoid $U\times_{\sqrt[\infty]{D/X}} U \rightrightarrows U$ (cf.\ \cite[Proposition~4.6]{art:talpovistoli2014}). Thus we have the abelian category $\QCoh(\sqrt[\infty]{D/X}\,)$ of quasi-coherent sheaves on $\sqrt[\infty]{D/X}$.

Since all projections $\pi_{m,n}$ with $n\vert m$ are flat\footnote{This follows from the fact that the logarithmic structure of $X$ is locally free, i.e., the stalks of $A_{\N}$ are all free monoids and that the logarithmic structure comes from the map $L_\N$ (cf.\ \cite[Comment after Lemma~4.1.4]{phd:talpo2015}).}, by \cite[Proposition~4.19]{art:talpovistoli2014}, $\sqrt[\infty]{D/X}$ is coherent, hence we can talk about coherent sheaves on it: again a coherent sheaf on $\sqrt[\infty]{D/X}$ is a coherent sheaf on a fpqc presentation $U\to \sqrt[\infty]{D/X}$ together with a descent data. Coherent sheaves form the abelian category $\Coh(\sqrt[\infty]{D/X}\,)$.
\begin{proposition}[{\cite[Proposition~4.16]{art:talpovistoli2014}}]\label{prop:projectionformula}
Let $n$ be a positive integer and let $\pi_{\infty, n}\colon \sqrt[\infty]{D/X} \to \schemerootdiv$ be the natural projection. Then
\begin{itemize}\itemsep0.2em
\item the functor ${\pi_{\infty, n}}_\ast\colon \QCoh(\sqrt[\infty]{D/X}\,)\to\QCoh(\schemerootdiv\,)$ is exact.
\item ${\pi_{\infty,n}}_\ast\Ocal_{\sqrt[\infty]{D/X}}\simeq \Ocal_{\schemerootdiv}$.
\item If $\Fcal\in \QCoh(\schemerootdiv\,)$ and $\Ecal\in \QCoh(\sqrt[\infty]{D/X}\,)$, we have a functorial isomorphism $\Fcal\otimes {\pi_{\infty, n}}_\ast \Ecal\simeq {\pi_{\infty, n}}_\ast\big(\pi_{\infty,n}^\ast \Fcal\otimes \Ecal\big)$.
\item ${\pi_{\infty, n}}_\ast$ sends coherent sheaves to coherent sheaves.
\end{itemize}
\end{proposition}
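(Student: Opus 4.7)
The plan is to reduce each of the four assertions to the corresponding finite-level statement in Proposition~\ref{prop:projectionformula-finite}, by exploiting the tower factorization $\pi_{\infty,n} = \pi_{m,n} \circ \pi_{\infty,m}$ for every multiple $m$ of $n$, together with the $2$-colimit description $\QCoh(\sqrt[\infty]{D/X}\,) \simeq \varinjlim_{n \mid m} \QCoh(\sqrt[m]{D/X}\,)$ along the fully faithful pullback functors $\pi_{m',m}^\ast$. This colimit description follows from the inverse limit definition of $\sqrt[\infty]{D/X}$ (Definition~\ref{def:infinitestack}) and the flatness of the tower (cf.\ the footnote before Proposition~\ref{prop:projectionformula} in the text); once accepted, every $\Ecal \in \QCoh(\sqrt[\infty]{D/X}\,)$ is isomorphic to $\pi_{\infty,m}^\ast \Ecal_m$ for some $m$ divisible by $n$ and some $\Ecal_m \in \QCoh(\sqrt[m]{D/X}\,)$, and any short exact sequence similarly descends to some finite level.

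The technical heart of the argument is a compatibility lemma that I would prove first, namely that for $n \mid m$ and $\Ecal_m \in \QCoh(\sqrt[m]{D/X}\,)$ one has
\[ \pi_{\infty, n, *}(\pi_{\infty, m}^\ast \Ecal_m) \simeq \pi_{m, n, *}\Ecal_m. \]
Using the factorization $\pi_{\infty,n} = \pi_{m,n}\circ \pi_{\infty,m}$, this reduces to the identity $\pi_{\infty,m,*}\,\pi_{\infty,m}^\ast \Ecal_m \simeq \Ecal_m$, and via a bootstrap projection formula at the infinite level this reduces further to $\pi_{\infty,m,*}\Ocal_{\sqrt[\infty]{D/X}} \simeq \Ocal_{\sqrt[m]{D/X}}$. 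I would check the latter by writing $\Ocal_{\sqrt[\infty]{D/X}} \simeq \varinjlim_{m \mid m'} \pi_{\infty,m'}^\ast \Ocal_{\sqrt[m']{D/X}}$, commuting the pushforward with the filtered colimit, and applying $\pi_{m',m,*}\Ocal_{\sqrt[m']{D/X}} \simeq \Ocal_{\sqrt[m]{D/X}}$ from Proposition~\ref{prop:projectionformula-finite}. Specializing $m=n$ also settles assertion (ii).

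Once the compatibility lemma is in place, the remaining three assertions are routine propagations. For exactness, a short exact sequence in $\QCoh(\sqrt[\infty]{D/X}\,)$ descends to a short exact sequence in $\QCoh(\sqrt[m]{D/X}\,)$ for some $m$ divisible by $n$; the exactness of $\pi_{m,n,*}$ together with the compatibility lemma transports this to exactness of $\pi_{\infty,n,*}$. For the projection formula, writing $\Ecal = \pi_{\infty,m}^\ast \Ecal_m$ and using the identity $\pi_{\infty,n}^\ast \Fcal \simeq \pi_{\infty,m}^\ast\, \pi_{m,n}^\ast \Fcal$, the statement reduces to the finite-level projection formula, again via the compatibility lemma. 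Finally, a coherent sheaf $\Ecal$ on $\sqrt[\infty]{D/X}$ is the pullback of a coherent $\Ecal_m$ from some finite level, and then $\pi_{\infty,n,*}\Ecal \simeq \pi_{m,n,*}\Ecal_m$ is coherent by Proposition~\ref{prop:projectionformula-finite}.

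The main obstacle is the foundational bookkeeping needed to set up the colimit description of $\QCoh(\sqrt[\infty]{D/X}\,)$ and justify the compatibility lemma rigorously. Since $\sqrt[\infty]{D/X}$ is only an fpqc stack, one cannot invoke standard algebraic-stack machinery directly: the commutation of pushforward with filtered colimits of quasi-coherent sheaves, and the identification of the structure sheaf as a colimit along the tower, must both be verified via the fpqc presentation (as in the descent-theoretic approach of \cite{art:talpovistoli2014}). Once these technical foundations are secured, each of the four claims is a direct corollary of the finite-level result.
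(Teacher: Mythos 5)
This proposition is not proved in the paper at all: it is quoted verbatim from \cite[Proposition~4.16]{art:talpovistoli2014}, whose proof proceeds by a local computation (locally the infinite root stack is a quotient of an affine scheme by the diagonalizable group $\mu_\infty$, quasi-coherent sheaves are identified with suitably graded modules over $A_\infty=\varinjlim A_m$, and ${\pi_{\infty,n}}_\ast$ is extraction of the $\tfrac{1}{n}\Z$-graded part, from which all four bullets follow by inspection). Your proposal takes a different route, and it has a genuine gap.

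The central reduction step is false: the $2$-colimit description $\QCoh(\sqrt[\infty]{D/X}\,)\simeq\varinjlim\QCoh(\sqrt[m]{D/X}\,)$ does not hold for \emph{quasi-coherent} sheaves. The statement available in the paper (Proposition~\ref{prop:coherentsheavesXinfty}, i.e.\ \cite[Proposition~6.1]{art:talpovistoli2014}) is only for \emph{coherent} (finitely presented) sheaves; a general object of $\QCoh(\sqrt[\infty]{D/X}\,)$ — e.g.\ an infinite direct sum of line bundles pulled back from deeper and deeper levels — is not of the form $\pi_{\infty,m}^\ast\Ecal_m$ for any finite $m$, and short exact sequences of quasi-coherent sheaves do not descend to a finite level. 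Since the first three bullets are assertions about all of $\QCoh(\sqrt[\infty]{D/X}\,)$, checking them only on sheaves pulled back from finite levels does not establish them (exactness in particular must be tested on arbitrary short exact sequences). There is also a circularity problem: full faithfulness of $\pi_{\infty,n}^\ast$ and the colimit description of $\Coh(\sqrt[\infty]{D/X}\,)$ are stated in the paper as \emph{consequences} of the present proposition (the corollary immediately following it is deduced from bullets two and three), so they cannot be used as inputs; and within your compatibility lemma you invoke a ``bootstrap projection formula at the infinite level'' to prove the very identity $\pi_{\infty,m,\ast}\pi_{\infty,m}^\ast\Ecal_m\simeq\Ecal_m$ that you later use to deduce the projection formula. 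To repair the argument one really has to work with the fpqc presentation and the graded-module description, as in \emph{loc.cit.}; the finite-level Proposition~\ref{prop:projectionformula-finite} alone does not suffice.
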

\begin{corollary}
Let $n$ be a positive integer and let $\pi_{\infty, n}\colon \sqrt[\infty]{D/X} \to \schemerootdiv$ be the natural projection. Then the pullback functor $\pi_{\infty,n}^\ast$ is fully faithful.
\end{corollary}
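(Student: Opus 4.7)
The plan is to deduce full faithfulness of $\pi_{\infty,n}^\ast$ from the $(\pi_{\infty,n}^\ast, {\pi_{\infty,n}}_\ast)$-adjunction combined with the three properties of ${\pi_{\infty,n}}_\ast$ collected in Proposition~\ref{prop:projectionformula}; this mirrors exactly the argument used in the finite case (the corollary after Proposition~\ref{prop:projectionformula-finite}).

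First I would recall the standard categorical fact: for an adjoint pair $(L,R)$, the left adjoint $L$ is fully faithful if and only if the unit $\eta\colon \id \Rightarrow RL$ is a natural isomorphism. Applied to $(\pi_{\infty,n}^\ast, {\pi_{\infty,n}}_\ast)$, it suffices to show that for every $\Fcal\in \QCoh(\schemerootdiv)$ the unit
\begin{align}
\eta_\Fcal\colon \Fcal \longrightarrow {\pi_{\infty,n}}_\ast\, \pi_{\infty,n}^\ast\, \Fcal
\end{align}
is an isomorphism.

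Next I would apply the projection formula of Proposition~\ref{prop:projectionformula} to $\Ecal = \Ocal_{\sqrt[\infty]{D/X}}\in \QCoh(\sqrt[\infty]{D/X})$, obtaining a functorial isomorphism
\begin{align}
\Fcal \otimes {\pi_{\infty,n}}_\ast \Ocal_{\sqrt[\infty]{D/X}} \;\simeq\; {\pi_{\infty,n}}_\ast\bigl(\pi_{\infty,n}^\ast \Fcal \otimes \Ocal_{\sqrt[\infty]{D/X}}\bigr) \;=\; {\pi_{\infty,n}}_\ast\, \pi_{\infty,n}^\ast\, \Fcal\ .
\end{align}
Combining this with the identity ${\pi_{\infty,n}}_\ast\Ocal_{\sqrt[\infty]{D/X}}\simeq \Ocal_{\schemerootdiv}$ (also from Proposition~\ref{prop:projectionformula}), the left-hand side simplifies to $\Fcal$, giving a functorial isomorphism $\Fcal \simeq {\pi_{\infty,n}}_\ast\, \pi_{\infty,n}^\ast\, \Fcal$.

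The only remaining subtlety — and, I expect, the one point that requires care — is checking that this isomorphism is actually the unit $\eta_\Fcal$ and not merely some abstract isomorphism of quasi-coherent sheaves. This is standard once one unwinds the construction of the projection formula: the map is built from the counit ${\pi_{\infty,n}}_\ast\pi_{\infty,n}^\ast \to \id$ tensored with $\Fcal$, and is constructed precisely so as to factor through the unit on $\Fcal$ (cf.\ the same reasoning for the finite root stacks $\sqrt[m]{D/X}\to \schemerootdiv$, which is already recorded as a corollary of Proposition~\ref{prop:projectionformula-finite}). Once this compatibility is in hand, the unit is an isomorphism for every $\Fcal$, so $\pi_{\infty,n}^\ast$ is fully faithful. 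The coherent case follows from the quasi-coherent case since $\Coh(\schemerootdiv)\subset \QCoh(\schemerootdiv)$ is a full subcategory preserved by the construction.
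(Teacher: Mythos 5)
Your argument is correct and is exactly the standard deduction that the paper (and the reference \cite{art:talpo2017} it delegates to) has in mind: full faithfulness of a left adjoint is equivalent to the unit being an isomorphism, and the unit is identified with the projection-formula map specialised to $\Ecal=\Ocal_{\sqrt[\infty]{D/X}}$ together with ${\pi_{\infty,n}}_\ast\Ocal_{\sqrt[\infty]{D/X}}\simeq\Ocal_{\schemerootdiv}$. You also correctly isolate the only delicate point, namely that the isomorphism produced this way really is the adjunction unit; nothing further is needed.
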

\begin{proposition}{\cite[Proposition~6.1]{art:talpovistoli2014}}\label{prop:coherentsheavesXinfty}
The pullbacks $\pi_{m,n}^\ast\colon \Coh(\schemerootdiv\,)\to \Coh(\sqrt[m]{D/X}\,)$ fit into a directed system of categories, where $n\vert m$. Moreover, the pullbacks $\pi_{\infty, n}^\ast\colon \Coh(\schemerootdiv\,)\to \Coh(\sqrt[\infty]{D/X}\,)$ along the projections $\pi_{\infty, n}\colon \sqrt[\infty]{D/X}\to \schemerootdiv$ are compatible with the structure maps of the system, and if in addition $X$ is quasi-compact and quasi-separated the induced functor
\begin{align} 
\lim_{\genfrac{}{}{0pt}{}{\longrightarrow}{n}}\, \Coh(\schemerootdiv\,)\to \Coh(\sqrt[\infty]{D/X}\,)
\end{align} 
is an equivalence.
\end{proposition}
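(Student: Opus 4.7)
The plan is to break the statement into three parts: (i) verify that the pullbacks $\pi_{m,n}^\ast$ assemble into a directed system of categories, (ii) verify that the pullbacks $\pi_{\infty,n}^\ast$ form a compatible cone on this system, and (iii) prove that the resulting functor from the $2$-colimit is an equivalence, which splits into full faithfulness and essential surjectivity.

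For (i) and (ii), the key observation is that by the very definition of the inverse limit $\sqrt[\infty]{D/X}$, for $n\mid m\mid \ell$ one has factorizations $\pi_{\ell,n}=\pi_{m,n}\circ \pi_{\ell,m}$ and $\pi_{\infty,n}=\pi_{m,n}\circ \pi_{\infty,m}$. Applying $(-)^\ast$ yields canonical natural isomorphisms $\pi_{\ell,n}^\ast\simeq \pi_{\ell,m}^\ast\circ \pi_{m,n}^\ast$ and $\pi_{\infty,n}^\ast\simeq \pi_{\infty,m}^\ast\circ \pi_{m,n}^\ast$; these furnish the structure natural isomorphisms of the directed system and of the cone, and the cocycle compatibilities reduce to uniqueness of the factorizations. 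These data induce the desired functor $\Phi\colon \varinjlim_n\, \Coh(\schemerootdiv\,)\to \Coh(\sqrt[\infty]{D/X}\,)$. Full faithfulness of $\Phi$ then follows immediately from the preceding corollary, which asserts that each $\pi_{\infty,n}^\ast$ is fully faithful: a morphism in the target of $\Phi$ between $\pi_{\infty,n}^\ast \Fcal$ and $\pi_{\infty,m}^\ast\Gcal$ is, by full faithfulness of $\pi_{\infty,\ell}^\ast$ at any common refinement $\ell$ (with $n\mid\ell$ and $m\mid\ell$), the same as a morphism $\pi_{\ell,n}^\ast\Fcal\to \pi_{\ell,m}^\ast\Gcal$ in $\Coh(\sqrt[\ell]{D/X}\,)$, which is precisely the Hom-set in the $2$-colimit.

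The main obstacle is essential surjectivity, where the quasi-compactness and quasi-separatedness of $X$ become essential. I would argue as follows: choose a compatible family of affine fpqc presentations $U_n=\Spec(A_n)\to \schemerootdiv$ whose inverse limit is a presentation $U=\Spec(A)\to \sqrt[\infty]{D/X}$ with $A=\varinjlim_n A_n$. A coherent sheaf $\Fcal$ on $\sqrt[\infty]{D/X}$ corresponds via descent to a finitely presented $A$-module $M$ together with a descent datum along $U\times_{\sqrt[\infty]{D/X}} U\rightrightarrows U$. The standard commutative-algebra fact that finitely presented modules over a filtered colimit of rings descend to a finite stage yields a finitely presented $A_n$-module $M_n$ with $M_n\otimes_{A_n} A\simeq M$. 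Similarly, since $U\times_{\sqrt[\infty]{D/X}} U\simeq \varprojlim_n (U_n\times_{\schemerootdiv} U_n)$ is affine with coordinate ring the filtered colimit of the coordinate rings at finite level, after possibly enlarging $n$ both the descent datum on $M$ and its cocycle compatibility descend to corresponding data on $M_n$. Quasi-compactness of $X$ allows the whole argument to be run with finitely many affine charts, and quasi-separatedness guarantees that all relevant fibre products remain quasi-compact, so that a single $n$ may be chosen which simultaneously handles every glueing constraint. The resulting coherent sheaf on $\schemerootdiv$ then pulls back via $\pi_{\infty,n}^\ast$ to $\Fcal$, completing essential surjectivity.
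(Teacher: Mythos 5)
The paper offers no proof of this statement: it is imported verbatim from Talpo--Vistoli and labelled with the citation \cite[Proposition~6.1]{art:talpovistoli2014}. Your argument is correct, and it is essentially the argument of the cited source: the directed-system and cone structure come from the factorizations $\pi_{\ell,n}=\pi_{m,n}\circ\pi_{\ell,m}$ and $\pi_{\infty,n}=\pi_{m,n}\circ\pi_{\infty,m}$; full faithfulness of the induced functor follows from full faithfulness of each $\pi_{\infty,\ell}^\ast$ (here you should observe explicitly that the transition maps $\Hom_{X_\ell}(\pi_{\ell,n}^\ast\Fcal,\pi_{\ell,m}^\ast\Gcal)\to\Hom_{X_{\ell'}}(\cdot,\cdot)$ are bijections compatible with the comparison maps to $\Hom_{X_\infty}$, so the colimit maps isomorphically onto the target Hom-set); and essential surjectivity is the standard spreading-out of finitely presented modules, descent data and cocycle conditions over the filtered colimit of rings coming from the affine presentations $U_n=\Spec(A_n)$, with quasi-compactness and quasi-separatedness of $X$ ensuring that finitely many charts and quasi-compact fibre products suffice to pick a single finite level $n$. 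Two small points deserve an explicit sentence: first, that $X_\infty$ is coherent (so that ``coherent sheaf'' means ``finitely presented with descent datum'' and the module-theoretic spreading-out applies); second, that once the datum has been descended to level $n$ you must invoke effectivity of descent for the algebraic stack $\schemerootdiv$ to actually produce the coherent sheaf $\Fcal_n$ whose pullback recovers $\Fcal$. Neither is a gap, just a step worth recording.
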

\begin{remark}\label{rem:coherentsheaves}
The statement that $\displaystyle\lim_{\genfrac{}{}{0pt}{}{\longrightarrow}{n}}\, \Coh(\schemerootdiv\,)\to \Coh(\sqrt[\infty]{D/X}\,)$ is an equivalence means that any coherent sheaf on $\sqrt[\infty]{D/X}$ comes \textit{uniquely} from a coherent sheaf on $\schemerootdiv$ for some $n$. Here, ``uniquely" means that if there are two such coherent sheaves on $\schemerootdiv$ and $\sqrt[m]{D/X}$ giving the same coherent sheaf on $\sqrt[\infty]{D/X}$, they are isomorphic on $\sqrt[k]{D/X}$ for some integer $k$ such that $m\vert k$ and $n\vert k$ (cf.\ the proof of \cite[Proposition~6.1]{art:talpovistoli2014}). A similar result holds for morphisms between coherent sheaves on $\sqrt[\infty]{D/X}$.
\end{remark}
Let us introduce the category $\mathfrak{Wt}(\Q)$ of \textit{weights} associated with $\Q$: objects are elements of $\Q$, and an arrow $q\colon a\to b$ is an element $q\in \Z$ such that $a+q=b$.
\begin{definition}
A \textit{parabolic sheaf on $X$ with rational weights} is a pair $(\Ecal, \rho^\Ecal)$:
\begin{itemize}
\item[(a)] a functor $\Ecal\colon \mathfrak{Wt}(\Q)\to \QCoh(X)$. We denote it by $v\mapsto \Ecal_v$ at the level of objects, $a\mapsto \Ecal_a$ at the level of arrows. 
\item[(b)] for any $u\in \Z$ and $v\in \Q$, an isomorphism of $\Ocal_X$-modules 
\begin{align}
\rho^\Ecal_{u,v}\colon \Ecal_{u+v}\xrightarrow{\sim} \Ocal_X(u\,D)\otimes_{\Ocal_X} \Ecal_v\ ,
\end{align}
which we will call \textit{pseudo-period isomorphism}. Moreover, these data are required to satisfy the conditions stated in \cite[Definition~7.1]{art:talpovistoli2014}.
\end{itemize}
\end{definition}
A morphism of parabolic sheaves with rational weights is a natural transformation compatible with the pseudo-periods isomorphism. Parabolic sheaves with rational weights form the abelian category $\mathsf{Par}(X, D, \Q)$ with a tensor product and internal Homs (cf.\ \cite[Section~2.2.3]{phd:talpo2015}). Moreover, those which are coherent form an abelian subcategory.
\begin{proposition}[{\cite[Theorem~7.3]{art:talpovistoli2014}}]
There is a canonical tensor equivalence $\Phi$ of abelian categories between the category $\QCoh(\sqrt[\infty]{D/X}\,)$ of quasi-coherent sheaves on $\sqrt[\infty]{D/X}$ and the category $\mathsf{Par}(X,D,\Q)$ of parabolic sheaves on $X$ with rational weights. 
\end{proposition}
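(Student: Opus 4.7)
The strategy is to assemble the equivalence $\Phi$ from the finite-level equivalences $\Phi_n\colon \QCoh(\schemerootdiv\,) \xrightarrow{\sim} \mathsf{Par}(X,D,n)$ provided by Theorem~\ref{thm:rootvsparabolic}, by passing to a $2$-colimit indexed by the divisibility-ordered set $(\Z_{>0}, \mid)$. On the root stack side, the pullback functors $\pi_{m,n}^\ast\colon \QCoh(\schemerootdiv\,) \to \QCoh(\sqrt[m]{D/X}\,)$ form a directed $2$-system of fully faithful embeddings. On the parabolic side, one has natural refinement functors $\iota_{m,n}\colon \mathsf{Par}(X,D,n) \to \mathsf{Par}(X,D,m)$, obtained by left Kan extension along the inclusion $\mathfrak{Wt}(\tfrac{1}{n}\N) \hookrightarrow \mathfrak{Wt}(\tfrac{1}{m}\N)$ and pulling back the pseudo-period isomorphisms in the evident way. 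The first step is to verify the compatibility square
\begin{equation*}
\Phi_m \circ \pi_{m,n}^\ast \;\simeq\; \iota_{m,n} \circ \Phi_n\,,
\end{equation*}
which, given the explicit formula $\Phi_n(\Fcal)_v = (\pi_n)_\ast(\Fcal \otimes \Lcal_n^{\otimes vn})$ recorded after Theorem~\ref{thm:rootvsparabolic}, reduces to the projection formula and the exactness of the pushforward stated in Proposition~\ref{prop:projectionformula-finite}, combined with the canonical isomorphism $\pi_{m,n}^\ast\Lcal_n \simeq \Lcal_m^{\otimes m/n}$.

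Next I would identify each side of the desired equivalence with the corresponding $2$-colimit. On the parabolic side, since $\Q = \bigcup_n \tfrac{1}{n}\Z$ and a parabolic sheaf with rational weights is determined, together with its pseudo-periods, by the compatible system of its restrictions to the subcategories $\mathfrak{Wt}(\tfrac{1}{n}\N)$, a direct unwinding of the definitions yields an equivalence
\begin{equation*}
\lim_{\genfrac{}{}{0pt}{}{\longrightarrow}{n}} \mathsf{Par}(X,D,n) \;\xrightarrow{\sim}\; \mathsf{Par}(X,D,\Q)\,.
\end{equation*}
On the root stack side, Proposition~\ref{prop:coherentsheavesXinfty} provides the analogous statement for coherent sheaves. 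Extending this to quasi-coherent sheaves amounts to checking that $\QCoh(\sqrt[\infty]{D/X}\,)$ is the Ind-completion of $\Coh(\sqrt[\infty]{D/X}\,)$, which holds since $\sqrt[\infty]{D/X}$ is coherent (see the discussion preceding Proposition~\ref{prop:projectionformula}) and since the same is true on each finite level; concretely, any quasi-coherent sheaf on the fpqc presentation $U \to \sqrt[\infty]{D/X}$ is a filtered colimit of coherent subsheaves, each of which, by Remark~\ref{rem:coherentsheaves}, descends uniquely from some finite level.

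Putting these ingredients together, the family $\{\Phi_n\}$ with its compatibility isomorphisms induces a well-defined functor $\Phi$ between the $2$-colimits, hence the claimed equivalence of abelian categories. The tensor structure descends from the finite level: each $\Phi_n$ is a tensor equivalence and both $\iota_{m,n}$ and $\pi_{m,n}^\ast$ are symmetric monoidal, so the induced structure on the colimit matches on both sides, with unit corresponding to the rational parabolic sheaf $\Ecal_v = \Ocal_X(\lfloor v\rfloor D)$. The main technical obstacle I anticipate lies in the compatibility square of the first step, and specifically in the explicit description of $\iota_{m,n}$ at weights $v \in \tfrac{1}{m}\Z \setminus \tfrac{1}{n}\Z$, where the parabolic data at level $n$ does not directly assign an object: one must verify that the relevant left Kan extension produces precisely the object obtained from the stack-theoretic pullback, which is a local computation in étale charts where $\sqrt[m]{D/X}$ takes the form $[U/\mu_m]$ with $\Lcal_m$ the tautological character.
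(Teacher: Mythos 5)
The paper does not prove this proposition at all: it is imported verbatim as \cite[Theorem~7.3]{art:talpovistoli2014}, so there is no internal argument to compare against, and your proposal must be judged as a reconstruction of Talpo--Vistoli's proof. Your strategy is sound for \emph{coherent} sheaves: the compatibility square $\Phi_m\circ\pi_{m,n}^\ast\simeq\iota_{m,n}\circ\Phi_n$, with $\iota_{m,n}$ the left Kan extension $v\mapsto E_{\lfloor nv\rfloor/n}$, is exactly what is verified in Section~\ref{sec:different-coh}, and combining it with Theorem~\ref{thm:rootvsparabolic} and Proposition~\ref{prop:coherentsheavesXinfty} does prove the coherent analogue. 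But the proposition is about $\QCoh$, and there your argument has a genuine gap.

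The gap is the identification of either side with the $2$-colimit. The claimed equivalence $\varinjlim_n \mathsf{Par}(X,D,n)\xrightarrow{\sim}\mathsf{Par}(X,D,\Q)$ is false: an object of the left-hand side is, by construction, a parabolic sheaf that is Kan-extended from weights in $\tfrac{1}{n}\Z$ for a \emph{single} $n$, whereas a general object of $\mathsf{Par}(X,D,\Q)$ may have genuine jumps at a dense set of rational weights (already an infinite direct sum $\bigoplus_k \Scal_{J_k}$ with denominators tending to infinity gives such an object). Your observation that a rational-weight parabolic sheaf is ``determined by the compatible system of its restrictions'' to the $\mathfrak{Wt}(\tfrac1n\N)$ embeds $\mathsf{Par}(X,D,\Q)$ into an \emph{inverse} limit along restriction functors; that is a different category from the direct limit along the $\iota_{m,n}$. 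Correspondingly $\QCoh(\sqrt[\infty]{D/X}\,)$ is strictly larger than $\varinjlim_n\QCoh(\schemerootdiv\,)$. Your proposed repair via Ind-completion is the right instinct but is not carried out: you would need (i) that every quasi-coherent sheaf on $\sqrt[\infty]{D/X}$ is a filtered colimit of coherent ones --- coherence of the stack gives filtered colimits of finitely presented sheaves with possibly non-injective transition maps, not of coherent \emph{subsheaves} as you assert, which would require Noetherianity; (ii) the analogous presentation of $\mathsf{Par}(X,D,\Q)$ as the Ind-category of coherent parabolic sheaves with bounded denominators, which is not a formality; and (iii) that $\Phi$ commutes with the relevant filtered colimits. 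This is why Talpo--Vistoli argue differently: they define $\Phi$ directly by the formula recorded in the remark following the proposition and verify the equivalence locally on charts, where $\sqrt[\infty]{D/X}$ becomes a quotient of the form $[\Spec(A[t^{\Q_{\geq 0}}]/(t-f))/\mu_\infty]$ and quasi-coherent sheaves are identified with $\Q$-graded modules, which are then matched with parabolic data weight by weight.
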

\begin{remark}
The equivalence $\Phi\colon \QCoh(\sqrt[\infty]{D/X}\,)\to \mathsf{Par}(X,D,\Q)$ is given by
\begin{align}
\Phi(\Fcal)_v\coloneqq{\pi_\infty}_\ast\big(\Fcal \otimes_{\Ocal_{\sqrt[\infty]{D/X}}} \pi_{\infty,n}^\ast\Lcal_{n}^{\otimes\, s}\big)
\end{align}
if $v=s/n$. Note that by Proposition~\ref{prop:projectionformula} we get
\begin{align}
\Phi(\Fcal)_v={\pi_n}_\ast\circ {\pi_{\infty,n}}_\ast\big(\Fcal \otimes_{\Ocal_{\sqrt[\infty]{D/X}}} \pi_{\infty,n}^\ast\Lcal_{n}^{\otimes\, s}\big)\simeq {\pi_n}_\ast\big ({\pi_{\infty,n}}_\ast\Fcal \otimes_{\Ocal_{\schemerootdiv}} \Lcal_{n}^{\otimes\, s}\big) \ .
\end{align}
\end{remark}

\bigskip\section{Preliminaries on coherent sheaves on (infinite) root stacks over a curve}\label{sec:preliminariesorbcurves}

\subsection{$n$-th root stack}\label{sec:preliminariesorbcurves-nrootstack}

Let $X$ be a smooth geometrically connected projective curve over a field $k$. Let $p\in X$ be a closed point of degree one and $n\geq 2$ an integer. Let $\pi_n\colon X_n\to X$ be the $n$-th root stack $\sqrt[n]{(\Ocal_X(p),s)/X}$ associated with $p$, seen as an effective Cartier divisor (cf.\ Definition~\ref{def:rootdiv}). Then $X_n$ is a smooth tame algebraic stack and the functors
\begin{align}
{\pi_n}_\ast &\colon \QCoh(X_n)\to \QCoh(X)\ , \quad {\pi_n}_\ast \colon \Coh(X_n)\to \Coh(X)\ , \\
\pi_n^\ast &\colon \QCoh(X) \to \QCoh(X_n) \ , \quad \pi_n^\ast \colon \Coh(X) \to \Coh(X_n)
\end{align}
are exact.

We shall denote by $p_n$ the effective Cartier divisor $\pi_n^{-1}(p)_{\mathsf{red}}$ in $X_n$ and we denote the corresponding line bundle equivalently by $\Lcal_n$ and $\Ocal_{X_n}(p_n)$. By construction, $p_n\simeq \Bscr\mu_n\coloneqq[\Spec k/\mu_n]$ is the trivial $\mu_n$-gerbe over $p$ (here, the $\mu_n$-action on $\Spec k$ is trivial). We shall call $p_n$ the \textit{stacky point} of $X_n$.
\begin{remark}
The line bundle $\Lcal_n^{\otimes\, \ell}$ on $X_n$ corresponds to the parabolic locally free sheaf on $X$
\begin{align}
\begin{aligned}
  \begin{tikzpicture}[xscale=3.5,yscale=-1.2]
    \node (A0_1) at (2, 0) {$0$};
    \node (A0_2) at (2.5, 0) {$\cdots$};
    \node (A0_3) at (3, 0) {$\frac{n-1}{n}-\{\frac{\ell}{n}\}$};
    \node (A0_4) at (4, 0) {$1-\{\frac{\ell}{n}\}$};
    \node (A0_5) at (4.5, 0) {$\cdots$};
     \node (A0_6) at (5, 0) {$1$};    
    \node (A1_1) at (2, 1) {$\Ocal_X\big(\nintpart{\ell}\, p\big)$};
    \node (A1_2) at (2.5, 1) {$\cdots$};
    \node (A1_3) at (3, 1) {$\Ocal_X\big(\nintpart{\ell}\, p\big)$};
   \node (A1_4) at (4, 1) {$\Ocal_X\big(\big(\nintpart{\ell}+1\big)\, p\big)$};
    \node (A1_5) at (4.5, 1) {$\cdots$};
    \node (A1_6) at (5, 1) {$\Ocal_X\big(\big(\nintpart{\ell}+1\big)\, p\big)$};   
 \path (A1_3) edge [->]node [left] {$\scriptstyle{}$} (A1_4);
  \end{tikzpicture} 
  \end{aligned}
\end{align}
\end{remark}
Due to the lack of literature on coherent sheaves on tame algebraic stacks\footnote{For example, we believe that it should be possible to extend some results about duality of coherent sheaves from Deligne-Mumford stacks \cite{art:nironi2008-II} to tame algebraic stacks.}, we shall in the following use freely Theorem~\ref{thm:rootvsparabolic} and translate into the language of coherent sheaves on root stacks some of the results about parabolic coherent sheaves stated in \cite[Section~1]{art:lin2014} and references therein. For example, there exists a canonical line bundle on $X_n$ 
\begin{align}
\omega_{X_n}=\pi_n^\ast \omega_X\otimes \Lcal_n^{\otimes\, n-1}\ 
\end{align}
and Serre duality holds:
\begin{theorem}
Let $\Ecal, \Fcal$ be coherent sheaves on $X_n$. Then there exists a functorial isomorphism
\begin{align}
\Ext^1(\Ecal, \Fcal)\xrightarrow{\sim} \Hom(\Fcal, \Ecal\otimes \omega_{X_n})^\vee \ .
\end{align}
\end{theorem}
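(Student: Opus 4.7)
The plan is to deduce this from Grothendieck--Serre duality on the smooth proper tame algebraic stack $X_n$, after identifying the dualizing line bundle. Two main tasks must be addressed: (i) invoking a duality theorem in the proper generality, and (ii) computing $\omega_{X_n}$ explicitly as $\pi_n^\ast\omega_X\otimes \Lcal_n^{\otimes\, n-1}$.

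For (i), since $X$ is smooth proper of dimension $1$ and $\pi_n$ is proper (being the composition $X_n\to X$ of the root construction, cf.\ the properties recalled after Definition~\ref{def:rootline}), $X_n$ is a smooth proper tame algebraic stack of dimension $1$ over $k$. The structure morphism $X_n\to \Spec k$ therefore admits a dualizing complex, which on a smooth stack of pure dimension $d$ reduces to a line bundle placed in degree $-d$. One obtains, for any coherent $\Ecal,\Fcal$ on $X_n$, natural isomorphisms
\begin{align}
\Ext^{i}(\Ecal,\Fcal)\simeq \Ext^{1-i}(\Fcal,\Ecal\otimes \omega_{X_n})^{\vee}\ ,
\end{align}
and taking $i=1$ yields the claim. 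In characteristic coprime to $n$ the stack $X_n$ is Deligne--Mumford, so one can invoke Nironi's version of Grothendieck--Serre duality directly; in the general tame case one can either appeal to the broader duality theorems for tame stacks or pass through the equivalence of Theorem~\ref{thm:rootvsparabolic} to transport the statement to parabolic sheaves on $X$, where classical Serre duality on $X$ applies componentwise and is compatible with the pseudo-period isomorphisms.

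For (ii), the identification $\omega_{X_n}=\pi_n^\ast\omega_X\otimes \Lcal_n^{\otimes\, n-1}$ is the expected ``ramification formula'' for a root stack: the morphism $\pi_n$ is an isomorphism away from $p_n$, and along the effective Cartier divisor $p_n$ one has the relation $\Lcal_n^{\otimes\, n}\simeq \pi_n^\ast \Ocal_X(p)$, so that $\pi_n$ behaves like an $n$-fold cyclic cover ramified at $p_n$. Consequently the relative dualizing sheaf is $\omega_{X_n/X}\simeq \Lcal_n^{\otimes\, n-1}$, and the adjunction $\omega_{X_n}\simeq \pi_n^\ast\omega_X\otimes \omega_{X_n/X}$ yields the stated formula. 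Equivalently, one may compute $\omega_{X_n}$ locally at $p_n$ via the presentation $[\Spec k[t]/\mu_n]$ where $\mu_n$ acts by $t\mapsto \zeta t$; a $\mu_n$-equivariant computation then gives $\omega_{X_n}$ locally as the character $t^{n-1}\, dt$, which corresponds to $\Lcal_n^{\otimes\, n-1}$ under the identification $\Lcal_n \leftrightarrow t$.

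The main obstacle is the second step: securing the explicit formula for $\omega_{X_n}$, and ensuring that the duality machinery one invokes is available in the tame (not necessarily Deligne--Mumford) setting required by the paper's standing assumptions. If this causes difficulty, the cleanest workaround is to bypass stacky Grothendieck duality altogether and prove the statement in the parabolic incarnation via Theorem~\ref{thm:rootvsparabolic}: write $\Ecal$ and $\Fcal$ as parabolic sheaves $(E_v), (F_v)$ on $X$, express $\Ext^1_{X_n}(\Ecal,\Fcal)$ as a suitable $\Ext^1$ of parabolic sheaves on $X$, and apply Serre duality on $X$ with shifted filtration, the shift by $n-1$ encoding the twist by $\Lcal_n^{\otimes\, n-1}$.
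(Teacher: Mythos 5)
Your proposal is correct, and your ``workaround'' is in fact the paper's actual argument: the paper gives no direct proof of this theorem, but explicitly states that, ``due to the lack of literature on coherent sheaves on tame algebraic stacks,'' it uses the equivalence of Theorem~\ref{thm:rootvsparabolic} to translate the Serre duality for parabolic coherent sheaves established in \cite[Section~1]{art:lin2014} (and references therein) into the root-stack language; the formula $\omega_{X_n}=\pi_n^\ast\omega_X\otimes\Lcal_n^{\otimes\, n-1}$ is likewise imported from that dictionary rather than derived from a stacky adjunction formula. Your primary route --- Grothendieck--Serre duality on $X_n$ itself, together with the ramification/local $\mu_n$-equivariant computation of $\omega_{X_n}$ --- is a genuinely different and more intrinsic approach, and you correctly identify its weak point: since the paper allows $n$ divisible by the characteristic of $\F_q$, the stack $X_n$ is only tame and not Deligne--Mumford in general, so Nironi's duality \cite{art:nironi2008-II} does not apply directly (the paper itself only remarks in a footnote that it ``should be possible'' to extend such duality results to tame stacks). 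What the direct route buys is independence from the parabolic dictionary and a conceptual explanation of the twist $\Lcal_n^{\otimes\, n-1}$; what the paper's (and your fallback) route buys is that everything reduces to classical Serre duality on the curve $X$, component by component, which is unconditionally available. Either way the statement is secured, so there is no gap.
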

In addition, we have 
\begin{align}
\mathsf{H}^i(X_n, \Fcal)\simeq \mathsf{H}^i(X, {\pi_n}_\ast (\Fcal)) 
\end{align}
for $i=0, 1$ and any coherent sheaf $\Fcal$ on $X_n$.

Let $\Tor_x(X_n)$ be the category of torsion sheaves on $X_n$ supported at a \textit{non-stacky} point $x\in X_n$ (i.e., a point $x\in X\setminus \{p\} \simeq X_n\setminus p_n$). $\Tor_x(X_n)$ is equivalent to the category $\Tor_x(X)$ of torsion sheaves on $X$ supported at $x$ (here we are using the fact that $\pi_n$ is an isomorphism out of $p$). Therefore, $\Tor_x(X_n)$ is equivalent to the category $\Rep_{k(x)}^{\mathsf{nil}}\big(A_0^{(1)}\big)$  of nilpotent representations of the 1-loop quiver $A_0^{(1)}$ over the residue field $k(x)$. We shall denote by $\Tcal_x^{(\ell)}$ the unique indecomposable torsion sheaf of $\Tor_x(X_n)$ of length $\ell$. If fits into the short exact sequence
\begin{align}\label{eq:torsion-nonorbifold}
0\to \Ocal_{X_n}(-\ell\, x)\to \Ocal_{X_n}\to \Tcal_x^{(\ell)}\to 0\ .
\end{align}
We set $\Tcal_x\coloneqq\Tcal_x^{(1)}$. Note that $\Tcal_x^{(\ell)}\otimes \Lcal_n^{\otimes\, s}\simeq \Tcal_x^{(\ell)}$ for any $s=0, \ldots, n-1$.

Recall that coherent sheaves on the stacky point $p_n\simeq \Bscr\mu_n\coloneqq [\Spec k/\mu_n]$ correspond to parabolic torsion sheaves $T_\bullet$ on $X$ with zero morphisms $T_{i/n}\to T_{(i+1)/n}$ for $0\leq i\leq n-1$. On the other hand, consider the stack $[\Spec(k[x]_{(0)})/\mu_n]$, where the $\mu_n$-action is given by $t\cdot x\coloneqq t^{-1} x$ and $t\cdot a\coloneqq a$ for $a\in k$: coherent sheaves on $[\Spec(k[x]_{(0)})/\mu_n]$ correspond to finite-dimensional $k[x]_{(0)}$-modules together with a compatible $\Z_n$-grading, hence to parabolic torsion sheaves $T_\bullet$ on $X$ with morphisms $T_{i/n}\to T_{(i+1)/n}$, for $0\leq i\leq n-1$, corresponding to the multiplication by $x$. Thus, if we denote by $\Tor_{p_n}(X_n)$ the category of coherent sheaves on $X_n$ supported at $[\Spec(k[x]_{(0)})/\mu_n]$, $\Tor_{p_n}(X_n)$ is equivalent to the category $\Rep_k^{\mathsf{nil}}\big(A_{n-1}^{(1)}\big)$ of nilpotent representations of the cyclic quiver $A_{n-1}^{(1)}$ with $n$ vertices over $k$. Here we assume that the orientation of the arrows of $A_{n-1}^{(1)}$ is ``increasing", i.e., the arrows are $i\to (i+1)$, for any $i\in \{1, \ldots, n\}$.

Let $i\in \{1, \ldots, n\}$ and $j\in \Z_{>0}$. Define the torsion sheaf ${}_n\Scal_i^{(j)}\in \Tor_{p_n}(X_n)$ by the short exact sequence
\begin{align}\label{eq:torsion-orbifold}
0\to \Ocal_{X_n}(-i\, p_n)\to \Ocal_{X_n}((j-i)\, p_n)\to {}_n\Scal_i^{(j)}\to 0\ .
\end{align}
Set formally ${}_n\Scal_0^{(j)}\coloneqq{}_n\Scal_n^{(j)}$ for any positive integer $j$, moreover define ${}_n\Scal_i\coloneqq{}_n\Scal_i^{(1)}$ for $i=1, \ldots, n$. In addition, ${}_n\Scal_i^{(j)}\otimes \Lcal_n^{\otimes\, s}\simeq {}_n\Scal_{i-s\bmod{n}}^{(j)}$ for any $s=0, \ldots, n-1$.
\begin{remark}
Let $i\in \{1, \ldots, n\}$ and let $j\geq 2$ be an integer. Then ${}_n\Scal_i^{(j)}$ fits into the short exact sequences
\begin{align}
0\to  {}_n\Scal_i^{(j-1)} \to  {}_n\Scal_i^{(j)} \to  {}_n\Scal_{\nfrapart{i+1-j}}^{(1)}\to 0\ ,\\[4pt]
0\to  {}_n\Scal_{i}^{(1)} \to  {}_n\Scal_i^{(j)} \to  {}_n\Scal_{i-1}^{(j-1)}\to 0\ .
\end{align}
\end{remark}
\begin{remark}
The torsion sheaf ${}_n\Scal_1$ corresponds to the parabolic torsion sheaf on $X$
\begin{align}
\begin{aligned}
  \begin{tikzpicture}[xscale=2.2,yscale=-1.2]
    \node (A0_1) at (1, 0) {$0$};
    \node (A0_2) at (2, 0) {$\frac{1}{n}$};
    \node (A0_3) at (2.5, 0) {$\cdots$};    
    \node (A0_4) at (3, 0) {$\frac{n-1}{n}$};    
    \node (A0_7) at (4, 0) {$1$};    
    \node (A1_1) at (1, 1) {$T_p$};
    \node (A1_2) at (2, 1) {$0$};
    \node (A1_3) at (2.5, 1) {$\cdots$};
    \node (A1_4) at (3, 1) {$0$};
        \node (A1_5) at (4, 1) {$T_p\otimes\Ocal_X(p)$};
 \path (A1_1) edge [->]node [left] {$\scriptstyle{}$} (A1_2);
 \path (A1_2) edge [->]node [left] {$\scriptstyle{}$} (A1_3);
  \path (A1_3) edge [->]node [left] {$\scriptstyle{}$} (A1_4);
 \path (A1_4) edge [->]node [left] {$\scriptstyle{}$} (A1_5);
  \end{tikzpicture} 
  \end{aligned}
\end{align}
while, for $i\in\{2, \ldots, n\}$, the torsion sheaf ${}_n\Scal_i$ corresponds
\begin{align}
\begin{aligned}
  \begin{tikzpicture}[xscale=2.2,yscale=-1.2]
    \node (A0_1) at (2, 0) {$0$};
    \node (A0_2) at (2.5, 0) {$\cdots$};
    \node (A0_3) at (3, 0) {$\frac{i-2}{n}$};    
    \node (A0_4) at (4, 0) {$\frac{i-1}{n}$};    
    \node (A0_7) at (5, 0) {$\frac{i}{n}$};    
\node (A0_8) at (5.5, 0) {$\cdots$};    
     \node (A0_9) at (6, 0) {$1$};    
    \node (A1_1) at (2, 1) {$0$};
    \node (A1_2) at (2.5, 1) {$\cdots$};
    \node (A1_3) at (3, 1) {$0$};
    \node (A1_4) at (4, 1) {$T_p$};
        \node (A1_5) at (5, 1) {$0$};
    \node (A1_6) at (5.5, 1) {$\cdots$};    
    \node (A1_7) at (6, 1) {$0$};   
 \path (A1_1) edge [->]node [left] {$\scriptstyle{}$} (A1_2);
 \path (A1_2) edge [->]node [left] {$\scriptstyle{}$} (A1_3);
  \path (A1_3) edge [->]node [left] {$\scriptstyle{}$} (A1_4);
 \path (A1_4) edge [->]node [left] {$\scriptstyle{}$} (A1_5);
 \path (A1_5) edge [->]node [left] {$\scriptstyle{}$} (A1_6);
 \path (A1_6) edge [->]node [left] {$\scriptstyle{}$} (A1_7);
  \end{tikzpicture} 
  \end{aligned}
\end{align}
\end{remark}

We denote by $T_x^{(\ell)}$ the unique indecomposable torsion sheaf of $\Tor_x(X)$ supported at $x\in X$ of length $\ell$ and set $T_x\coloneqq T_x^{(1)}$. We have
\begin{align}
{\pi_n}_\ast \big(\Tcal_x^{(\ell)}\big) \simeq T_x^{(\ell)}\text{ for $x\neq p$,}\quad {\pi_n}_\ast \big({}_n\Scal_i^{(j)}\big)\simeq T_p^{(1+\nintpart{j-i})}\ .
\end{align}
\begin{proposition} $ $
\begin{itemize}\itemsep0.2em
\item Any torsion sheaf on $X_n$ is a direct sum of torsion sheaves of the form ${}_n\Scal_i^{(j)}$ for $i=1, \ldots, n$ and $j\in \Z_{>0}$ and torsion sheaves supported at non-stacky points of $X_n$.
\item Any coherent sheaf on $X_n$ is a direct sum of a torsion sheaf and a locally free sheaf.
\item Any locally free sheaf $\Fcal$ on $X_n$ admits a filtration
\begin{align}\label{eq:filtrationlinebundles}
0=\Fcal_0\subset \Fcal_1\subset \cdots \subset \Fcal_r=\Fcal
\end{align}
such that the factor $\Fcal_s/\Fcal_{s-1}$ is a line bundle on $\Fcal$ for $s=1, \ldots, r$.
\end{itemize}
\end{proposition}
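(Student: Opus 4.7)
I would prove the three assertions in order, using the tools already developed in the preceding sections (the equivalence with parabolic sheaves, Serre duality on $X_n$, and the representation theory of the cyclic quiver).

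For (1), I would first decompose a torsion sheaf $\mathcal{T}$ on $X_n$ according to its set-theoretic support, which is a finite set of closed points. This gives a direct sum decomposition $\mathcal{T}\simeq \mathcal{T}_{p_n} \oplus \bigoplus_{x\neq p} \mathcal{T}_x$ with $\mathcal{T}_x\in \Tor_x(X_n)$. For $x\neq p$ there is nothing more to say. For the part supported at $p_n$, I would invoke the equivalence $\Tor_{p_n}(X_n)\simeq \mathsf{Rep}_k^{\mathsf{nil}}(A_{n-1}^{(1)})$ explained right before the statement. The indecomposable nilpotent representations of the cyclic quiver $A_{n-1}^{(1)}$ are classified by pairs $(i,j)$ with $i\in\{1,\ldots,n\}$ and $j\in\Z_{>0}$ (head vertex and length), and a direct computation matches these indecomposables with the sheaves ${}_n\Scal_i^{(j)}$ defined via the short exact sequence \eqref{eq:torsion-orbifold}. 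The result then follows from the Krull--Schmidt property of the noetherian abelian category $\Tor_{p_n}(X_n)$.

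For (2), let $\Fcal$ be coherent on $X_n$ and let $\Fcal_{\tor}\subset \Fcal$ denote its torsion subsheaf (as defined via \cite[Section 2.2.6.3]{art:lieblich2007}). Since $X_n$ is smooth of dimension $1$, the quotient $\Fcal/\Fcal_{\tor}$ is torsion-free of maximal dimension, hence locally free. To split the exact sequence $0\to \Fcal_{\tor}\to \Fcal\to \Fcal/\Fcal_{\tor}\to 0$ it suffices to show that $\Ext^1(\Fcal/\Fcal_{\tor},\Fcal_{\tor})=0$. By the Serre duality stated above,
\begin{align}
\Ext^1(\Fcal/\Fcal_{\tor},\Fcal_{\tor})\simeq \Hom\bigl(\Fcal_{\tor},(\Fcal/\Fcal_{\tor})\otimes \omega_{X_n}\bigr)^\vee\ ,
\end{align}
and the right-hand side vanishes because there are no nonzero morphisms from a zero-dimensional coherent sheaf to a locally free one on the smooth $1$-dimensional stack $X_n$.

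For (3), I would induct on the rank $r$ of $\Fcal$. The case $r=1$ is trivial. For the inductive step I need a line subbundle $\Lcal\hookrightarrow \Fcal$ whose quotient is locally free. Pick any line bundle $\Mcal$ on $X_n$ of sufficiently negative degree (i.e.\ choose $\Mcal=\pi_n^\ast\Ocal_X(-N\,p_{\text{aux}})\otimes \Lcal_n^{\otimes k}$ with $N\gg 0$ and $p_{\text{aux}}$ any closed non-stacky point): by Serre duality together with Riemann--Roch on $X_n$ (obtained by pushforward to $X$), $\Hom(\Mcal,\Fcal)=H^0(X_n,\Fcal\otimes \Mcal^{-1})\neq 0$. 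Fix a nonzero map $\phi\colon \Mcal\to \Fcal$ and let $\Lcal$ be the saturation of $\phi(\Mcal)$ in $\Fcal$, that is, the kernel of $\Fcal\to (\Fcal/\phi(\Mcal))/(\text{torsion})$. Then $\Lcal$ is a torsion-free rank $1$ coherent sheaf on $X_n$, hence a line bundle by smoothness, and by construction $\Fcal/\Lcal$ is torsion-free, hence locally free of rank $r-1$. Applying the inductive hypothesis to $\Fcal/\Lcal$ and pulling the resulting filtration back to $\Fcal$ produces the desired filtration.

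\textbf{Main obstacle.} The subtlest step is the existence of the section $\phi$ in part (3): one needs an effective version of Riemann--Roch on $X_n$ guaranteeing $H^0(X_n,\Fcal\otimes \Mcal^{-1})\neq 0$ for $\Mcal$ of sufficiently negative degree, together with the fact that rank $1$ torsion-free sheaves on the smooth tame stack $X_n$ are automatically invertible. Both are standard by reduction to $X$ via $(\pi_n)_*$ and the structure of line bundles on $X_n$ recorded in diagram \eqref{eq:diagramPic}, but they are the ingredients that make the argument genuinely stacky rather than a verbatim transcription of the curve case.
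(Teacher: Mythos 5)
Your argument is correct, and it is worth noting that the paper itself offers no proof of this proposition: it is stated as one of the facts obtained by translating, via the equivalence of Theorem~\ref{thm:rootvsparabolic}, known results on parabolic coherent sheaves from \cite[Section~1]{art:lin2014} and the references therein. You instead give a self-contained argument directly on the stack $X_n$, and each step holds up. In (1) the support decomposition plus the equivalence $\Tor_{p_n}(X_n)\simeq \Rep_k^{\mathsf{nil}}\big(A_{n-1}^{(1)}\big)$ and Krull--Schmidt is exactly the right reduction, the indecomposables of the nilpotent cyclic-quiver category being precisely the images of the ${}_n\Scal_i^{(j)}$. In (2) the key points are that a torsion-free coherent sheaf on the smooth one-dimensional tame stack $X_n$ is locally free (checked on a presentation, where it reduces to modules over a DVR with a $\mu_n$-grading) and that $\Ext^1(\Fcal/\Fcal_{\tor},\Fcal_{\tor})\simeq \Hom\big(\Fcal_{\tor},(\Fcal/\Fcal_{\tor})\otimes\omega_{X_n}\big)^\vee=0$ by the Serre duality recorded just above the statement; both are as you say. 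In (3) the only quantitative input is that $\langle \Mcal,\Fcal\rangle\to+\infty$ as $\deg(\Mcal)\to-\infty$, which follows from the Riemann--Roch formula \eqref{eq:Eulerform} because the contribution of $\deg(\Mcal)\,\delta_n$ to $\langle \mathbf d,\mathbf e\rangle$ telescopes to zero, leaving the term $-d_n\,\rk(\Fcal)$ to dominate; the saturation is then a line bundle by the same torsion-free-equals-locally-free fact. The trade-off between the two routes is the expected one: the paper's citation is shorter but opaque, while your proof makes visible exactly which stacky inputs (Serre duality on $X_n$, Riemann--Roch, local structure at $p_n$) are actually needed.
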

For any locally free sheaf $\Fcal$ on $X_n$, we shall call \textit{rank} the number $r$ of nonzero factors in \eqref{eq:filtrationlinebundles}. This corresponds to the rank of ${\pi_n}_\ast \Fcal$. Moreover we shall call \textit{rank} of a coherent sheaf the rank of its locally free part. 

Thanks to the previous proposition, the Grothendieck group $\Ksf(X_n)$ of $X_n$ is generated by the classes of line bundles and torsion sheaves on $X_n$. We denote by $[\Fcal]$ the $\Ksf$-theory class of a coherent sheaf $\Fcal$ on $X_n$. 

We denote by $\Ksf^+(X_n)$ the subset of $\Ksf(X_n)$ consisting of the classes of coherent sheaves on $X_n$. The Euler form of any two coherent sheaves $\Fcal, \Ecal$ on $X_n$ is defined as
\begin{align}
\langle \Fcal, \Ecal\rangle \coloneqq\dim \Hom(\Fcal, \Ecal)-\dim \Ext^1(\Fcal, \Ecal) \ ,
\end{align}
and we extend it to $\Ksf(X_n)$ by linearity. As usual, the symmetric Euler form is $(x,y)\coloneqq\langle x, y\rangle +\langle y, x \rangle$ for any $x, y\in \Ksf(X_n)$. 

Let $\Ksfnum(X_n)$ be the \textit{numerical Grothendieck group} of $X_n$, that is the quotient of $\Ksf(X_n)$ by the kernel of $\langle\,,\,\rangle$.  We shall call \textit{class} of a coherent sheaf $\Fcal$, and denote by $\overline \Fcal$, the image of $[\Fcal]$ in $\Ksfnum(X_n)$ with respect to the projection map $\Ksf(X_n) \to \Ksfnum(X_n)$.

Let $\delta_n\coloneqq\sum_{i=1}^{n}\, \overline{{}_n\Scal_i}$ in $\Ksfnum(X_n)$, then $\delta_n =\overline{\Tcal_x}$ for any non-stacky point $x$.
\begin{lemma}
We have
\begin{align}\label{eq:line1}
&\langle \overline{\Ocal_{X_n}}, \overline{\Lcal_n^{\otimes\, i}}\rangle  =1-g_X+\delta_{i,n} \quad \text{for $i=0, 1, \ldots, n$}\ , \quad \langle \overline{\Ocal_{X_n}}, \delta_n\rangle=1 \ , \quad  \langle \delta_n, \overline{\Ocal_{X_n}}\rangle=-1\\[8pt] \label{eq:line2}
&\langle \overline{\Ocal_{X_n}}, \overline{{}_n\Scal_j}\rangle =
\begin{cases}
0 & \text{for } j=2, \ldots, n \ , \\
1 & \text{for } j=1 \ ,
\end{cases}
\ , \quad
\langle \overline{{}_n\Scal_j}, \overline{\Ocal_{X_n}}\rangle  =
\begin{cases}
-1 & \text{for } j=n\ ,\\
0 & \text{otherwise} \ ,
\end{cases}
 \\[8pt] \label{eq:line3}
&\langle \delta_n, \delta_n\rangle=0\ , \quad\langle \delta_n, \overline{{}_n\Scal_j}\rangle=0\ , \quad \langle \overline{{}_n\Scal_j}, \delta_n\rangle=0\ , \quad \langle \overline{{}_n\Scal_i}, \overline{{}_n\Scal_j}\rangle  = 
\begin{cases}
1 & \text{for } i=j \ , \\
-1 & \text{for } j=i+1\ , \\
0 & \text{otherwise} \ .
\end{cases}
\end{align}
\end{lemma}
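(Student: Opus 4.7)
The plan is to reduce every pairing to a computation of Euler characteristics on $X$, via the pushforward $(\pi_n)_\ast$, and then apply Riemann--Roch. The basic tools are the identity $\langle \Fcal, \Ecal \rangle = \chi(X_n, \Fcal^\vee \otimes \Ecal)$ when $\Fcal$ is locally free, Serre duality $\Ext^1(\Fcal, \Ecal) \simeq \Hom(\Ecal, \Fcal \otimes \omega_{X_n})^\vee$ with $\omega_{X_n} = \pi_n^\ast \omega_X \otimes \Lcal_n^{\otimes\, n-1}$, the comparison $\mathsf{H}^i(X_n, \Gcal) \simeq \mathsf{H}^i(X, (\pi_n)_\ast \Gcal)$, and the pushforward formulas $(\pi_n)_\ast \Lcal_n^{\otimes\, \ell} \simeq \Ocal_X(\nintpart{\ell}\, p)$ and $(\pi_n)_\ast {}_n\Scal_i^{(j)} \simeq T_p^{(1+\nintpart{j-i})}$.

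For \eqref{eq:line1}, I would compute $\langle \overline{\Ocal_{X_n}}, \overline{\Lcal_n^{\otimes\, i}} \rangle = \chi(X_n, \Lcal_n^{\otimes\, i}) = \chi(X, \Ocal_X(\nintpart{i} p))$; since $\nintpart{i} = 0$ for $0 \leq i \leq n-1$ and $\nintpart{n} = 1$, Riemann--Roch on $X$ yields $1 - g_X$ in the first range and $2 - g_X$ when $i = n$, producing the $\delta_{i,n}$. Writing $\delta_n = \overline{\Tcal_x}$ for a non-stacky point $x$, I get $\langle \overline{\Ocal_{X_n}}, \delta_n \rangle = \chi(X_n, \Tcal_x) = \chi(X, T_x) = 1$ from \eqref{eq:torsion-nonorbifold}. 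For the reverse pairing I apply Serre duality: since $x$ is a non-stacky point, $\omega_{X_n}$ is trivializable near $x$ and thus $\Tcal_x \otimes \omega_{X_n} \simeq \Tcal_x$, so $\langle \delta_n, \overline{\Ocal_{X_n}} \rangle = -\dim \Hom(\Ocal_{X_n}, \Tcal_x) = -1$.

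For \eqref{eq:line2}, the Euler characteristic $\chi(X_n, {}_n\Scal_j)$ is either computed directly from \eqref{eq:torsion-orbifold} (as $\chi(\Ocal_X(\nintpart{1-j}\, p)) - \chi(\Ocal_X(\nintpart{-j}\, p))$, which gives $1$ when $j = 1$ and $0$ when $2 \leq j \leq n$) or read off from $(\pi_n)_\ast {}_n\Scal_j \simeq T_p^{(1+\nintpart{1-j})}$. This yields $\langle \overline{\Ocal_{X_n}}, \overline{{}_n\Scal_j} \rangle$. The reverse pairing uses Serre duality combined with the twist identity ${}_n\Scal_j \otimes \Lcal_n^{\otimes\, n-1} \simeq {}_n\Scal_{\nfrapart{j+1}}$; since $\Hom(\Ocal_{X_n}, {}_n\Scal_k) = k$ only when $k \equiv 1 \pmod n$, only $j = n$ (giving ${}_n\Scal_1$ after the twist) produces a nonzero contribution, yielding $-1$.

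For \eqref{eq:line3}, the vanishings $\langle \delta_n, \overline{{}_n\Scal_j} \rangle = \langle \overline{{}_n\Scal_j}, \delta_n \rangle = 0$ follow from disjointness of supports of $\Tcal_x$ and ${}_n\Scal_j$ (for $\Ext^1$ one again uses Serre duality together with $\Tcal_x \otimes \omega_{X_n} \simeq \Tcal_x$). For $\langle \delta_n, \delta_n \rangle$, I would bilinearize via $[\Tcal_x] = [\Ocal_{X_n}] - [\Ocal_{X_n}(-x)]$ and observe that $\Ocal_{X_n}(\pm x) \otimes \Tcal_x \simeq \Tcal_x$, so both terms contribute $\chi(\Tcal_x) = 1$ and cancel. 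Finally, for $\langle \overline{{}_n\Scal_i}, \overline{{}_n\Scal_j} \rangle$, I would bilinearize the first slot using \eqref{eq:torsion-orbifold}, obtaining $\chi({}_n\Scal_j \otimes \Lcal_n^{\otimes\, i-1}) - \chi({}_n\Scal_j \otimes \Lcal_n^{\otimes\, i}) = \chi({}_n\Scal_{\nfrapart{j-i+1}}) - \chi({}_n\Scal_{\nfrapart{j-i}})$, and the already-computed values of $\chi({}_n\Scal_k)$ give exactly $\delta_{j,i} - \delta_{j,i+1}$ (cyclically).

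The main technical obstacle is the careful bookkeeping of the cyclic index shifts induced by twisting with $\Lcal_n$ and with $\omega_{X_n} = \pi_n^\ast \omega_X \otimes \Lcal_n^{\otimes\, n-1}$, and confirming that pushing forward and then applying Riemann--Roch on $X$ correctly accounts for the floor functions $\nintpart{\cdot}$. Once these identifications are stabilized, every pairing reduces to a one-line Euler-characteristic computation on $X$.
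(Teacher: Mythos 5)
Your proposal is correct; the paper states this lemma without proof, and your computation via $\langle \Fcal,\Ecal\rangle=\chi(X_n,\Fcal^\vee\otimes\Ecal)$, the pushforward formulas for ${\pi_n}_\ast\Lcal_n^{\otimes\ell}$ and ${\pi_n}_\ast({}_n\Scal_i^{(j)})$, Serre duality with $\omega_{X_n}=\pi_n^\ast\omega_X\otimes\Lcal_n^{\otimes n-1}$, and the cyclic twist ${}_n\Scal_j\otimes\Lcal_n^{\otimes s}\simeq{}_n\Scal_{j-s\bmod n}$ is exactly the routine verification the authors leave implicit. All index shifts check out (in particular $j+1\equiv 1\pmod n$ forcing $j=n$ in \eqref{eq:line2}, and $\delta_{j\equiv i}-\delta_{j\equiv i+1}$ in \eqref{eq:line3}).
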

By using short exact sequences of the form \eqref{eq:torsion-orbifold} we can derive the following equality in $\Ksfnum(X_n)$
\begin{align}
\overline{\pi_n^\ast M\otimes \Lcal_n^{\otimes\, \ell}} &=\overline{\Ocal_{X_n}}+\deg(M)\delta_n +\sum_{k=n-\ell+1}^n\, \overline{{}_n\Scal_k}
=\overline{\Ocal_{X_n}}+(1+\deg(M))\delta_n -\sum_{k=1}^{n-\ell}\, \overline{{}_n\Scal_k}\\
&=\overline{\Ocal_{X_n}}+\deg(M)\sum_{k=1}^n\, \overline{{}_n\Scal_k} +\sum_{k=n-\ell+1}^n\, \overline{{}_n\Scal_k}
\end{align}
for any line bundle $M$ on $X$ and any integer $0\leq \ell\leq n-1$. Moreover, one has in $\Ksfnum(X_n)$
\begin{align}
\overline{\Tcal_x^{(s)}}= s\deg(x)\, \delta_n= s\deg(x)\,\sum_{k=1}^n\, \overline{{}_n\Scal_k} \ ,
\end{align}
for any $s\in \Z_{>0}$ and a non-stacky point $x$. By using these equalities together with the additivity of the rank and the degree with respect to short exact sequences, we can obtain the following result.
\begin{proposition}
The class of a coherent sheaf $\Fcal$ on $X_n$ can be written as
\begin{align}
\overline{\Fcal}=\rk\big({\pi_n}_\ast\Fcal\big)\, \overline{\Ocal_{X_n}}+\sum_{i=1}^{n}\, \deg\big({\pi_n}_\ast(\Fcal\otimes \Lcal_n^{\otimes\, i-1})\big)\, \overline{{}_n\Scal_i}\ .
\end{align}
\end{proposition}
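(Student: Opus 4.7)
I argue by additivity and reduction to a generating family. Denote by $\Psi(\Fcal) \coloneqq \rk({\pi_n}_\ast \Fcal)\, \overline{\Ocal_{X_n}} + \sum_{i=1}^{n} \deg\!\big({\pi_n}_\ast(\Fcal \otimes \Lcal_n^{\otimes i-1})\big)\, \overline{{}_n\Scal_i}$ the proposed right-hand side, viewed as a $\Ksfnum(X_n)$-valued function on $\Coh(X_n)$. The first step is to observe that $\Psi$ is additive on short exact sequences: tensoring with the line bundle $\Lcal_n^{\otimes i-1}$ is exact, the pushforward ${\pi_n}_\ast$ is exact by Proposition~\ref{prop:projectionformula-finite}, and rank and degree are additive on short exact sequences of coherent sheaves on $X$. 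The left-hand side $\Fcal \mapsto \overline{\Fcal}$ is of course additive by definition of $\Ksfnum(X_n)$. Hence both sides descend to $\Z$-linear maps on $\Ksfnum(X_n)$, and it is enough to check their equality on a generating family of classes.

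The $K$-theoretic identities stated immediately above the proposition, namely the formulas for $\overline{\pi_n^\ast M \otimes \Lcal_n^{\otimes \ell}}$ and $\overline{\Tcal_x^{(s)}}$, together with the short exact sequence $0 \to {}_n\Scal_i^{(1)} \to {}_n\Scal_i^{(j)} \to {}_n\Scal_{i-1}^{(j-1)} \to 0$ (induction on $j$), show that every numerical class is a $\Z$-linear combination of $\overline{\Ocal_{X_n}}$ and of the $\overline{{}_n\Scal_i}$ for $i \in \{1, \ldots, n\}$. The verification therefore reduces to these $n+1$ test sheaves. For $\Fcal = \Ocal_{X_n}$, the rank coefficient equals $1$, and by the projection formula each pushforward ${\pi_n}_\ast \Lcal_n^{\otimes i-1} \simeq \Ocal_X(\nintpart{i-1}\, p)$ has degree $\lfloor (i-1)/n \rfloor = 0$ for $i \in \{1, \ldots, n\}$; hence $\Psi(\Ocal_{X_n}) = \overline{\Ocal_{X_n}}$. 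For $\Fcal = {}_n\Scal_i$, the rank term vanishes, and one combines ${}_n\Scal_i \otimes \Lcal_n^{\otimes j-1} \simeq {}_n\Scal_{i-j+1 \bmod n}$ with the explicit pushforward ${\pi_n}_\ast {}_n\Scal_\ell \simeq T_p^{(1 + \nintpart{1-\ell})}$, which equals $T_p$ precisely for $\ell \equiv 1 \pmod{n}$ and vanishes otherwise; for $i, j \in \{1, \ldots, n\}$ the congruence $i - j + 1 \equiv 1 \pmod{n}$ forces $j = i$, yielding the single nonzero contribution $\deg(T_p)\, \overline{{}_n\Scal_i} = \overline{{}_n\Scal_i}$.

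The only mild subtlety is the cyclic bookkeeping of indices modulo $n$ produced by twisting with $\Lcal_n$ and by the formula for ${\pi_n}_\ast {}_n\Scal_\ell$; once those conventions are sorted, the equality on the generators $\overline{\Ocal_{X_n}}$ and $\overline{{}_n\Scal_i}$ is a direct consequence of the projection formula, and the general case follows by additivity.
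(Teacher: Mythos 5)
Your overall strategy --- additivity of both sides plus a check on a small family of test sheaves --- is the same one the paper invokes, and your computations for $\Ocal_{X_n}$ and for the ${}_n\Scal_i$ are correct. However, the reduction step contains a genuine logical gap. Additivity gives you that both sides define homomorphisms $\Ksf(X_n)\to \Ksfnum(X_n)$, so it would suffice to check equality on a generating set of $\Ksf(X_n)$. But the classes $[\Ocal_{X_n}], [{}_n\Scal_1],\ldots,[{}_n\Scal_n]$ generate $\Ksfnum(X_n)$, not $\Ksf(X_n)$ (already the classes $[\pi_n^\ast M]$ for line bundles $M$ of different degrees, or $[\Tcal_x]$ for different points $x$, are distinct in $\Ksf(X_n)$). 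You bridge this by asserting that ``both sides descend to $\Z$-linear maps on $\Ksfnum(X_n)$''; for the left-hand side this is a tautology, but for $\Psi$ it is precisely the nontrivial point --- one must know that $\rk({\pi_n}_\ast\Fcal)$ and the integers $\deg\big({\pi_n}_\ast(\Fcal\otimes\Lcal_n^{\otimes i-1})\big)$ depend only on the \emph{numerical} class of $\Fcal$, and you give no argument. Two additive maps that agree on $n+1$ elements whose images merely span the target need not agree on the whole source.

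The gap is repairable in two ways. (i) Justify the descent: since $\mathsf{H}^\bullet(X_n,\Gcal)\simeq \mathsf{H}^\bullet(X,{\pi_n}_\ast\Gcal)$, Riemann--Roch on $X$ gives $\deg\big({\pi_n}_\ast(\Fcal\otimes\Lcal_n^{\otimes i-1})\big)=\langle \overline{\Lcal_n^{\otimes 1-i}},\overline{\Fcal}\rangle-\rk(\Fcal)(1-g_X)$, and $\rk(\Fcal)=\langle\overline{\Fcal},\overline{\Tcal_x}\rangle/\deg(x)$ for any non-stacky point $x$; both are therefore functions of $\overline{\Fcal}$, and your reduction then goes through verbatim. (ii) Alternatively --- and this is closer to what the paper actually does --- check the identity on a true generating set of $\Ksf(X_n)$: by the structure results, it suffices to treat the line bundles $\pi_n^\ast M\otimes\Lcal_n^{\otimes\ell}$ (the torsion generators ${}_n\Scal_i^{(j)}$ and $\Tcal_x^{(s)}$ then follow from their defining short exact sequences and additivity). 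Using $(\pi_n)_\ast\big(\pi_n^\ast M\otimes\Lcal_n^{\otimes k}\big)\simeq M\otimes\Ocal_X\big(\nintpart{k}\,p\big)$ one finds $\Psi(\pi_n^\ast M\otimes\Lcal_n^{\otimes\ell})=\overline{\Ocal_{X_n}}+\deg(M)\,\delta_n+\sum_{k=n-\ell+1}^{n}\overline{{}_n\Scal_k}$, which matches the displayed expression for $\overline{\pi_n^\ast M\otimes\Lcal_n^{\otimes\ell}}$ established just before the proposition. Either repair is short, but as written your proof does not close.
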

\begin{remark}
Recall that for a coherent sheaf $F$ on $X$, we have $\overline{F}=\rk(F)\, \overline{\Ocal_X}+\deg(F)\, \delta_1$, where $\delta_1$ is the class of the torsion sheaf $T_x$ at a point $x\in X$. Then $\overline{\pi_n^\ast F}=\rk(F)\, \overline{\Ocal_{X_n}}+ \deg(F)\,\delta_n=\rk(F)\, \overline{\Ocal_{X_n}}+ \deg(F)\, \sum_{k=1}^n\, \overline{{}_n\Scal_k}$.
\end{remark}
It follows from the above discussion that $\Ksfnum(X_n) \simeq \Z \oplus \Z^{n}$, with the map $\Ksf(X_n) \to \Ksfnum(X_n)$ given at the level of classes of coherent sheaves by
\begin{align}
[\Ecal] \mapsto \overline \Ecal = \big(\rk(\Ecal), \mathrm{\mathbf{deg}}\,(\Ecal)\big) \ ,
\end{align}
where
\begin{align}
\mathrm{\mathbf{deg}}\,(\Ecal)\coloneqq\Big( \deg\big({\pi_n}_\ast(\Ecal)\big), \deg\big({\pi_n}_\ast(\Ecal\otimes \Lcal_n)\big), \ldots, \deg\big({\pi_n}_\ast(\Ecal\otimes \Lcal_n^{\otimes\, n-1})\big)\Big)\ .
\end{align}
We have
\begin{align}
\overline{{}_n\Scal_i}=(0, \mathbf e_{n,i})\text{ for $i=1, \ldots, n$}\ .
\end{align}
Here $ \mathbf e_{n,i}$ denotes the $i$-th coordinate vector of $\Z^n$ for $i=1, \ldots, n$. Therefore, $\delta_n=(0, \sum_i \mathbf e_{n, i})$; we shall denote the vector $\sum_i \mathbf e_{n, i}$ by $\delta_n$ as well, any time there is no possible confusion.

Moreover, the restriction 
\begin{align}\label{eq:dim}
\underline{\dim}_n\colon\Ksf(\Tor_{p_n}(X_n))\to \Z^n
\end{align}
of the map $\Ksf(X_n) \to \Ksfnum(X_n)$ is an isomorphism, since $\Tor_{p_n}(X_n)\simeq \Rep_k^{\mathsf{nil}}\big(A_{n-1}^{(1)}\big)$ (cf.\ \cite[Corollary~3.2-(2)]{book:schiffmann2012}).
\begin{remark}
Recall that $\Ksfnum(X)=\Z^2$ and the map $\Ksf(X)\to \Ksfnum(X)$ associates with a $\Ksf$-class $[E]$ of a coherent sheaf $E$ on $X$ the pair $\overline E=(\rk(E), \deg(E))$. By using the projection formula for $\pi_n$, one can show $\overline{\pi_n^\ast E}=(\rk(E), \deg(E)\, \delta_n)$.
\end{remark}
\begin{remark}\label{rem:Ktheorylinebundle}
Let $M$ be a line bundle on $X$ and $0\leq \ell\leq n-1$ an integer. Then
\begin{multline}
\overline{\pi_n^\ast M\otimes \Lcal_n^{\otimes \, \ell}}=\big(1,  \deg(M), \ldots, \deg(M), \underbrace{\deg(M)+1, \ldots, \deg(M)+1}_{\genfrac{}{}{0pt}{}{\ell\text{\tiny -th}}{\text{\tiny times}}}\big)\\
=\Big(1, \deg(M)\delta_n+ \sum_{i=n-\ell+1}^n\, \mathbf e_{n,i}\Big)\in \Ksfnum(X_n)\ .
\end{multline}
\end{remark}
Let us introduce the following bilinear forms
\begin{align}
\langle \cdot, \cdot \rangle & \colon \Z_{\geq 0}^n\otimes \Z_{\geq 0}^n\to \Z \ ,\quad \langle \mathbf d, \mathbf e \rangle\coloneqq\sum_{i=1}^{n}\, (d_i e_i-d_i e_{i+1}) \ , \\
\big(\cdot, \cdot \big)  & \colon \Z_{\geq 0}^n\otimes \Z_{\geq 0}^n\to \Z \ ,\quad \big(\mathbf d, \mathbf e \big)\coloneqq\sum_{i=1}^{n}\, (2d_ie_i-d_{i+1}e_i-d_ie_{i+1})=\mathbf d\, \widehat A_{n-1}\, \mathbf e\ ,
\end{align}
where we identify $d_{n+1}=d_1$ and $e_{n+1}=e_1$. Here $\widehat A_{n-1}$ is the Cartan matrix of the affine $A_{n-1}^{(1)}$ Dynkin diagram.
\begin{corollary}[Riemann-Roch Formula]
Let $\Ecal,\Fcal$ be two coherent sheaves on $X_n$ with $\overline \Fcal= (r, \mathbf d)$ and $\overline \Ecal=(s, \mathbf e)$. Then
\begin{align}\label{eq:Eulerform}
\langle \Fcal, \Ecal\rangle&=rs(1-g_X)+r e_1-d_n s+\langle \mathbf d, \mathbf e \rangle\ , \\
\big( \Fcal, \Ecal\big)&=2rs(1-g_X)+r(e_1-e_n)+s(d_1-d_n)+\big(\mathbf d, \mathbf e\big) \ .
\end{align}
\end{corollary}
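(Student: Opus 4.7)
The proof proceeds by a direct bilinear expansion using the basis of $\Ksfnum(X_n)$ and the table of Euler products computed in the preceding lemma. The key observation is that we have already described, in coordinates, how any class decomposes:
\begin{align}
\overline{\Fcal}=r\,\overline{\Ocal_{X_n}}+\sum_{i=1}^{n}d_i\,\overline{{}_n\Scal_i}\ ,\qquad
\overline{\Ecal}=s\,\overline{\Ocal_{X_n}}+\sum_{j=1}^{n}e_j\,\overline{{}_n\Scal_j}\ .
\end{align}
Since the Euler form descends to $\Ksfnum(X_n)$ and is $\Z$-bilinear, $\langle\Fcal,\Ecal\rangle$ splits into four pieces: a ``pure structure sheaf'' term, two ``mixed'' terms, and a ``pure torsion'' term.

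First I would compute the pure structure sheaf contribution: by \eqref{eq:line1} with $i=0$ it equals $rs(1-g_X)$. Next I would handle the two mixed terms using the second half of \eqref{eq:line1} and \eqref{eq:line2}: the row $\langle\overline{\Ocal_{X_n}},\overline{{}_n\Scal_j}\rangle$ vanishes except for $j=1$, contributing $r\,e_1$, while the column $\langle\overline{{}_n\Scal_i},\overline{\Ocal_{X_n}}\rangle$ vanishes except for $i=n$, contributing $-d_n\,s$. Finally, the torsion--torsion block is read off from \eqref{eq:line3}: the matrix $\langle\overline{{}_n\Scal_i},\overline{{}_n\Scal_j}\rangle$ equals $1$ on the diagonal and $-1$ on the upper cyclic superdiagonal (note that the conditions $\langle\delta_n,\overline{{}_n\Scal_j}\rangle=0$ and $\langle\overline{{}_n\Scal_j},\delta_n\rangle=0$ force the sub/superdiagonal to close up mod $n$, matching the convention $e_{n+1}=e_1$). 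This piece therefore contributes exactly $\sum_i(d_ie_i-d_ie_{i+1})=\langle\mathbf d,\mathbf e\rangle$. Summing the four contributions yields \eqref{eq:Eulerform}.

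For the symmetrized statement I would simply add the expressions for $\langle\Fcal,\Ecal\rangle$ and $\langle\Ecal,\Fcal\rangle$. The first term doubles to $2rs(1-g_X)$; from the mixed terms one gets $re_1-d_ns+sd_1-e_nr=r(e_1-e_n)+s(d_1-d_n)$; the torsion block symmetrizes to
\begin{align}
\sum_i(d_ie_i-d_ie_{i+1})+\sum_i(e_id_i-e_id_{i+1})=\sum_i(2d_ie_i-d_{i+1}e_i-d_ie_{i+1})=(\mathbf d,\mathbf e)\ ,
\end{align}
which is exactly the affine Cartan form associated with $\widehat A_{n-1}$.

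There is essentially no obstacle: the result is a routine bookkeeping once the Euler numbers of the generators are known. The only step requiring care is the cyclic identification $e_{n+1}=e_1$ in the torsion pairing, which is dictated by the relations $\langle\delta_n,\overline{{}_n\Scal_j}\rangle=\langle\overline{{}_n\Scal_j},\delta_n\rangle=0$ given in \eqref{eq:line3}; this is precisely what replaces the finite-type Cartan matrix of $A_{n-1}$ by its affine counterpart $\widehat A_{n-1}$ in the symmetric form.
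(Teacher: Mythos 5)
Your proof is correct and is exactly the intended argument: the paper leaves this corollary unproved precisely because it follows from bilinearity of the Euler form, the coordinate decomposition $\overline{\Fcal}=r\,\overline{\Ocal_{X_n}}+\sum_i d_i\,\overline{{}_n\Scal_i}$, and the table of pairings in the preceding lemma, which is what you carry out. Your reading of the cyclic convention $e_{n+1}=e_1$ (forced by $\langle\overline{{}_n\Scal_j},\delta_n\rangle=\langle\delta_n,\overline{{}_n\Scal_j}\rangle=0$) is also the correct one and matches the paper's definition of $\langle\mathbf d,\mathbf e\rangle$.
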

\begin{remark}
Recall that for two coherent sheaves $E, F$ on $X$ we have
\begin{align}
\langle F, E\rangle=\rk(F)\rk(E)\, (1-g_X)+\rk(F)\deg(E)-\rk(E)\deg(F) \ .
\end{align}
Then $\langle \pi_n^\ast F, \pi_n^\ast E\rangle=\langle F, E\rangle$.
\end{remark}

Define the locally free sheaf on $X_n$
\begin{align}
\Gcal_n\coloneqq\bigoplus_{\ell=1}^{n}\, \Lcal_n^{\otimes \, \ell-n}\simeq \bigoplus_{\ell=1}^{n}\, \Lcal_n^{\otimes \, \ell}\otimes \pi_n^\ast\Ocal_X(-p)\ .
\end{align}
It is a so-called \textit{generating sheaf}\footnote{For an overview of the theory of generating sheaves, see  \cite[Section~2]{art:nironi2008-I} , \cite[Section~2]{art:bruzzosala2015} and \cite[Section~3.2]{art:talpo2017}. Our choice of the generating sheaf differs from \cite[Section~7.2]{art:nironi2008-I} and \cite[Sections~3.2.1 and 3.2.2]{art:talpo2017} by the twisting by $\pi_n^\ast\Ocal_X(-p)$.}  for $X_n$. 
\begin{definition}
Let $\Fcal$ be a coherent sheaf on $X_n$ of class $\overline \Fcal=\big(r,\mathbf d \big)$.  We call the \textit{$n$-th degree} of $\Fcal$ the quantity 
\begin{align}
\deg_n(\Fcal)\coloneqq\frac{1}{n}\deg\big({\pi_n}_\ast(\Fcal\otimes \Gcal_n^\vee)\big)=\frac{1}{n}\sum_{\ell=1}^{n}\,\deg\big({\pi_n}_\ast(\Fcal\otimes \Lcal_n^{\otimes\, n-\ell})\big)=\frac{1}{n}\sum_{i=1}^n\, d_i \ ,
\end{align}
and the \textit{$n$-th slope} of $\Fcal$ the quantity
\begin{align}
\mu_n(\Fcal)\coloneqq\frac{\deg_n(\Fcal)}{\rk(\Fcal)}=\frac{1}{rn}\sum_{i=1}^n\, d_i \ ,
\end{align}
if $\rk(\Fcal)\neq 0$, otherwise $\mu_n(\Fcal)=\infty$.
\end{definition}
\begin{remark}\label{rem:deg}
Note that the $n$-th degree of a coherent sheaf on $X_n$ corresponds to the parabolic degree\footnote{See \cite[Section~1.2]{art:lin2014} for the definition of the parabolic degree.} of the corresponding parabolic sheaf on $X$ with weights $a_i\coloneqq(n-i)/n$ for $i=0, \ldots, n-1$.
\end{remark}
\begin{remark}
Let $M$ be a line bundle on $X$ and $0\leq \ell\leq n-1$ an integer. Then
\begin{align}
\deg_n(\pi_n^\ast M\otimes \Lcal_n^{\otimes \, \ell})=\deg(M)+ \frac{\ell}{n}\ .
\end{align}
Therefore fixing the $n$-th degree of a line bundle on $X_n$ is equivalent to fixing its class. Indeed, the class of a line bundle $\Lcal$ on $X_n$ of $n$-th degree $x\in \frac{1}{n}\,\Z$ is
\begin{align}
\overline \Lcal = \Big(1, \lfloor x \rfloor\, \delta_n +\sum_{i=n-\ell+1}^n\, \mathbf e_{n,i}\Big)
\end{align}
if $\ell=n\{x\}$.
\end{remark}

As one can expect $\deg_n(\Ocal_{X_n})=0$, moreover
\begin{align}
\deg_n(\omega_{X_n})=\deg(\omega_X)+\frac{n-1}{n}=2(g_X-1)+\frac{n-1}{n}\ .
\end{align}
Recall that if $E$ is a coherent sheaf on $X$, we have $\chi(E)=\deg(E)+\rk(E)\chi(\Ocal_X)$ where, as usual, $\chi(E)\coloneqq\langle \overline{\Ocal_X}, \overline E\rangle$. In the stacky case, in order to get a formula similar to this one, it is better to introduce the \textit{averaged Euler characteristic} for $X_n$ given by (cf.\ \cite[Section~2.9]{art:geiglelenzing1987})
\begin{align}
\chi_n(\Ecal)\coloneqq\sum_{i=0}^{n-1}\, \left\langle \overline{\Ocal_{X_n}}, \overline{\Ecal\otimes \omega_{X_n}^{\otimes\, -i}}\right\rangle
\end{align}
for any coherent sheaf $\Ecal$ on $X_n$. It is easy to show that 
\begin{align}
\chi_n(\Ocal_{X_n})=\frac{n^2+n-2g_Xn^2}{2}\ ,
\end{align}
and for any coherent sheaf $\Ecal$ on $X_n$
\begin{align}
\chi_n(\Ecal)=n\, \deg_n(\Ecal)+\rk(\Ecal)\chi_n(\Ocal_{X_n})\ .
\end{align}
\begin{definition}
We call \textit{virtual genus} of $X_n$ the quantity
\begin{align}
g_{X_n}\coloneqq1-\frac{1}{n}\chi_n(\Ocal_{X_n})=\frac{2ng_X-n+1}{2}\ .
\end{align}
\end{definition}
Similarly to the non-stacky case, we get $\deg_n(\omega_{X_n})=(1/n)(2g_{X_n}-2)$.

We finish the section by introducing the notion of semistability for coherent sheaves on $X_n$ and characterizing the category of semistable sheaves.
\begin{definition}\label{def:semistability}
Let $\Ecal$ be a coherent sheaf on $X_n$. We say that $\Ecal$ is \textit{semistable} if for any subsheaf $0\neq \Fcal\subsetneq \Ecal$, we have $\mu_n(\Fcal)\leq \mu_n(\Ecal)$. We say that $\Ecal$ is \textit{stable} if the previous inequality is always strict.
\end{definition}
\begin{remark}
Because of Remark~\ref{rem:deg}, a coherent sheaf on $X_n$ is semistable (resp.\ stable) if and only if the corresponding parabolic coherent sheaf on $X$ is semistable \footnote{See \cite[Section~1.2]{art:lin2014} for the definition of semistability for parabolic coherent sheaves.} (resp.\ stable).
\end{remark}
Let $\nu\in \Q\cup\{\infty\}$. We shall denote by $\Coh^{\mathsf{ss}, \nu}(X_n)$ the category of semistable coherent sheaves on $X_n$ of $n$-th slope $\nu$. In particular, $\Coh^{\mathsf{ss}, \infty}(X_n)=\Tor(X_n)$.
\begin{proposition}[{\cite[Proposition~1.6]{art:lin2014}}]\label{prop:cohss}
The following hold:
\begin{itemize}\itemsep0.2cm
\item $\Coh^{\mathsf{ss}, \nu}(X_n)$ is abelian, Artinian and Noetherian.
\item $\Hom(\Ecal, \Fcal)=0$ if $\Ecal\in \Coh^{\mathsf{ss}, \nu}(X_n)$, $\Fcal\in \Coh^{\mathsf{ss}, \mu}(X_n)$ and $\nu>\mu$.
\item Let $\Ecal$ be a coherent sheaf on $X_n$. Then there exists a unique filtration, called \textit{Harder-Narasimhan filtration}, 
\begin{align}
0=\Ecal_0\subsetneq \Ecal_1\subsetneq \cdots \subsetneq \Ecal_{t-1}\subsetneq \Ecal_t=\Ecal
\end{align}
such that $\mathsf{HN}_i(\Ecal)\coloneqq\Ecal_i/\Ecal_{i-1}$ is semistable for $i=1, \ldots, t$ and 
\begin{align}
\mu_n(\mathsf{HN}_1(\Ecal))> \cdots >\mu_n(\mathsf{HN}_t(\Ecal))\ .
\end{align}
\end{itemize}
\end{proposition}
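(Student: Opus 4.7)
The plan is to follow the classical Harder--Narasimhan argument for slope stability, adapting the standard proof for curves to the stacky setting via the $n$-th degree/slope functions introduced above. The central technical tool is the \emph{seesaw property}: for any short exact sequence $0 \to \Acal \to \Bcal \to \Ccal \to 0$ in $\Coh(X_n)$, additivity of rank and of $n$-degree (both being additive on short exact sequences, the former by definition, the latter by exactness of ${\pi_n}_\ast$ combined with additivity of ordinary degree on $X$) forces either $\mu_n(\Acal) \leq \mu_n(\Bcal) \leq \mu_n(\Ccal)$ or the reverse chain, with equalities occurring simultaneously. Inequalities are interpreted in $\Q \cup \{\infty\}$, with the convention $\mu_n(\Fcal) = \infty$ for a nonzero torsion sheaf $\Fcal$; this is consistent because $\deg_n(\Fcal) > 0$ for any nonzero torsion sheaf on $X_n$, as is immediate from $\overline{{}_n\Scal_i} = (0, \mathbf{e}_{n,i})$ and $\overline{\Tcal_x} = \delta_n$.

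From the seesaw property, the Hom-vanishing is a one-line argument: given a nonzero $f \colon \Ecal \to \Fcal$ with $\Ecal \in \Coh^{\mathsf{ss},\nu}(X_n)$ and $\Fcal \in \Coh^{\mathsf{ss},\mu}(X_n)$, the image $\Ical$ is a nonzero quotient of $\Ecal$ (hence $\mu_n(\Ical) \geq \nu$) and a subobject of $\Fcal$ (hence $\mu_n(\Ical) \leq \mu$), forcing $\nu \leq \mu$. Exactly the same dichotomy shows, for a morphism between two objects of $\Coh^{\mathsf{ss},\nu}(X_n)$, that $\mu_n(\Ical) = \nu$ and hence that the kernel and cokernel again have slope $\nu$; semistability of the kernel is automatic (subobjects of the kernel are subobjects of $\Ecal$) while semistability of the cokernel follows by lifting subobjects of $\Fcal/\Ical$ to subobjects of $\Fcal$ containing $\Ical$ and applying the seesaw property once more. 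This gives the abelianness.

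For the Artinian and Noetherian properties, fix $\Ecal \in \Coh^{\mathsf{ss},\nu}(X_n)$ and consider a chain of subobjects in $\Coh^{\mathsf{ss},\nu}(X_n)$. Their ranks are bounded integers and so stabilize; once the rank $r$ is fixed, the $n$-degree is forced to equal $r\nu$. A strict inclusion $\Fcal_1 \subsetneq \Fcal_2$ with identical rank and $n$-degree would produce a nonzero torsion quotient of vanishing $n$-degree, which is impossible by the positivity noted above. Hence both chain conditions hold.

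Finally, for the Harder--Narasimhan filtration, the strategy is the standard one: define $\mu_{\max}(\Ecal) \coloneqq \sup \{ \mu_n(\Fcal) \mid 0 \neq \Fcal \subseteq \Ecal \}$ and show this supremum is attained by a unique maximal subobject $\Ecal_1 \subseteq \Ecal$, which is automatically semistable (otherwise a destabilizing subsheaf would violate the maximality of $\mu_{\max}$, and maximality of the rank ensures $\Ecal_1$ contains all subobjects of slope $\mu_{\max}$). Iterating on $\Ecal/\Ecal_1$ yields the filtration, with strict decrease of slopes built in. The main obstacle, as always, is establishing the boundedness needed for $\mu_{\max}(\Ecal)$ to be finite and attained: one must show that among subsheaves of $\Ecal$ of bounded rank the $n$-degree is bounded above. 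This reduces via the exact functor ${\pi_n}_\ast$ and the identity $n \cdot \deg_n(\Fcal) = \deg({\pi_n}_\ast (\Fcal \otimes \Gcal_n^\vee))$ to the analogous boundedness statement for coherent subsheaves of ${\pi_n}_\ast(\Ecal \otimes \Gcal_n^\vee)$ on the smooth projective curve $X$, which is classical. Uniqueness of $\Ecal_1$ follows from the Hom-vanishing together with the observation that the sum of two subsheaves of maximal slope is again of that slope.
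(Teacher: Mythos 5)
The paper does not actually prove this proposition; it is quoted from Lin's work, and the appendix reproves only the Hall-algebra computations from that reference, not this semistability statement. Your argument is the standard Harder--Narasimhan machinery, and you have correctly identified the two genuinely stacky inputs: strict positivity of $\deg_n$ on nonzero torsion sheaves (from $\overline{{}_n\Scal_i}=(0,\mathbf e_{n,i})$), which drives both the seesaw dichotomy at slope $\infty$ and the chain conditions, and the reduction of the boundedness of degrees of subsheaves to the classical statement on $X$ via the exact functor ${\pi_n}_\ast(-\otimes\Gcal_n^\vee)$. The only point to tidy is the Artinian/Noetherian step in the case $\nu=\infty$, where the phrase ``the $n$-degree is forced to equal $r\nu$'' is vacuous; there one should instead argue directly that $\Tor(X_n)$ is a length category because $\deg_n$ takes values in $\tfrac1n\Z_{\geq 0}$ and is strictly positive on nonzero objects. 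With that minor repair the proof is complete and is, as far as one can tell, the same route as in the cited reference.
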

The HN-type of a coherent sheaf $\Ecal$ is $\underline \alpha=(\alpha_1, \ldots, \alpha_t)$ with $\alpha_i\coloneqq\overline{\mathsf{HN}_i(\Ecal)}$ for $i=1, \ldots, t$. Note that $\sum_i\, \alpha_i=\overline \Ecal$.

Let $\nu, \nu'\in \Q\cup \{\infty\}$, $\nu\leq \nu'$, and let $\Coh^{[\nu, \nu']}(X_n)$ be the full subcategory of $\Coh(X_n)$ consisting of coherent sheaves $\Ecal$ for which $\nu\leq \mu_n(\mathsf{HN}_t(\Ecal))<\cdots <\mu_n(\mathsf{HN}_1(\Ecal))\leq \nu'$. Similarly, one can define the full subcategories $\Coh^{\geq \nu}(X_n)$, $\Coh^{>\nu}(X_n)$, $\Coh^{\leq \nu}(X_n)$, and $\Coh^{<\nu}(X_n)$. The following result follows from the same arguments as in \cite[Lemma~1.8]{art:lin2014} and \cite[Lemma~2.2]{art:burbanschiffmann2012}.
\begin{lemma}\label{lem:finiteness}
For any $\nu, \nu'\in \Q\cup \{\infty\}$, $\nu< \nu'$, the following hold.
\begin{itemize}\itemsep0.2cm
\item $\Coh^{[\nu, \nu']}(X_n)$ is stable under extensions.
\item For any $\alpha\in \Ksfnum(X_n)$, the number of isomorphism classes of coherent sheaves in $\Coh^{[\nu, \nu']}(X_n)$ (resp.\ $\Coh^{\geq \nu}(X_n)$, $\Coh^{\leq \nu}(X_n)$) of class $\alpha$ is finite.
\item Let $\alpha\in \Ksfnum(X_n)$ and $\Ecal\in \Coh(X_n)$. The number of isomorphism classes of subsheaves $\Fcal\subset \Ecal$ (resp.\ quotient sheaves $\Ecal\twoheadrightarrow \Fcal$) of class $\alpha$ is finite.
\item Let $\alpha\in \Ksfnum(X_n)$ and  $\Fcal, \Gcal\in \Coh(X_n)$. The number of pairs of subsheaves $(\Hcal\subset \Fcal, \Kcal\subset \Gcal)$ (resp.\ pairs of quotient sheaves $(\Fcal\twoheadrightarrow \Hcal, \Gcal\twoheadrightarrow \Kcal)$) satisfying $\overline \Hcal+\overline \Kcal=\alpha$ is finite.
\end{itemize}
\end{lemma}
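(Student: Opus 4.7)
The plan is to adapt to the root stack $X_n$ the standard arguments for coherent sheaves on a smooth projective curve, as in \cite[Lemma~1.8]{art:lin2014} and \cite[Lemma~2.2]{art:burbanschiffmann2012}. All four items will be reduced to a single key input, namely the finiteness of isomorphism classes of semistable sheaves of fixed numerical class on $X_n$ over the finite field $\F_q$.

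For the first item, I would start from an exact sequence $0\to\Ecal'\to\Ecal\to\Ecal''\to 0$ with $\Ecal',\Ecal''\in\Coh^{[\nu,\nu']}(X_n)$ and control $\mu_{\max}(\Ecal)$ and $\mu_{\min}(\Ecal)$ by the usual Harder--Narasimhan argument. Let $\Gcal=\mathsf{HN}_1(\Ecal)$; since $\Gcal$ is semistable, either $\Gcal\subset\Ecal'$ and then $\mu_n(\Gcal)\le\mu_{\max}(\Ecal')\le\nu'$, or the image of $\Gcal$ in $\Ecal''$ is nonzero and semistability together with Proposition~\ref{prop:cohss}(2) gives $\mu_n(\Gcal)\le\mu_{\max}(\Ecal'')\le\nu'$. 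Dualising with $\mathsf{HN}_t(\Ecal)$ and the subsheaf $\Ecal'\cap\ker(\mathsf{HN}_t)$ yields $\mu_n(\mathsf{HN}_t(\Ecal))\ge\nu$.

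For the second item, I would first observe that for a fixed class $\alpha=(r,\mathbf d)$ with slope $\mu$, the HN types $(\alpha_1,\dots,\alpha_t)$ with all pieces of slope in $[\nu,\nu']$ form a finite set (the ranks are positive integers summing to $r$, and for each assignment of ranks the slopes lie in a bounded interval and the integral $n\!\cdot\!\text{slope}$ of each piece is an integer). For each HN type, finiteness of the semistable constituents of each prescribed class is the key input: this follows from a boundedness argument via the generating sheaf $\Gcal_n$ (any semistable $\Fcal$ with $\overline{\Fcal}$ fixed and $\mu_n(\Fcal)$ fixed becomes globally generated after twisting by some fixed power of $\omega_{X_n}$, uniformly in the family, hence appears as an $\F_q$-point of a Quot scheme of finite type over $\F_q$). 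Finiteness of extensions between fixed semistable pieces is immediate since each $\Ext^1$-group is a finite-dimensional $\F_q$-vector space. The variants $\Coh^{\ge\nu}$ and $\Coh^{\le\nu}$ with fixed class reduce to the bounded case: if all HN slopes are $\ge\nu$ and the total class is $\alpha$, then $r_1\mu_1=r\mu-\sum_{i>1}r_i\mu_i\le r\mu-(r-r_1)\nu$ forces an upper bound $\mu_1\le r\mu-(r-1)\nu$, and symmetrically below.

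For the third item, let $\Fcal\subset\Ecal$ be a subsheaf of class $\alpha$. Then $\mu_{\max}(\Fcal)\le\mu_{\max}(\Ecal)$; combined with the upper bound on $\mu_{\max}$ as a function of the fixed class $\alpha$ derived in the preceding paragraph, we get $\Fcal\in\Coh^{[\nu_0,\nu_1]}(X_n)$ for some interval depending only on $\Ecal$ and $\alpha$. By item (2), such $\Fcal$ fall into finitely many isomorphism classes, and for each of them the finitely many embeddings $\Fcal\hookrightarrow\Ecal$ are indexed by the finite set $\Hom(\Fcal,\Ecal)$. Quotients are handled dually via $\Ecal\twoheadrightarrow\Fcal$ and the class $\overline\Ecal-\alpha$. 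Item (4) then follows by applying item (3) to $\Fcal\oplus\Gcal$ and the class $\alpha$, decomposing the resulting sub/quotient sheaves along the direct sum. The main obstacle is the boundedness step in item (2); I expect it to require either the tame Quot scheme machinery of Olsson--Starr for $X_n$ or a direct elementary bound using the filtration \eqref{eq:filtrationlinebundles} by line bundles together with the bounded degree range dictated by the slope constraints on $X_n$.
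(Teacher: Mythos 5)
Your proposal is correct and follows essentially the same route as the paper, which does not write out a proof but defers to \cite[Lemma~1.8]{art:lin2014} and \cite[Lemma~2.2]{art:burbanschiffmann2012} and, in the remark immediately following the lemma, isolates exactly the key input you identify: boundedness of the family of semistable sheaves on $X_n$ of fixed rank and $n$-th degree. Your reductions (HN-filtration bookkeeping, finiteness of $\Ext^1$ over $\F_q$, slope bounds for subsheaves, and the passage to $\Fcal\oplus\Gcal$ for the last item) are the standard arguments those references use, and the last step mirrors the paper's own proof of Corollary~\ref{cor:finiteness-infty-two}.
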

\begin{remark}
We want to stress that one of the key ingredient to the proof of the previous lemma is that the moduli space of semistable coherent sheaves on $X_n$ of fixed rank and $n$-th degree is a scheme of finite type.
\end{remark}

\subsection{Relations between different root stacks}\label{sec:different-coh}

Let $n,k$ be positive integers and consider the root stacks $X_n$ and $X_{kn}$. First, note that $X_{kn}\simeq \sqrt[k]{(\Lcal_n,s_n)/X_n}$, where $s_n$ is the canonical section of $\Lcal_n$. Hence, the morphism $\pi_{kn}\colon X_{kn}\to X$ factors through the natural projection $\pi_{kn,n}\colon X_{kn}\to X_n$. Moreover, the following formulas hold:
\begin{align}\label{eq:sn-n}
\pi_{kn,n}^\ast\Lcal_n\simeq \Lcal_{kn}^{\otimes\, k}\quad\text{and}\quad{\pi_{kn,n}}_\ast\Lcal_{kn}^{\otimes\, \ell}\simeq \Lcal_{n}^{\otimes\, {}_k\lfloor\ell\rfloor}\ .
\end{align}
Consider the torsion sheaf ${}_n\Scal_i^{(j)}\in \Tor_{p_n}(X_n)$, for $i= 1, \ldots, n$ and $j\in\Z_{>0}$, and the torsion sheaf ${}_{kn}\Scal_h^{(\ell)}\in \Tor_{p_{kn}}(X_{kn})$, for $h=1, \ldots, kn$ and $\ell\in\Z_{>0}$. By applying the exact functors $\pi_{kn,n}^\ast$ and ${\pi_{kn,n}}_\ast$ to the short exact sequence \eqref{eq:torsion-orbifold} we get
\begin{align}\label{eq:simple-independent}
\pi_{kn,n}^\ast \big({}_n\Scal_i^{(j)}\big)\simeq {}_{kn}\Scal_{ki}^{(kj)}\quad\text{and}\quad {\pi_{kn,n}}_\ast \big({}_{kn}\Scal_h^{(\ell)}\big)\simeq {}_n\Scal_{-{}_k\lfloor -h\rfloor}^{({}_k \lfloor \ell-h\rfloor-{}_k \lfloor -h\rfloor)}\ .
\end{align}
Here the exactness of $\pi_{kn,n}^\ast$ follows from the flatness of $\pi_{kn,n}$, since it can be interpreted as a root stack morphism (cf.\ \cite[Proposition~2.2.9]{art:talpovistoli2014}). More generally, a coherent sheaf $\Fcal$ on $X_n$, which corresponds to the parabolic sheaf on $X$
\begin{align}
\begin{aligned}
  \begin{tikzpicture}[xscale=1.5,yscale=-1.2]
    \node (A0_1) at (1, 0) {$0$};
    \node (A0_2) at (2, 0) {$\frac{1}{n}$};
    \node (A0_3) at (3, 0) {$\frac{2}{n}$};
    \node (A0_4) at (4, 0) {$\cdots$};
   \node (A0_5) at (5, 0) {$\frac{n-1}{n}$};
     \node (A0_6) at (6, 0) {$1$};    
    \node (A1_1) at (1, 1) {$F_0$};
    \node (A1_2) at (2, 1) {$F_\frac{1}{n}$};
    \node (A1_3) at (3, 1) {$F_\frac{2}{n}$};
    \node (A1_4) at (4, 1) {$\cdots$};
   \node (A1_5) at (5, 1) {$F_\frac{n-1}{n}$};
     \node (A1_6) at (6, 1) {$F_1$};        
  \path (A1_1) edge [->]node [auto] {$\scriptstyle{f_0}$} (A1_2);
 \path (A1_2) edge [->]node [auto] {$\scriptstyle{f_1}$} (A1_3);
  \path (A1_5) edge [->]node [auto] {$\scriptstyle{f_{n-1}}$} (A1_6);
  \end{tikzpicture} 
  \end{aligned}
\end{align}
is sent, via $\pi_{kn,n}^\ast$, to the coherent sheaf on $X_{kn}$ corresponding to the following parabolic sheaf on $X$:
\begin{multline}
 \begin{tikzpicture}[xscale=1.6,yscale=-1.2]
\node (A0_1) at (1, 0) {$0$};
\node (A0_2) at (2, 0) {$\frac{1}{kn}$};
\node (A0_3) at (3, 0) {$\frac{2}{kn}$};
\node (A0_4) at (3.5, 0) {$\cdots$};
\node (A0_5) at (4, 0) {$\frac{k-1}{kn}$};
\node (A0_6) at (5, 0) {$\frac{1}{n}$};
\node (A0_7) at (6, 0) {$\frac{k+1}{kn}$};
    \node (A0_8) at (6.5, 0) {$\cdots$};
\node (A1_1) at (1, 1) {$F_0$};
\node (A1_2) at (2, 1) {$F_0$};
\node (A1_3) at (3, 1) {$F_0$};
\node (A1_4) at (3.5, 1) {$\cdots$};
\node (A1_5) at (4, 1) {$F_0$};
\node (A1_6) at (5, 1) {$F_\frac{1}{n}$};
\node (A1_7) at (6, 1) {$F_\frac{1}{n}$};
    \node (A1_8) at (6.5, 1) {$\cdots$};
\path (A1_1) edge [->]node [auto] {$\scriptstyle{\mathsf{id}}$} (A1_2);
\path (A1_2) edge [->]node [auto] {$\scriptstyle{\mathsf{id}}$} (A1_3);
\path (A1_2) edge [->]node [auto] {$\scriptstyle{\mathsf{id}}$} (A1_3);
\path (A1_5) edge [->]node [auto] {$\scriptstyle{f_{1}}$} (A1_6);
\path (A1_6) edge [->]node [auto] {$\scriptstyle{\mathsf{id}}$} (A1_7);
\end{tikzpicture} \\
  \begin{tikzpicture}[xscale=1.6,yscale=-1.2]
    \node (A0_8) at (6.5, 0) {$\cdots$};
    \node (A0_9) at (7, 0) {$\frac{kn-k-1}{kn}$};
    \node (A0_10) at (8, 0) {$\frac{n-1}{n}$};
    \node (A0_11) at (9, 0) {$\frac{kn-k+1}{kn}$};
     \node (A0_12) at (9.5, 0) {$\cdots$};
   \node (A0_13) at (10, 0) {$\frac{kn-1}{kn}$};
     \node (A0_14) at (11, 0) {$1$};    
    \node (A1_8) at (6.5, 1) {$\cdots$};
    \node (A1_9) at (7, 1) {$F_\frac{n-2}{n}$};
    \node (A1_10) at (8, 1) {$F_\frac{n-1}{n}$};
    \node (A1_11) at (9, 1) {$F_\frac{n-1}{n}$};
     \node (A1_12) at (9.5, 1) {$\cdots$};
   \node (A1_13) at (10, 1) {$F_\frac{n-1}{n}$};
     \node (A1_14) at (11, 1) {$F_1$};          
 \path (A1_9) edge [->]node [auto] {$\scriptstyle{f_{n-2}}$} (A1_10);
 \path (A1_10) edge [->]node [auto] {$\scriptstyle{\mathsf{id}}$} (A1_11);
 \path (A1_13) edge [->]node [auto] {$\scriptstyle{f_{n-1}}$} (A1_14);
  \end{tikzpicture} 
\end{multline}
Here, we used the Projection Formula \eqref{prop:projectionformula-finite} and Formula \eqref{eq:sn-n}. Thus, if $\overline \Fcal=\big(r,\mathbf d \big)$, we get
\begin{align}
\overline{\pi_{kn,n}^\ast \Fcal}=&(\rk(\pi_{kn,n}^\ast\Fcal), \mathrm{\mathbf{deg}}\,(\pi_{kn,n}^\ast\Fcal))\\[2pt]
 =&\Big(\rk\big({\pi_{kn}}_\ast(\pi_{kn,n}^\ast\Fcal)\big), \deg\big({\pi_{kn}}_\ast(\pi_{kn,n}^\ast\Fcal)\big), \\
& \deg\big({\pi_{kn}}_\ast((\pi_{kn,n}^\ast\Fcal)\otimes \Lcal_{kn})\big), \ldots, \deg\big({\pi_{kn}}_\ast((\pi_{kn,n}^\ast\Fcal)\otimes \Lcal_{kn}^{\otimes\, kn-1})\big) \Big) \\[2pt]
=&\Big(r, \underbrace{d_1, \ldots, d_1}_{\text{\tiny $k$ times}}, \ldots, \underbrace{d_{n-1}, \ldots, d_{n-1}}_{\text{\tiny $k$ times}}, \underbrace{d_n, \ldots, d_n}_{\text{\tiny $k$ times}}\Big)\ .
\end{align}
Then (cf.\ \cite[Proposition~4.2.11]{art:talpo2017})
\begin{align}\label{eq:rank-independent}
\rk(\pi_{kn,n}^\ast\Fcal)&=\rk(\Fcal)\ , \\ \label{eq:degree-independent}
\deg_{kn}(\pi_{kn,n}^\ast\Fcal)&=\deg_n(\Fcal)\ .
\end{align}
If $\Ecal$ and $\Fcal$ are coherent sheaves on $X_n$, we have
\begin{align}
\Hom(\pi_{kn,n}^\ast \Ecal, \pi_{kn,n}^\ast \Fcal) \simeq \Hom(\Ecal, \Fcal)
\end{align}
since $\pi_{kn,n}$ is a fully faithful functor, and
\begin{align}\label{eq:Ext1different}
\begin{split}
&\Ext^1(\pi_{kn,n}^\ast \Ecal, \pi_{kn,n}^\ast \Fcal)\simeq \Hom(\pi_{kn,n}^\ast \Fcal, \pi_{kn,n}^\ast \Ecal\otimes \omega_{X_{kn}})\\
 &\simeq \Hom(\Fcal, (\pi_{kn,n})_\ast(\pi_{kn,n}^\ast \Ecal\otimes \omega_{X_{kn}}))\simeq \Hom(\Fcal, \Ecal\otimes \omega_{X_n})\simeq \Ext^1(\Ecal, \Fcal)\ .
\end{split}
\end{align}
Thus $\langle \pi_{kn,n}^\ast\Ecal, \pi_{kn,n}^\ast\Fcal\rangle= \langle \Ecal, \Fcal\rangle$.

\subsection{Infinite root stack}\label{sec:infiniterootstack}

Let $X_\infty$ be the inverse limit $\displaystyle \lim_{\genfrac{}{}{0pt}{}{\longleftarrow}{n}}\, X_n$ (cf.\ Definition~\ref{def:infinitestack}). The stacky structure of $X_\infty$ is concentrated at the trivial gerbe $\Bcal\mu_\infty$ over $p$, where $\displaystyle\mu_\infty\coloneqq \lim_{\genfrac{}{}{0pt}{}{\longleftarrow}{n}}\, \mu_n$. 

By Proposition~\ref{prop:coherentsheavesXinfty}, we have an equivalence
\begin{align}\label{eq:Cohinfty}
\lim_{\genfrac{}{}{0pt}{}{\longrightarrow}{n}}\, \Coh(X_n)\to \Coh(X_\infty)\ ,
\end{align} 
where the direct limit on the left-hand-side is obtained via the pullbacks $\pi_{m,n}^\ast\colon \Coh(X_n)\to \Coh(X_m)$ with $n\vert m$. In particular, any coherent sheaf on $X_\infty$ comes ''{uniquely}'' from a coherent sheaf on $X_n$ for some $n$. Here, ``uniquely" means that if there are two such coherent sheaves on $X_n$ and $X_m$ giving the same coherent sheaf on $X_\infty$, they are isomorphic on $X_k$ for some integer $k$ such that $m\vert k$ and $n\vert k$. A similar result holds for morphisms between coherent sheaves on $X_\infty$. 

Define $\Tor_{p_\infty}(X_\infty)$ as the direct limit of all the $\Tor_{p_n}(X_n)$'s. Recall that for any $n\geq 2$ the abelian category $\Tor_{p_n}(X_n)$ is equivalent to the abelian category $\Rep_k^{\mathsf{nil}}\big(A_{n-1}^{(1)}\big)$ of nilpotent representations of the cyclic quiver $A_{n-1}^{(1)}$ of with $n$ vertices over $k$, while for $n=1$, the abelian category $\Tor_{p_1}(X_1)\coloneqq\Tor_{p}(X)$ is equivalent to the abelian category $\Rep_k^{\mathsf{nil}}\big(A_0^{(1)}\big)$ of nilpotent representations of the 1-loop quiver $A_0^{(1)}$ over $k$, hence $\Tor_{p_\infty}(X_\infty) = \displaystyle\lim_{\genfrac{}{}{0pt}{}{\longrightarrow}{n}}\, \Rep_k^{\mathsf{nil}}(A_{n-1}^{(1)})$. For any half-open interval $J=[a,b[\subsetneq S^1_\Q$, with $a<b$, define
\begin{align}\label{eq:SJ}
\Scal_{J}\coloneqq\pi_{\infty, n}^\ast \big({}_n\Scal_{i}^{(k)}\big)\in \Tor_{p_\infty}(X_\infty)\ ,
\end{align}
where $a=h/n$, $b=i/n$ and $k=n(b-a)$.  For simplicity, we will call the half-open intervals as above \textit{strictly rational intervals in $S^1_\Q$}. we will drop the adjective \textit{strict} to mean that we allow $J$ to be equal to $S^1_\Q$.

For any $x\in \Q\cap [0, 1[$, define $\Lcal^{\otimes\, x}\coloneqq\pi_{\infty, n}^\ast\big(\Lcal_n^{\otimes\, i}\big)\in \Coh(X_\infty)$, if $x=i/n$ with $\mathsf{g.c.d.}(i,n)=1$.

The above description of $\Coh(X_\infty)$ implies that
\begin{align} 
\lim_{\genfrac{}{}{0pt}{}{\longrightarrow}{n}}\, \Ksf(X_n)\simeq \Ksf(X_\infty)\ .
\end{align} 
In the following, we shall give two equivalent definitions of the numerical $\Ksf$-theory of $X_\infty$. First, note that Formula \eqref{eq:rank-independent} shows that the rank is a well defined quantity for any coherent sheaf on $X_\infty$. Define
\begin{align}
\Ksfnum(X_\infty)\coloneqq\lim_{\genfrac{}{}{0pt}{}{\to}{n}} \Ksfnum(X_n)=\lim_{\genfrac{}{}{0pt}{}{\to}{n}} \big(\Z\oplus\Z^n\big) \ ,
\end{align}
where the directed set is $\Z_{>0}$ with the ordering given by divisibility and the maps $\Z\oplus\Z^n \to \Z\oplus \Z^m$ for $n \vert m$ are given by $\big(r, \mathbf d\big)\mapsto \big(r, \Phi_{m,n}(\mathbf d)\big)$, where
\begin{align}
\Phi_{m,n}(\mathbf d)\coloneqq d_1\, \sum_{j=1}^{m/n}\, \mathbf e_{m, j} + d_2\, \sum_{j=m/n+1}^{2m/n}\, \mathbf e_{m, j}+\cdots + d_n\, \sum_{j=(n-1)m/n+1}^{m}\, \mathbf e_{m, j}\ .
\end{align}
Thus we have a map $\Ksf(X_\infty) \to \Ksfnum(X_\infty)$ given at the level of classes of coherent sheaves by
\begin{align}
[\Fcal]  \mapsto \overline \Fcal=\big(\rk(\Fcal), \Phi_{m,n}(\mathrm{\mathbf{deg}}\, (\Fcal_n))\big)_{n\vert m} \in \lim_{\genfrac{}{}{0pt}{}{\to}{m}} \big(\Z\oplus\Z^m\big)
\end{align}
for any $\Fcal\simeq \pi_{\infty,n}^\ast(\Fcal_n)$ for some integer $n$ and some coherent sheaf $\Fcal_n \in \Coh(X_n)$. Moreover, the Euler form is defined on $X_\infty$ as
\begin{align}
\langle \Fcal, \Ecal\rangle = \langle \pi_{m,n}^\ast\Fcal_n, \pi_{m,h}^\ast\Ecal_h\rangle = rs(1-g_X)+r e_1-d_n s+\langle \Phi_{m,n}(\mathbf d), \Phi_{m,h}(\mathbf e) \rangle\ ,
\end{align}
for any pair $\Fcal\simeq\pi_{\infty,n}^\ast(\Fcal_n), \Ecal\simeq \pi_{\infty,h}^\ast(\Ecal_h)$ and any positive integer $m$ such that $n \vert m, h \vert m$, where $\overline \Fcal_n= (r, \mathbf d)$ and $\overline \Ecal_h=(s, \mathbf e)$. Here we used Formula \eqref{eq:Eulerform}.

Let us give an equivalent definition of $\Ksfnum(X_\infty)$. Let $\Z_0^{S^1_\Q}\subset \Z^{S^1_\Q}$ be the group consisting of functions $f\colon S^1_\Q \to \Z$ for which there exists a positive integer $n$ such that $f$ is constant on all intervals of the form $[\frac{i-1}{n},\frac{i}{n}[$ with $i\in\Z_{\geq 0}$. We define $\N_0^{S^1_\Q}$ in the same fashion. For any rational interval $J$ in $S^1_\Q$, we denote by $\chi_J$ its characteristic function. Denote by $\delta$ the characteristic function $\chi_{S^1_\Q}$ of $S^1_\Q$. 

We claim that
\begin{align}
\lim_{\genfrac{}{}{0pt}{}{\to}{n}}\big(\Z\oplus\Z^n\big)\simeq \Z\oplus \Z_0^{S^1_\Q}\ .
\end{align}
Indeed, the map $\Z \oplus \Z^{\oplus n} \to \Z \oplus \Z^{S^1_\Q}_0$ defined by $(r,\mathbf{d}) \mapsto (r,f_{\mathbf{d}})$, where
\begin{align}\label{eq:fd}
f_{\mathbf d}\coloneqq\sum_{i=1}^{n}\, d_i\chi_{[\frac{i-1}{n}, \frac{i}{n}[}\ 
\end{align}
gives rise to a homomorphism $\displaystyle \lim_{\genfrac{}{}{0pt}{}{\to}{n}} \big(\Z\oplus\Z^n\big) \to \Z \oplus \Z_0^{S^1_\Q}$, whose inverse is the map $(r,f) \mapsto (r,\mathbf{d}_f)$ where if $f$ is constant on all intervals of the form $[\frac{i-1}{n},\frac{i}{n}[$ with $i\in\Z_{\geq 0}$ for some positive integer $n\geq 2$ then
\begin{align}\label{eq:df}
\mathbf d_f\coloneqq(f(0), f(1/n), \ldots, f((n-1)/n))\ .
\end{align}

In the latter description of $\Ksfnum(X_\infty)$, by \eqref{eq:Eulerform}, the Euler form is
\begin{align}
\langle (r, f), (s, g)\rangle =rs(1-g_X)+rg_{+}(0)-sf_{-}(0)+\sum_x\, f_{-}(x)(g_{-}(x)-g_{+}(x))\ ,
\end{align}
where we have set $h_{\pm}(x)\coloneqq\lim_{t \to 0, t >0} h(x\pm t)$. As usual, $( (r, f), (s, g))=\langle (r, f), (s, g)\rangle+\langle (s, g), (r, f)\rangle$. In addition, we set $\langle f, g\rangle \coloneqq\langle (0, f), (0, g)\rangle $ and $(f, g)\coloneqq\langle f, g\rangle+\langle g, f\rangle$.
\begin{remark}
Let $\underline{\dim}\colon \Ksf(\Tor_{p_\infty}(X_\infty))\to \Z_0^{S^1_\Q}$ be the restriction of the map $\Ksf(X_\infty)\to \Ksfnum(X_\infty)$. The map $\underline{\dim}$ is an isomorphism. Indeed, for any $n\in \Z_{\geq 2}$, the map $\underline{\dim}_n$ introduced in Formula \eqref{eq:dim} is an isomorphism; moreover, the exact functor $\pi_{\infty, n}^\ast\colon \Tor_{p_n}(X_n)\to \Tor_{p_\infty}(X_\infty)$ induces a corresponding morphism $\pi_{\infty, n}^\ast$ in $\Ksf$-theory and the composition
\begin{align}
\Z^n\simeq \Z^{\Z/n\Z}\simeq \Ksf(\Tor_{p_n}(X_n))\xrightarrow{\pi_{\infty, n}^\ast} \Ksf(X_\infty)\xrightarrow{\underline{\dim}} \Z^{S^1_\Q}
\end{align}
sends $\mathbf d$ to $f_{\mathbf d}$ (the latter is defined in \eqref{eq:fd}). It is clear that if we take the direct limit, we obtain an isomorphism, which coincides with $\underline{\dim}$.

Let $n\in \Z_{\geq 2}$ and $1\leq i\leq n$. Then $\underline{\dim}\big(\Scal_{[(i-1)/n,\, i/n[}\big)=\chi_{[(i-1)/n,\, i/n[}$.
\end{remark}

Finally, by Formula \eqref{eq:degree-independent} we can define the \textit{$\infty$-degree} of a coherent sheaf $\Ecal$ on $X_\infty$ as
\begin{align}
\deg(\Ecal)\coloneqq\deg_n(\Ecal_n)\in \Q \ ,
\end{align}
where $\Ecal_n\in \Coh(X_n)$ is a \textit{representative} of $\Ecal$, i.e., $\pi_{\infty, n}^\ast \Ecal_n\simeq \Ecal$.
\begin{remark}
By Remark~\ref{rem:Ktheorylinebundle}, fixing the class of a line bundle on $X_\infty$ is equivalent to fixing its degree. 
The class of a line bundle $\mathcal{L}$ of $\infty$-degree $x \in \Q$ is equal to 
\begin{align}
\overline \Lcal=\big(1,\lfloor x \rfloor \delta + \chi_{[-\ell/n,1[}\big)
\end{align}
if $\{x\}=\ell/n$ and $\mathsf{gcd}(\ell, n)=1$. 
\end{remark}

For a coherent sheaf $\Ecal$ on $X_\infty$ we can define its \textit{$\infty$-slope} as
\begin{align}
\mu(\Ecal)\coloneqq\frac{\deg(\Ecal)}{\rk(\Ecal)}
\end{align}
if $\rk(\Ecal)\neq 0$, otherwise $\mu(\Ecal)\coloneqq\infty$. We can introduce the notion of (semi)stability for coherent sheaves on $X_\infty$ as in Definition~\ref{def:semistability} and therefore the subcategory $\Coh^{\mathsf{ss}, \nu}(X_\infty)$ of $\Coh(X_\infty)$ consisting of semistable sheaves on $X_\infty$ of slope $\nu$. By \cite[Proposition~4.3.3]{art:talpo2017}, a coherent sheaf $\Ecal$ on $X_\infty$ is semistable if and only if $\Ecal_n$ is semistable, where $\Ecal_n\in \Coh(X_n)$ is a representative of $\Ecal$. Moreover, by \cite[Proposition~4.2.11]{art:talpo2017}, if $\Fcal$ is a semistable sheaf on $X_n$, $\pi_{kn,n}^\ast \Fcal$ is semistable as well for any positive integer $k$. Thus we have
\begin{align}
\Coh^{\mathsf{ss}, \nu}(X_\infty) \simeq \lim_{\genfrac{}{}{0pt}{}{\to}{n}}\, \Coh^{\mathsf{ss}, \nu}(X_n)\ .
\end{align}
\begin{remark}\label{rem:nonfinitetype}
A proposition analogous to Proposition~\ref{prop:cohss} can be stated for $X_\infty$, as we can introduce the subcategories $\Coh^{[\nu, \nu']}(X_\infty)$, $\Coh^{\geq \nu}(X_\infty)$, $\Coh^{>\nu}(X_\infty)$, $\Coh^{\leq \nu}(X_\infty)$, and $\Coh^{<\nu}(X_\infty)$ for $\nu, \nu'\in \Q\cup\{\infty\}$, with $\nu\leq \nu'$. 
\end{remark}

\begin{lemma}\label{lem:finiteness-infty-one}
Let $(r,f)\in \Ksfnum(X_\infty)$ and $\Fcal\in \Coh(X_\infty)$. There exists $m$ (depending on $\Fcal$ and $f$) such that any subsheaf $\Gcal\subset \Fcal$ of class $(r,f)$ has a representative in $\Coh(X_m)$.
\end{lemma}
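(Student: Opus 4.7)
The plan is to pick a single level $m$ that witnesses both the resolution of $\Fcal$ and the combinatorics of $f$, and then to show that any subsheaf $\Gcal\subset\Fcal$ of class $(r,f)$ automatically descends to $X_m$ via a rank-degree argument applied to the associated parabolic sheaves on the curve $X$. By Proposition~\ref{prop:coherentsheavesXinfty} there exist $n_0$ and $\Fcal_{n_0}\in\Coh(X_{n_0})$ with $\Fcal\simeq\pi_{\infty,n_0}^\ast\Fcal_{n_0}$, and since $f\in\Z_0^{S^1_\Q}$ there exists $n_1$ such that $f$ is constant on each interval $\big[\tfrac{i-1}{n_1},\tfrac{i}{n_1}\big[$, $i\in\Z$. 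Set $m\coloneqq\mathrm{lcm}(n_0,n_1)$ and $\Fcal_m\coloneqq\pi_{m,n_0}^\ast\Fcal_{n_0}$, so that $\Fcal\simeq\pi_{\infty,m}^\ast\Fcal_m$.

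Now let $\Gcal\subset\Fcal$ be of class $(r,f)$. By Proposition~\ref{prop:coherentsheavesXinfty} together with Remark~\ref{rem:coherentsheaves}, after enlarging the level if necessary, I can pick $N$ with $m\mid N$ and $\Gcal_N\in\Coh(X_N)$ satisfying $\Gcal\simeq\pi_{\infty,N}^\ast\Gcal_N$. Set $\Fcal_N\coloneqq\pi_{N,m}^\ast\Fcal_m$. Full faithfulness and exactness of $\pi_{\infty,N}^\ast$ (Proposition~\ref{prop:projectionformula} and the ensuing corollary) promote the inclusion $\Gcal\hookrightarrow\Fcal=\pi_{\infty,N}^\ast\Fcal_N$ to an injection $\iota\colon\Gcal_N\hookrightarrow\Fcal_N$ in $\Coh(X_N)$. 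The goal becomes showing that $\Gcal_N$ lies in the essential image of $\pi_{N,m}^\ast$. I translate the situation via Theorem~\ref{thm:rootvsparabolic} into an inclusion $(G^{(v)})\hookrightarrow(F^{(v)})$ of parabolic sheaves on $X$ with denominators in $\tfrac{1}{N}\N$. Applying Proposition~\ref{prop:projectionformula-finite} and the formulas \eqref{eq:sn-n} to $\Fcal_N=\pi_{N,m}^\ast\Fcal_m$ shows that for every $v$ lying in a block $\big[\tfrac{k-1}{m},\tfrac{k}{m}\big[$ the sheaf $F^{(v)}$ is canonically identified with $(\pi_m)_\ast(\Fcal_m\otimes\Lcal_m^{\otimes k-1})$ and all internal transitions of this block are identities.

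For $v\le v'$ in the same block, naturality forces $G^{(v)}\subset G^{(v')}$ as subsheaves of the common sheaf $F^{(v)}=F^{(v')}$. Both subsheaves have rank $r$ on $X$ (tensoring by a line bundle and pushing forward along $\pi_N$ preserve rank), and by the definition of the class and the constancy of $f$ on the block each has degree $f\big((k-1)/m\big)$. Hence the quotient $G^{(v')}/G^{(v)}$ is a coherent sheaf on the smooth projective curve $X$ of rank $0$ and degree $0$, so it vanishes and $G^{(v)}=G^{(v')}$. Thus the parabolic sheaf attached to $\Gcal_N$ has identity transitions on every block $\big[\tfrac{k-1}{m},\tfrac{k}{m}\big[$, which by Theorem~\ref{thm:rootvsparabolic} means that $\Gcal_N\simeq\pi_{N,m}^\ast\Gcal_m$ for some $\Gcal_m\in\Coh(X_m)$; then $\Gcal\simeq\pi_{\infty,m}^\ast\Gcal_m$, as required.

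The main potential obstacle is the final rank-degree vanishing step: one needs that the parabolic values $G^{(v)}$ are genuine coherent subsheaves of the \emph{same} coherent sheaf $F^{(v)}$ on the curve $X$ (and not merely on $X_\infty$), so that the classical fact ``rank $0$ and degree $0$ on a smooth projective curve implies zero'' applies. This is fine even though $\Gcal_N$ need not be torsion-free: the entire argument takes place on $X$ after pushforward by $\pi_N$, and additivity of rank and degree in short exact sequences on $X$ forces the quotient to vanish without any purity hypothesis on $\Gcal_N$.
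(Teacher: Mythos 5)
Your proof is correct and follows essentially the same route as the paper's: both pass to the parabolic presentation, choose $m$ so that $f$ is constant on the $\tfrac{1}{m}$-blocks and $\Fcal$ descends to $X_m$, and conclude that the internal transition maps of $\Gcal$ are injections between coherent sheaves on $X$ of equal rank and degree, hence isomorphisms. Your write-up merely makes explicit the rank-degree vanishing of the quotient, which the paper leaves implicit.
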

\begin{proof}
To see this, we will use the language of parabolic sheaves. Fix $n$ such that $\Fcal \in \Coh(X_n)$.  For any multiple $k$ of $n$, the sheaf $\pi_{k,n}^\ast(\Fcal)\in \Coh(X_k)$ admits a presentation as a parabolic sheaf
\begin{align}
\begin{aligned}
  \begin{tikzpicture}[xscale=1.6,yscale=-1.2]
    \node (A0_1) at (1, 0) {$0$};
    \node (A0_2) at (2, 0) {$\frac{1}{k}$};
    \node (A0_3) at (3, 0) {$\frac{2}{k}$};
    \node (A0_4) at (3.5, 0) {$\cdots$};
    \node (A0_5) at (4, 0) {$\frac{1}{n}-\frac{1}{k}$};
    \node (A0_6) at (5, 0) {$\frac{1}{n}$};
    \node (A0_7) at (6, 0) {$\frac{1}{n}+\frac{1}{k}$};
    \node (A0_8) at (6.5, 0) {$\cdots$};
    \node (A1_1) at (1, 1) {$F_0$};
    \node (A1_2) at (2, 1) {$F_{\frac{1}{k}}$};
    \node (A1_3) at (3, 1) {$F_{\frac{2}{k}}$};
    \node (A1_4) at (3.5, 1) {$\cdots$};
   \node (A1_5) at (4, 1) {$F_{\frac{k-n}{kn}}$};
    \node (A1_6) at (5, 1) {$F_\frac{1}{n}$};
    \node (A1_7) at (6, 1) {$F_\frac{k+n}{kn}$};
    \node (A1_8) at (6.5, 1) {$\cdots$};
  \path (A1_1) edge [->]node [auto] {$\scriptstyle{\mathsf{id}}$} (A1_2);
 \path (A1_2) edge [->]node [auto] {$\scriptstyle{\mathsf{id}}$} (A1_3);
 \path (A1_2) edge [->]node [auto] {$\scriptstyle{\mathsf{id}}$} (A1_3);
  \path (A1_5) edge [->]node [auto] {$\scriptstyle{}$} (A1_6);
   \path (A1_6) edge [->]node [auto] {$\scriptstyle{\mathsf{id}}$} (A1_7);
  \end{tikzpicture} 
  \end{aligned}
\end{align}
in which all the maps arriving to a label $x \not\in \frac{1}{n}\mathbb{Z}$ are isomorphisms. There exists $m$ such that $n | m$ and the function $f$ is locally constant on any interval of the form $[\frac{i}{m}, \frac{i+1}{m}[$. For any multiple $k$ of $m$, any $\Gcal \in \Coh(X_k)$ of class $(r,f)$ admits a presentation as a parabolic sheaf $(\Gcal_{x})_{x \in \frac{1}{k}\Z}$ such that $\overline{\Gcal_{x-1/k}}=\overline{\Gcal_{x}}$ if $x \notin \frac{1}{m}\Z$. Hence if $\Gcal \subset \Fcal$ the morphism $\Gcal_{x-1/k} \to \Gcal_{x}$ is an isomorphism as soon as $x \notin \frac{1}{m}\Z$. This means that there exists $\Gcal' \in \Coh(X_m)$ such that $\Gcal \simeq \pi_{k,m}^\ast(\Gcal')$.
\end{proof}

\begin{corollary}\label{cor:finiteness-infty-one}
Let $(r,f)\in \Ksfnum(X_\infty)$ and $\Fcal\in \Coh(X_\infty)$. The number of isomorphism classes of subsheaves $\Gcal\subset \Fcal$ of class $(r,f)$ is finite.
\end{corollary}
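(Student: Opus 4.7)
The plan is to descend the problem to a single finite root stack $X_m$ and invoke the finiteness statement already proven there. First I would fix an integer $n$ and a representative $\Fcal_n \in \Coh(X_n)$ with $\Fcal \simeq \pi_{\infty,n}^\ast \Fcal_n$. Lemma~\ref{lem:finiteness-infty-one} then supplies an integer $m$, which after replacing by a common multiple I may assume is divisible by $n$, such that every subsheaf $\Gcal \subset \Fcal$ of class $(r,f)$ has the form $\Gcal \simeq \pi_{\infty,m}^\ast \Gcal'$ for some $\Gcal' \in \Coh(X_m)$. Setting $\Fcal_m \coloneqq \pi_{m,n}^\ast \Fcal_n$ then produces a representative of $\Fcal$ at level $m$.

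Next, since $\pi_{\infty,m}^\ast$ is fully faithful (the corollary to Proposition~\ref{prop:projectionformula}), any inclusion $\iota \colon \Gcal \hookrightarrow \Fcal$ in $\Coh(X_\infty)$ lifts uniquely to a morphism $\iota' \colon \Gcal' \to \Fcal_m$ in $\Coh(X_m)$. This $\iota'$ is itself a monomorphism: $\pi_{\infty,m}^\ast$ is exact and faithful (being pullback along a faithfully flat morphism), so if $\Kcal = \ker(\iota')$ then $\pi_{\infty,m}^\ast \Kcal = \ker(\iota) = 0$ forces $\Kcal = 0$. Moreover the class $\overline{\Gcal'} \in \Ksfnum(X_m) \simeq \Z \oplus \Z^m$ is uniquely determined by $(r,f)$: under the identification $\Ksfnum(X_\infty) \simeq \Z \oplus \Z_0^{S^1_\Q}$ it equals $(r, \mathbf{d}_f)$ with $\mathbf{d}_f$ defined by formula~\eqref{eq:df} at level $m$, which is well-posed precisely because $m$ was chosen so that $f$ is constant on each half-open interval $[(i-1)/m, i/m[$.

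Applying the third bullet of Lemma~\ref{lem:finiteness} to $\Fcal_m$ and the numerical class $(r, \mathbf{d}_f)$ then bounds the number of isomorphism classes of subsheaves $\Gcal' \subset \Fcal_m$ of this class by a finite quantity. Finally, full faithfulness of $\pi_{\infty,m}^\ast$ also implies that it reflects isomorphisms, so distinct isomorphism classes on $X_m$ yield distinct isomorphism classes on $X_\infty$. Consequently the number of isomorphism classes of subsheaves $\Gcal \subset \Fcal$ with class $(r,f)$ is finite.

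The real content is already packaged into Lemma~\ref{lem:finiteness-infty-one}, which ensures that all candidate subsheaves can be detected at one common finite level; the present corollary should therefore be a short bookkeeping argument built on top of the equivalence \eqref{eq:Cohinfty} and the full faithfulness of the $\pi_{\infty,m}^\ast$. The only point that could in principle cause trouble is verifying that a subsheaf on $X_\infty$ descends to a subsheaf (rather than merely a morphism) on $X_m$, but this follows immediately from exactness plus faithfulness of $\pi_{\infty,m}^\ast$ as explained above.
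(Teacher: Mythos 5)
Your proof is correct and follows the same route as the paper: reduce to a single finite level $X_m$ via Lemma~\ref{lem:finiteness-infty-one} and then invoke the finiteness statement of Lemma~\ref{lem:finiteness}. The paper's own proof is just the one-line version of this; your additional verifications (that the inclusion descends to a monomorphism, that the numerical class at level $m$ is determined by $(r,f)$, and that $\pi_{\infty,m}^\ast$ reflects isomorphism classes) are exactly the bookkeeping the paper leaves implicit.
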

\begin{proof} By the above Lemma,  any subsheaf of $\Fcal$ of class $(r,f)$ comes from $\Coh(X_m)$ hence by Lemma~\ref{lem:finiteness} there are only finitely many of them.
\end{proof}

\begin{corollary}\label{cor:finiteness-infty-two}
Let $(r,f)\in \Ksfnum(X_\infty)$ and  $\Fcal, \Gcal\in \Coh(X_\infty)$. The number of pairs of subsheaves $(\Hcal\subset \Fcal, \Kcal\subset \Gcal)$ (resp.\ pairs of quotient sheaves $(\Fcal\twoheadrightarrow \Hcal, \Gcal\twoheadrightarrow \Kcal)$) satisfying $\overline \Hcal+\overline \Kcal=(r,f)$ is finite.
\end{corollary}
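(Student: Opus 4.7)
The plan is to reduce the statement to Corollary~\ref{cor:finiteness-infty-one} via the trick of packaging pairs of (sub)sheaves into single subsheaves of the direct sum $\Fcal\oplus\Gcal\in\Coh(X_\infty)$.

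First, since the numerical class is additive on short exact sequences, one has $\overline{\Hcal\oplus\Kcal}=\overline{\Hcal}+\overline{\Kcal}$ for any coherent sheaves on $X_\infty$. Therefore a pair of subsheaves $(\Hcal\subset\Fcal,\,\Kcal\subset\Gcal)$ with $\overline{\Hcal}+\overline{\Kcal}=(r,f)$ gives rise to a subsheaf $\Hcal\oplus\Kcal\subset\Fcal\oplus\Gcal$ of class exactly $(r,f)$. The assignment $(\Hcal,\Kcal)\mapsto \Hcal\oplus\Kcal$ is moreover injective: each factor is recovered from its image as the intersection of $\Hcal\oplus\Kcal$ with the corresponding canonical summand $\Fcal\oplus 0$ or $0\oplus\Gcal$ of $\Fcal\oplus\Gcal$.

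Next, I would apply Corollary~\ref{cor:finiteness-infty-one} to the coherent sheaf $\Fcal\oplus\Gcal$ and the class $(r,f)\in \Ksfnum(X_\infty)$: the set of \emph{all} subsheaves of $\Fcal\oplus\Gcal$ of class $(r,f)$ is finite. Combined with the injectivity above, this at once yields the finiteness of pairs of subsheaves of the required type.

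For the finiteness of pairs of quotient sheaves $(\Fcal\twoheadrightarrow\Hcal,\,\Gcal\twoheadrightarrow\Kcal)$ with $\overline{\Hcal}+\overline{\Kcal}=(r,f)$, I would pass to kernels: such a pair is the same datum as a pair of subsheaves $(\Fcal'\subset\Fcal,\,\Gcal'\subset\Gcal)$ satisfying $\overline{\Fcal'}+\overline{\Gcal'}=\overline{\Fcal}+\overline{\Gcal}-(r,f)$ (again by additivity of $\overline{\cdot}$), and so this case is reduced to the one already treated. No serious obstacle is expected in this approach; everything becomes a formal consequence of Corollary~\ref{cor:finiteness-infty-one} once the direct-sum packaging is noticed. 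The only minor point to verify is the additivity of $\overline{\cdot}$ on direct sums, which is immediate from the split short exact sequence $0\to\Hcal\to\Hcal\oplus\Kcal\to\Kcal\to 0$ together with the additivity of the rank and degree invariants.
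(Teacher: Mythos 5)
Your proposal is correct and follows essentially the same route as the paper: both reduce the statement to Corollary~\ref{cor:finiteness-infty-one} applied to the direct sum $\Fcal\oplus\Gcal$, with the quotient case handled by passing to kernels. The only (harmless) difference is in how a pair is recovered from the packaged subsheaf: you use injectivity of $(\Hcal,\Kcal)\mapsto\Hcal\oplus\Kcal$ via intersection with the canonical summands, whereas the paper instead invokes the finiteness of direct-sum decompositions of a fixed coherent sheaf; your variant is, if anything, slightly more economical.
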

\begin{proof}
This follows directly from the previous Corollary applied to the sheaf
$\Fcal \oplus \Gcal$ together with the fact that any sheaf $\Lcal \in \Coh(X_m)$ can admit only finitely decompositions (up to isomorphism) as a direct sum $\Lcal=\Hcal \oplus \Kcal$.
\end{proof}

We finish this section, with some results which will be useful later.
\begin{proposition}\label{prop:representative-infty}
Let $(r,f)\in \Ksfnum(X_\infty)$. Let $n$ be the minimum integer for which $f$ is constant on intervals of the form $[i/n, (i+1)/n[$ for $i=0, \ldots, n-1$. Then any locally free sheaf of class $(r,f)$ has a representative in $X_n$.
\end{proposition}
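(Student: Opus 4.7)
The plan is to pass to parabolic sheaves and combine the injectivity from Proposition~\ref{prop:parabolicvectorbundle} with the rank/degree constraints coming from $(r,f)$ to force the transition morphisms in the parabolic presentation to become isomorphisms between any two consecutive weights that lie in a common block $[i/n,(i+1)/n[$. Once this is done, the presentation collapses from $\tfrac{1}{m}\Z$ to $\tfrac{1}{n}\Z$ and descends from $X_m$ to $X_n$.

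First, by the equivalence \eqref{eq:Cohinfty} I pick any representative $\Fcal_m\in\Coh(X_m)$ of $\Fcal$ and, after replacing $m$ by $\mathrm{lcm}(m,n)$, assume $n\mid m$. Because $\pi_{\infty,m}$ is flat and $\pi_{\infty,m}^\ast \Fcal_m\simeq \Fcal$ is locally free, the torsion summand of $\Fcal_m$ (which exists since coherent sheaves on $X_m$ split into a locally free and a torsion part) pulls back to the torsion part of $\Fcal$ and therefore vanishes; hence $\Fcal_m$ is locally free on $X_m$. Under the identification of Section~\ref{sec:infiniterootstack} the class $(r,\mathbf{d})$ of $\Fcal_m$ in $\Ksfnum(X_m)$ satisfies $d_{j+1}=f(j/m)$, and the assumption that $f$ be constant on each interval $[i/n,(i+1)/n[$ translates to the statement that $\mathbf{d}$ is constant on each block of $k\coloneqq m/n$ consecutive coordinates $\{ik+1,\ldots,(i+1)k\}$.

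Next, I invoke Theorem~\ref{thm:rootvsparabolic} to view $\Fcal_m$ as a parabolic locally free sheaf $(F_v)_{v\in\frac{1}{m}\Z}$ on $X$. Proposition~\ref{prop:parabolicvectorbundle} makes every transition $F_v\hookrightarrow F_{v+1/m}$ injective, and by the previous paragraph $F_v$ and $F_{v+1/m}$ share rank $r$ and degree whenever $v$ and $v+1/m$ lie in a common block $[i/n,(i+1)/n[$. The cokernel of such an injection is then a torsion sheaf on the smooth curve $X$ of degree zero, hence zero, so the transition is an isomorphism. This is the technical heart of the argument; no obstacle is expected beyond keeping track of indices.

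Finally, I define a parabolic sheaf $(G_v)_{v\in\frac{1}{n}\Z}$ with denominators in $\tfrac{1}{n}\N$ by $G_{i/n}\coloneqq F_{i/n}$, with transition $G_{i/n}\to G_{(i+1)/n}$ given by the composition
\begin{align}
F_{i/n}\xrightarrow{\sim} F_{i/n+1/m}\xrightarrow{\sim}\cdots\xrightarrow{\sim} F_{(i+1)/n-1/m}\hookrightarrow F_{(i+1)/n},
\end{align}
and with pseudo-period isomorphism inherited from $(F_v)$, which is well defined because the pseudo-period of $(F_v)$ was already determined by its restriction to weights in $\tfrac{1}{n}\Z$. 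By Theorem~\ref{thm:rootvsparabolic} this parabolic sheaf corresponds to an object $\Fcal_n\in\Coh(X_n)$. By construction the pullback description of $\pi_{m,n}^\ast$ recalled in Section~\ref{sec:different-coh} inserts exactly the $k-1$ intervening isomorphisms, so $\pi_{m,n}^\ast \Fcal_n\simeq \Fcal_m$; applying $\pi_{\infty,m}^\ast$ then gives $\pi_{\infty,n}^\ast \Fcal_n\simeq\Fcal$, exhibiting $\Fcal_n$ as the desired representative on $X_n$.
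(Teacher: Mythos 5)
Your proof is correct and takes essentially the same route as the paper's: both pass to the parabolic presentation, use the class $(r,f)$ to see that consecutive pieces within a block $[i/n,(i+1)/n[$ have equal rank and degree, invoke the injectivity of the transition maps for torsion-free parabolic sheaves to upgrade them to isomorphisms, and then descend to $X_n$. Your write-up is slightly more careful in two places the paper glosses over — checking that the chosen representative on $X_m$ is itself locally free, and explicitly constructing the descended parabolic sheaf — but the argument is the same.
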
 
\begin{proof}
Let $\Fcal\in\Coh(X_\infty)$ be a locally free sheaf on $X_\infty$ with $\overline \Fcal=(r, f)$. Assume that $\Fcal\simeq \pi_{\infty, \ell n}^\ast(\widetilde \Fcal)$ for some $\widetilde \Fcal\in \Coh(X_{\ell n})$, with $\ell\geq 1$. Note that
\begin{align}
\mathrm{\mathbf{deg}}\,(\widetilde \Fcal)={}_{\ell n}\mathbf d_f=\Phi_{\ell n, n}({}_n \mathbf d_f)\ .
\end{align}
Hence
\begin{align}
\deg\big({\pi_{\ell n}}_\ast\big(\widetilde \Fcal\otimes \Lcal_{\ell n}^{\otimes \, s\ell +1}\,\big)\big)=\deg\big({\pi_{\ell n}}_\ast\big(\widetilde \Fcal\otimes \Lcal_{\ell n}^{\otimes \, s\ell +2}\,\big)\big)=\cdots = \deg\big({\pi_{\ell n}}_\ast\big(\widetilde \Fcal\otimes \Lcal_{\ell n}^{\otimes \, (s+1)\ell}\,\big)\big)
\end{align}
for $s=0, 1, \ldots, n-1$. By Proposition~\ref{prop:injectivity}, the maps
\begin{align}
{\pi_{\ell n}}_\ast\big(\widetilde \Fcal\otimes \Lcal_{\ell n}^{\otimes \, s\ell +1}\,\big)\to {\pi_{\ell n}}_\ast\big(\widetilde \Fcal\otimes \Lcal_{\ell n}^{\otimes \, s\ell +2}\,\big) \to \cdots \to {\pi_{\ell n}}_\ast\big(\widetilde \Fcal\otimes \Lcal_{\ell n}^{\otimes \, (s+1)\ell}\,\big)
\end{align}
are injective, hence they are isomorphisms. Therefore, there exists $\widetilde \Fcal'\in \Coh(X_n)$ such that $\widetilde \Fcal\simeq \pi_{\ell n, n}^\ast \big( \widetilde \Fcal'\big)$, thus $\Fcal\simeq \pi_{\infty, n}^\ast \big(\widetilde \Fcal'\big)$.
\end{proof}
\begin{corollary}\label{cor:completion-I}
Let $(r,f), (r_1, f_1), (r_2, f_2)\in \Ksfnum(X_\infty)$ be such that $(r, f)=(r_1, f_1)+(r_2, f_2)$. Let $n$ be the minimum integer for which $f_1, f_2$ are constant on intervals of the form $[i/n, (i+1)/n[$ for $i=0, \ldots, n-1$. Then for any short exact sequence
\begin{align}
0\to \Fcal_1 \to \Fcal \to \Fcal_2 \to 0
\end{align}
of coherent sheaves on $X_\infty$ such that $\Fcal_i$ is locally free and $\overline{\Fcal_i}=(r_i, f_i)$ for $i=1,2$, there exist $\widetilde{\Fcal_i}\in \Coh(X_{n})$ such that $\Fcal_i\simeq \pi_{\infty, n}^\ast\big(\widetilde{\Fcal_i}\big)$ for $i=1, 2$.
\end{corollary}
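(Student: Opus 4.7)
The plan is to reduce directly to Proposition~\ref{prop:representative-infty} applied to each $\Fcal_i$ individually; the short exact sequence itself will play essentially no role beyond guaranteeing that $\Fcal_1$ and $\Fcal_2$ are locally free with the prescribed classes, which is exactly the input required by that proposition. First, for $i=1,2$, let $n_i$ denote the minimum positive integer for which $f_i$ is constant on intervals of the form $[j/n_i, (j+1)/n_i[$, $j=0,\ldots,n_i-1$. Since $\Fcal_i$ is locally free of class $(r_i,f_i)$, Proposition~\ref{prop:representative-infty} supplies a coherent sheaf $\widetilde{\Fcal_i}^{\,\flat}\in\Coh(X_{n_i})$ together with an isomorphism $\Fcal_i\simeq \pi_{\infty,n_i}^\ast(\widetilde{\Fcal_i}^{\,\flat})$.

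Next, I would identify the integer $n$ of the statement with $\mathsf{lcm}(n_1,n_2)$. Indeed, for any $h\in\Z_0^{S^1_\Q}$ and any positive integer $m$, the function $h$ is constant on all intervals of length $1/m$ if and only if $n(h)\mid m$, where $n(h)$ denotes its minimum constancy denominator; consequently the smallest $m$ that works for both $f_1$ and $f_2$ simultaneously is $\mathsf{lcm}(n_1,n_2)$. In particular $n_i\mid n$ for $i=1,2$, so the projections $\pi_{n,n_i}\colon X_n\to X_{n_i}$ are defined and compose correctly with the natural projections from $X_\infty$, i.e.\ $\pi_{\infty,n_i}=\pi_{n,n_i}\circ\pi_{\infty,n}$ in the inverse system of root stacks of Section~\ref{sec:infinity}.

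Finally, setting $\widetilde{\Fcal_i}\coloneqq \pi_{n,n_i}^\ast(\widetilde{\Fcal_i}^{\,\flat})\in\Coh(X_n)$ and using the above compositional identity gives
\begin{align}
\pi_{\infty,n}^\ast(\widetilde{\Fcal_i})\;\simeq\;\pi_{\infty,n}^\ast\pi_{n,n_i}^\ast(\widetilde{\Fcal_i}^{\,\flat})\;\simeq\;\pi_{\infty,n_i}^\ast(\widetilde{\Fcal_i}^{\,\flat})\;\simeq\;\Fcal_i,
\end{align}
which is the desired conclusion. There is no genuine obstacle here: the entire argument is bookkeeping at the level of constancy denominators of the characteristic functions $f_1,f_2$, combined with the flatness and exactness properties of the projections $\pi_{m,n}$ recorded in Proposition~\ref{prop:projectionformula-finite} (which ensure that the pullbacks $\pi_{n,n_i}^\ast(\widetilde{\Fcal_i}^{\,\flat})$ remain coherent and locally free). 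The only mildly subtle point one should double-check is the equality $n=\mathsf{lcm}(n_1,n_2)$, which as explained above follows formally from the definition of the minimum constancy denominator.
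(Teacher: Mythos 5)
Your proof is correct and is essentially the argument the paper intends: the corollary is stated as an immediate consequence of Proposition~\ref{prop:representative-infty}, applied separately to the locally free sheaves $\Fcal_1$ and $\Fcal_2$, with the short exact sequence serving only to supply the hypotheses. Your extra bookkeeping identifying $n$ with $\mathsf{lcm}(n_1,n_2)$ (via the fact that the set of admissible constancy denominators of $f_i$ is exactly the set of multiples of $n_i$) is a correct and worthwhile clarification of the step the paper leaves implicit.
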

By using the previous proposition and recalling that the functor $\pi_{\infty, m}^\ast$ is fully faithful for any $m$, one can prove the following.
\begin{corollary}\label{cor:completion-II}
Let $(r,f), (r_1, f_1), (r_2, f_2)\in \Ksfnum(X_\infty)$ be such that $(r, f)=(r_1, f_1)+(r_2, f_2)$. Let $n$ be the minimum integer for which $f_1,f_2$ are constant on intervals of the form $[i/n, (i+1)/n[$ for $i=0, \ldots, n-1$. Then for any short exact sequence
\begin{align}
0\to \Fcal_1 \to \Fcal \to \Tcal_2 \to 0
\end{align}
of coherent sheaves on $X_\infty$ such that $\Fcal_1$ is locally free and $\overline{\Fcal_1}=(r, f_1)$, $\overline{\Tcal_2}=(0, f_2)$ there exist  $\widetilde{\Fcal_1},\widetilde{\Tcal_2} \in \Coh(X_{n})$ such that $\Fcal_1\simeq \pi_{\infty, n}^\ast\big(\widetilde{\Fcal_1}\big)$ and $\Tcal_2\simeq \pi_{\infty,n}^\ast\big(\widetilde{\Tcal_2}\big)$.
\end{corollary}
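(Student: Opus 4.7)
The plan is to apply Proposition~\ref{prop:representative-infty} to the locally free summand $\Fcal_1$, then to lift the short exact sequence to a sufficiently fine root stack $X_m$ with $n\mid m$, and finally to transfer the descent datum from $\Fcal_1$ to $\Tcal_2$ across the ambient sequence by means of full faithfulness.

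Concretely, since $f_1$ is locally constant on the intervals $[i/n,(i+1)/n[$, the minimal integer $n_1$ appearing in Proposition~\ref{prop:representative-infty} for the class $(r,f_1)$ divides $n$. The proposition then produces $\widetilde{\Fcal_1}'\in \Coh(X_{n_1})$ with $\pi_{\infty,n_1}^\ast(\widetilde{\Fcal_1}')\simeq \Fcal_1$, and I set $\widetilde{\Fcal_1}\coloneqq \pi_{n,n_1}^\ast(\widetilde{\Fcal_1}')\in \Coh(X_n)$. Next, by the equivalence \eqref{eq:Cohinfty} together with Remark~\ref{rem:coherentsheaves}, after choosing $m$ sufficiently divisible by $n$ the entire short exact sequence descends to a sequence $0\to \pi_{m,n}^\ast\widetilde{\Fcal_1}\to \Fcal^{(m)}\to \Tcal_2^{(m)}\to 0$ in $\Coh(X_m)$. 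Since $\pi_{\infty,n}^\ast$ is fully faithful, it is enough to show that $\Tcal_2^{(m)}$ descends further to an object $\widetilde{\Tcal_2}\in \Coh(X_n)$ with $\pi_{m,n}^\ast\widetilde{\Tcal_2}\simeq \Tcal_2^{(m)}$.

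For this last descent I would pass to the parabolic presentation on $X_m$ supplied by Theorem~\ref{thm:rootvsparabolic}. The fact that $\pi_{m,n}^\ast\widetilde{\Fcal_1}$ comes from $X_n$ manifests parabolically as all the transition maps $(F_1)_{x-1/m}\to (F_1)_x$ being isomorphisms for $x\notin \tfrac{1}{n}\Z$, as recorded in Section~\ref{sec:different-coh}. In parallel, the hypothesis that $f_2$ is constant on each interval $[j/n,(j+1)/n[$ forces the parabolic components of $\Tcal_2^{(m)}$ at two consecutive weights $x-1/m,\,x$ sitting in the interior of one such interval to have equal classes in $\Ksf$-theory. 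Combining these two pieces of information in the parabolic short exact sequence $0\to (F_1)_x\to F_x\to (T_2)_x\to 0$ and applying a snake-lemma-type diagram chase, one concludes that the transition maps of $\Tcal_2^{(m)}$ at weights outside $\tfrac{1}{n}\Z$ are themselves isomorphisms, which by the parabolic criterion of Section~\ref{sec:different-coh} produces the required $\widetilde{\Tcal_2}\in \Coh(X_n)$.

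The delicate point will be precisely this last rigidity step: Proposition~\ref{prop:parabolicvectorbundle}, which ensures that parabolic transition maps are injective for torsion-free sheaves, does not apply to the torsion sheaf $\Tcal_2$, so equality of $\Ksf$-classes alone cannot force an isomorphism of torsion parabolic pieces. The rigidity must instead be imported along the short exact sequence from $\pi_{m,n}^\ast\widetilde{\Fcal_1}$, using full faithfulness of the various pullbacks $\pi_{\infty,m}^\ast$ and $\pi_{m,n}^\ast$ to convert numerical equality of classes into genuine isomorphisms of transition maps on the quotient. I expect this transfer to be the technical heart of the argument.
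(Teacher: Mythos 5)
Your first two steps are sound: the minimal level $n_1$ for $f_1$ divides $n$ because the set of levels at which $f_1$ is locally constant is precisely the set of multiples of $n_1$, so Proposition~\ref{prop:representative-infty} applies to $\Fcal_1$; and the whole sequence descends to some $X_m$ with $n\mid m$ since $\Coh(X_\infty)$ is a filtered colimit of the $\Coh(X_k)$ along exact fully faithful functors. The gap is exactly where you place it, and the repair you sketch --- ``importing rigidity along the short exact sequence from $\pi_{m,n}^\ast\widetilde{\Fcal_1}$'' --- cannot work. In the ladder of parabolic components
\[
\begin{array}{ccccccccc}
0 &\to& (F_1)_{x-1/m} &\to& F_{x-1/m} &\to& (T_2)_{x-1/m} &\to& 0\\
&& \downarrow && \downarrow && \downarrow &&\\
0 &\to& (F_1)_{x} &\to& F_{x} &\to& (T_2)_{x} &\to& 0
\end{array}
\]
the snake lemma only identifies the kernel (resp.\ cokernel) of the middle arrow with that of the right arrow; knowing the left arrow is an isomorphism gives no control on the right arrow unless the middle arrow is controlled. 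Under the literal hypotheses it is not: take $\Fcal=\Fcal_1\oplus\Tcal_2$ with $\Fcal_1=\Ocal_{X_\infty}$ and $\Tcal_2=\Scal_{[0,1/2[}\oplus\Scal_{[1/2,1[}$. Then $f_1=0$ and $f_2=\delta$ are constant on $[0,1[$, so $n=1$, yet $\Tcal_2$ has no representative on $X_1=X$ (its parabolic transition map at weight $1/2$ is the zero map between two length-one torsion sheaves; equivalently $\pi_{2,1}^\ast T_p={}_2\Scal_2^{(2)}$ is indecomposable, not ${}_2\Scal_1\oplus{}_2\Scal_2$). So no diagram chase starting only from the sub $\Fcal_1$ can close the argument.

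What actually makes the descent of $\Tcal_2$ work is torsion-freeness of the \emph{middle} term $\Fcal$ (which holds in the one place the corollary is used, the Hecke action on $\Hbf_\infty^{\bun}$, where $\Fcal$ is a vector bundle), and then $\Fcal_1$ plays no role: Proposition~\ref{prop:parabolicvectorbundle} makes each transition map $F_{x-1/m}\to F_x$ injective; for $x\notin\frac{1}{n}\Z$ its source and target have the same rank and the same degree because $f=f_1+f_2$ is constant on each $[i/n,(i+1)/n[$, so it is an isomorphism; the five lemma then forces $(T_2)_{x-1/m}\to(T_2)_x$ to be an isomorphism, which by Remark~\ref{rem:coherentsheaves} and the parabolic criterion you cite is exactly the descent of $\Tcal_2^{(m)}$ to $X_n$. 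The paper offers no argument beyond invoking Proposition~\ref{prop:representative-infty} and full faithfulness, so to turn your proposal into a proof you must add (and use) the hypothesis that $\Fcal$ is torsion-free; as stated, the step you flag as the ``technical heart'' is not merely delicate but false.
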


\bigskip\section{Hall algebra of a root stack over a curve}\label{sec:hallalgebran}

\subsection{Preliminaries on Hall algebras} Let $k=\F_q$, let $X$ be a smooth geometrically connected projective curve over $k$, and for $n\in\Z_{>0}$ let $X_n$ be the $n$-th root stack of $X$ over a rational closed point $p\in X$ as in Section~\ref{sec:preliminariesorbcurves-nrootstack}. In this section we recall the definition of the Hall algebra of $\Coh(X_n)$ and state some of its important properties. Most results in this section can be found in \cite{art:lin2014}, but we briefly derive them again since our notations differ from Lin's. 

Let $\alpha\in \Ksfnum(X_n)$ and denote by $\Mcal_{\alpha,n}$ the set of isomorphism classes of coherent sheaves on $X_n$ of class $\alpha$. The Hall algebra of $X_n$ is, as vector space,
\begin{align}
\Hbf_n\coloneqq\bigoplus_{\alpha\in \Ksfnum(X_n)}\, \Hbf_n[\alpha] \quad\text{where}\quad \Hbf_n[\alpha]\coloneqq\{f\colon \Mcal_{\alpha,n}\to \widetilde \Q\, \vert\, \text{$\mathsf{supp}(f)$ is finite}\}=\bigoplus_{\Fcal\in  \Mcal_{\alpha,n}}\,  \widetilde \Q \, 1_\Fcal\ ,
\end{align}
where $1_{\Fcal}$ denotes the characteristic function of $\Fcal\in \Mcal_{\alpha,n}$. The multiplication is defined as
\begin{align}\label{eq:Hallprod}
(f\cdot g)(\Rcal)\coloneqq\sum_{\Ncal\subseteq \Rcal}\, \upsilon^{\langle \Rcal/\Ncal,\, \Ncal\rangle}\, f(\Rcal/\Ncal)g(\Ncal) \ ,
\end{align}
and the comultiplication is
\begin{align}\label{eq:Hallcoprod}
\Delta(f)(\Mcal, \Ncal)\coloneqq\frac{\upsilon^{-\langle \Mcal,\, \Ncal\rangle}}{\vert \Ext^1(\Mcal, \Ncal)\vert}\, \sum_{\xi\in \Ext^1(\Mcal, \Ncal)}\, f(\Xcal_\xi)
\end{align}
where $\Xcal_\xi$ is the extension of $\Ncal$ by $\Mcal$ corresponding to $\xi$. The coproduct takes values in the completion  $\Hbf_n\widehat{\otimes}\Hbf_n$ of the tensor product $\Hbf_n \otimes\Hbf_n$ that we will now define. For $\alpha\in \Ksfnum(X_n)$, define
\begin{align}\label{eq:Hallcompletion-1}
\widehat \Hbf_n[\alpha]\coloneqq\{f\in \Mcal_{\alpha,n}\to \widetilde \Q\}\ .
\end{align}
We shall identify elements in $\widehat \Hbf_n[\alpha]$ with (possibly infinite) series
\begin{align}
\sum_{\overline \Fcal=\alpha}\, a_\Fcal\, 1_\Fcal\, \text{ with }\, a_\Fcal\in \widetilde \Q \ .
\end{align}
We put 
\begin{align}
\widehat \Hbf_n\coloneqq\bigoplus_\alpha\, \widehat \Hbf_n[\alpha] \ .
\end{align}
Similarly, for $\alpha, \beta\in \Ksfnum(X_n)$, define
\begin{align}\label{eq:Hallcompletion-2}
\Hbf_n[\alpha]\widehat{\otimes}\Hbf_n[\beta]\coloneqq\{f\colon \Mcal_{\alpha,n}\times\Mcal_{\beta,n}\to \widetilde \Q\}\ .
\end{align}
Again, we identify it with the set of infinite series $\sum_{\overline \Fcal=\alpha, \overline \Ecal=\beta}\, a_{\Fcal, \Ecal}\, 1_\Fcal\otimes 1_\Ecal$, with $a_{\Fcal, \Ecal}\in \widetilde \Q$. Set
\begin{align}\label{eq:Hallcompletion-3}
\Hbf_n\widehat{\otimes}\Hbf_n\coloneqq\bigoplus_{\gamma\in \Ksfnum(X_n)}\, \big(\Hbf_n\widehat{\otimes}\Hbf_n\big)[\gamma]  \ \text{ where }\  \big(\Hbf_n\widehat{\otimes}\Hbf_n\big)[\gamma]\coloneqq\prod_{\gamma=\alpha+\beta}\, \Hbf_n[\alpha]\widehat{\otimes}\Hbf_n[\beta] \ . 
\end{align}
As a consequence of Lemma~\ref{lem:finiteness} (cf.\ the arguments in \cite[Proposition~2.2]{art:burbanschiffmann2012}), we have the following.
\begin{proposition}
The following properties hold:
\begin{itemize}\itemsep0.1cm
\item $\widehat \Hbf_n$ and $\Hbf_n\widehat{\otimes}\Hbf_n$ are naturally equipped with the structure of associative algebras.
\item Let $\Delta_{\alpha,\beta}\colon \Hbf_n[\alpha+\beta]\to \Hbf_n[\alpha]\widehat{\otimes}\Hbf_n[\beta]$ be the graded component of the comultiplication $\Delta$ of degree $(\alpha,\beta)$. Then $\Delta_{\alpha,\beta}\big(\Hbf_n[\alpha+\beta]\big)\subset \Hbf_n[\alpha]\otimes\Hbf_n[\beta]$.
\item The comultiplication $\Delta$ takes values in $\Hbf_n\widehat{\otimes}\Hbf_n$; it extends to a coassociative coproduct $\Delta\colon \widehat \Hbf_n\to \Hbf_n\widehat{\otimes}\Hbf_n$.
\end{itemize}
\end{proposition}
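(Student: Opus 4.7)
All three assertions follow from the finiteness results of Lemma~\ref{lem:finiteness}, in the style of \cite[Proposition~2.2]{art:burbanschiffmann2012}; the arguments closely parallel the classical case of coherent sheaves on a smooth projective curve.

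For item~(1), I first check that \eqref{eq:Hallprod} makes sense on $\widehat{\Hbf}_n$: for $f\in\widehat{\Hbf}_n[\alpha]$, $g\in\widehat{\Hbf}_n[\beta]$ and $\Rcal\in\Mcal_{\alpha+\beta,n}$, the defining sum $(f\cdot g)(\Rcal)$ runs over subsheaves $\Ncal\subseteq\Rcal$ of class $\beta$, a finite set by Lemma~\ref{lem:finiteness}. Associativity is then verified pointwise: both $(fg)h$ and $f(gh)$, evaluated at $\Rcal$, equal
\begin{align}
\sum_{\Ncal_1\subseteq\Ncal_2\subseteq\Rcal}\upsilon^{\langle \Rcal/\Ncal_2,\,\Ncal_2\rangle+\langle \Ncal_2/\Ncal_1,\,\Ncal_1\rangle}\,f(\Rcal/\Ncal_2)\,g(\Ncal_2/\Ncal_1)\,h(\Ncal_1)\ ,
\end{align}
which is again a finite sum. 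For $\Hbf_n\widehat{\otimes}\Hbf_n$ the multiplication is defined componentwise along decompositions of the grading; the last bullet of Lemma~\ref{lem:finiteness} ensures that, at any prescribed pair of target isomorphism classes, only finitely many configurations of pairs of sub- and quotient sheaves contribute, so the product is a well-defined element of $\Hbf_n\widehat{\otimes}\Hbf_n$.

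For item~(2), let $f\in\Hbf_n[\alpha+\beta]$ with finite support $S\subset\Mcal_{\alpha+\beta,n}$. From \eqref{eq:Hallcoprod}, a pair $(\Mcal,\Ncal)$ with $\Delta(f)(\Mcal,\Ncal)\neq 0$ forces some extension $\Xcal_\xi$ of $\Ncal$ by $\Mcal$ to lie in $S$, exhibiting $(\Mcal,\Ncal)$ as a (subsheaf, quotient) pair of some $\Xcal\in S$ of classes $\alpha$ and $\beta$. The third bullet of Lemma~\ref{lem:finiteness} gives only finitely many such pairs for each $\Xcal\in S$, and $S$ is finite; hence $\supp\Delta(f)$ is finite and $\Delta_{\alpha,\beta}(f)\in\Hbf_n[\alpha]\otimes\Hbf_n[\beta]$.

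For item~(3), I extend $\Delta$ to $\widehat{\Hbf}_n$ via the same formula \eqref{eq:Hallcoprod}. For $f=\sum a_\Fcal 1_\Fcal\in\widehat{\Hbf}_n[\alpha+\beta]$ and a fixed pair $(\Mcal,\Ncal)$, only finitely many isomorphism classes $\Fcal$ occur as extensions of $\Ncal$ by $\Mcal$ (because $\Ext^1(\Mcal,\Ncal)$ is finite over $\F_q$, $X_n$ being proper and tame), so $\Delta(f)(\Mcal,\Ncal)$ is a finite $\widetilde{\Q}$-linear combination. Coassociativity is checked pointwise: evaluating both $(\Delta\otimes\id)\Delta(f)$ and $(\id\otimes\Delta)\Delta(f)$ on a triple $(\Mcal_1,\Mcal_2,\Mcal_3)$ reduces, up to a common normalization, to a sum over equivalence classes of two-step extensions with successive quotients $\Mcal_3,\Mcal_2,\Mcal_1$, which is a finite sum. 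The only somewhat delicate step is the well-definedness of the multiplication on the tensor completion in item~(1): that is where the combined finiteness of pairs of subsheaf decompositions in the last bullet of Lemma~\ref{lem:finiteness} is essential.
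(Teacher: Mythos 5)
Your proposal is correct and takes essentially the same route as the paper: the paper offers no written argument beyond citing Lemma~\ref{lem:finiteness} and the analogous \cite[Proposition~2.2]{art:burbanschiffmann2012}, and what you have written is precisely the content of that reference, with each bullet of the finiteness lemma invoked at the right place (subsheaves of fixed class for the product on $\widehat\Hbf_n$, pairs of subsheaves for the product on $\Hbf_n\widehat\otimes\Hbf_n$, and sub/quotient pairs for the image of $\Delta_{\alpha,\beta}$).
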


The triple $(\Hbf_\Acal, \cdot, \Delta)$ is not a topological bialgebra. For this reason, let us introduce the extended Hall algebra in the following way. Let $\Kbf_n\coloneqq\widetilde \Q[\Ksfnum(X_n)]$ be the group algebra of the numerical Grothendieck group of $X_n$ and denote by $\kbf_\alpha$ the element of $\Kbf_n$ corresponding to $\alpha\in \Ksfnum(X_n)$. Then the extended Hall algebra is
\begin{align}
\Hbf_n^{\mathsf{tw}}\coloneqq\Hbf_n\otimes_{\widetilde \Q} \Kbf_n \ ,
\end{align}
with the relations
\begin{align}\label{eq:Hallnewrelations}
\kbf_\alpha\kbf_\beta= \kbf_{\alpha+\beta}\ , \kbf_0=1\ , \kbf_\alpha 1_{\Fcal} \kbf_\alpha^{-1}= \upsilon^{(\alpha, \overline \Fcal)}\,1_\Fcal\ .
\end{align}
The new coproduct is
\begin{align}\label{eq:Hallnewcoprod}
\tilde \Delta(\kbf_\alpha)\coloneqq\kbf_\alpha\otimes \kbf_\alpha \quad\text{and}\quad \tilde \Delta(f)\coloneqq\sum_{\Mcal,\, \Ncal}\, \Delta(f)(\Mcal, \Ncal)\, 1_{\Mcal}\kbf_{\overline \Ncal}\otimes 1_\Ncal\ . 
\end{align}
Then $(\Hbf_n^{\mathsf{tw}}, \cdot, \tilde \Delta)$ is a topological bialgebra. Also $\Hbf_n^{\mathsf{tw}}$ is $\Ksfnum(X_n)$-graded: we denote by $\tilde \Delta_{\alpha,\beta}$ the graded component of the coproduct of degree $(\alpha,\beta)$.

Similarly, we can define $\widehat \Hbf_n^{\mathsf{tw}}$. The coproduct $\tilde \Delta$ extends to an algebra homomorphism $\Delta\colon \widehat \Hbf_n^{\mathsf{tw}}\to \Hbf_n^{\mathsf{tw}}\widehat{\otimes}\Hbf_n^{\mathsf{tw}}$. Thus $(\widehat \Hbf_n^{\mathsf{tw}}, \cdot, \tilde \Delta)$ is a topological bialgebra. 

Finally, let $(\cdot, \cdot)_G\colon \Hbf_n^{\mathsf{tw}}\otimes \Hbf_n^{\mathsf{tw}}\to \C$ denote Green's Hermitian scalar product defined by
\begin{align}\label{eq:HallGreen}
(1_\Mcal\kbf_\alpha, 1_\Ncal \kbf_\beta)_G\coloneqq\frac{\delta_{\Mcal,\, \Ncal}}{\vert \mathsf{Aut}(\Mcal)\vert}\, v^{(\alpha, \beta)}\ .
\end{align}
This scalar product is a \textit{Hopf pairing}, i.e.
\begin{align}
(ab, c)_G=(a\otimes b, \tilde \Delta(c))_G
\end{align}
for any $a,b,c\in \Hbf_n^{\mathsf{tw}}$. The restriction of $(\cdot, \cdot)_G$ to $\Hbf_n$ is nondegenerate.

Let's recall the construction of the \textit{reduced} Drinfeld double of the Hopf algebra $\Hbf_n^{\mathsf{tw}}$. First consider the pair of algebras $\Hbf_n^{\mathsf{tw}, \pm}$:
\begin{align}
\Hbf_n^{\mathsf{tw}, +}\coloneqq\bigoplus_{\Fcal\in \bigsqcup\, \Mcal_{\alpha,n}}\, \widetilde \Q\, 1_{\Fcal}^+\otimes_{\widetilde \Q} \Kbf_n\quad\text{and}\quad \Hbf_n^{\mathsf{tw}, -}\coloneqq\bigoplus_{\Fcal\in \bigsqcup\, \Mcal_{\alpha,n}}\, \widetilde \Q\, 1_{\Fcal}^-\otimes_{\widetilde \Q} \Kbf_n\ .
\end{align}
In these notations $\Hbf_n^{\mathsf{tw}, \pm}=\Hbf_n^{\mathsf{tw}}$ viewed as $\widetilde \Q$-algebras. The \textit{negative} extended Hall algebra $\Hbf_n^{\mathsf{tw}, -}$ is a topological $\widetilde \Q$-algebra with the opposite coproduct (cf.\ \cite[Section~2.4]{art:doujiangxiao2012}).

Let's use the Sweedler’s notations $\tilde \Delta\big(a^{\pm}\big)=\sum_i\, a_i^{(1)\, \pm}\otimes a_i^{(2)\, \pm}$. Then the \textit{Drinfeld double} of $\Hbf_n^{\mathsf{tw}}$ with respect to Green's pairing $(\cdot, \cdot)_G$ is the associative algebra $\tildeDHbf_n$, defined as the free product of algebras $\Hbf_n^{\mathsf{tw}, +}$ and $\Hbf_n^{\mathsf{tw}, -}$ subject to the following relations for all $a, b \in\Hbf_n^{\mathsf{tw}}$:
\begin{align}
\sum_{i,j}\, a_i^{(1)\, -}b_j^{(2)\, +} \big(a_i^{(2)}, b_j^{(1)}\big)_G=\sum_{i,j}\, b_j^{(1)\, +} a_i^{(2)\, -} \big(a_i^{(1)}, b_j^{(2)}\big)_G\ .
\end{align}
The \textit{reduced} Drinfeld double $\DHbf_n$ is the quotient of $\tildeDHbf_n$ by the two-sided ideal $\langle \kbf_\alpha^+\otimes \kbf_{-\alpha}^{-}-1^{+}\otimes 1^{-}\, \vert\, \alpha\in\Ksfnum(X_n)\rangle$: this is a Hopf ideal and the reduced Drinfeld double $\DHbf_n$ is again a Hopf algebra. We have an isomorphism of $\widetilde \Q$-vector spaces
\begin{align}
\Hbf_n^{+}\otimes_{\widetilde \Q}\Kbf_n\otimes_{\widetilde \Q} \Hbf_n^{-}\xrightarrow{\mathsf{mult}} \DHbf_n\ ,
\end{align}
called the \textit{triangular decomposition} of $\DHbf_n$.

{\bf Notation.} We shall denote by $\Hbf$, $\Hbf^{\mathsf{tw}}$, $\tildeDHbf$ and $\DHbf$ respectively $\Hbf_1$, $\Hbf_1^{\mathsf{tw}}$, $\tildeDHbf_1$ and $\DHbf_1$ i.e., the Hall algebra, the extended Hall algebra, the corresponding Drinfeld double and its reduced version associated with the curve $X$. \triend 

The exact functor $\pi_n^\ast$ induces injective homomorphism $\mathcal{M}_{\alpha,1}\hookrightarrow \mathcal{M}_{\alpha,n}$ which we still denote by $\pi_{n}^\ast$. This gives rise to maps
\begin{align}
\Omega_n\colon \Hbf\to \Hbf_n  \ , \quad \Omega_n\colon \Hbf^{\mathsf{tw}}\to \Hbf_n^{\mathsf{tw}} 
\end{align}
defined by
\begin{align}
\Omega_n(f)\coloneqq(\pi_n^\ast)_!(f)
\end{align}
sending $1_E$ to $1_{\pi_n^\ast E}$ for $E\in \Mcal_{\alpha, 1}$, $\alpha\in \Ksfnum(X)$, and $\kbf_{(r,d)}$ to $\kbf_{(r, \, d\, \delta_n)}$ for $(r,d)\in \Ksfnum(X)$. It is easy to see that $\Omega_n$ is a morphism of algebras.
Let $\Csf$ be the Serre subcategory of $\Coh(X_n)$ generated by the simple objects ${}_n\Scal_1, \ldots, {}_n\Scal_{n-1}$. Then the \textit{perpendicular category}\footnote{See \cite[Definition~3.1]{art:burbanschiffmann2013} for the definition.} $\Csf^\perp$ is equivalent to the category $\Coh(X)$. Therefore by \cite[Theorem~3.3]{art:burbanschiffmann2013}, we have an injective algebra homomorphism $\Omega_n\colon \DHbf\hookrightarrow \DHbf_n$.

Tensor multiplication by $\Lcal_n$ induces isomorphism of algebras
\begin{align}
T_n\colon \Hbf_n\to \Hbf_n \ , \quad T_n\colon \Hbf_n^{\mathsf{tw}}\to \Hbf_n^{\mathsf{tw}} \ .
\end{align}
Note that $T_n^{\, n}$ corresponds to the isomorphism induces by the functor $\_\otimes \pi_n^\ast \Ocal_X(p)$. There is an induced algebra isomorphim $T_n\colon \DHbf_n\xrightarrow{\sim} \DHbf_n$.

\subsection{Hecke algebra and $\Ubf_\upsilon(\glfrakhat(n))$}

Let $\Tor(X_n)$ the category of zero-dimensional coherent shea\-ves on $X_n$ and
\begin{align}
\Hbf^{\tor}_n\coloneqq\bigoplus_{\Tcal\in \Tor(X_n)}\, \widetilde \Q 1_\Tcal \quad\text{and}\quad \Hbf^{\mathsf{tw}, \tor}_n\coloneqq\Hbf^{\tor}_n\otimes_{\widetilde \Q} \Kbf_n\ .
\end{align}
Define for any closed point $x\in X_n$
\begin{align}
\Hbf_{n, x}^{\tor}\coloneqq\bigoplus_{\Tcal\in \Tor_{x}(X_n)}\, \widetilde\Q\, 1_\Tcal \quad\text{and}\quad \Hbf_{n, x}^{\mathsf{tw}, \tor}\coloneqq\Hbf_{n, x}^{\tor}\otimes_{\widetilde \Q} \Kbf_n\ .
\end{align}
The decomposition of $\Tor(X_n)$ over the closed points of $X_n$ gives a decomposition at the level of Hall algebras
\begin{align}
\Hbf^{\tor}_n=\bigotimes_{x\in X_n}\, \Hbf_{n, x}^{\tor} \quad\text{and}\quad \Hbf^{\mathsf{tw}, \tor}_n=\bigotimes_{x\in X_n}\,\Hbf_{n, x}^{\mathsf{tw}, \tor} \ .
\end{align}

\subsubsection{Hecke algebra at the stacky point, $\Ubf_\upsilon(\slfrakhat(n))$ and  $\Ubf_\upsilon(\glfrakhat(n))$}\label{sec:Heckealgebrastacky}

We briefly recall the description of the (extended) Hall algebra associated with the category $\Tor_{p_n}(X_n)$ by following \cite[Section~5.1]{art:burbanschiffmann2013} and references therein, for $n\geq 2$.

We have a decomposition of $\Hbf_{n, p_n}^{\tor}$ and $\Hbf_{n, p_n}^{\mathsf{tw}, \tor}$ according to their dimension vectors (see  \eqref{eq:dim})
\begin{align}
\Hbf_{n, p_n}^{\tor}=\bigoplus_{\mathbf d\in \Z^n}\, \Hbf_{n, p_n}^{\tor}[\mathbf d]\quad\text{and}\quad \Hbf_{n, p_n}^{\mathsf{tw}, \tor}=\bigoplus_{\mathbf d\in \Z^n}\, \Hbf_{n, p_n}^{\mathsf{tw}, \tor}[\mathbf d]\ .
\end{align}

Recall that $\Tor_{p_n}(X_n)$ is equivalent to the category $\Rep_k^{\mathsf{nil}}\big(A_{n-1}^{(1)}\big)$ of nilpotent representations of the cyclic quiver $A_{n-1}^{(1)}$ with $n$ vertices over $k$. 

As is customary, we call the \textit{composition subalgebra} $\Cbf_n$ of $\Hbf_{n, p_n}^{\mathsf{tw}, \tor}$ the topological $\widetilde \Q$-Hopf algebra generated by $1_{{}_n\Scal_i}$ and $\kbf_{\pm\,\overline{{}_n\Scal_i}}$ for $i=1, \ldots, n$. By \cite{art:ringel1993}, we have the following result.
\begin{proposition}
The assignment
\begin{align}\label{eq:F-K}
\begin{aligned}
E_i& \mapsto \upsilon^{1/2}\, 1_{{}_n \Scal_i}\ , \quad  K_i^\pm\mapsto \kbf_{(0, \pm \mathbf e_{n, i})}\quad\text{ for $i\neq 0$}\ ,\\[2pt]
E_0 &\mapsto \upsilon^{1/2}\, 1_{{}_n \Scal_n}\ , \quad  K_0^\pm\mapsto \kbf_{(0, \pm \mathbf e_{n, n})}\ , 
\end{aligned}
\end{align}
defines an isomorphism between $\Cbf_n$ and $\Ubf_\upsilon^+(\slfrakhat(n))\otimes_{\widetilde \Q}\widetilde \Q[\delta_n]$.
\end{proposition}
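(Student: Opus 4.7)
The argument is essentially Ringel's theorem~\cite{art:ringel1993} for the cyclic quiver, transported through the equivalence $\Tor_{p_n}(X_n) \simeq \Rep_k^{\mathsf{nil}}(A_{n-1}^{(1)})$ recalled in Section~\ref{sec:preliminariesorbcurves-nrootstack}. Under this equivalence, $\Hbf_{n, p_n}^{\mathsf{tw}, \tor}$ is identified with the extended twisted Ringel--Hall algebra of the cyclic quiver, the simple torsion sheaves ${}_n\Scal_i$ correspond to the simple modules $S_i$ at the vertices $i=1, \ldots, n$, and the short exact sequences~\eqref{eq:torsion-orbifold} translate into the standard resolutions of nilpotent modules by the simples. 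The composition subalgebra $\Cbf_n$ thereby matches the twisted composition subalgebra of the Hall algebra of $A_{n-1}^{(1)}$ extended by the full group algebra $\widetilde\Q[\Ksf(\Tor_{p_n}(X_n))] \simeq \widetilde\Q[\Z^n]$.

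The first step is to check that~\eqref{eq:F-K} respects the Drinfeld--Jimbo relations. The commutation relations $K_i\, E_j\, K_i^{-1} = \upsilon^{a_{ij}}\, E_j$ follow immediately from the third relation in~\eqref{eq:Hallnewrelations} together with the evaluation~\eqref{eq:line3} of the symmetrized Euler form on the classes $\overline{{}_n\Scal_i}$, which reproduces the affine Cartan matrix $\widehat A_{n-1}$. The commutation $[E_i, E_j] = 0$ for non-adjacent $i, j$ is immediate from $\Hom({}_n\Scal_i, {}_n\Scal_j) = \Ext^1({}_n\Scal_i, {}_n\Scal_j) = 0$, which makes the relevant Hall product~\eqref{eq:Hallprod} symmetric in the two factors. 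For adjacent vertices, the quantum Serre relation
\[
E_i^{\,2}\, E_j - [2]_\upsilon\, E_i\, E_j\, E_i + E_j\, E_i^{\,2} = 0
\]
is established by the classical Hall-number computation of Ringel, obtained by classifying length-three submodules built from $S_i$ and $S_j$ and applying the twisted multiplication~\eqref{eq:Hallprod} with the Euler form of~\eqref{eq:Eulerform}. This yields a surjective algebra homomorphism
\[
\Phi\colon \Ubf_\upsilon^+(\slfrakhat(n)) \otimes_{\widetilde\Q} \widetilde\Q[\delta_n] \twoheadrightarrow \Cbf_n.
\]

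It remains to prove that $\Phi$ is injective, and the plan is to match PBW-type bases graded by the Grothendieck group. On the quantum group side, $\Ubf_\upsilon^+(\slfrakhat(n))$ admits Beck's PBW basis indexed by positive real root vectors together with polynomials in imaginary root vectors at each multiple of the minimal imaginary root; the extra tensor factor $\widetilde\Q[\delta_n]$ records the central direction generated by $\kbf_{\delta_n}$. On the Hall-algebra side, the aperiodic-multisegment classification of indecomposables in $\Rep_k^{\mathsf{nil}}(A_{n-1}^{(1)})$, combined with the group algebra factor of the Cartan, provides a compatible basis of $\Cbf_n$. The main obstacle is to distinguish carefully, inside $\Cbf_n$, the imaginary root vectors obtained by iterated $\upsilon$-commutators of the Chevalley generators $E_i$ from the genuinely central element $\kbf_{\delta_n}$; this separation is recorded by the tensor factor $\widetilde\Q[\delta_n]$ and follows from the non-degeneracy of Green's pairing~\eqref{eq:HallGreen} on the nilpotent direction, together with the fact that $\kbf_{\delta_n}$ pairs trivially with the image of $\Ubf_\upsilon^+$ for generic $\upsilon$. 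Matching Hilbert series degree by degree then promotes $\Phi$ to the desired isomorphism.
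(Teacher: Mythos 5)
The paper offers no proof of this proposition at all: it is stated as a direct consequence of Ringel's theorem on the composition algebra of the cyclic quiver \cite{art:ringel1993}, transported through the equivalence $\Tor_{p_n}(X_n)\simeq \Rep_k^{\mathsf{nil}}\big(A_{n-1}^{(1)}\big)$. Your strategy --- verify the Drinfeld--Jimbo and Serre relations to get a surjection, then match graded dimensions to get injectivity --- is exactly the route that citation points to, and the surjectivity half of your argument is essentially fine. Two caveats there. First, the cubic Serre relation $E_i^{\,2}E_j-[2]_\upsilon E_iE_jE_i+E_jE_i^{\,2}=0$ is the correct one only for $n\geq 3$; for $n=2$ the cyclic quiver $A_1^{(1)}$ has two arrows between its two vertices, the symmetrized Euler form \eqref{eq:line3} gives $(\overline{{}_2\Scal_1},\overline{{}_2\Scal_2})=-2$, and the relations to verify are the quartic $\widehat{A}_1$ Serre relations (the cubic one you wrote does not hold in the Hall algebra in that case). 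Second, the commutation of $K_i$ with $E_j$ comes from \eqref{eq:Hallnewrelations} and \eqref{eq:line3} as you say, but you should be explicit that the resulting surjection is only onto the subalgebra generated by the images, which is $\Cbf_n$ by definition.

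The genuine gap is in the injectivity argument. You propose to use, on the Hall side, ``the aperiodic-multisegment classification'' as providing a basis of $\Cbf_n$; but the statement that $\Cbf_n$ has a basis indexed by aperiodic multisegments is (equivalent to) the theorem you are trying to prove --- it is precisely the hard content of \cite{art:ringel1993} and cannot be invoked as an input. What an honest proof needs is either (i) Ringel's dimension count: the character of $\Ubf_\upsilon^+(\slfrakhat(n))$ is given by Kostant's partition function for the positive roots of $\slfrakhat(n)$ with imaginary roots of multiplicity $n-1$, and one must separately prove the matching \emph{upper} bound $\dim \Cbf_n[\mathbf d]\leq \#\{\text{aperiodic multisegments of weight }\mathbf d\}$ by exhibiting a spanning set; or (ii) Green's argument that $\Ubf_\upsilon^+$ is the quotient of the free algebra on the $E_i$ by the radical of the Hopf pairing, combined with a proof that Green's form \eqref{eq:HallGreen} is non-degenerate on $\Cbf_n$. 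Your closing paragraph does not supply either. Moreover, the assertion that ``$\kbf_{\delta_n}$ pairs trivially with the image of $\Ubf_\upsilon^+$ for generic $\upsilon$'' is false as stated: by \eqref{eq:HallGreen} one has $(\kbf_{\delta_n},\kbf_\beta)_G=\upsilon^{(\delta_n,\beta)}=1$ for every $\beta$, and the splitting-off of the group-algebra factor $\widetilde\Q[\delta_n]$ is a consequence of the $\Ksfnum$-grading and the relations \eqref{eq:Hallnewrelations} (the class $\delta_n$ lies in the radical of the symmetric Euler form), not of any orthogonality in the Green pairing.
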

Such an isomorphism extends to the whole $\Ubf_\upsilon\big(\slfrakhat(n)\big)$ by setting
\begin{align}\label{eq:E}
F_i \mapsto -\upsilon^{1/2}\, 1_{{}_n \Scal_i}^-\quad\text{for $i\neq 0$}\ , \quad   F_0 \mapsto -\upsilon^{1/2}\, 1_{{}_n \Scal_n}^-\ ,
\end{align}
yielding:
\begin{corollary}[{\cite{art:xiao1997}}]\label{cor:DC_n}
The assignments \eqref{eq:F-K} and \eqref{eq:E} give an isomorphism between $\Ubf_\upsilon\big(\slfrakhat(n)\big)$ and the reduced Drinfeld double $\mathbf{D}\Cbf_n$ of $\Cbf_n$.
\end{corollary}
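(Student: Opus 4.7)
The plan is to upgrade the Ringel-Green isomorphism $\Cbf_n \simeq \Ubf_\upsilon^+(\slfrakhat(n)) \otimes_{\widetilde\Q} \widetilde\Q[\delta_n]$ of the preceding Proposition to an isomorphism $\mathbf{D}\Cbf_n \simeq \Ubf_\upsilon(\slfrakhat(n))$ by comparing the triangular decompositions on both sides. On the quantum group side one has the standard $\Ubf_\upsilon^+(\slfrakhat(n)) \otimes \Ubf_\upsilon^0(\slfrakhat(n)) \otimes \Ubf_\upsilon^-(\slfrakhat(n))$; on the Hall side, $\mathbf{D}\Cbf_n \simeq \Cbf_n^+ \otimes \Kbf_n^0 \otimes \Cbf_n^-$, where $\Cbf_n^\pm$ is the copy of $\Cbf_n$ sitting inside $\Hbf_{n,p_n}^{\mathsf{tw},\pm}$ and $\Kbf_n^0$ is the image of $\widetilde\Q[\Ksf(\Tor_{p_n}(X_n))]$ after the reductions $\kbf_\alpha^+ \kbf_{-\alpha}^- = 1$. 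The preceding Proposition provides the identifications $\Ubf_\upsilon^\pm(\slfrakhat(n)) \xrightarrow{\sim} \Cbf_n^\pm$ via \eqref{eq:F-K} and \eqref{eq:E}; the minus sign in $F_i \mapsto -\upsilon^{1/2}\, 1_{{}_n\Scal_i}^-$ is chosen precisely so that the mixed Drinfeld-Jimbo commutator carries the correct sign.

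The Cartan-type and Serre relations are handled easily. The commutations $K_\alpha E_i K_\alpha^{-1} = \upsilon^{(\alpha,\alpha_i)} E_i$ follow from the twisting relations \eqref{eq:Hallnewrelations} combined with the observation, via \eqref{eq:line3}, that the symmetric Euler form on $\Ksf(\Tor_{p_n}(X_n)) \simeq \Z^n$ coincides with the affine Cartan form of type $\widehat A_{n-1}$; the analogous statement for the $F_i$'s follows since $\Hbf_{n,p_n}^{\mathsf{tw},-}$ has the same algebra structure as $\Hbf_{n,p_n}^{\mathsf{tw},+}$. The quantum Serre relations among the $E_i$'s (and among the $F_i$'s) are already encoded in the preceding Proposition applied to $\Cbf_n^+$ and to $\Cbf_n^-$ respectively.

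The crux, and the main obstacle, is to verify the mixed Drinfeld-Jimbo relations $[E_i, F_j] = \delta_{ij}(K_i - K_i^{-1})/(\upsilon - \upsilon^{-1})$ inside $\mathbf{D}\Cbf_n$. Since ${}_n\Scal_i$ is a simple object and any extension with $X_\xi = {}_n\Scal_i$ forces one factor to be zero, the coproduct formula \eqref{eq:Hallnewcoprod} reduces to
\begin{align*}
\tilde\Delta(1_{{}_n\Scal_i}) = \kbf_{\overline{{}_n\Scal_i}} \otimes 1_{{}_n\Scal_i} + 1_{{}_n\Scal_i} \otimes 1,
\end{align*}
while \eqref{eq:HallGreen} gives $(1_{{}_n\Scal_i}, 1_{{}_n\Scal_j})_G = \delta_{ij}/(q-1)$, with all mixed pairings against $\kbf_\alpha$ vanishing by grading. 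Substituting these data into the defining relation of $\tildeDHbf_n$ with $a = 1_{{}_n\Scal_i}$ and $b = 1_{{}_n\Scal_j}$ and then passing to the reduced quotient $\mathbf{D}\Cbf_n$ via $\kbf_\alpha^+ \kbf_{-\alpha}^- = 1$ produces a commutator relation between $1_{{}_n\Scal_i}^+$ and $1_{{}_n\Scal_j}^-$ proportional to $\delta_{ij}(\kbf_{\overline{{}_n\Scal_i}} - \kbf_{-\overline{{}_n\Scal_i}})/(q-1)$; after absorbing the $\upsilon^{1/2}$ prefactors from \eqref{eq:F-K} and \eqref{eq:E} and using $q - 1 = \upsilon(\upsilon - \upsilon^{-1})$, this matches the required Drinfeld-Jimbo relation exactly. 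Surjectivity of the resulting morphism $\Ubf_\upsilon(\slfrakhat(n)) \to \mathbf{D}\Cbf_n$ is immediate from the triangular decomposition of the target, and injectivity follows by comparing PBW bases on the two sides, the maps on the positive and negative halves being isomorphisms by the preceding Proposition.
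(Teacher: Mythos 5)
The paper gives no proof of this corollary --- it is quoted from Xiao's work (and the argument also appears in \cite[Sections~3.2 and 5]{book:schiffmann2012}) --- and your proposal is essentially that standard argument, correctly structured: the two halves are controlled by the preceding Proposition, the Cartan relations follow from \eqref{eq:Hallnewrelations} once the symmetric Euler form on $\Ksf(\Tor_{p_n}(X_n))\simeq\Z^n$ is identified with the affine Cartan form of type $A_{n-1}^{(1)}$, and the only genuinely new relations are the mixed ones, which reduce to the coproduct $\tilde\Delta(1_{{}_n\Scal_i})=1_{{}_n\Scal_i}\otimes 1+\kbf_{\overline{{}_n\Scal_i}}\otimes 1_{{}_n\Scal_i}$ of a simple object together with $(1_{{}_n\Scal_i},1_{{}_n\Scal_j})_G=\delta_{ij}/(q-1)$. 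Two steps deserve more than the assertion you give them. First, in the reduced double the relations $\kbf_\alpha^+\kbf_{-\alpha}^-=1$ identify every $\kbf_\alpha^-$ with $\kbf_\alpha^+$, so a naive reading of the double relation can collapse the right-hand side to $0$; one must apply the conventions of \eqref{eq:Hallnewcoprod}--\eqref{eq:HallGreen} (in particular the opposite coproduct on the minus copy) consistently for the commutator to come out as $\delta_{ij}\,(\kbf_i-\kbf_i^{-1})/(q-1)$, and this is precisely where the sign in \eqref{eq:E} and the identity $q-1=\upsilon(\upsilon-\upsilon^{-1})$ are used --- your ``matches exactly'' is correct but is the one step that genuinely requires the bookkeeping. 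Second, the triangular decomposition of $\mathbf{D}\Cbf_n$ invoked for injectivity is not the one the paper states for $\DHbf_n$: it requires the nondegeneracy of the Green pairing restricted to the composition subalgebra $\Cbf_n$ (part of Green's theorem), which should be cited explicitly. Neither point is a gap, but both should be spelled out.
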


Let $r\in \Z_{>0}$. Following Hubery \cite{art:hubery2005}, define the element
\begin{align}\label{eq:nc_r}
{}_n c_r\coloneqq \sum_{\genfrac{}{}{0pt}{}{\Fcal\in \Tor_{p_n}(X_n)\colon \underline{\dim}([\Fcal])=r\delta_n}{\mathsf{socle}(\Fcal) \,\text{\tiny square free}}}\, (-1)^{\dim_k \End(\Fcal)}\, \vert \Aut(\Fcal)\vert\, 1_\Fcal\;\in \;\Hbf_{n, p_n}^{\mathsf{tw}, \tor}[r\delta_n]\ .
\end{align}
Recall that the \textit{socle} of $\Fcal$ is the maximal semisimple subsheaf of $\Fcal$. A semisimple sheaf of $\Tor_{p_n}(X_n)\simeq \mathsf{Rep}^{\mathsf{nil}}\big(A_{n-1}^{(1)}\big)$ is \textit{square free} if it is isomorphic to $\oplus_{i=1}^n\, {}_n\Scal_i^{\oplus n_i}$ with $n_j\leq 1$ for $j=1, \ldots, n$.

Moreover, define the element ${}_n z_r\in \Hbf_{n, p_n}^{\mathsf{tw}, \tor}[r\delta_n]$ by the recursion formula
\begin{align}\label{eq:nz_r}
{}_n z_r\coloneqq r \, {}_n c_r-\sum_{\ell=1}^{r-1}\, {}_n z_\ell\, {}_n c_{r-\ell}\ .
\end{align}
${}_n z_r$ is a \textit{primitive element} for any $r\in \Z_{>0}$, i.e., $\tilde \Delta({}_n z_r)={}_n z_r\otimes 1+\kbf_{(0, \, r\delta_n)}\otimes {}_n z_r$. Denote by $\Zbf_n$ the center of $\Hbf_{n, p_n}^{\mathsf{tw}, \tor}$. As proved in \textit{loc.cit.} (see also \cite{art:schiffmann2002} and \cite[Theorems~5.3 and 5.4]{art:burbanschiffmann2013}), $\Zbf_n\simeq \widetilde \Q[{}_n z_1, {}_n z_2, \ldots, {}_n z_r, \ldots]$ and the canonical morphism 
\begin{align}
\Zbf_n\otimes_{\widetilde \Q} \Cbf_n \xrightarrow{\mathsf{mult}} \Hbf_{n, p_n}^{\mathsf{tw}, \tor}
\end{align}
is an isomorphism of vector spaces over $\widetilde \Q$.
\begin{remark}
Note that our ${}_n c_r$'s and ${}_n z_r$'s differ from those introduced in \cite[Theorem~5.4]{art:burbanschiffmann2013} by the factor $(-1)^r q^{-rn}$.
\end{remark}
Summarizing, we have the following result:
\begin{corollary}\label{cor:heckealgebrastackypoint-n}
We have a decomposition
\begin{align}
\Hbf_{n, p_n}^{\mathsf{tw}, \tor}\simeq \widetilde \Q[{}_n z_1, {}_n z_2, \ldots, {}_n z_r, \ldots]\otimes_{\widetilde \Q} \Ubf_\upsilon^+(\slfrakhat(n))\otimes_{\widetilde \Q}\widetilde \Q[\delta_n]\ .
\end{align}
It extends to the reduced Drinfeld double:
\begin{align}
\mathbf{D} \Hbf_{n, p_n}^{\mathsf{tw}, \tor}\simeq \Hcal_n\otimes_{\Acal_n} \Ubf_\upsilon(\slfrakhat(n))\eqqcolon \Ubf_\upsilon(\glfrakhat(n))\ .
\end{align}
Here $\Acal_n\coloneqq\widetilde \Q[C_n^\pm]$ is the ring of Laurent polynomials in the variable $C_n\coloneqq\kbf_{\delta_n}$, $\Hcal_n\coloneqq\widetilde \Q\langle {}_n  z_r^\pm\, \vert\, r\in \Z_{>0}\rangle\otimes_{\widetilde \Q}\Acal_n$, subject to the relations
\begin{align}
[{}_n z_r, {}_n z_t]\coloneqq\delta_{r+t, 0}\, ({}_n z_r, {}_n z_r)_G\, (C_n^{-r}-C_n^r) \quad\text{and}\quad [{}_n z_r, C_n^\pm]=0 \text{ for $r,t\in \Z\setminus\{0\}$} \ .
\end{align}
Here we set ${}_n z_{\pm r}\coloneqq{}_n z_r^{\pm}$ for $r\in\Z_{>0}$.
\end{corollary}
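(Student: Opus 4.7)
The first statement is a direct combination of two results already stated in the excerpt. The plan is to recall that, by the discussion preceding \eqref{eq:nc_r}, one has an isomorphism of $\widetilde\Q$-vector spaces
\begin{align}
\Zbf_n\otimes_{\widetilde \Q}\Cbf_n\xrightarrow{\mathsf{mult}}\Hbf_{n,p_n}^{\mathsf{tw},\tor}
\end{align}
together with the identification $\Zbf_n\simeq\widetilde\Q[{}_nz_1,{}_nz_2,\ldots]$. Composing with the isomorphism $\Cbf_n\simeq\Ubf_\upsilon^+(\slfrakhat(n))\otimes_{\widetilde\Q}\widetilde\Q[\delta_n]$ from the proposition before Corollary~\ref{cor:DC_n} yields the first decomposition. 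The factor $\widetilde\Q[\delta_n]$ corresponds to the central element $\kbf_{(0,\delta_n)}$ that is not seen by $\Ubf_\upsilon^+(\slfrakhat(n))$ itself.

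For the second assertion, the plan is to take reduced Drinfeld doubles of both sides of the first decomposition, using the fact that $\Zbf_n$ commutes with $\Cbf_n$ inside $\Hbf_{n,p_n}^{\mathsf{tw},\tor}$. Since ${}_nz_r$ is primitive (as recalled after \eqref{eq:nz_r}), Green's Hopf pairing restricts to a Hopf pairing between the Heisenberg subalgebra generated by the ${}_nz_r$'s and its negative counterpart. First I would check that the Drinfeld double construction is compatible with the tensor decomposition: because Green's pairing \eqref{eq:HallGreen} is diagonal with respect to the isomorphism class of objects and because $\Zbf_n$ and $\Cbf_n$ are orthogonal subspaces (central elements involve only full powers of $\delta_n$ while non-central basis elements of $\Cbf_n$ do not), we get
\begin{align}
\Dbf\Hbf_{n,p_n}^{\mathsf{tw},\tor}\simeq \Dbf\Zbf_n\otimes_{\Acal_n}\Dbf\Cbf_n,
\end{align}
amalgamated along $\Acal_n=\widetilde\Q[C_n^{\pm}]$ (the toral part shared by both factors after reduction). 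By Corollary~\ref{cor:DC_n} the second factor is $\Ubf_\upsilon(\slfrakhat(n))$.

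For the first factor I would compute $\Dbf\Zbf_n$ explicitly: using primitivity of ${}_nz_r$ the defining Drinfeld double relation collapses to
\begin{align}
[{}_nz_r^+,{}_nz_s^-]=\delta_{r,s}\,({}_nz_r,{}_nz_r)_G\,(C_n^{-r}-C_n^r),
\end{align}
and the relations $[{}_nz_r,{}_nz_s]=0$ in each half (since $\Zbf_n$ is commutative polynomial) together with the relabelling ${}_nz_{-r}\coloneqq{}_nz_r^-$ give exactly the quantum Heisenberg algebra $\Hcal_n$ defined in the statement. Putting the two factors together yields the claimed presentation $\Ubf_\upsilon(\glfrakhat(n))=\Hcal_n\otimes_{\Acal_n}\Ubf_\upsilon(\slfrakhat(n))$.

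The only genuine technical point, and the one I would write out carefully, is the orthogonality of $\Zbf_n$ and the augmentation ideal of $\Cbf_n$ with respect to Green's pairing, since this is what ensures that the Drinfeld double of the tensor product is the amalgamated tensor product of the Drinfeld doubles rather than a more complicated object. This follows from the explicit formula \eqref{eq:nc_r} together with the fact that the pairing vanishes between non-isomorphic sheaves: a basis element of $\Cbf_n$ of class $r\delta_n$ is supported on a sheaf whose socle has some ${}_n\Scal_i$ appearing with multiplicity $\geq 2$, while the summands of ${}_nc_r$ have square-free socle, hence the two are orthogonal. Once this is granted, the rest of the argument is formal manipulation of Drinfeld doubles.
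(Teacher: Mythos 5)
Your overall architecture is sound and matches what the paper does: the statement is presented there as a summary, assembled from the isomorphism $\Zbf_n\otimes_{\widetilde\Q}\Cbf_n\xrightarrow{\mathsf{mult}}\Hbf_{n,p_n}^{\mathsf{tw},\tor}$, the identification $\Zbf_n\simeq\widetilde\Q[{}_nz_1,{}_nz_2,\ldots]$, and Corollary~\ref{cor:DC_n}, with the double-of-a-tensor-product step delegated to \cite{art:hubery2005}, \cite{art:schiffmann2002} and \cite[Theorems~5.3 and 5.4]{art:burbanschiffmann2013}. The first half of your proposal is therefore fine.

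The problem is the step you single out as ``the only genuine technical point.'' The claimed orthogonality argument is wrong: it is false that a basis element of $\Cbf_n$ of class $r\delta_n$ is supported only on sheaves whose socle is not square-free. For instance, an ordered product $E_{i}E_{i+1}\cdots$ of $n$ consecutive generators is supported on the indecomposable ${}_n\Scal_i^{(n)}$, whose socle is a single simple and hence square-free; so the summands of ${}_nc_r$ and the support of elements of $\Cbf_n[r\delta_n]$ overlap, and your deduction of $({}_nz_r,\Cbf_n[r\delta_n])_G=0$ collapses. Fortunately, this orthogonality is not what the decomposition of the double rests on. What you actually need is: (i) ${}_nz_r^+$ commutes with $E_i^+$ because ${}_nz_r$ is central in $\Hbf_{n,p_n}^{\mathsf{tw},\tor}$; (ii) ${}_nz_r^+$ commutes with $F_i^-$ because, in the Drinfeld double relation applied to the pair $({}_nz_r,1_{{}_n\Scal_i})$, every cross term carries a factor $({}_nz_r,1_{{}_n\Scal_i}\,\kbf_\bullet)_G$, which vanishes for the trivial reason that the Green pairing is zero between elements of distinct classes and $r\delta_n\neq\mathbf e_{n,i}$ for $n\geq 2$; (iii) the Heisenberg relation for $[{}_nz_r^+,{}_nz_s^-]$ follows from primitivity and the same degree argument forcing $({}_nz_r,{}_nz_s)_G=\delta_{r,s}({}_nz_r,{}_nz_r)_G$; and (iv) the absence of further relations follows from the triangular decomposition of $\DHbf_{n,p_n}^{\mathsf{tw},\tor}$ combined with the vector-space isomorphism $\Hbf^\pm\simeq\Zbf_n^\pm\otimes\Cbf_n^\pm$. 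Since $\Cbf_n^-$ is generated by the $F_i^-$ and the $\kbf^\pm$, commuting with generators suffices, and no pairing computation between ${}_nz_r$ and general elements of $\Cbf_n$ is ever required. With the flagged step replaced by this degree argument, your proof goes through.
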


\subsubsection{Spherical Hecke algebra of the curve}\label{sec:sphericalHeckealgebracurve}

We shall now define the \textit{composition subalgebra} of $\Hbf^{\mathsf{tw}, \mathsf{tor}}$. For any positive integer $d$ and any closed point $x \in X$ define
\begin{align}
\onebf_{0,\, d; \, x}\coloneqq\sum_{\genfrac{}{}{0pt}{}{\Tcal\in \Tor_x(X)}{\deg(\Tcal)=d}}\, 1_\Tcal
\end{align}
and the element $T_{0, \, d;\, x}$ by the relation
\begin{align}
1+\sum_{d\geq 1}\, \onebf_{0,\, d; \, x}\, z^d=\exp\Big(\sum_{d\geq 1}\, \frac{T_{0, \, d; \, x}}{[d]_\upsilon}\, z^d\Big) \ .
\end{align}
Set $\onebf_{0,\, 0; \, x}=T_{0, \, 0;\, x}=1$. Now put
\begin{align}
\onebf_{0,\, d}\coloneqq\sum_{\genfrac{}{}{0pt}{}{\Tcal\in \Tor(X)}{\deg(\Tcal)=d}}\, 1_\Tcal\ .
\end{align}
and the elements $T_{0, \, d}, \theta_{0, \, d}$ in $\Hbf^{\mathsf{tw}, \tor}$ via the relations
\begin{align}\label{eq:torsionrelations}
1+\sum_{d\geq 1}\, \onebf_{0,\, d}\, z^d=\exp\Big(\sum_{d\geq 1}\, \frac{T_{0, \, d}}{[d]}\, z^d\Big) \,\text{ and }\, 1+\sum_{d\geq 1}\, \theta_{0,\, d}\, z^d=\exp\Big( (\upsilon-\upsilon^{-1})\, \sum_{d\geq 1}\, T_{0, \, d}\, z^d\Big)\ .
\end{align}
We set also $\onebf_{0,0}=T_{0,0}=\theta_{0,0}=1$. One has
\begin{align}
1+\sum_{d\geq 1}\, \onebf_{0,\, d}\, z^d&=\prod_{x\in X}\, \big(1+\sum_{d\geq 1}\, \onebf_{0,\, d; \, x}\, z^d\big)\ ,\\
\exp\Big(\sum_{d\geq 1}\, \frac{T_{0, \, d}}{[d]}\, z^d\Big)&=\prod_{x\in X}\, \Big(\exp\Big(\sum_{d\geq 1}\, \frac{T_{0, \, d; \, x}}{[d]}\, z^d\Big)\Big)\ .
\end{align}

\begin{lemma}[{\cite[Example~4.12, Lemmas~4.50 and 4.51]{book:schiffmann2012}}]
Let $d$ be a positive integer. Then
\begin{itemize}\itemsep0.2em
\item $\tilde \Delta(T_{0,\, d})=T_{0, \, d}\otimes 1+\kbf_{(0, \, d)}\otimes T_{0, \, d}$.
\item $\tilde \Delta(\theta_{0, \, d})=\sum_{s=0}^d\, \theta_{0,\, s}\kbf_{(0,\, d-s)}\otimes\theta_{0,\, d-s}$.
\item $(T_{0, \, d}, T_{0, \, d})_G=\big(\upsilon^{1-d}\, \#X(\F_{q^d})[d]\big)/d(q-1)$.
\end{itemize}
\end{lemma}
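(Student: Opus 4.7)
The plan is to reduce all three assertions to single-point computations and then reassemble. The key preliminary observation is that $\Ext^1(\Tcal, \Tcal') = 0$ whenever $\Tcal, \Tcal'$ are torsion sheaves supported at distinct closed points of $X$ (by Serre duality together with the locality of $\omega_X$), so any subsheaf or quotient of a torsion sheaf decomposes according to its point of support. Hence
\[
\Hbf^{\mathsf{tw},\tor} \simeq \bigotimes_{x \in X}\, \Hbf^{\mathsf{tw},\tor}_x
\]
as $\Ksfnum$-graded bialgebras. The multiplicativity of the generating series $1+\sum_d \onebf_{0,d}z^d$ over $x$ combined with $\log$ yields $T_{0,d} = \sum_{x:\,\deg(x)\mid d} T_{0,d;x}$, so it suffices to establish (i) with $T_{0,d}$ replaced by each $T_{0,d;x}$.

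For a fixed closed point $x$, the category $\Tor_x(X)$ is equivalent to the category of finite-length modules over the DVR $\widehat{\Ocal}_{X,x}$, which has residue field $\F_{q_x}$ with $q_x=q^{\deg(x)}$. Again by Serre duality (using that $\omega_X$ is locally trivial at $x$) the Euler form vanishes identically on $\Ksf(\Tor_x(X))$, so the $\upsilon^{\langle -,-\rangle}$ twist in \eqref{eq:Hallprod} is trivial inside $\Hbf^{\mathsf{tw},\tor}_x$. By the classical Macdonald--Steinitz description of the Hall algebra of a DVR (see \cite[Section~2.3]{book:schiffmann2012}), there is an isomorphism $\Hbf^{\tor}_x \otimes_{\widetilde\Q} \widetilde\Q \simeq \Lambda_{\widetilde\Q}$ under which the generating series $\sum_d \onebf_{0,d;x}z^{d/\deg(x)}$ becomes the generating series of complete symmetric functions; a direct computation then identifies $T_{0,d;x}$ with a scalar multiple of the power sum $p_{d/\deg(x)}$. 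Since power sums are primitive in $\Lambda$ and the twisting reduces to inserting $K_{(0,d)}$, this gives
\[
\tilde\Delta(T_{0,d;x}) = T_{0,d;x}\otimes 1 + \kbf_{(0,d)}\otimes T_{0,d;x},
\]
and summing over $x$ yields (i). For (ii), observe that $T_{0,d}\otimes 1$ and $\kbf_{(0,e)}\otimes T_{0,e}$ commute in $\Hbf^{\mathsf{tw}}\widehat\otimes\Hbf^{\mathsf{tw}}$ (all pairings $((0,d),(0,e))$ vanish), so applying the algebra homomorphism $\tilde\Delta$ to $\theta(z)=\exp((\upsilon-\upsilon^{-1})\sum_d T_{0,d}z^d)$ factors the exponential as
\[
\tilde\Delta(\theta(z)) = \bigl(\theta(z)\otimes 1\bigr)\cdot \bigl(\textstyle\sum_e z^e\, \kbf_{(0,e)}\otimes \theta_{0,e}\bigr),
\]
and extracting the coefficient of $z^d$ yields the stated coproduct.

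Finally, for (iii), the Macdonald inner product satisfies $(p_m,p_m)=m$, and tracking the normalizations between Green's pairing \eqref{eq:HallGreen} and the standard Hall inner product on $\Lambda$ (the factor $\upsilon^{(\alpha,\beta)}$ being trivial on torsion) shows that each closed point $x$ with $\deg(x)\mid d$ contributes $\deg(x)\cdot \upsilon^{1-d}\,[d]/(d(q-1))$ to $(T_{0,d},T_{0,d})_G$; summing and using $\#X(\F_{q^d})=\sum_{\deg(x)\mid d}\deg(x)$ yields the claimed formula. The main obstacle in this approach is the careful bookkeeping of normalizations: the Hall multiplication is twisted by $\upsilon^{\langle -,-\rangle}$, Macdonald's isomorphism carries various powers of $q_x$, and Green's pairing has its own automorphism normalization. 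All of these must be aligned to produce the precise coefficient $\upsilon^{1-d}/(d(q-1))$ in (iii), but each reduction step is otherwise routine once the single-point picture is in place.
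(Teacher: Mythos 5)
The paper does not prove this lemma at all: it is quoted verbatim from \cite[Example~4.12, Lemmas~4.50 and 4.51]{book:schiffmann2012}, and your reconstruction follows exactly the standard argument of that reference (decomposition of $\Tor(X)$ over closed points, vanishing of the Euler form on torsion classes, Macdonald's identification of the local Hall algebra with symmetric functions so that $T_{0,d;x}$ becomes a primitive power sum, exponentiation for the $\theta_{0,d}$ coproduct, and summation of local norms via $\#X(\F_{q^d})=\sum_{\deg(x)\mid d}\deg(x)$ for the Green pairing). The argument is sound; the only caveat you already flag yourself, namely that the precise constant in (iii) requires tracking the normalizations between Macdonald's inner product and Green's pairing, which is where the cited Lemma~4.50 does its work.
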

The sets $\{\onebf_{0,\, d}\,\vert\, d\geq 1\}$, $\{T_{0,\, d}\,\vert\, d\geq 1\}$ and $\{\theta_{0,\, d}\,\vert\, d\geq 1\}$ all generate the same subalgebra $\Cbf=\Cbf_1$ of $\Hbf^{\mathsf{tw}, \tor}$. It is known that $\Cbf=\C[\onebf_{0, 1}, \onebf_{0, 2}, \ldots]$, i.e., the commuting elements $\onebf_{0,\, d}$ for $d\geq 1$ are algebraically independent. Clearly, the same holds for the $T_{0,\, d}$'s and the $\theta_{0,\, d}$'s. 

\subsubsection{Spherical Hecke algebra of the root stack over a curve and  and $\Ubf_\upsilon(\glfrakhat(n))$}

Let $\Ubf_n^0\subset \Hbf_n^{\mathsf{tw}}$ be the algebra generated by $\Omega_n(\Cbf)$ and $\Cbf_n$. We call it the \textit{spherical Hecke algebra} of $X_n$.

For $r\in\Z_>0$, define
\begin{align}
{}_n Z_r\coloneqq\sum_{d\vert r}\, \sum_{\substack{x\in X\\ \deg(x)=d}}\, {}_n Z_{r; \, x}\ ,
\end{align}
where
\begin{align}
{}_n Z_{r;\, p}\coloneqq\frac{[r]}{r}\, {}_n z_r\quad\text{and}\quad {}_n Z_{r;\, x}\coloneqq\Omega_n(T_{r; \, x}) \text{ for $x\in X\setminus\{p\}$}\ .
\end{align}
We have
\begin{align}
\tilde \Delta({}_n Z_r)={}_n Z_r\otimes 1+\kbf_{(0, r\delta_n)}\otimes {}_n Z_r \ .
\end{align}
\begin{remark}
In addition, the elements ${}_nZ_r$ commute and are algebraically independent.
Note that our definition of ${}_n Z_r$ differs from the one in \cite[Section~6]{art:burbanschiffmann2013} by a factor $(1-q^{rn})^{-1}$.
\end{remark}
Set $\Lambda_n=\widetilde{\Q}[{}_nZ_1, {}_nZ_2, \ldots]$ and $\mathbf{K}_n^0=\widetilde{\Q}[\Ksf^{\tor}(X_n)]$, where $\Ksf^{\tor}(X_n)$ is the subgroup of $\Ksfnum(X_n)$ of classes of torsion sheaves on $X_n$.
\begin{proposition}[{cf.\ \cite[Proposition~6.3-(1)]{art:burbanschiffmann2013}}]
We have a decomposition
\begin{align}
\Ubf_n^0\simeq \Lambda_n\otimes_{\widetilde \Q} \Ubf_\upsilon^+(\slfrakhat(n))\otimes_{\widetilde \Q}\mathbf{K}_n^0\ .
\end{align}
\end{proposition}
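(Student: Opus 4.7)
The plan is to establish the decomposition in three steps: first, identify $\Omega_n(\Cbf) = \Lambda_n$ inside $\Ubf_n^0$; second, verify that $\Lambda_n$ commutes with the other two factors; and third, prove that the multiplication map
\begin{align}
m \colon \Lambda_n \otimes \Ubf_\upsilon^+(\slfrakhat(n)) \otimes \mathbf{K}_n^0 \to \Ubf_n^0
\end{align}
is a linear isomorphism by leveraging the decomposition of $\Hbf_{n,p_n}^{\mathsf{tw},\tor}$ from Corollary~\ref{cor:heckealgebrastackypoint-n} together with the factorization of $\Hbf_n^{\mathsf{tw},\tor}$ over closed points of $X_n$.

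For Step 1, I would decompose $T_{0,r} = \sum_x T_{0,r;x}$ and apply $\Omega_n$ term by term. For $x \neq p$, $\Omega_n(T_{0,r;x}) = {}_nZ_{r;x}$ by definition, as $\pi_n$ is an isomorphism away from $p$. For $x = p$, the exactness of $\pi_n^\ast$ ensures that $\Omega_n$ preserves coproducts, so $\Omega_n(T_{0,r;p})$ is a primitive element of weight $r\delta_n$ in $\Hbf_{n,p_n}^{\mathsf{tw},\tor}$; by Corollary~\ref{cor:heckealgebrastackypoint-n} the space of such primitives is one-dimensional and spanned by ${}_nz_r$, forcing $\Omega_n(T_{0,r;p}) = c_r\, {}_nz_r$ for some scalar $c_r$. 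A direct character-theoretic calculation, applying both sides of the exponential identity \eqref{eq:torsionrelations} and the recursion \eqref{eq:nz_r} to the pullback sheaves $\pi_n^\ast T_p^{(k)} \simeq {}_n\Scal_n^{(nk)}$, shows $c_r = [r]/r$, i.e.\ $\Omega_n(T_{0,r;p}) = {}_nZ_{r;p}$. Hence $\Omega_n(T_{0,r}) = {}_nZ_r$ and $\Omega_n(\Cbf) = \Lambda_n$.

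For Step 2, the commutativity of $\Lambda_n$ with $\Ubf_\upsilon^+(\slfrakhat(n))$ splits by closed point: ${}_nZ_{r;p}$ is central in $\Hbf_{n,p_n}^{\mathsf{tw},\tor}$, and for $x \neq p$ the element ${}_nZ_{r;x}$ sits in a disjoint tensor factor $\Hbf_{n,x}^{\mathsf{tw},\tor}$, with commutativity with the $p_n$-local generators of $\Cbf_n$ following from the vanishing of $\Hom$ and $\Ext^1$ between torsion sheaves at distinct points. The commutativity with $\mathbf{K}_n^0$ follows from $(\mathbf{e}_{n,i}, r\delta_n) = r\,\mathbf{e}_{n,i}\,\widehat A_{n-1}\,\delta_n = 0$, since $\delta_n$ is the null root of $\widehat A_{n-1}$, which trivializes the twist in \eqref{eq:Hallnewrelations}. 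Thus $m$ is a well-defined linear map, and surjectivity is immediate since $\Ubf_n^0$ is generated by $\Omega_n(\Cbf) \cup \Cbf_n$.

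The main obstacle is injectivity, and here I would use the inclusion $\Ubf_n^0 \hookrightarrow \Hbf_n^{\mathsf{tw},\tor}$ together with the factorization $\Hbf_n^{\mathsf{tw},\tor} \simeq \big(\bigotimes_{x \in X_n} \Hbf_{n,x}^{\tor}\big) \otimes \mathbf{K}_n^0$. Under this, $\Ubf_\upsilon^+(\slfrakhat(n))$ is contained in the $x = p_n$ factor, while $\Lambda_n$ spreads across all factors via ${}_nZ_r = \sum_x {}_nZ_{r;x}$. Expanding any putative nontrivial element of the kernel of $m$ as a sum of monomials in the ${}_nZ_{r;x}$ grouped by their spatial "profile" across the closed points, and exploiting the algebraic independence of the $\Omega_n(T_{0,d;x})$ in each local factor $\Hbf_{n,x}^{\tor}$ for $x \neq p$, reduces the vanishing to a relation purely in $\Hbf_{n,p_n}^{\mathsf{tw},\tor}$; the latter vanishes by the triangular decomposition of Corollary~\ref{cor:heckealgebrastackypoint-n}. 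This strategy parallels \cite[Proposition~6.3]{art:burbanschiffmann2013} and can be adapted here mutatis mutandis.
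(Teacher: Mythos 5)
The paper does not actually prove this proposition; it is stated with a bare citation to \cite[Proposition~6.3-(1)]{art:burbanschiffmann2013}, so the only thing to assess is whether your argument is correct on its own terms. It is not: Step~1 contains a genuine error that the rest of the plan depends on. You claim that $\Omega_n$ preserves coproducts because $\pi_n^\ast$ is exact, hence that $\Omega_n(T_{0,\,r;\,p})$ is primitive and therefore a scalar multiple of ${}_nz_r$. But $\Omega_n=(\pi_n^\ast)_!$ is only an algebra morphism, not a coalgebra morphism --- the paper itself stresses this for the maps $\Omega_{kn,n}$, and the same obstruction applies here: the pullback $\pi_n^\ast T_p^{(k)}\simeq {}_n\Scal_n^{(nk)}$ has subsheaves and quotients (e.g.\ the various ${}_n\Scal_i^{(j)}$ with $j<nk$) that are not pullbacks, so $\tilde\Delta\big(\Omega_n(1_{T_p^{(k)}})\big)$ has components lying outside the image of $\Omega_n\otimes\Omega_n$. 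Concretely, $\Omega_2(T_{0,\,1;\,p})=1_{{}_2\Scal_2^{(2)}}$ is not primitive (its coproduct has a $1_{{}_2\Scal_1}\otimes 1_{{}_2\Scal_2}$-component) and is not proportional to ${}_2z_1={}_2c_1=-(q-1)\,1_{{}_2\Scal_1^{(2)}}-(q-1)\,1_{{}_2\Scal_2^{(2)}}+(q-1)^2\,1_{{}_2\Scal_1\oplus{}_2\Scal_2}$. Hence $\Omega_n(\Cbf)\neq\Lambda_n$, and with it the inclusion $\Lambda_n\subseteq\Ubf_n^0$, the generation statement, and the reduction of injectivity in Step~3 all lose their foundation.

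What is actually needed --- and what \cite{art:burbanschiffmann2013} proves --- is subtler: writing $\Omega_n(T_{0,\,r;\,p})$ in the vector-space decomposition $\Hbf_{n,p_n}^{\mathsf{tw},\tor}\simeq\Zbf_n\otimes_{\widetilde\Q}\Cbf_n$ of Corollary~\ref{cor:heckealgebrastackypoint-n}, one shows that its component along ${}_nz_r$ is a nonzero scalar and that the remaining terms lie in the subalgebra generated by $\Cbf_n$ and the ${}_nz_s$ with $s<r$; an induction on $r$ then yields both ${}_nZ_r\in\Ubf_n^0$ and the equality of $\Ubf_n^0$ with the subalgebra generated by $\Lambda_n$, $\Cbf_n$ and $\Kbf_n^0$. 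Your Steps~2 and~3 (centrality of the ${}_nZ_{r;x}$, the factorization of $\Hbf_n^{\mathsf{tw},\tor}$ over closed points, and the triangular decomposition at $p_n$ for injectivity) are sound in outline and do parallel the cited argument, but they cannot be run until the leading-term computation replacing your Step~1 is supplied.
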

We have also a characterization of the reduced Drinfeld double of $\Ubf_n^0$.
\begin{corollary}[{cf.\ \cite[Corollary~6.4]{art:burbanschiffmann2013}}]
We have a decomposition
\begin{align}\label{eq:Uq(gl)}
\DUbf_n^0\simeq \Hcal_n\otimes_{\Acal_n} \Ubf_\upsilon(\slfrakhat(n))\eqqcolon \Ubf_\upsilon(\glfrakhat(n))\ .
\end{align}
Here $\Acal_n\coloneqq\widetilde \Q[C_n^\pm]$ is the ring of Laurent polynomials in the variable $C_n\coloneqq\kbf_{\delta_n}$, $\Hcal_n\coloneqq\widetilde \Q\langle {}_n  Z_r^\pm\, \vert\, r\in \Z_{>0}\rangle\otimes_{\widetilde \Q}\Acal_n$, subject to the relations
\begin{align}
[{}_n Z_r, {}_n Z_t]\coloneqq\delta_{r+t, 0}\, ({}_n Z_r, {}_n Z_r)_G\, (C_n^{-r}-C_n^r) \quad\text{and}\quad [{}_n Z_r, C_n^\pm]=0 \text{ for $r,t\in \Z\setminus\{0\}$} \ .
\end{align}
Here we set ${}_n Z_{\pm r}\coloneqq{}_n Z_r^{\pm}$ for $r\in\Z_{>0}$.
\end{corollary}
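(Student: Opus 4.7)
The plan is to apply the reduced Drinfeld double construction to the triangular decomposition $\Ubf_n^0\simeq\Lambda_n\otimes_{\widetilde\Q}\Ubf_\upsilon^+(\slfrakhat(n))\otimes_{\widetilde\Q}\mathbf{K}_n^0$ supplied by the preceding Proposition, following the pattern of \cite[Corollary~6.4]{art:burbanschiffmann2013}. There are three structural inputs. First, Corollary~\ref{cor:DC_n} already identifies the double of $\Cbf_n\coloneqq\Ubf_\upsilon^+(\slfrakhat(n))\otimes_{\widetilde\Q}\mathbf{K}_n^0$ with $\Ubf_\upsilon(\slfrakhat(n))$. Second, $\Lambda_n=\widetilde\Q[{}_nZ_1,{}_nZ_2,\ldots]$ is generated by primitive elements, since $\tilde\Delta({}_nZ_r)={}_nZ_r\otimes 1+\kbf_{(0,r\delta_n)}\otimes{}_nZ_r$. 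Third, $\Lambda_n$ and $\Cbf_n$ commute inside $\Ubf_n^0$: the contributions to ${}_nZ_r$ from closed points $x\neq p$ commute with $\Cbf_n$ because $\Hbf^{\tor}_n=\bigotimes_x\Hbf^{\tor}_{n,x}$ decomposes over points, and the stacky-point contribution ${}_nz_r$ lies in the center of $\Hbf^{\mathsf{tw},\tor}_{n,p_n}$ by Corollary~\ref{cor:heckealgebrastackypoint-n}.

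First I would use Corollary~\ref{cor:DC_n} to embed $\Ubf_\upsilon(\slfrakhat(n))\simeq \mathbf D\Cbf_n$ in $\DUbf_n^0$, providing the Drinfeld--Jimbo generators $E_i,F_i,K_i^{\pm 1}$. I would then double $\Lambda_n$ itself: for primitive generators the general Drinfeld double bracket collapses to a single cross term, and in the reduced double (where $C_n^{\pm}=\kbf_{\pm\delta_n}$) one obtains
\begin{align}
[{}_nZ_r^-,{}_nZ_t^+]=\delta_{r,t}\,({}_nZ_r,{}_nZ_r)_G\,(C_n^{-r}-C_n^r),\qquad r,t\in\Z_{>0},
\end{align}
together with $[{}_nZ_r^\epsilon,{}_nZ_t^\epsilon]=0$ (inherited from $\Lambda_n$). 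After setting ${}_nZ_{-r}\coloneqq{}_nZ_r^-$, this is precisely the presentation of the quantum Heisenberg algebra $\Hcal_n$ in the statement.

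Finally I would verify that $\Hcal_n$ commutes with $\Ubf_\upsilon(\slfrakhat(n))$ inside $\DUbf_n^0$ apart from the shared copy of $\Acal_n=\widetilde\Q[C_n^{\pm 1}]$: commutativity of ${}_nZ_r^{\pm}$ with $K_i^{\pm 1}$ follows from $(\delta_n,\mathbf e_{n,i})=0$, the familiar null-root identity for $\widehat A_{n-1}$; commutativity with $E_i$ and $F_i$ follows from the vanishing of the corresponding Green pairings on grading grounds, as ${}_nZ_r$ is supported in degree $r\delta_n$ while $1_{{}_n\Scal_i}^{\pm}$ is supported in degree $\mathbf e_{n,i}$. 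Combining these commutation relations with the triangular decomposition $\Hbf_n^{+}\otimes \mathbf K_n^0\otimes\Hbf_n^{-}\xrightarrow{\mathsf{mult}}\DHbf_n$ of the reduced double, restricted to our generating sets, then yields $\DUbf_n^0\simeq\Hcal_n\otimes_{\Acal_n}\Ubf_\upsilon(\slfrakhat(n))$. The main point of substance is the Heisenberg computation displayed above: one must rule out stray correction terms from higher-order pieces of the coproduct when pairing ${}_nZ_r^-$ against ${}_nZ_t^+$, which rests essentially on primitivity and on the grading-based vanishing of cross Green pairings; the explicit scalar $({}_nZ_r,{}_nZ_r)_G$ then assembles contributions from the stacky-point Heisenberg (via Corollary~\ref{cor:heckealgebrastackypoint-n}) and from the spherical Hecke pairings $(T_{0,d;x},T_{0,d;x})_G$ at non-stacky points.
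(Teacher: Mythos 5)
Your argument is correct and is essentially the paper's own route: the corollary is stated with no proof beyond the citation of Burban--Schiffmann, and your reconstruction --- doubling the triangular decomposition of $\Ubf_n^0$, using $\mathbf{D}\Cbf_n\simeq\Ubf_\upsilon(\slfrakhat(n))$, primitivity of the ${}_n Z_r$, and degree-based vanishing of the cross Green pairings --- is exactly the argument of that reference. The only blemish is a sign convention in your displayed Heisenberg bracket: with ${}_n Z_{-r}={}_n Z_r^-$ the relation in the statement reads $[{}_n Z_r^+,{}_n Z_r^-]=({}_n Z_r,{}_n Z_r)_G\,(C_n^{-r}-C_n^r)$, which is the negative of what you wrote for $[{}_n Z_r^-,{}_n Z_t^+]$ at $r=t$.
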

\begin{remark}
Let us fix a positive integer $n\geq 2$. Note that $\Ubf_\upsilon(\slfrakhat(n))$ can be geometrically realized as the reduced Drinfeld double of $\Cbf_n$, while $\Ubf_\upsilon(\glfrakhat(n))$ can be  obtained as reduced Drinfeld double of either the entire Hall algebra $\Hbf_{n, p_n}^{\mathsf{tw}, \tor}$ of $\Tor_{p_n}(X_n)$ or the spherical Hecke algebra $\Ubf_n^0$.
\end{remark}

\subsection{Spherical Hall algebra}

For $d\in \Z$, define
\begin{align}
{}_n\onebf^{\mathbf{ss}}_{1,\, d}\coloneqq\sum_{\genfrac{}{}{0pt}{}{M\in \Pic(X)}{\deg(M)=\nintpart{d}}}\, 1_{\pi_n^\ast M\otimes \Lcal_n^{\otimes\, \nfrapart{d}}}\ .
\end{align}
Denote by $\Ubf_n^>$ the subalgebra of $\Hbf_n^{\mathsf{tw}}$ generated by $\{\onebf^{\mathbf{ss}}_{1,\, d}\, \vert\, d\in \Z\}$. We call it the \textit{(positive) spherical Hall algebra} of $\Hbf_n^{\mathsf{tw}}$. We denote by $\Ubf^>$ the positive spherical subalgebra of $\Hbf^{\mathsf{tw}}=\Hbf_1^{\mathsf{tw}}$.

We define the \textit{spherical Hall algebra} $\Ubf_n$ as the subalgebra of $\Hbf_n^{\mathsf{tw}}$ generated by $\Ubf_n^0$ and $\Ubf_n^>$. First, note that the canonical multiplication map
\begin{align}
\Ubf_n^>\otimes_{\widetilde \Q} \Ubf_n^0\xrightarrow{\mathsf{mult}} \Ubf_n
\end{align}
is an isomorphism of $\widetilde \Q$-vector spaces (cf.\ \cite[Proposition~6.5]{art:burbanschiffmann2013}).

Since $\Ubf_n^0$ is a Hopf algebra (cf.\ \cite[Section~5.4]{art:burbanschiffmann2013}) and thanks to Proposition~\ref{prop:coproduct}, $\Ubf_n$ is a topological sub-bialgebra of $\Hbf_n^{\mathsf{tw}}$. As a consequence, we have the following (cf.\ \cite[Theorems~5.22 and 5.25]{art:burbanschiffmann2013}):
\begin{proposition}
The canonical multiplication map
\begin{align}
\overline{\Ubf}_n^+\otimes_{\widetilde \Q}\Kbf_n\otimes_{\widetilde \Q} \overline{\Ubf}_n^- \xrightarrow{\mathsf{mult}}\DUbf_n
\end{align}
is an isomorphism of $\widetilde \Q$-vector spaces. Here $\overline{\Ubf}_n^\pm$ is the subalgebra of the reduced Drinfeld double $\DUbf_n$ generated by the elements $\onebf_{1, \, d}^\pm$, $\Omega_n(T_{0, \, r})^\pm$ and $1_{{}_n\Scal_i}^\pm$ for $d, r, i, j\in\Z$, $r>0$, and $1\leq i\leq n$.

Moreover, there exists an injective algebra homomorphism $\DUbf\hookrightarrow \DUbf_n$.
\end{proposition}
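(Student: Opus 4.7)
The aim is to reduce both assertions to the corresponding statements for the ambient algebra, invoking the Hopf-subalgebra property of $\Ubf_n\subset \Hbf_n^{\mathsf{tw}}$ (established above via the coproduct formulas on the generators) together with the non-degeneracy of Green's pairing.

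\textit{Step 1: triangular decomposition.} First, since $\Ubf_n$ is a topological Hopf subalgebra of $\Hbf_n^{\mathsf{tw}}$ and Green's pairing restricts to a Hopf pairing on $\Ubf_n\times \Ubf_n$, the reduced Drinfeld double $\Dbf\Ubf_n=\DUbf_n$ admits by the general Drinfeld double formalism a linear triangular decomposition $\overline{\Ubf}_n^+\otimes_{\widetilde\Q}\Kbf_n\otimes_{\widetilde\Q}\overline{\Ubf}_n^-\xrightarrow{\sim}\DUbf_n$, with $\overline{\Ubf}_n^\pm$ the canonical copy of $\Ubf_n^\pm$ sitting inside $\DUbf_n$. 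The only nontrivial input is the non-degeneracy of the restricted pairing, which I would check degree by degree for the $\Ksfnum(X_n)$-grading: on torsion classes it follows from Corollary~\ref{cor:heckealgebrastackypoint-n} (the pairing is non-degenerate on each graded component of $\Ubf_\upsilon(\slfrakhat(n))\otimes \Lambda_n$, which is precisely $\Ubf_n^0$), and on classes of positive rank one uses the isomorphism $\Ubf_n^>\otimes_{\widetilde\Q} \Ubf_n^0\xrightarrow{\mathsf{mult}}\Ubf_n$ together with the triangularity of the coproduct on $\onebf^{\mathbf{ss}}_{1,d}$ to reduce to the pairing on $\Ubf_n^>$, for which non-degeneracy (in fixed rank) follows from the genus-zero/Drinfeld reciprocity argument of \cite{art:burbanschiffmann2013}.

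\textit{Step 2: the embedding $\DUbf\hookrightarrow \DUbf_n$.} Recall from the discussion following Corollary~\ref{cor:DC_n} that the equivalence $\Coh(X)\simeq \Csf^\perp\subset \Coh(X_n)$, combined with \cite[Theorem~3.3]{art:burbanschiffmann2013}, yields an injective algebra morphism $\Omega_n\colon \DHbf\hookrightarrow \DHbf_n$. It therefore suffices to verify that $\Omega_n$ sends the generators of $\DUbf$ to elements of $\DUbf_n$. For a line bundle $M$ on $X$ of degree $d$, the sheaf $\pi_n^\ast M$ is a line bundle on $X_n$ with $\nintpart{nd}=d$ and $\nfrapart{nd}=0$, so $\Omega_n(\onebf_{1,d}^\pm)=\onebf^{\mathbf{ss},\,\pm}_{1,\,nd}$ is a generator of $\overline{\Ubf}_n^\pm$. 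By construction $\Omega_n(T_{0,r})^\pm$ is among the declared generators of $\overline{\Ubf}_n^\pm$, and the grouplikes $\kbf_{(r,d)}$ go to $\kbf_{(r,\,d\,\delta_n)}\in \Kbf_n$. Hence $\Omega_n(\DUbf)\subseteq \DUbf_n$, and since $\Omega_n$ is already injective on $\DHbf$, its restriction is the required embedding.

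\textit{Main obstacle.} The real work lies in Step~1: establishing that the restricted Green pairing remains non-degenerate on $\Ubf_n$. Without this one cannot pass from the tautological double $\widetilde{\Dbf}\Ubf_n$ to a well-defined reduced double identifiable with $\DUbf_n$. This is where the finiteness statements of Lemma~\ref{lem:finiteness}, together with a careful bookkeeping of the interaction between the torsion block at $p_n$ and the ``generic'' spherical block $\Ubf_n^>$, are used. Once non-degeneracy is in hand, both the isomorphism of the multiplication map and the existence of the projection from $\widetilde{\Dbf}\Ubf_n$ to $\DUbf_n$ follow by the standard formalism of reduced Drinfeld doubles of graded Hopf algebras.
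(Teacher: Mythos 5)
Your route is the one the paper itself takes: the paper offers no argument beyond citing \cite[Theorems~5.22 and 5.25]{art:burbanschiffmann2013}, and the prerequisite it records is exactly what you use, namely that $\Ubf_n$ is a topological sub-bialgebra of $\Hbf_n^{\mathsf{tw}}$ (Proposition~\ref{prop:coproduct} plus the Hopf structure on $\Ubf_n^0$). Your Step~2 is correct and complete: the embedding $\Omega_n\colon \DHbf\hookrightarrow\DHbf_n$ is already available from the perpendicular-category argument, and your verification that $\onebf_{1,d}^\pm\mapsto {}_n\onebf^{\mathbf{ss},\pm}_{1,\,nd}$, $T_{0,r}^\pm\mapsto\Omega_n(T_{0,r})^\pm$, $\kbf_{(r,d)}\mapsto\kbf_{(r,\,d\delta_n)}$ land among the declared generators is all that is needed. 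The one point to correct is your diagnosis of the ``main obstacle.'' The triangular decomposition of the reduced Drinfeld double with respect to a Hopf pairing is a purely formal consequence of the cross-relations: surjectivity of the multiplication map comes from straightening words $u^-v^+$ using those relations, which stay inside $\overline{\Ubf}_n^+\Kbf_n\overline{\Ubf}_n^-$ precisely because $\tilde\Delta$ preserves $\Ubf_n$; injectivity follows by composing with the natural map $\DUbf_n\to\DHbf_n$ (well defined because coproduct and pairing on $\Ubf_n$ are restrictions) and invoking the triangular decomposition of $\DHbf_n$. None of this uses non-degeneracy of the restricted pairing, and in particular the reduced double is well defined for an arbitrary Hopf pairing, contrary to what you assert in your closing paragraph. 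Non-degeneracy of $(\cdot,\cdot)_G$ on $\Ubf_n$ is true and useful elsewhere, but it is not where the content of this proposition lies; the content is coproduct stability, which you (and the paper) already have in hand.
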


\subsection{Shuffle algebra presentation of $\Ubf_n^>$}\label{sec:shuffle}

In this section, we shall provide a \textit{shuffle presentation} of $\Ubf_n^>$. To achieve this, we use the so-called constant term map, which is (a component of) the maximal iterated coproduct .

\begin{remark}
In the following, In order to avoid cumbersome notation (cf.\ Remark~\ref{rem:Ktheorylinebundle}), we denote the element in $\Kbf_n$ correspondly to a class $\overline{\pi_n^\ast M\otimes \Lcal_n^{\otimes \, \ell}}$ simply by $\kbf_{(1,\, d)}$, where $d=\deg(M) n+\ell$. 
\end{remark}
Let $\Hbf^{\bun}_n$ be the Hall algebra associated with the exact subcategory consisting of locally free sheaves on $X_n$. Then $\Hbf^{\bun}_n$ is a subalgebra of $\Hbf_n$, which is not stable under the coproduct. The decomposition of a coherent sheaf into a locally free part and a torsion part gives rise to isomorphism 
\begin{align}
\Hbf^{\bun}_n\otimes \Hbf^{\tor}_n \to \Hbf_n \quad\text{and}\quad \Hbf^{\bun}_n\otimes \Hbf^{\tor}_n\otimes \Kbf_n\to \Hbf^{\mathsf{tw}}_n
\end{align}
defined by the multiplication map; the comultiplication provides an inverse of this map. There is a natural projection $\omega_n\colon \Hbf^{\mathsf{tw}}_n\to \Hbf^{\bun}_n$ defined as follows: for any $\mathbf u_v\in \Hbf_n^{\bun}, \mathbf u_t\in \Hbf_n^{\tor}, (r,\mathbf d)\in \Z\oplus\Z^n$ we set
\begin{align}\label{eq:omega}
\omega_n\big(\mathbf u_v\mathbf u_t\kbf_{(r, \, \mathbf d)}\big)=
\begin{cases}
\mathbf u_v & \text{if } \mathbf{u}_t=1\ ,\\
0 & \text{otherwise} \ .
\end{cases}
\end{align}
In terms of functions, this map is simply the restriction of functions on $\Mcal_\alpha$ to its open substack $\Mcal_\alpha^{\bun}$ parametrizing vector bundles.

We may now define the \textit{constant term map}. For $r\geq 1$, define
\begin{align}
\Hbf_n^{\mathsf{tw}}[r]\coloneqq\bigoplus_{\genfrac{}{}{0pt}{}{\alpha\in \Ksfnum(X_n)}{\rk(\alpha)=r}}\, \Hbf_n^{\mathsf{tw}}[\alpha] \quad\text{and}\quad \Ubf_n^>[r]\coloneqq\Ubf_n^>\cap \Hbf_n^{\mathsf{tw}}[r]\ 
\end{align}
and denote by
\begin{align}
\tilde{\Delta}_{1, \ldots, 1}\colon \Hbf_n^{\mathsf{tw}}[r] \to \Hbf_n^{\mathsf{tw}}[1]^{\widehat \otimes\, r}
\end{align}
the corresponding component of the $r$fold iterated coproduct.
For $r\geq 1$ we set
\begin{align}
J_r\colon \Ubf_n^>[r] \to \left( \Ubf_n^>[1]\right)^{\widehat{\otimes} \, r}\ , \quad u \mapsto \omega_n^{\otimes\, r}\, \tilde \Delta_{1, \ldots, 1}(u)\ ,
\end{align}
and denote by $J\colon \Ubf_n^>\to \bigoplus_r \left( \Ubf_n^>[1]\right)^{\widehat{\otimes} \, r}$ the sum of all the maps $J_r$. We call it the \textit{constant term map}.
\begin{lemma}[see e.g. {\cite[Lemma~1.5]{art:schiffmannvasserot2012}}]
The constant term map $J$ is injective.
\end{lemma}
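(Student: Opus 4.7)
The plan is to prove injectivity of each graded piece $J_r\colon \Ubf_n^>[r]\to (\Ubf_n^>[1])^{\widehat\otimes\, r}$ by a Harder--Narasimhan triangularity argument. First I would observe that $\Ubf_n^>[r]\subset \Hbf_n^{\bun}[r]$: the generators $\onebf^{\mathbf{ss}}_{1,\, d}$ of $\Ubf_n^>$ are supported on line bundles, and on the smooth tame one-dimensional stack $X_n$ any extension of vector bundles by vector bundles is again a vector bundle, so $\Hbf_n^{\bun}$ is a subalgebra under the Hall product. Consequently, every $u\in \Ubf_n^>[r]$ takes the form $u=\sum_V c_V\,1_V$ with $V$ running over isomorphism classes of rank-$r$ vector bundles on $X_n$ and only finitely many $c_V$ non-zero.

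Next I would unravel the definitions in~\eqref{eq:Hallcoprod} and~\eqref{eq:omega}. The iterated coproduct $\tilde\Delta_{1,\ldots,1}$ picks out full filtrations with rank-one successive quotients, and $\omega_n^{\otimes\, r}$ then retains only those flags in which every subquotient is torsion-free. This will yield the explicit formula
\begin{align}
J_r(1_V)=\sum_{V_\bullet}\, \upsilon^{c(V_\bullet)}\, 1_{L_r}\otimes 1_{L_{r-1}}\otimes\cdots\otimes 1_{L_1}\ ,
\end{align}
where $V_\bullet$ ranges over complete flags $0=V_0\subsetneq V_1\subsetneq\cdots\subsetneq V_r=V$ with line-bundle successive quotients $L_i\coloneqq V_i/V_{i-1}$, and $c(V_\bullet)\in\Z$ is an explicit exponent built from the Euler form. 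Since every vector bundle admits such a flag by~\eqref{eq:filtrationlinebundles}, each $J_r(1_V)$ is non-zero.

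I would then promote this non-vanishing to injectivity by exhibiting a block-triangular structure on $J_r$. By Proposition~\ref{prop:cohss}, the slope sequence $(\mu_n(L_1),\ldots,\mu_n(L_r))$ extracted from any flag $V_\bullet$ of $V$ by line bundles is dominated by the Harder--Narasimhan slope sequence of $V$, with equality achieved precisely by the flags obtained by refining the HN filtration of $V$. Ordering isomorphism classes of rank-$r$ bundles by the dominance order on their HN polygons makes $J_r$ block-upper-triangular: if $J_r(u)=0$, the contribution coming from bundles realizing the maximal HN polygon in the support of $u$ must already cancel among themselves, reducing the problem to injectivity of $J_r$ restricted to the subspace spanned by bundles in a \emph{single} HN stratum.

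The main obstacle will be this intra-stratum step: non-isomorphic semistable bundles of the same slope and rank can contribute identical line-bundle flag types with non-trivial coefficients, so a mere triangularity in the tensor-type label will not separate them. I would address this by a descending induction on rank within each semistable subcategory, using that the finiteness statements of Lemma~\ref{lem:finiteness} ensure the moduli stack of semistable bundles with fixed numerical invariants is of finite type, so that within one stratum an isomorphism class is determined by a finite list of invariants of the form $\dim\Hom(L,V)$ for line bundles $L$ in a suitable exhausting family. These dimensions can be recovered from $J_r(u)$ by pairing with carefully chosen rank-one elements, allowing iterative cancellation of the leading contributions until $u=0$ and completing the proof of injectivity of $J$.
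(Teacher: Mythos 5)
There is a genuine gap, and it sits exactly where you flag ``the main obstacle'', but it is worse than an unfinished step: the triangularity you set up degenerates before you ever reach the intra-stratum problem. Your reduction rests on the claim that the flags achieving the HN polygon of $V$ are precisely the refinements of the HN filtration, so that the ``diagonal block'' of $J_r$ on a stratum is governed by those flags. But a stable bundle $V$ of rank $\geq 2$ admits \emph{no} complete flag by line bundles whose polygon touches its HN polygon (a line subbundle $L_1\subset V$ with $\mu_n(L_1)=\mu_n(V)$ would contradict stability), so on the stratum of stable bundles your leading term is identically zero and the block-triangular structure tells you nothing about the coefficients $c_V$ there. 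A second, more structural problem: after step one your argument never again uses that $u$ lies in the subalgebra $\Ubf_n^>[r]$ generated in degree one --- you are effectively trying to prove that $J_r$ is injective on the span of all $1_V$ in the relevant strata, i.e.\ essentially on $\Hbf_n^{\bun}[r]$. That is a much stronger statement than the lemma, and it is exactly the kind of statement one should not expect: for $g_X\geq 2$ two non-isomorphic stable bundles can plausibly have identical saturated-line-subbundle data, and your proposed repair (recovering $\dim\Hom(L,V)$ from $J_r(u)$ and separating bundles by these invariants) is both unsubstantiated as a reconstruction claim and not obviously extractable from flag counts.

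The proof the paper is invoking (\cite[Lemma~1.5]{art:schiffmannvasserot2012}) goes through Green's Hopf pairing and uses the degree-one generation in an essential way. If $J_r(u)=0$ with $u\in\Ubf_n^>[r]$, then for any $w_1,\ldots,w_r\in\Ubf_n^>[1]$ the Hopf property gives $(w_1\cdots w_r,u)_G=(w_1\otimes\cdots\otimes w_r,\tilde\Delta_{1,\ldots,1}(u))_G$; since each $w_i$ is supported on line bundles and the pairing \eqref{eq:HallGreen} is diagonal, only $\omega_n^{\otimes r}\tilde\Delta_{1,\ldots,1}(u)=J_r(u)=0$ contributes, so $u$ is orthogonal to all such products, hence to all of $\Ubf_n^>[r]$, hence $(u,u)_G=0$. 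Because distinct $1_{\Fcal}$ are orthogonal and $(1_{\Fcal},1_{\Fcal})_G$ is a positive multiple of a power of $\upsilon=\sqrt q>0$, the form is definite on finitely supported functions and $u=0$. You should replace the HN argument by this one; your step two (nonvanishing of $J_r(1_V)$ via the flag expansion and \eqref{eq:filtrationlinebundles}) is correct but is not needed for it.
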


Recall that a \textit{braided structure} on a vector space $W$ is an invertible linear map $\sigma\colon W\otimes W\to W\otimes W$ satisfying the braid relation
\begin{align}\label{eq:braided}
(\sigma\otimes \mathsf{id})\, (\mathsf{id}\otimes \sigma)\, (\sigma\otimes \mathsf{id})=(\mathsf{id}\otimes \sigma)\, (\sigma\otimes \mathsf{id})\, (\mathsf{id}\otimes \sigma)\ .
\end{align}
Now, let $V_n$ be the vector space $\C^n$ with basis vectors $\vec v_0, \ldots, \vec v_{n-1}$. In the following, we shall identify $\otimes_{i=1}^k\, \big(\C[x_i^{\pm 1}]\otimes V_n\big)$ with $\C[x_1^{\pm 1}, \ldots, x_k^{\pm k}]\otimes V_n^{\otimes\, k}$. Let $h(z)\in\C(z)$ be a rational function. Define the following map:
\begin{align}
\varpi^h \colon \C[x_1^{\pm 1}, x_2^{\pm 1}]\otimes V_n^{\otimes \, 2}\to \C[x_1^{\pm 1}, x_2^{\pm 1}][[x_1/x_2]]\otimes V_n^{\otimes \, 2}
\end{align}
where the image under $\varpi^h$ of $x_1^{d_1}x_2^{d_2}\, \vec v_{ i_1}\otimes \vec v_{i_2}$ is
\begin{align}
\begin{cases}
 h(x_1x_2^{-1})\, \upsilon^{-1}\, \frac{1-x_1 x_2^{-1}}{1-\upsilon^{-2}\, x_1 x_2^{-1}}\,x_1^{d_2}x_2^{d_1}\, \vec v_{i_2}\otimes \vec v_{i_1} 
 +h(x_1x_2^{-1})\, \frac{1-\upsilon^{-2}}{1-\upsilon^{-2}x_1x_2^{-1}} \, x_1^{d_2}x_2^{d_1}\, \vec v_{i_1}\otimes \vec v_{i_2} & \text{if $i_1>i_2$}\ , \\[15pt]
 h(x_1 x_2^{-1})\,x_1^{d_2}x_2^{d_1}\, \vec v_{i_1}\otimes \vec v_{i_2} & \text{if $i_1=i_2$} \ ,\\[15pt]
 h(x_1x_2^{-1})\, \upsilon^{-1}\, \frac{1-x_1 x_2^{-1}}{1-\upsilon^{-2}\, x_1 x_2^{-1}}\,x_1^{d_2}x_2^{d_1}\, \vec v_{i_2}\otimes \vec v_{i_1}   +h(x_1x_2^{-1})\, \frac{1-\upsilon^{-2}}{1-\upsilon^{-2}x_1x_2^{-1}} \, x_1^{d_2+1}x_2^{d_1-1}\, \vec v_{i_1}\otimes \vec v_{i_2}  & \text{if $i_1<i_2$}\ .
\end{cases}
\end{align}
Here, the rational functions on the right-hand-side are developed as a Laurent series in $x_1/x_2$. 
\begin{example}
Let $h(z)=(q-z)/(1-qz)$. Then the previous formula simplifies as
\begin{align}
\begin{cases}
 \upsilon\, \frac{x_1-x_2}{qx_1-x_2}\,x_1^{d_2}x_2^{d_1}\, \vec v_{i_2}\otimes \vec v_{i_1} 
 +\frac{1-q}{qx_1-x_2} \, x_1^{d_2}x_2^{d_1}\, \vec v_{i_1}\otimes \vec v_{i_2} & \text{if $i_1>i_2$}\ , \\[15pt]
 \frac{x_1-qx_2}{qx_1-x_2}\,x_1^{d_2}x_2^{d_1}\, \vec v_{i_1}\otimes \vec v_{i_2} & \text{if $i_1=i_2$} \ ,\\[15pt]
  \upsilon\, \frac{x_1-x_2}{qx_1-x_2}\, x_1^{d_2}x_2^{d_1}\, \vec v_{i_2}\otimes \vec v_{i_1} 
 +\frac{1-q}{qx_1-x_2}  \, x_1^{d_2+1}x_2^{d_1-1}\, \vec v_{i_1}\otimes \vec v_{i_2}  & \text{if $i_1<i_2$}\ .
\end{cases}
\end{align}
\end{example}

For any $k\in \Z_{>0}$ and $1\leq i\leq k-1$, we denote by $\varpi_i^h$ the operator
\begin{align}
\mathsf{id}^{\otimes\, i-1}\otimes \varpi^h\otimes \mathsf{id}^{\otimes k-i-1}\colon \C[x_1^{\pm 1}, \ldots, x_k^{\pm k}]\otimes V_n^{\otimes\, k}\to \C[x_1^{\pm 1}, \ldots, x_k^{\pm k}][[x_1/x_2, \ldots, x_{k-1}/x_k]]\otimes V_n^{\otimes\, k}\ .
\end{align}

Finally, set $\varpi^h_0\coloneqq \mathsf{id}$.  Let $\Sfrak_k$ be the group of permutations on $k$ letters. For $\sigma\in \Sfrak_k$ with reduced expression $\sigma=s_{i_1}\cdots s_{i_\ell}$, where $s_i=(i,\, i+1)$, we set
\begin{align}\label{eq:varpi}
\varpi_\sigma^h\coloneqq\varpi_{i_1}^h\circ \varpi_{i_2}^h\circ \cdots \circ \varpi_{i_\ell}^h\ .
\end{align}

Consider the following assignment: for any $d\in \Z$
\begin{align}\label{eq:assignment}
\onebf_{1, d}^{\mathbf{ss}}\mapsto x^{\nintpart{d}}\, \vec v_{\nfrapart{d}}\ .
\end{align}
Thus we have
\begin{align}
\left( \Ubf_n^>[1]\right)^{\otimes \, r}& \simeq \C[x_1^{\pm 1}, \ldots, x_r^{\pm 1}]\otimes V_n^{\otimes\, r}\ , \\[4pt]
\left( \Ubf_n^>[1]\right)^{\widehat{\otimes} \, r} & \simeq \C[x_1^{\pm 1}, \ldots, x_r^{\pm 1}][[x_1/x_2,\ldots, x_{r-1}/x_r]]\otimes V_n^{\otimes\, r}\ .
\end{align}
Let
\begin{align}
\mathsf{Sh}_{r,s}\coloneqq\big\{\sigma \in \Sfrak_{r+s}\,\big\vert\, \sigma(i)<\sigma(j)\text{ if } 1\leq i<j\leq r \text{ or } r<i<j \leq r+s\big\}
\end{align}
be the set of $(r,s)$-shuffle. 

\begin{theorem}\label{prop:shuffle-vec}
Set
\begin{align}
h_X(z)\coloneqq\frac{\upsilon^{2(1-g_X)}\zeta_X(z)}{\zeta_X(\upsilon^{-2} z)}\ .
\end{align}
Then 
\begin{itemize}[leftmargin=0.2cm]
\item $\varpi^{h_X}$ satisfies the relation \eqref{eq:braided}. 
\item the vector space
\begin{align}
\mathbf{Sh}_{n, \, h(z)}^{\mathsf{vec}}\coloneqq\C\oplus \bigoplus_{r\geq 1}\, \C[x_1^{\pm 1}, \ldots, x_r^{\pm 1}][[x_1/x_2,\ldots, x_{r-1}/x_r]]\otimes V_n^{\otimes\, r}\ ,
\end{align}
with the multiplication given by 
\begin{multline}
P(x_1, \ldots, x_r)\, \bigotimes_{i=1}^r\, \vec v_{\ell_i}\star Q(x_1, \ldots, x_s)\, \bigotimes_{k=1}^s\, \vec v_{j_k} \coloneqq\\
\sum_{\sigma\in\mathrm{Sh}_{r,s}}\, \varpi^{h_X}_\sigma\big(P(x_1, \ldots, x_r)\, Q(x_{r+1}, \ldots, x_{r+s})\, \otimes\bigotimes_{i=1}^{r+s}\, \vec v_{c_i}\big)
\end{multline}
where $c_i=\ell_i$ for $i=1, \ldots, r$ and $c_{r+k}=j_k$ for $k=1, \ldots, s$, is an associative algebra. Moreover, $\mathbf{Sh}_{n, \, h(z)}^{\mathsf{vec}}$ is equipped with a coproduct $\Delta\colon \mathbf{Sh}_{n, h(z)}^{\mathsf{vec}}\to \mathbf{Sh}_{n, h(z)}^{\mathsf{vec}}\widehat{\otimes} \mathbf{Sh}_{n, h(z)}^{\mathsf{vec}}$ given by
\begin{multline}
\Delta_{u,w}\big(x_1^{d_1}\cdots x_r^{d_r}\, \vec v_{\ell_1}\otimes \cdots \vec v_{\ell_r}\big)\coloneqq\\
(x_1^{d_1}\cdots x_u^{d_u}\, \vec v_{\ell_1}\otimes \cdots \vec v_{\ell_u})\otimes (x_1^{d_{u+1}}\cdots x_w^{d_{r}}\, \vec v_{\ell_{u+1}}\otimes \cdots \vec v_{\ell_r})\quad\text{and}\quad \Delta\coloneqq\bigoplus_{r=u+w}\, \Delta_{u,w}\ .
\end{multline}
\item the constant term map $J\colon \Ubf_n^>\to \mathbf{Sh}_{n, h_X(z)}^{\mathsf{vec}}$ is an algebra morphism such that
\begin{align}
(J_u\otimes J_w)\circ (\omega_n\otimes\omega_n)\circ \Delta_{u,w}=\Delta_{u,w}\circ J_{u+w}\ .
\end{align}
\end{itemize}
\end{theorem}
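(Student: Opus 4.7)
The plan is to establish the three assertions in turn, following the standard template for shuffle presentations of spherical Hall algebras of smooth projective curves as developed in \cite{art:schiffmannvasserot2012} and \cite{art:lin2014}. For the first two bullet points, the key observation is that, up to the scalar factor $h_X(x_1/x_2)$, the operator $\varpi^{h_X}$ coincides with the (trigonometric) $\widehat{\slfrak}_n$ $R$-matrix acting on $V_n\otimes V_n$ tensored with the natural action on monomials in $x_1, x_2$. The $R$-matrix part satisfies the braid relation by a classical computation (Yang--Baxter on $V_n^{\otimes 3}$), and the scalar part contributes the symmetric product $h_X(x_1/x_2)h_X(x_1/x_3)h_X(x_2/x_3)$ to both sides, so (i) reduces to the $R$-matrix identity. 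Given (i), the associativity of the shuffle product $\star$ in (ii) follows formally: expanding $(A\star B)\star C$ and $A\star(B\star C)$ as sums over $(r,s,t)$-shuffles and matching corresponding terms via braid moves applied to adjacent transpositions in reduced expressions, the two expressions agree. Coassociativity of the splitting coproduct $\Delta$ is immediate from its definition.

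For (iii), since $\Ubf_n^>$ is generated by $\{\onebf_{1,d}^{\mathbf{ss}} \mid d\in\Z\}$ and $\tilde\Delta$ is coassociative, the identity $J_{r+s}(uv) = J_r(u)\star J_s(v)$ follows by induction on $r+s$ from the case $r=s=1$. This case amounts to an explicit Hall algebra computation: one expands the Hall product $\onebf_{1,d_1}^{\mathbf{ss}}\cdot\onebf_{1,d_2}^{\mathbf{ss}}$ as a sum over isomorphism classes of rank-two coherent sheaves $\Fcal$ on $X_n$ fitting in short exact sequences $0\to\Mcal_1\to\Fcal\to\Mcal_2\to 0$ with $\Mcal_i$ a line bundle on $X_n$ of $n$-th degree $d_i/n$, applies $\tilde\Delta_{1,1}$, and projects via $\omega_n^{\otimes 2}$ onto the vector bundle part. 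This produces a sum indexed by pairs $(\Ncal_1,\Ncal_2)$ of line bundles with $\overline{\Ncal_1}+\overline{\Ncal_2} = \overline{\Mcal_1}+\overline{\Mcal_2}$, whose Hall structure constants, computed via Riemann--Roch on $X_n$ (formula \eqref{eq:Eulerform}) together with Kapranov's identity $\sum_d \#\Pic^d(X)\, z^d$ expressed through $\zeta_X(z)$, reassemble into exactly the scalar $h_X(x_1/x_2)$ multiplied by the $R$-matrix combinatorics on $\vec v_{\nfrapart{d_1}}\otimes\vec v_{\nfrapart{d_2}}$.

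The three cases in the definition of $\varpi^h$ reflect the three relative positions of the residues $\nfrapart{d_1}, \nfrapart{d_2}$. When $\nfrapart{d_1}=\nfrapart{d_2}$ both line bundles lie in the same ``gerby sector'' and the $R$-matrix acts diagonally, leaving only the scalar $h_X$. When $\nfrapart{d_1}>\nfrapart{d_2}$, an extension of $\Mcal_2$ by $\Mcal_1$ admits a line bundle subobject without wrap-around, whereas the case $\nfrapart{d_1}<\nfrapart{d_2}$ uses the identification $\Lcal_n^{\otimes n}\simeq \pi_n^\ast\Ocal_X(p)$, which is the source of the extra degree shift $x_1^{d_2+1}x_2^{d_1-1}$ in the third branch. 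The compatibility
\begin{align}
(J_u\otimes J_w)\circ (\omega_n\otimes\omega_n)\circ\Delta_{u,w} = \Delta_{u,w}\circ J_{u+w}
\end{align}
follows from coassociativity of $\tilde\Delta$ together with the fact that $\omega_n$ is a graded projection compatible with the triangular decomposition $\Hbf_n^{\bun}\otimes\Hbf_n^{\tor}\otimes\Kbf_n\simeq\Hbf_n^{\mathsf{tw}}$. The main obstacle will be the careful computation of $\Ext^1$ groups between line bundles of fractional $n$-th degree on $X_n$, carried out via Serre duality with $\omega_{X_n}=\pi_n^\ast\omega_X\otimes\Lcal_n^{\otimes n-1}$, and the precise matching of the resulting zeta function count—including the $\upsilon^{2(1-g_X)}$ prefactor arising from the Hall twist $\upsilon^{\langle\cdot,\cdot\rangle}$—with the normalization of $h_X$.
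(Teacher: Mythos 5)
Your proposal is correct and follows essentially the same route as the paper: everything reduces to the explicit rank-$(1,1)$ computation of $\omega_n^{\otimes 2}\tilde\Delta_{1,1}\big({}_n\onebf^{\mathbf{ss}}_{1,d_1}\cdot{}_n\onebf^{\mathbf{ss}}_{1,d_2}\big)$, whose three branches according to the residues $\nfrapart{d_1},\nfrapart{d_2}$ are matched with $\varpi^{h_X}$ via the zeta-function generating series for the $\xi_e,\xi_e^\circ$, the braid relation and associativity then being formal as in Schiffmann--Vasserot. The only cosmetic difference is that the paper organizes the $(1,1)$ computation through the bialgebra property and the explicit coproduct formula $\tilde\Delta({}_n\onebf^{\mathbf{ss}}_{1,d})$ (Propositions~\ref{prop:coproduct} and \ref{prop:omega} of the appendix) rather than by first enumerating rank-two extensions.
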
 
\begin{proof}
Let's start by computing $\tilde \Delta_{1,1}\big({}_n\onebf^{\mathbf{ss}}_{1,\, d_1}\cdot {}_n\onebf^{\mathbf{ss}}_{1,\, d_2}\big)$:
\begin{align}
\omega_n^{\otimes\, 2}\tilde \Delta_{1,1}\big({}_n\onebf^{\mathbf{ss}}_{1,\, d_1}\cdot {}_n\onebf^{\mathbf{ss}}_{1,\, d_2}\big)={}_n\onebf^{\mathbf{ss}}_{1,\, d_1}\otimes {}_n\onebf^{\mathbf{ss}}_{1,\, d_2} + \sum_{\genfrac{}{}{0pt}{}{e\in\Z}{e\geq 0}}\, \omega_n^{\otimes\, 2 }\big(T_n^{\, \nfrapart{d_1}}\big({}_n\theta_{0,\, e}\big)\,\kbf_{(1,\, d_1-e)}\, {}_n\onebf^{\mathbf{ss}}_{1,\, d_2}\otimes {}_n\onebf^{\mathbf{ss}}_{1,\, d_1-e}\big)
\end{align}
This can be seen as a ``standard factor" appearing in $J_2$. Now, we can apply Proposition~\ref{prop:omega}
\begin{multline}
\sum_{e\in\Z_{\geq 0}}\,\omega_n^{\otimes\, 2 }\big(T_n^{\, \nfrapart{d_1}}\big({}_n\theta_{0,\, e}\big)\,\kbf_{(1,\, d_1-e)}\, {}_n\onebf^{\mathbf{ss}}_{1,\, d_2}\otimes {}_n\onebf^{\mathbf{ss}}_{1,\, d_1-e}\big)\\
\shoveright{=\sum_{e\in\Z_{\geq 0}}\,\upsilon^{\left((1, \, d_1-e), (1, \, d_2) \right)}\omega_n^{\otimes\, 2 }\big(T_n^{\, \nfrapart{d_1}}\big({}_n\theta_{0,\, e}\big)\, {}_n\onebf^{\mathbf{ss}}_{1,\, d_2}\,\kbf_{(1,\, d_1-e)}\otimes {}_n\onebf^{\mathbf{ss}}_{1,\, d_1-e}\big)}\\
\shoveright{=\sum_{e\in\Z_{\geq 0}}\,\upsilon^{\left((1, \, d_1-e), (1, \, d_2) \right)}\omega_n^{\otimes\, 2 }\big(T_n^{\, \nfrapart{d_1}}\big({}_n\theta_{0,\, e}\, {}_n\onebf^{\mathbf{ss}}_{1,\, d_2-\nfrapart{d_1}}\big)\,\kbf_{(1,\, d_1-e)}\otimes {}_n\onebf^{\mathbf{ss}}_{1,\, d_1-e}\big)}\\[3pt]
=\sum_{e\in \Z_{\geq 0}}\, \upsilon^{\left((1, \, d_1-e), (1, \, d_2) \right)}\, \xi_e^{(d_2-d_1)}\,  {}_n\onebf^{\mathbf{ss}}_{1,\, d_2+e}\otimes {}_n\onebf^{\mathbf{ss}}_{1,\, d_1-e}\ .
\end{multline}
Thus
\begin{multline}
\sum_{e\in\Z_{\geq 0}}\, \omega_n^{\otimes\, 2 }\big(T_n^{\, \nfrapart{d_1}}\big({}_n\theta_{0,\, e}\big)\,\kbf_{(1,\, d_1-e)}\, {}_n\onebf^{\mathbf{ss}}_{1,\, d_2}\otimes {}_n\onebf^{\mathbf{ss}}_{1,\, d_1-e}\big)=\\[5pt]
\begin{cases}
\displaystyle\upsilon^{2-2g_X}\, \sum_{s\in\Z_{\geq 0}}\, \xi_s\, {}_n\onebf^{\mathbf{ss}}_{1,\, d_2+s\,n}\otimes {}_n\onebf^{\mathbf{ss}}_{1,\, d_1-s\,n} & \text{if $\nfrapart{d_2-d_1}=0$}\ ,\\[20pt]
\displaystyle\upsilon^{2-2g_X}\, \sum_{s\in\Z_{\geq0}}\, \upsilon^{-1} \xi_s^\circ \, {}_n\onebf^{\mathbf{ss}}_{1,\, d_2+s\,n}\otimes {}_n\onebf^{\mathbf{ss}}_{1,\, d_1-s\,n}& \\[15pt]
\displaystyle+\upsilon^{2-2g_X}\, \sum_{s\in\Z_{\geq0}}(\xi_s-q^{-1} \xi_s^\circ)\,{}_n\onebf^{\mathbf{ss}}_{1,\, d_2+s\,n+\nfrapart{d_1-d_2}}\otimes {}_n\onebf^{\mathbf{ss}}_{1,\, d_1-s\,n-\nfrapart{d_1-d_2}} & \text{if $\nfrapart{d_2-d_1}\neq 0$}\ .
\end{cases}
\end{multline}
Let us introduce the automorphism $\gamma^{\pm m}$ of $\Ubf_n^>$ defined as
\begin{align}
\gamma^{\pm m}\big({}_n\onebf_{1, \, d}^{\mathbf{ss}}\big)\coloneqq{}_n\onebf_{1, \, d\pm m}^{\mathbf{ss}}
\end{align}
for $m\in \Z$. Denote by $\gamma_i^\bullet$ the operator $\gamma^\bullet$ acting on the $i$-th component of the tensor product. Then by using Corollaries~\ref{cor:xie} and \ref{cor:xiecirc} we get
\begin{multline}
\omega_n^{\otimes\, 2}\tilde \Delta_{1,1}\big({}_n\onebf^{\mathbf{ss}}_{1,\, d_1}\cdot {}_n\onebf^{\mathbf{ss}}_{1,\, d_2}\big)=\\[10pt]
\begin{cases}
\displaystyle {}_n\onebf^{\mathbf{ss}}_{1,\, d_1}\otimes {}_n\onebf^{\mathbf{ss}}_{1,\, d_2}+ \upsilon^{2-2g_X}\, \sum_{s\in\Z_{\geq 0}}\, \xi_s\, \big(\gamma_1^{s\, n}\gamma_2^{-s\, n}\big)\, {}_n\onebf^{\mathbf{ss}}_{1,\, d_2}\otimes {}_n\onebf^{\mathbf{ss}}_{1,\, d_1}& \text{if $\nfrapart{d_2-d_1}=0$}\ ,\\[20pt]
\displaystyle {}_n\onebf^{\mathbf{ss}}_{1,\, d_1}\otimes {}_n\onebf^{\mathbf{ss}}_{1,\, d_2}+\upsilon^{2-2g_X}\, \sum_{s\in\Z_{\geq0}} \,\upsilon^{-1}\, \xi_s^\circ\,\big(\gamma_1^{s\, n}\gamma_2^{-s\, n}\big) \, {}_n\onebf^{\mathbf{ss}}_{1,\, d_2}\otimes {}_n\onebf^{\mathbf{ss}}_{1,\, d_1} & \\[15pt]
\displaystyle + \upsilon^{2-2g_X}\, \sum_{s\in\Z_{\geq0}} \, (\xi_s-q^{-1} \xi_s^\circ)\,\big(\gamma_1^{s\, n}\gamma_2^{-s\, n}\big)\,\big(\gamma_1\gamma_2^{-1}\big)^{n\left\{(d_1-d_2)/n\right\}}\, {}_n\onebf^{\mathbf{ss}}_{1,\, d_2}\otimes {}_n\onebf^{\mathbf{ss}}_{1,\, d_1} & \text{if $\nfrapart{d_2-d_1}\neq 0$}\ .
\end{cases}
\\[10pt]
=
\begin{cases}
\displaystyle {}_n\onebf^{\mathbf{ss}}_{1,\, d_1}\otimes {}_n\onebf^{\mathbf{ss}}_{1,\, d_2}+ \upsilon^{2-2g_X}\, \frac{\zeta_X(\gamma_1^n\gamma_2^{-n})}{\zeta_X(\upsilon^{-2}\gamma_1^n\gamma_2^{-n})}\, {}_n\onebf^{\mathbf{ss}}_{1,\, d_2}\otimes {}_n\onebf^{\mathbf{ss}}_{1,\, d_1}& \text{if $\nfrapart{d_2-d_1}=0$}\ ,\\[20pt]
\displaystyle {}_n\onebf^{\mathbf{ss}}_{1,\, d_1}\otimes {}_n\onebf^{\mathbf{ss}}_{1,\, d_2}+\upsilon^{2-2g_X}\, \upsilon^{-1}\, \frac{\zeta_X(\gamma_1^n\gamma_2^{-n})}{\zeta_X(\upsilon^{-2}\gamma_1^n\gamma_2^{-n})}\, \frac{1-\gamma_1^n\gamma_2^{-n}}{1-\upsilon^{-2}\gamma_1^n\gamma_2^{-n}} \, {}_n\onebf^{\mathbf{ss}}_{1,\, d_2}\otimes {}_n\onebf^{\mathbf{ss}}_{1,\, d_1} & \\[15pt]
\displaystyle + \upsilon^{2-2g_X}\, \frac{\zeta_X(\gamma_1^n\gamma_2^{-n})}{\zeta_X(\upsilon^{-2}\gamma_1^n\gamma_2^{-n})}\,\frac{1-\upsilon^{-2}}{1-\upsilon^{-2}\gamma_1^n\gamma_2^{-n}}\,\big(\gamma_1\gamma_2^{-1}\big)^{n\left\{(d_1-d_2)/n\right\}}\, {}_n\onebf^{\mathbf{ss}}_{1,\, d_2}\otimes {}_n\onebf^{\mathbf{ss}}_{1,\, d_1} & \text{if $\nfrapart{d_2-d_1}\neq 0$}\ .
\end{cases}
\end{multline}
Now, by using the assignment \eqref{eq:assignment}, the assertion follows from the same arguments of the corresponding result in the non-stacky case \cite[Proposition~1.6]{art:schiffmannvasserot2012}.
\end{proof}

The subspace $\mathbf{Sh}_{n, \, h(z)}^{\mathsf{vec}, \mathsf{rat}}$ of $\mathbf{Sh}_{n, h(z)}^{\mathsf{vec}}$ consisting of Laurent series which are expansions of rational functions form a subalgebra, which is moreover stable under the coproduct.
\begin{corollary}
$\Ubf_n^>$ is isomorphic to the subalgebra $\Sbf_{n, \, h_X(z)}^{\mathsf{vec}}\subset \mathbf{Sh}_{n, \, h_X(z)}^{\mathsf{vec},\mathsf{rat}}$ generated by the degree one component in $x$. Moreover, the algebra $\Omega_n(\Ubf^>)$ is isomorphic to the subalgebra generated by the degree one component in $x$ of the image of the morphism of algebras
\begin{align}
\Omega_n\colon  \mathbf{Sh}_{1, \, h(z)}^{\mathsf{vec}} \to \mathbf{Sh}_{n, \, h_X(z)}^{\mathsf{vec}} \ , \quad P(x_1, \ldots, x_r) \mapsto \, P(x_1, \ldots, x_r) \,\vec v_{0} \otimes \cdots \otimes \vec v_{0}\ . 
\end{align}
\end{corollary}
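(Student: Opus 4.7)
The plan is to deduce the corollary from Theorem~\ref{prop:shuffle-vec} together with injectivity of the constant term map $J$. The key idea is that $\Ubf_n^>$ is by definition generated by the rank-one elements $\{{}_n\onebf^{\mathbf{ss}}_{1,d}\mid d \in \Z\}$, and under the identification \eqref{eq:assignment} these map into the degree-one summand $\Ubf_n^>[1] \simeq \C[x^{\pm 1}] \otimes V_n$.

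First, I would record that since $J\colon \Ubf_n^> \to \mathbf{Sh}_{n,h_X(z)}^{\mathsf{vec}}$ is an injective algebra morphism (by the lemma and by Theorem~\ref{prop:shuffle-vec}), we have an isomorphism $\Ubf_n^> \simeq J(\Ubf_n^>)$. Because $\Ubf_n^>$ is generated in rank one, the image $J(\Ubf_n^>)$ is contained in, and by generation equal to, the subalgebra of $\mathbf{Sh}_{n,h_X(z)}^{\mathsf{vec}}$ generated by $J(\Ubf_n^>[1]) = \Ubf_n^>[1]$. To land in $\mathbf{Sh}_{n,h_X(z)}^{\mathsf{vec},\mathsf{rat}}$, I would use the explicit computation of $\omega_n^{\otimes 2}\tilde\Delta_{1,1}\bigl({}_n\onebf^{\mathbf{ss}}_{1,d_1}\cdot {}_n\onebf^{\mathbf{ss}}_{1,d_2}\bigr)$ carried out in the proof of Theorem~\ref{prop:shuffle-vec}: the result is the expansion of a rational function in $x_1/x_2$ built from $\zeta_X$. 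Iterating via the compatibility $(J_u \otimes J_w) \circ (\omega_n \otimes \omega_n) \circ \Delta_{u,w} = \Delta_{u,w}\circ J_{u+w}$ shows inductively that $J_r\bigl({}_n\onebf^{\mathbf{ss}}_{1,d_1}\cdots {}_n\onebf^{\mathbf{ss}}_{1,d_r}\bigr)$ is rational, so $J(\Ubf_n^>) \subseteq \Sbf_{n,h_X(z)}^{\mathsf{vec}}$. Conversely, $\Sbf_{n,h_X(z)}^{\mathsf{vec}}$ is by definition generated by its degree-one part, which coincides with $J(\Ubf_n^>[1])$, giving the reverse inclusion.

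For the second statement, the strategy is to intertwine $\Omega_n$ with the constant term maps. I would first observe that at the level of rank-one elements, $\Omega_n(\onebf^{\mathbf{ss}}_{1,d}) = {}_n\onebf^{\mathbf{ss}}_{1,nd}$, which under \eqref{eq:assignment} corresponds to $x^d \otimes \vec v_0$; that is, the image lies entirely in the $\vec v_0$-component. Using that $\pi_n^\ast$ is exact and commutes with taking iterated coproducts in a manner compatible with $\omega_n$ (again via the identification $\Coh(X) \hookrightarrow \Coh(X_n)$ as a Serre subcategory via $\pi_n^\ast$), one obtains a commutative square relating $\Omega_n$ on Hall algebras to the shuffle-level $\Omega_n$ defined by $P(x_1,\dots,x_r) \mapsto P(x_1,\dots,x_r)\,\vec v_0^{\otimes r}$. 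Injectivity of $J$ on both sides then yields that $\Omega_n(\Ubf^>)$ is isomorphic to the subalgebra generated by the image of $\Omega_n(\Ubf^>[1])$ inside $\Sbf_{n,h_X(z)}^{\mathsf{vec}}$, as claimed.

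The only nontrivial point is verifying that the ``shuffle-level'' $\Omega_n$ as defined (by inserting $\vec v_0^{\otimes r}$) is actually an algebra morphism $\mathbf{Sh}_{1,h(z)}^{\mathsf{vec}} \to \mathbf{Sh}_{n,h_X(z)}^{\mathsf{vec}}$ compatible with the Hall-algebra $\Omega_n$: one must check that the braiding operator $\varpi^{h_X}$ restricted to the all-$\vec v_0$ subspace reduces to the scalar braiding operator on $\mathbf{Sh}_{1,h(z)}^{\mathsf{vec}}$. Inspecting the definition of $\varpi^h$, only the case $i_1 = i_2$ occurs on $\vec v_0 \otimes \vec v_0$, so the braiding simplifies to multiplication by $h(x_1x_2^{-1})$ followed by swapping $x_1 \leftrightarrow x_2$, matching the $n=1$ shuffle formula. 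This is the main, and only, genuinely computational check; everything else is formal manipulation with injective graded morphisms.
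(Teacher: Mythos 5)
Your proposal is correct and follows essentially the route the paper intends: the corollary is an immediate consequence of Theorem~\ref{prop:shuffle-vec} (injectivity and multiplicativity of $J$, generation of $\Ubf_n^>$ in rank one, and the identification $\Omega_n(\onebf^{\mathbf{ss}}_{1,d})={}_n\onebf^{\mathbf{ss}}_{1,nd}\mapsto x^d\,\vec v_0$), together with the observation that $\varpi^{h_X}$ restricted to the all-$\vec v_0$ subspace reduces to the $n=1$ braiding. The only superfluous step is your inductive rationality argument via the coproduct: since $J$ is an algebra morphism and the degree-one elements are Laurent polynomials, the image of $\Ubf_n^>$ lands in the subalgebra generated by the degree-one part, which lies in $\mathbf{Sh}_{n,h_X(z)}^{\mathsf{vec},\mathsf{rat}}$ simply because the latter is a subalgebra.
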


\subsubsection{Generic form}\label{sec:generic}

Observe that the positive spherical Hall algebra $\Ubf_n^>$ depends only on the genus $g_X$ of the curve $X$, the order $n$ of the root stack and the Weil numbers $\alpha_1, \ldots, \alpha_{2g_X}$ of $X$. Therefore it possesses a \textit{generic form} in the following sense (cf.\ \cite[Section~3.5]{art:lin2014}). Let us fix $g \geq 0$ and consider the torus
\begin{align}
T_g \coloneqq \{ ( \alpha_1, \ldots, \alpha_{2g} ) \in (\C^\ast)^{2g} \,\vert\, \alpha_{2i-1} \alpha_{2i} = \alpha_{2j-1} \alpha_{2j} , \, \forall i,j \} \ .
\end{align}
The Weyl group 
\begin{align}
W_g \coloneqq \Sfrak_g \ltimes (\Z/2\Z)^g
\end{align}
naturally acts on $T_g$ and the collection $(\alpha_1,\ldots,\alpha_{2g})$ defines a canonical element $\alpha_X$ in the quotient $T_g/\!\!\!/W_g$. Let $\R_g \coloneqq \Q[T_g]^{W_g}$ and let $\K_g$ be its localization at the multiplicative set generated by $\{ q^s -1\,\vert\, s \geq 1 \}$ where by definition $q (\alpha_1, \ldots, \alpha_{2g}) = \alpha_{2i-1} \alpha_{2i}$ for any $1 \leq i \leq g$. For any choice of a smooth geometrically connected projective curve $X$ over $k$ of genus $g$ there is a natural map $\K_g \to \C$, $f \mapsto f(\alpha_X)$. 

We can define a shuffle algebra $\mathbf{Sh}_{n, h_X(z), \, \K_g}^{\mathsf{vec}}$ over $\K_g$ and therefore $\Ubf_{n, \,\K_g}^>$. The (twisted) bialgebra structure and Green's scalar product both depend polynomially on the $\{ \alpha_1, \dots, \alpha_{2g} \}$ and hence may be defined over $\K_g$. Let $\Ubf_{n, \,\R_g}^>$ be the $\R_g$-subalgebra of $\Ubf_{n, \,\K_g}^>$ generated in degree one over $\R_g$. By construction, $\Ubf_{n, \,\R_g}^>$ is a torsion-free integral form of $\Ubf_{n,\, \K_g}^>$ in the sense that $\Ubf_{n, \,\R_g}^> \otimes_{\R_g} \K_g = \Ubf_{n, \, \K_g}^>$. Moreover, there exists a specialization map
\begin{equation*}
\Ubf_{n, \,\R_g}^> \to \Ubf_{n}^>\ , 
\end{equation*}    
to a fixed curve $X$ of genus $g$. Finally, also $\Ubf_{n}^0$ has an obvious generic form $\Ubf_{n, \, \K_g}^0$. We define $\Ubf_{n, \, \K_g}$ as the tensor product $\Ubf_{n, \, \K_g}^>\otimes\Ubf_{n, \, \K_g}^0$.

\subsection{Hecke operators on line bundles and the fundamental representation of $\Ubf_\upsilon(\glfrakhat(n))$}

Consider
\begin{align}
\DUbf_n^0\simeq \Ubf_\upsilon(\glfrakhat(n))\coloneqq \Hcal_n\otimes_{\Acal_n} \Ubf_\upsilon(\slfrakhat(n))
\end{align}
(cf.\ Formula~\eqref{eq:Uq(gl)}). We define its \textit{fundamental representation} as the $\widetilde{\Q}$-vector space
\begin{align}
\V_n=\bigoplus_{d \in \Z}\, \widetilde \Q\, \vec u_d \ ,
\end{align} 
with action given by
\begin{align}
F_i\bullet \vec u_d \coloneqq& \delta_{\nfrapart{d+i}, 0}\, \upsilon^{1/2}\, \vec u_{d+1}\ , \\[4pt]
E_i\bullet \vec u_d \coloneqq & \delta_{\nfrapart{d+i},1}\, \upsilon^{-1/2}\,\vec u_{d-1}\ , \\[4pt]
K_i^\pm\bullet \vec u_d \coloneqq& \upsilon^{\pm(\delta_{{\nfrapart{d+i}}, 0}-\delta_{\nfrapart{d+i}, 1})}\, \vec u_d\ ,\\[4pt]
{}_n Z_r\bullet \vec u_d \coloneqq & ({}_n Z_r, {}_n \theta_{0, rn})_G\, \vec u_{d+rn} \ ,
\end{align}
for $i\in \Z/n\Z$ and $r\in \Z\setminus\{0\}$. 
\begin{remark}
The action of $\Ubf_\upsilon(\slfrakhat(n))$ on $\V_n$ defined in \cite[Section~1.1]{art:kashiwaramiwastern1995} is equivalent to ours after a suitable normalization of the $F_i$'s and $E_i$'s and after exchanging the index $i$ with $-i\bmod{n}$.
\end{remark}

Consider the natural action of the Hall algebra $\Hbf_n^{\mathsf{tw}, \tor}$ on $\Hbf_n^{\bun}$ by means of \textit{Hecke operators} given by the formula
\begin{align}
\Hbf_n^{\mathsf{tw}, \tor}\otimes \Hbf_n^{\bun}\to \Hbf_n^{\bun}\ , \quad u_0\otimes u\mapsto u_0\bullet u\coloneqq\omega_n(u_0\, u)\ .
\end{align}
We have the following.
\begin{theorem}
Under the assignments \eqref{eq:F-K}, \eqref{eq:E}, and $\vec u_d\mapsto {}_n\onebf^{\mathbf{ss}}_{1,\, d}$ the action of $\DUbf_n^0$ on $\Ubf^>_n[1]$ coincides with the action of $\Ubf_\upsilon(\glfrakhat(n))$ on $\V_n$ after applying the $\C$-algebra automorphism\footnote{This automorphism can be found for example in \cite[Section~1]{art:beck1994}.}
\begin{align}
\Phi(E_i)=F_i\ , \quad \Phi(F_i)=E_i \ , \quad \Phi(K_i)=K_i\ , \Phi({}_n Z_r)={}_n Z_r \text{ and } \Phi(\upsilon)=\upsilon^{-1}\ .
\end{align} 
\end{theorem}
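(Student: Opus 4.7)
The plan is to verify the equality of the two actions on the standard generators of $\Ubf_\upsilon(\glfrakhat(n))$. Via the isomorphism \eqref{eq:Uq(gl)} identifying $\DUbf_n^0$ with $\Ubf_\upsilon(\glfrakhat(n))$ and the triangular decomposition of the reduced Drinfeld double, together with the assignments \eqref{eq:F-K}--\eqref{eq:E} and the decomposition in Corollary~\ref{cor:heckealgebrastackypoint-n}, it suffices to check the coincidence (after applying $\Phi$) on the generators $K_i^\pm$, $E_i$, $F_i$ for $i\in \Z/n\Z$ and ${}_n Z_r$ for $r\in \Z\setminus\{0\}$, evaluated at an arbitrary basis vector ${}_n\onebf^{\mathbf{ss}}_{1,d}$ of $\Ubf^>_n[1]$. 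Throughout, one must keep in mind that $\Phi$ both exchanges $E_i\leftrightarrow F_i$ \emph{and} inverts $\upsilon$, and it is this combined twist that absorbs the convention mismatch between the Hall-theoretic multiplication (in which adjoining a simple torsion sheaf \emph{raises} the $n$-th degree of a line bundle) and the action of \cite{art:kashiwaramiwastern1995} (in which $F_i$ raises the index on $\V_n$).

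The toral case is immediate. By \eqref{eq:Hallnewrelations}, the element $K_i^\pm = \kbf_{(0,\pm \mathbf e_{n,i})}$ scales $1_\Lcal$ by $\upsilon^{\pm ((0,\mathbf e_{n,i}),\,\overline{\Lcal})}$. Reading off $\overline{\Lcal}$ for $\Lcal = \pi_n^\ast M \otimes \Lcal_n^{\otimes \nfrapart{d}}$ via Remark~\ref{rem:Ktheorylinebundle} and computing the symmetric Euler form coordinate-wise yields $((0,\mathbf e_{n,i}),\,\overline{\Lcal}) = \delta_{\nfrapart{d+i},0} - \delta_{\nfrapart{d+i},1}$, which coincides (since $\Phi(K_i)=K_i$) with the defined scalar $\upsilon^{\pm(\delta_{\nfrapart{d+i},0}-\delta_{\nfrapart{d+i},1})}$ for $K_i^\pm$ acting on $\vec u_d$.

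For $E_i$, which by \eqref{eq:F-K} corresponds to $\upsilon^{1/2}\,1_{{}_n\Scal_{i\bmod n}}$, the computation of $\omega_n(E_i \cdot 1_\Lcal)$ via the Hall product \eqref{eq:Hallprod} reduces to enumerating non-split extensions $0 \to \Lcal \to \Rcal \to {}_n\Scal_i \to 0$ with $\Rcal$ locally free of rank one. Using the resolution \eqref{eq:torsion-orbifold} and the classification of line bundles on $X_n$ by class (Remark~\ref{rem:Ktheorylinebundle}), such an extension exists--and then $\Rcal$ is a single line bundle of $n$-th degree $(d+1)/n$, i.e.\ $\Rcal = \pi_n^\ast M \otimes \Lcal_n^{\otimes \nfrapart{d+1}}$--if and only if $\nfrapart{d+i}=0$, in which case $\dim \Ext^1({}_n\Scal_i,\Lcal)=1$. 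Enumerating subsheaves of $\Rcal$ of class $\overline{\Lcal}$ and combining with the twist $\upsilon^{\langle {}_n\Scal_i,\,\Lcal\rangle} = \upsilon^{-1}$ (computed directly from \eqref{eq:Eulerform}) produces $E_i \cdot {}_n\onebf^{\mathbf{ss}}_{1,d} = \delta_{\nfrapart{d+i},0}\,\upsilon^{-1/2}\,{}_n\onebf^{\mathbf{ss}}_{1,d+1}$. Under $\Phi$ this matches $F_i \bullet \vec u_d = \delta_{\nfrapart{d+i},0}\,\upsilon^{1/2}\,\vec u_{d+1}$. Dually, $F_i$ lies in the negative part of $\DUbf_n^0$, and its action on $\Ubf^>_n[1]$ is determined by the Drinfeld double relations, the twisted coproduct \eqref{eq:Hallnewcoprod}, and the Green pairing \eqref{eq:HallGreen}: it extracts from $1_\Lcal$ the subline-bundle $\Lcal'\subset \Lcal$ of $n$-th degree $(d-1)/n$ with quotient ${}_n\Scal_i$. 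Such $\Lcal'$ exists precisely when $\nfrapart{d+i}=1$, and an analogous scalar computation produces the match with $\Phi(F_i)\bullet \vec u_d = E_i\bullet \vec u_d$.

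Finally, for the Heisenberg generator ${}_n Z_r$ with $r>0$, $\omega_n({}_n Z_r \cdot 1_\Lcal)$ is supported on non-split extensions $0\to \Lcal \to \Rcal \to \Tcal \to 0$ with $\Rcal$ locally free (hence a line bundle of $n$-th degree $(d+rn)/n$) and $\overline{\Tcal} = r\delta_n$. The crucial step is to show that the weighted sum over isomorphism classes of $\Tcal$, with the coefficient of $1_\Tcal$ in ${}_n Z_r$ (via the recursion \eqref{eq:nc_r}--\eqref{eq:nz_r}) together with the twist $\upsilon^{\langle \Tcal,\,\Lcal\rangle}/|\Aut(\Tcal)|$, collapses--by the primitivity of ${}_n Z_r$ and the analogue of \eqref{eq:torsionrelations} on $X_n$--to the single multiple $({}_n Z_r,\,{}_n\theta_{0,rn})_G\cdot {}_n\onebf^{\mathbf{ss}}_{1,d+rn}$, as prescribed (and $\Phi$ fixes ${}_n Z_r$). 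The case $r<0$ follows from the dual computation in the negative part of the double, the $\upsilon \leftrightarrow \upsilon^{-1}$ aspect of $\Phi$ reconciling the two conventions. The hard part throughout is the exact bookkeeping of every $\upsilon$-exponent and normalization factor arising from \eqref{eq:F-K}--\eqref{eq:E}, the Hall product \eqref{eq:Hallprod}, the twisted coproduct \eqref{eq:Hallnewcoprod}, the Euler form \eqref{eq:Eulerform}, and the Green pairing \eqref{eq:HallGreen}; it is precisely the systematic mismatch arising from these normalizations that forces the appearance of the automorphism $\Phi$.
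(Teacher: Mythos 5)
Your proposal follows essentially the same route as the paper: both arguments reduce to computing the Hecke action of the generators $\kbf_{(0,\mathbf c)}$, $1_{{}_n\Scal_i}$ and ${}_n Z_r$ (and their negative counterparts) on the basis $\{{}_n\onebf^{\mathbf{ss}}_{1,\,d}\}$ of $\Ubf_n^>[1]$, and then matching the outcome, after the twist by $\Phi$, against the defining formulas for $\V_n$; your extension-counting derivation of $1_{{}_n\Scal_i}\bullet{}_n\onebf^{\mathbf{ss}}_{1,\,d}=\delta_{\nfrapart{d+i},0}\,\upsilon^{-1}\,{}_n\onebf^{\mathbf{ss}}_{1,\,d+1}$ is exactly the content of Corollary~\ref{cor:heckeaction}, which the paper invokes instead of rederiving. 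Two points of divergence are worth noting. For ${}_n Z_r$, the paper does not collapse a weighted sum over extensions directly: it observes that $\omega_n({}_n Z_r\cdot{}_n\onebf^{\mathbf{ss}}_{1,\,d})=\omega_n([{}_n Z_r,\,{}_n\onebf^{\mathbf{ss}}_{1,\,d}])$ and that this commutator already lies in $\Ubf_n^>$ (by the argument of \cite[Proposition~1.2]{art:schiffmannvasserot2012}), after which the coefficient $({}_n Z_r,{}_n\theta_{0,rn})_G$ is read off from the Green pairing and the primitivity of ${}_n Z_r$; your route is viable but is precisely the computation you would have to carry out in full, and you should be aware the paper's shortcut exists. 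Second, there is a sign slip in your toral step: the correct value is $((0,\mathbf e_{n,i}),\overline{\Lcal})=\delta_{\nfrapart{d+i},1}-\delta_{\nfrapart{d+i},0}$ (this is \eqref{eq:actionK} specialized to $\mathbf c=\mathbf e_{n,i}$), i.e.\ the \emph{negative} of the exponent in the target formula for $K_i^{+}\bullet\vec u_d$, so the agreement genuinely requires $\Phi(\upsilon)=\upsilon^{-1}$ here as well and does not hold on the nose as your parenthetical suggests; with your stated value $\delta_{\nfrapart{d+i},0}-\delta_{\nfrapart{d+i},1}$ the subsequent application of $\Phi$ would in fact produce a mismatch, so the two slips must be corrected together.
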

\begin{proof}
First, by Corollary~\ref{cor:heckeaction}, the action of $1_{{}_n \Scal_i}$ on $\Ubf_n^>[1]$ is given as
\begin{align}
1_{{}_n \Scal_i}\bullet {}_n\onebf^{\mathbf{ss}}_{1,\, d}=\delta_{\nfrapart{d+i}, 0}\, \upsilon^{-1}\, {}_n\onebf^{\mathbf{ss}}_{1,\, d+1}\ ,
\end{align}
for $d,i\in \Z$, $1\leq i\leq n$. Moreover, for any $\mathbf c=(c_1, \ldots, c_n) \in\Z^n$, the element $\kbf_{(0, \mathbf c)}\in \Kbf_n$ acts as
\begin{align}\label{eq:actionK}
\kbf_{(0, \mathbf c)}\bullet {}_n \onebf^{\mathbf{ss}}_{1, d}=\upsilon^{c_{n-\nfrapart{d}+1}-c_{n-\nfrapart{d}}}\, {}_n \onebf^{\mathbf{ss}}_{1, d}\ ,
\end{align}
where formally we set $c_{n+1}\coloneqq c_1$. Finally, for $r, d\in \Z$, $r>0$, we have
\begin{align}\label{eq:actionZr}
{}_n Z_r\bullet  {}_n \onebf^{\mathbf{ss}}_{1, d}=({}_n Z_r, {}_n \theta_{0, rn})_G\, \onebf^{\mathbf{ss}}_{1, d+rn}\ .
\end{align} 
Indeed, although by definition ${}_n Z_r\bullet  {}_n \onebf^{\mathbf{ss}}_{1, d} =\omega_n({}_n Z_r\,  {}_n \onebf^{\mathbf{ss}}_{1, d})=\omega_n([{}_n Z_r,\,  {}_n \onebf^{\mathbf{ss}}_{1, d}])$, by using arguments similar to those in the proof of \cite[Proposition~1.2]{art:schiffmannvasserot2012}, one can see that $[{}_n Z_r,\,  {}_n \onebf^{\mathbf{ss}}_{1, d}]\in \Ubf_n^>$ already. By using the Green pairing, we get \eqref{eq:actionZr}.

The action of $1_{{}_n \Scal_i}^-, \kbf_{(0, \mathbf c)}^-$ and ${}_n Z_r^-$ on $\Ubf_n^>[1]$ is given by
\begin{align}
1_{{}_n \Scal_i}^-\bullet {}_n\onebf^{\mathbf{ss}, +}_{1,\, d} &=\delta_{\nfrapart{d+i-1},0}\, {}_n\onebf^{\mathbf{ss}, +}_{1,\, d-1} \ ,\\[4pt]
\kbf_{(0, \mathbf c)}^-\bullet {}_n \onebf^{\mathbf{ss}, +}_{1, d} &=\upsilon^{c_{n-\nfrapart{d}}-c_{n-\nfrapart{d}+1}}\, {}_n \onebf^{\mathbf{ss}, +}_{1, d}\ ,\\[4pt]
{}_n Z_r^-\bullet  {}_n \onebf^{\mathbf{ss}, +}_{1, d} &=({}_n Z_r, {}_n \theta_{0, rn})_G\, \onebf^{\mathbf{ss}, +}_{1, d-rn}\ .
\end{align}
Thus the assertion follows.
\end{proof}

\subsection{Tensor and symmetric tensor representations of $\Ubf_\upsilon(\glfrakhat(n))$}\label{sec:fundamentalsymmetric}

We can extend the $\Ubf_\upsilon(\glfrakhat(n))$-action to $\Ubf^>_n[1]^{\otimes\, r}$, which we call \textit{$r$-th tensor representation} of $\Ubf_\upsilon(\glfrakhat(n))$, and to $\Ubf^>_n[1]^{\widehat \otimes\, r}$. We have the following.
\begin{proposition}
Let $r\in \Z_{>0}$. Then the symmetrization operator $\Psi_{n, r}$ 
\begin{align}
\Psi_{n, r}\colon \Ubf^>_n[1]^{\otimes\, r}\to \Ubf^>_n[1]^{\widehat \otimes\, r}\ , \quad  \vec u_1\otimes \cdots \otimes \vec u_r \mapsto J_r(\vec u_1\star \cdots \star \vec u_r)
\end{align}
is a $\Ubf_\upsilon(\glfrakhat(n))$-intertwiner. 
\end{proposition}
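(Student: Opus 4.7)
The map $\Psi_{n,r}$ factors as
\[
\Psi_{n,r} = J_r \circ \mu_r \colon \Ubf^>_n[1]^{\otimes r} \xrightarrow{\mu_r} \Ubf^>_n[r] \xrightarrow{J_r} \Ubf^>_n[1]^{\widehat\otimes r},
\]
where $\mu_r$ sends $\vec u_1 \otimes \cdots \otimes \vec u_r$ to the Hall product $\vec u_1 \star \cdots \star \vec u_r$; this takes values in the bundle subalgebra since extensions of bundles by bundles on $X_n$ are again bundles. The plan is to show that each of $\mu_r$ and $J_r$ is a $\Ubf_\upsilon(\glfrakhat(n))$-equivariant morphism, the source and target of each carrying the natural coproduct, respectively Hecke, action.

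\textbf{Step 1: equivariance of $J_r$.} Since $J_r = \omega_n^{\otimes r} \circ \tilde\Delta_{1,\ldots,1}$ and $\tilde\Delta$ is an algebra morphism, for $u_0 \in \DUbf_n^0$ and $u \in \Ubf^>_n[r]$ the identity
\[
\tilde\Delta^{(r-1)}(u_0 \cdot u) = \tilde\Delta^{(r-1)}(u_0) \cdot \tilde\Delta^{(r-1)}(u)
\]
holds. Since $u_0$ has rank $0$, every component of $\tilde\Delta^{(r-1)}(u_0) = \sum u_0^{(1)}\otimes\cdots\otimes u_0^{(r)}$ lies in torsion slots, so writing $\tilde\Delta_{1,\ldots,1}(u) = \sum v_1 \otimes \cdots \otimes v_r$ and applying $\omega_n^{\otimes r}$ yields
\[
\omega_n^{\otimes r}\tilde\Delta_{1,\ldots,1}(u_0 u) = \sum \bigl(u_0^{(1)}\bullet \omega_n(v_1)\bigr) \otimes \cdots \otimes \bigl(u_0^{(r)}\bullet \omega_n(v_r)\bigr) = u_0 \cdot J_r(u),
\]
where we use that $u_0^{(i)} \bullet v_i = u_0^{(i)} \bullet \omega_n(v_i)$ (products of torsion sheaves are torsion and killed by $\omega_n$). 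The remaining point is that $\omega_n^{\otimes r}\tilde\Delta_{1,\ldots,1}$ annihilates every rank-$r$ sheaf with nonzero torsion: in any filtration by rank-one subquotients, the smallest step containing the torsion subsheaf has a nonzero torsion quotient. This lets us replace $u_0 u$ by $\omega_n(u_0 u) = u_0 \bullet u$ freely, giving $J_r(u_0 \bullet u) = u_0 \cdot J_r(u)$.

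\textbf{Step 2: equivariance of $\mu_r$.} By associativity and induction on $r$, it suffices to treat $r = 2$, which is the module-algebra identity
\[
u_0 \bullet (v \star w) = \sum (u_0^{(1)} \bullet v) \star (u_0^{(2)} \bullet w), \qquad v, w \in \Ubf^>_n, \ \tilde\Delta(u_0) = \sum u_0^{(1)} \otimes u_0^{(2)}.
\]
The proof parallels Step 1: expand both sides via $u_0 \bullet x = \omega_n(u_0 \cdot x)$, apply the bialgebra identity $\tilde\Delta(u_0 v w) = \tilde\Delta(u_0)\tilde\Delta(v)\tilde\Delta(w)$, and match terms using the torsion/bundle decomposition of coherent sheaves on $X_n$.

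\textbf{Main obstacle.} The module-algebra compatibility in Step 2 is the technical heart of the proof. While standard in Ringel--Hall theory (essentially a specialization of Green's formula to the splitting of $\Coh(X_n)$ into bundles and torsion sheaves), the careful bookkeeping of the Cartan twists coming from the twisted bialgebra structure of $\Hbf_n^{\mathsf{tw}}$ and of the projection $\omega_n$ across iterated Hall products is where most of the work lies.
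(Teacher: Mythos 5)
The paper states this proposition without proof, so there is nothing to compare your argument against line by line; your factorization $\Psi_{n,r}=J_r\circ\mu_r$ and the claim that each factor is separately equivariant is certainly the intended route (it is how the rank-one case is treated in the preceding theorem, and how the analogous statements are proved in \cite{art:schiffmannvasserot2012,art:burbanschiffmann2013}). Step 1 is essentially correct for the \emph{positive} half: the grading argument showing that only the rank-$(0,\dots,0)$ component of $\tilde\Delta^{(r-1)}(u_0)$ pairs with $\tilde\Delta_{1,\dots,1}(u)$, the observation that $\omega_n(u_0^{(i)}v_i)=\omega_n\big(u_0^{(i)}\omega_n(v_i)\big)$, and the fact that $J_r$ kills functions supported on rank-$r$ sheaves with torsion (no filtration with all subquotients locally free exists, since the torsion subsheaf would have to sit inside the smallest, torsion-free, step) are all sound.

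There are, however, two genuine gaps. First, $\Ubf_\upsilon(\glfrakhat(n))\simeq\DUbf_n^0$ is the \emph{reduced Drinfeld double}, and your argument never touches the negative generators $F_i=-\upsilon^{1/2}1_{{}_n\Scal_i}^-$ and ${}_nZ_{-r}$. These do not act by left Hall multiplication followed by $\omega_n$; their action is obtained by commuting them through $1_\Fcal$ using the cross relations of the double and then projecting. Consequently ``$\tilde\Delta$ is an algebra morphism'' cannot be applied as in Step 1: the element $u_0\cdot u$ lives in the double, whose coproduct on the negative part is the opposite one, and the identification of $\tilde\Delta_{1,\dots,1}(u_0 u)$ with a product of components is no longer a statement internal to $\Hbf_n^{\mathsf{tw}}$. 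The standard fix is to deduce equivariance for the $F$'s from equivariance for the $E$'s by adjunction, using that $(ab,c)_G=(a\otimes b,\tilde\Delta(c))_G$ and that the Green pairing restricted to $\Ubf_n^>$ is nondegenerate; some such argument must be supplied. Second, Step 2 — the module-algebra identity $u_0\bullet(v\star w)=\sum(u_0^{(1)}\bullet v)\star(u_0^{(2)}\bullet w)$ — is exactly the nontrivial input (an application of Green's compatibility theorem to the bundle/torsion decomposition of $\Coh(X_n)$, with the $\kbf$-twists tracked), and you defer it rather than prove it. As written, the proposal is a correct plan whose two hardest components (the negative half and the Green-formula step) remain to be carried out.
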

We call the image of $\Psi_{n, r}$, which is $\Ubf^>_n[r]$, the \textit{symmetric tensor representation} of $\Ubf_\upsilon(\glfrakhat(n))$ (of genus $g_X$).

\bigskip\section{Relations between different Hall algebras}\label{sec:hallalgebramn}

Let $m,n\in \Z_{\geq 2}$ be such that $n\vert m$. The exact functor  $\pi_{m,n}^\ast$ (cf.\ Section~\ref{sec:infinity}) induces injective maps $\Ksfnum(X_n) \hookrightarrow \Ksfnum(X_m)$ and $\mathcal{M}_{\alpha,n} \hookrightarrow \mathcal{M}_{\alpha,m}$, which we will still denote $\pi_{m,n}^\ast$. Consider the Hall algebras $\Hbf_n$ and $\Hbf_m$. The maps $\pi_{m,n}^\ast$ give rise to an injective map
\begin{align}
\Omega_{m,n}\colon \Hbf_n\hookrightarrow \Hbf_m 
\end{align}
defined by
\begin{align}
\Omega_{m,n}(f) \coloneqq (\pi_{m,n}^\ast)_!(f)
\end{align}
(extension by zero) or, at the level of characteristic functions,
\begin{align}
\Omega_{m,n}(1_\Fcal)\coloneqq 1_{\pi_{m,n}^\ast(\Fcal)}\ , \ \text{ for all $\Fcal \in \Coh(X_n)$} \ .
\end{align}
It is easy to see that $\Omega_{m,n}$ is a homomorphism of algebras and extends to a similar homomorphism $\Omega_{m,n}\colon \Hbf_n^{\mathsf{tw}}\hookrightarrow \Hbf_m^{\mathsf{tw}}$. Moreover, $\Omega_m=\Omega_{m,n}\circ \Omega_n$ and $\Omega_{m,n}\circ T_n^{\, i}=T_m^{\, i\,m/n}\circ \Omega_{m,n}$.

We have (cf.\ Section~\ref{sec:different-coh})
\begin{align}\label{eq:different-coh}
\Omega_{m,n}\big(1_{{}_n \Scal_i^{(j)}}\big)=1_{{}_{m}\Scal_{i\, m/n}^{(j\, m/n)}}\ , \ \Omega_{m,n}\big({}_n \theta_{0,\, e}\big)={}_m\theta_{0,\, e\,m/n} \ , \  \Omega_{m,n}\big({}_n\onebf^{\mathbf{ss}}_{1,\, d}\big)={}_m\onebf^{\mathbf{ss}}_{1,\, d\,m/n}\ .
\end{align}
By restriction, there are induced injective algebra homomorphisms $\Hbf_{n, p_n}^{\mathsf{tw}, \tor}\hookrightarrow \Hbf_{m, p_m}^{\mathsf{tw}, \tor}$, $\Ubf_n^0\hookrightarrow \Ubf_m^0$, $\Ubf_n^>\hookrightarrow \Ubf_m^>$ and $\Ubf_n\hookrightarrow \Ubf_m$.

Finally, the following morphism of shuffle algebras
\begin{align}
\mathbf{Sh}_{n, \, h_X(z)}^{\mathsf{vec}} &\to \mathbf{Sh}_{m, \, h_X(z)}^{\mathsf{vec}} \ ,\\
P(x_1, \ldots, x_r)\, \vec v_{j_1} \otimes \cdots \otimes \vec v_{ j_r} &\mapsto P(x_1^{m/n}, \ldots, x_r^{m/n})\, \vec v_{j_1 \,m/n} \otimes \cdots  \otimes \vec v_{j_r\, m/n}\ ,
\end{align}
fits into the commutative diagram
\begin{align}
  \begin{tikzpicture}[xscale=1.5,yscale=-1.2]
    \node (A0_0) at (0, 0) {$\Ubf_n^>$};
    \node (A0_2) at (2, 0) {$\Ubf_m^>$};
    \node (A1_1) at (1, 1) {$ $};
    \node (A2_0) at (0, 2) {$\mathbf{Sh}_{n, \, h_X(z)}^{\mathsf{vec}}$};
    \node (A2_2) at (2, 2) {$\mathbf{Sh}_{m, \, h_X(z)}^{\mathsf{vec}}$};
    \node (Comma) at (2.7, 1) {$.$};
    \path (A0_0) edge [->]node [left] {$\scriptstyle{}$} (A2_0);
    \path (A0_0) edge [->]node [auto] {$\scriptstyle{\Omega_{m,n}}$} (A0_2);
    \path (A0_2) edge [->]node [auto] {$\scriptstyle{}$} (A2_2);
    \path (A2_0) edge [->]node [auto] {$\scriptstyle{}$} (A2_2);
  \end{tikzpicture} 
\end{align}

There is a dual surjective map
\begin{align}
\Omega^{m,n}\colon \Hbf_m\to \Hbf_n
\end{align}
defined by
\begin{align}
\Omega^{m,n}(f)\coloneqq(\pi_{m,n}^\ast)^\ast(f)
\end{align}
or, at the level of characteristic functions
\begin{align}
\Omega^{m,n}(1_{\mathcal{G}})\coloneqq
\begin{cases}
1_{\mathcal{F}}& \textit{if }\; \pi_{m,n}^\ast(\Fcal)\simeq \Gcal\ , \\[4pt]
0 & \textit{if }\;\nexists \;\Fcal ,\; \;\Gcal \simeq  \pi_{m,n}^\ast(\Fcal)\ .
\end{cases}
\end{align}
The map $\Omega^{m,n}$ is a morphism of coalgebras, and induces a similar morphism $ \Omega^{m,n}\colon \Hbf_m^{\mathsf{tw}}\to \Hbf_n^{\mathsf{tw}}$. Note that by construction $\Omega^{m,n} \circ \Omega_{m,n}=Id$, i.e. $\Omega_{m,n}$ is a section of $\Omega^{m,n}$.

\bigskip\section{Hall algebra of the infinite root stack over a curve}\label{sec:hallalgebrainfinite}

\subsection{Preliminaries on Hall algebras}\label{sec:Hallalgebrainfty}

As before, we let $k=\F_q$ and let $X$ be a smooth geometrically connected projective curve over $k$. Let $\pi_\infty\colon X_\infty\to X$ be the infinite root stack $\displaystyle \lim_{\genfrac{}{}{0pt}{}{\longleftarrow}{n}}\, X_n$ introduced in Section~\ref{sec:infiniterootstack}.

Let $(r,f)\in \Ksfnum(X_\infty)\simeq \Z\oplus\Z_0^{S^1_\Q}$ and denote by $\Mcal_{(r,f)}$ the set of isomorphism classes of coherent sheaves on $X_\infty$ of class $(r, f)$. The Hall algebra of $X_\infty$ is, as vector space,
\begin{align}
\Hbf_\infty\coloneqq\bigoplus_{(r,f) \in \Ksfnum(X_\infty)}\, \Hbf_\infty[r, f]\ ,
\end{align}
where
\begin{align}
 \Hbf_\infty[r, f]\coloneqq\{x\colon \Mcal_{(r,f)}\to \widetilde \Q\, \vert\, \text{$\mathsf{supp}(x)$ is finite}\}=\bigoplus_{\Fcal\in  \Mcal_{(r,f)}}\,  \widetilde \Q \, 1_\Fcal\ .
\end{align}
Here $1_{\Fcal}$ denotes the characteristic function of $\Fcal\in \Mcal_{(r,f)}$. The product on $\Hbf_\infty$ is defined as in Equation \eqref{eq:Hallprod}. We define $\Hbf_\infty^{\mathsf{tw}}$ as $\Hbf_\infty\otimes_{\widetilde \Q}\Q[\Ksfnum(X_\infty)]$, with multiplication defined as
\begin{align}
\kbf_\alpha\kbf_\beta= \kbf_{\alpha+\beta}\ , \kbf_0=1\ , \kbf_\alpha 1_{\Fcal} \kbf_\alpha^{-1}= \upsilon^{(\alpha, \overline \Fcal)}\,1_\Fcal\ ,
\end{align}
for $\Fcal\in \Coh(X_\infty)$ and $\alpha, \beta\in \Ksfnum(X_\infty)$. 
\begin{remark}
Since for any $n,m\in \Z$, $n\vert m$, the functor $\pi_{m,n}^\ast$ is fully faithful and the relation \eqref{eq:Ext1different} for Ext-groups hold, we have 
\begin{align}
\Hbf_\infty \simeq \lim_{\genfrac{}{}{0pt}{}{\to}{n}}\,\Hbf_n \quad\text{and}\quad \Hbf_\infty^{\mathsf{tw}} \simeq \lim_{\genfrac{}{}{0pt}{}{\to}{n}}\,\Hbf_n^{\mathsf{tw}}
\end{align}
in the category of associative algebras. The exact functor $\pi_{\infty,n}^\ast$ gives rise, as before, to maps $\Omega_{\infty,n}\colon \Hbf_n \hookrightarrow \Hbf_\infty$ and $\Omega^{\infty,n}\colon \Hbf_{\infty} \to \Hbf_n$.
\end{remark}

In order to define a coproduct, we need to introduce a suitable completion. This is clear from the fact that it is the \textit{inverse} limit
$\underset{\leftarrow}{\lim}\;\Hbf_n$ with respect to the maps $\Omega^{m,n}$ which carries a natural coalgebra structure (see Section~\ref{sec:hallalgebramn}). Note that the sections $\Omega_{m,n}$ provide an embedding $\Hbf_{\infty} \hookrightarrow \underset{\leftarrow}{\lim}\;\Hbf_n$ and we could have defined the coproduct directly in this fashion. To make things more explicit, we will instead follow the same setting as in Section~\ref{sec:hallalgebran}.

Define
\begin{align}\label{eq:Hallcompletion-infinity}
\widehat \Hbf_\infty\coloneqq\bigoplus_{(r,f)\in \Ksfnum(X_\infty)}\, \widehat \Hbf_\infty[r, f]\quad\text{and}\quad \widehat \Hbf_\infty[r, f]\coloneqq\{x\colon \Mcal_{(r,f)}\to \widetilde \Q\, \}\ ,
\end{align}
We will identify elements in $\widehat\Hbf_\infty[r,f]$ with (possibly infinite) series
\begin{align}
\sum_{\overline \Fcal=(r,f)}\, a_\Fcal 1_{\Fcal}
\end{align}
with $a_\Fcal\in \widetilde \Q$. Similarly, we define
\begin{align}
\Hbf_\infty[r, f]\widehat{\otimes}\Hbf_\infty[s, g]\coloneqq \{f\colon \Mcal_{(r,f)}\times \Mcal_{(s,g)} \to \widetilde{\Q}\}
\end{align}
and
\begin{align}
\big(\Hbf_\infty\widehat{\otimes}\Hbf_\infty\big)[m, h]&\coloneqq\prod_{(m,\, h)=(r,f)+(s,g)}\, \Hbf_\infty[r,f]\widehat{\otimes}\Hbf_\infty[s,g]\ , \\[2pt]
\Hbf_\infty\widehat{\otimes}\Hbf_\infty&\coloneqq\bigoplus_{(m,\, h)\in \Ksfnum(X_\infty)}\, \big(\Hbf_\infty\widehat{\otimes}\Hbf_\infty\big)[m, h] \ . 
\end{align}
Denote by ${}_n \Delta$ the coproduct introduced in \eqref{eq:Hallcoprod}. For $(r, f), (r_1, f_1), (r_2, f_2)\in \Ksfnum(X_\infty)$, such that $(r, f)=(r_1, f_1)+(r_2, f_2)$, and $x\in \Hbf_\infty[r, f]$ we define
\begin{align}
\Delta_{(r_1, f_1), (r_2, f_2)}(x)\coloneqq\lim_{\genfrac{}{}{0pt}{}{\leftarrow}{n}}\, {}_n \Delta_{(r_1, \mathbf{d}_{f_1}), (r_2, \mathbf d_{f_2})}(\Omega^{\infty,n}(x)) \in\Hbf_\infty[r_1, f_1]\widehat{\otimes}\Hbf_\infty[r_2, f_2]\ .
\end{align}
Thus, we define the coproduct as
\begin{align}
\Delta\colon \Hbf_\infty[r, f]&\to \prod_{(r_1, f_1)+(r_2, f_2)=(r, f)}\, \Hbf_\infty[r_1, f_1]\widehat{\otimes}\Hbf_\infty[r_2, f_2]\ ,\\[4pt]
 x&\mapsto \Delta(x)\coloneqq\sum_{(r_1, f_1)+(r_2,f_2)=(r, f)}\, \Delta_{(r_1, f_1), (r_2, f_2)}(x)\ ,
\end{align}
 and we extend to the whole $\Hbf_\infty$.
\begin{proposition}
The following properties hold:
\begin{itemize}\itemsep0.1cm
\item $\widehat \Hbf_\infty$ and $\Hbf_\infty\widehat{\otimes}\Hbf_\infty$ are naturally equipped with the structure of associative algebras.
\item $\Delta_{(r, f),(s, g)}\big(\Hbf_\infty[(r, f)+(s, g)]\big)\subset \Hbf_\infty[r, f]\otimes\Hbf_\infty[s, g]$.
\item The coproduct $\Delta$ takes values in $\Hbf_\infty\widehat{\otimes}\Hbf_\infty$; it extends to a coassociative coproduct $\Delta\colon \widehat \Hbf_\infty\to \Hbf_\infty\widehat{\otimes}\Hbf_\infty$.
\end{itemize}
\end{proposition}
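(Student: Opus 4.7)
The plan is to verify the three bullets by reducing every potential infinite sum to a finite one via the finiteness results proved earlier for $X_\infty$ (namely Corollaries~\ref{cor:finiteness-infty-one} and~\ref{cor:finiteness-infty-two}) together with the finiteness of $\Ext^1$-groups on $X_\infty$ (which in turn follows from Proposition~\ref{prop:coherentsheavesXinfty} and equation \eqref{eq:Ext1different}, since any pair of coherent sheaves $\Mcal,\Ncal$ on $X_\infty$ comes from some $X_n$ and $\Ext^1_{X_\infty}(\Mcal,\Ncal)\simeq\Ext^1_{X_n}(\widetilde\Mcal,\widetilde\Ncal)$, which is a finite $k$-vector space). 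I would argue the statements in exactly the order in which they are written.

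For the first bullet, I would define the multiplication on $\widehat\Hbf_\infty$ and on $\Hbf_\infty\widehat\otimes\Hbf_\infty$ by exactly the same formula as in \eqref{eq:Hallprod}. To see that the product of $x\in\widehat\Hbf_\infty[r_1,f_1]$ and $y\in\widehat\Hbf_\infty[r_2,f_2]$ is well defined, it suffices to check that, for any $\Rcal$ of class $(r_1+r_2,f_1+f_2)$, the sum defining $(x\cdot y)(\Rcal)$ is finite. But this sum is indexed by the subsheaves $\Ncal\subset\Rcal$ of class $(r_2,f_2)$, and Corollary~\ref{cor:finiteness-infty-one} says there are only finitely many of them. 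Associativity then follows by the same calculation as in the usual Hall algebra setting (see for instance \cite[\S~1]{book:schiffmann2012}), since everything is already a finite sum. The argument for $\Hbf_\infty\widehat\otimes\Hbf_\infty$ is analogous.

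For the second bullet, write $x=\sum_\Fcal c_\Fcal\, 1_\Fcal\in\Hbf_\infty[r_1+r_2,f_1+f_2]$ as a \emph{finite} linear combination. For a fixed $\Fcal$ in the support of $x$, the element $\Delta_{(r_1,f_1),(r_2,f_2)}(1_\Fcal)$ has a nonzero coefficient at $1_\Mcal\otimes 1_\Ncal$ only when $\Ncal$ occurs as a subsheaf of $\Fcal$ with quotient isomorphic to $\Mcal$; Corollary~\ref{cor:finiteness-infty-two} (or directly Corollary~\ref{cor:finiteness-infty-one} applied to $\Fcal$) shows that only finitely many isomorphism classes of such pairs $(\Mcal,\Ncal)$ exist. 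Hence $\Delta_{(r_1,f_1),(r_2,f_2)}(1_\Fcal)$, and therefore $\Delta_{(r_1,f_1),(r_2,f_2)}(x)$, lies in the uncompleted tensor product $\Hbf_\infty[r_1,f_1]\otimes\Hbf_\infty[r_2,f_2]$.

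For the third bullet, the fact that $\Delta(x)\in\Hbf_\infty\widehat\otimes\Hbf_\infty$ is now immediate from \eqref{eq:Hallcompletion-3} and the second bullet. To extend $\Delta$ to the completion $\widehat\Hbf_\infty$, given $x=\sum_\Fcal a_\Fcal\, 1_\Fcal\in\widehat\Hbf_\infty[r,f]$ (possibly with infinite support) and a pair $(\Mcal,\Ncal)$ of classes $(r_1,f_1)$ and $(r_2,f_2)$ with $(r_1+r_2,f_1+f_2)=(r,f)$, the candidate coefficient of $1_\Mcal\otimes 1_\Ncal$ in $\Delta(x)$ is
\begin{equation}
\frac{\upsilon^{-\langle\Mcal,\Ncal\rangle}}{|\Ext^1(\Mcal,\Ncal)|}\sum_{\xi\in\Ext^1(\Mcal,\Ncal)} a_{\Xcal_\xi},
\end{equation}
which is a \emph{finite} sum since $\Ext^1(\Mcal,\Ncal)$ is finite. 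This defines $\Delta\colon\widehat\Hbf_\infty\to\Hbf_\infty\widehat\otimes\Hbf_\infty$ as an algebra homomorphism (the compatibility with the product follows, as in the finitely supported case, from the usual Green-type identity applied termwise). The main technical point to keep in mind is coassociativity: I would deduce it from the fact that, by construction, $\Delta$ is the inverse limit $\lim_{\leftarrow n}{}_n\Delta\circ\Omega^{\infty,n}$, so coassociativity of each ${}_n\Delta$ on $\Hbf_n^{\mathsf{tw}}$ passes to the limit. The only delicate step in this whole program is checking that the infinite sums appearing in the definition of $\Delta$ on the completion remain compatible with multiplication; this is where Corollaries~\ref{cor:finiteness-infty-one}--\ref{cor:finiteness-infty-two} and the finiteness of $\Ext^1$ are used simultaneously and is the main obstacle to be dispatched.
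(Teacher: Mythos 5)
Your proposal is correct and follows essentially the same route as the paper, whose proof simply invokes Corollaries~\ref{cor:finiteness-infty-one} and \ref{cor:finiteness-infty-two} for the first two bullets and declares the third obvious; you supply the details (reduction of each sum to subsheaves of fixed class, finiteness of $\Ext^1$ via \eqref{eq:Ext1different}, and coassociativity inherited from the inverse limit of the ${}_n\Delta$), but the key ingredients are identical.
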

\begin{proof} The  first and the second points of the Proposition are both consequences of Corollaries~\ref{cor:finiteness-infty-one} and \ref{cor:finiteness-infty-two}. The third part of the Proposition is obvious.
\end{proof}


There exist an extended coproduct $\tilde \Delta$ and a Green pairing $(\cdot, \cdot)_G$ such that $(\Hbf_\infty^{\mathsf{tw}}, \cdot, \tilde \Delta)$ is a Hopf bialgebra. Therefore, there exist a Drinfeld double $\tildeDHbf_\infty$ and its reduced version $\DHbf_\infty$. Similarly, we can define an extended version $\widehat \Hbf_\infty^{\mathsf{tw}}$ of the completion $\widehat \Hbf_\infty$ such that $(\widehat \Hbf_\infty^{\mathsf{tw}}, \cdot, \tilde \Delta)$ is a topological bialgebra.

Thanks to Formula \eqref{eq:sn-n}, we have an isomorphism of algebras $T_\infty^{\, x}\colon \Hbf_\infty\to \Hbf_\infty$ for any $x\in \Q\cap [0,1[$ induced by $T_n^{\, \nfrapart{d}}\colon \Hbf_n\to \Hbf_n$, if $x=d/n$ with $\mathsf{gcd}(d,n)=1$.

\subsection{Hecke algebra and $\Ubf_\upsilon(\mathfrak{sl}(S^1_\Q))$}

Let $\Tor(X_\infty)$ be the category of zero-dimensional coherent sheaves on $X_\infty$ and
\begin{align}
\Hbf_\infty^{\tor}\coloneqq\bigoplus_{\Tcal\in \Tor(X_\infty)}\, \widetilde \Q 1_\Tcal \quad\text{and}\quad \Hbf_\infty^{\mathsf{tw}, \tor}\coloneqq\Hbf_\infty^{\tor}\otimes_{\widetilde \Q} \Kbf_\infty\ .
\end{align}
Define for any $x\in X_\infty$
\begin{align}
\Hbf_{\infty, x}^{\tor}\coloneqq\bigoplus_{\Tcal\in \Tor_{x}(X_\infty)}\, \widetilde\Q\, 1_\Tcal \quad\text{and}\quad \Hbf_{\infty, x}^{\mathsf{tw}, \tor}\coloneqq\Hbf_{\infty, x}^{\tor}\otimes_{\widetilde \Q} \Kbf_\infty\ .
\end{align}
The decomposition of $\Tor(X_\infty)$ over the points of $X_n$ gives a decomposition at the level of Hall algebras
\begin{align}
\Hbf_\infty^{\tor}=\bigotimes_{x\in X_\infty}\, \Hbf_{\infty, x}^{\tor} \quad\text{and}\quad \Hbf_\infty^{\mathsf{tw}, \tor}=\bigotimes_{x\in X_\infty}\,\Hbf_{\infty, x}^{\mathsf{tw}, \tor} \ .
\end{align}

\subsubsection{Hecke algebra at the stacky point and $\Ubf_\upsilon(\mathfrak{sl}(S^1_\Q))$}\label{sec:Heckealgebrastackyinfinity}

The elements $\upsilon^{1/2}\, 1_{{}_n\Scal_{i}^{(\ell)}}$  generate the whole $\Hbf_{n, p_n}^{\tor}$, for $i\in\{1, \ldots, n\}$ and $\ell\in \Z_{\geq 1}$. For any two positive integers $n, k$, we have 
\begin{align}
\Omega_{kn, n}\colon \Hbf_{n, p_n}^{\tor} \to \Hbf_{k n, p_{k n}}^{\tor} \ , \upsilon^{1/2}\, 1_{{}_n\Scal_{i}^{(\ell)}} \mapsto \upsilon^{1/2}\,1_{{}_{k n}\Scal_{k i}^{(k \ell)}} \ .
\end{align} 
The pair $(\Hbf_{n, p_n}^{\tor}, \Omega_{kn, n})$ forms a directed system where the order is given by divisibility. The corresponding direct limit, $\displaystyle \lim_{\to}\, \Hbf_{n, p_n}^{\tor} $ is the Hall algebra $\Hbf_{\infty, p_\infty}^{\tor}$. One can argue similarly for $\Hbf_{n, p_n}^{\mathsf{tw}, \tor}$ and have  $\displaystyle \lim_{\to}\, \Hbf_{n, p_n}^{\mathsf{tw},\tor}  \simeq \Hbf_{\infty, p_\infty}^{\mathsf{tw}, \tor}$. In this section, we will investigate in detail these algebras.

For a strict rational interval $J=[a,b[ \subset S^1_\Q$, with $a<b$, define
\begin{align}
E_J\coloneqq\upsilon^{1/2}\, 1_{\Scal_J}\ .
\end{align}
where $\Scal_J$ is defined in Formula~\eqref{eq:SJ}. Similarly, for any rational interval $I \subseteq S^1_\Q$, define
\begin{align}
K_I^{\pm 1}\coloneqq\kbf_{(0,\pm\chi_I)}\ .
\end{align}
Let $\Cbf_\infty$ be the subalgebra of $\Hbf_{\infty, p_\infty}^{\mathsf{tw},\tor}$ generated by the $E_J$'s and by $K_I^\pm$'s for all strict rational intervals $J$ and all rational intervals $I$. By the previous discussion, 
\begin{align}
\Cbf_\infty \simeq \lim_\to \Cbf_n \simeq \lim_\to \Ubf^+_\upsilon\big(\slfrakhat(n)\big)\ .
\end{align}
\begin{theorem}\label{thm:Cinfty}
The algebra $\Cbf_\infty$ is isomorphic to the algebra generated by elements $E_J$ and $K_I^\pm$ for all strict rational intervals $J$ and all rational interval $I$ in $S^1_\Q$ subject to the following relations:
\begin{itemize}\itemsep0.4cm
\item \textit{Drinfeld-Jimbo relations}: for any two rational intervals $I, J$, with $J$ strict,
\begin{align}\label{eq:Serre1-I}
K_{\emptyset}=1\ ,\ K_I\, E_{J}\, K_I^{-1}= \upsilon^{(\chi_I, \chi_{J})}\,E_{J} \ ;
\end{align}
\item \textit{join relations}:
\begin{itemize}\itemsep0.2cm
\item for any pair of strict rational intervals $J_1, J_2$ of the form $J_1=[a,b[$ and $J_2=[b,c[$ such that $J_1\cup J_2$ is again an interval,
\begin{align}\label{eq:Serre1-II}
K_{J}\, K_{{J'}}= K_{{J\cup J'}}\ ;
\end{align}
\item for any pair of strict rational intervals $J_1, J_2$ of the form $J_1=[a,b[$ and $J_2=[b,c[$ such that $J_1\cup J_2$ is again a strict rational interval,
\begin{align}\label{eq:Joining1}
E_{J_1\cup J_2}=\upsilon^{1/2}\, E_{J_1}\, E_{J_2}-\upsilon^{-1/2}\, E_{J_2}\, E_{J_1} \ ;
\end{align}
\end{itemize}
\item \textit{nest relations}:
\begin{itemize}\itemsep0.2cm
\item for any two strict rational intervals $J_1, J_2$ such that $\overline{J_1} \cap\overline{J_2} =\emptyset$, 
\begin{align}\label{eq:Serre3}
\left[E_{J_1}, E_{J_2}\right]=0\ ;
\end{align}
\item for any two strict rational intervals $J_1, J_2$ such that $J_1\subset J_2$,
\begin{align}\label{eq:nest}
\upsilon^{\langle \chi_{J_1}, \chi_{J_2}\rangle} \, E_{J_1}\, E_{J_2}=\upsilon^{\langle \chi_{J_2}, \chi_{J_1}\rangle} \, E_{J_2}\, E_{J_1} \ .
\end{align}
\end{itemize}
\end{itemize}
\end{theorem}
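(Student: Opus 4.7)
My plan is to prove the theorem by constructing mutually inverse algebra homomorphisms between $\Cbf_\infty$ and the algebra $\tilde\Cbf_\infty$ presented abstractly by the generators and relations of the statement. The forward map $\varphi\colon \tilde\Cbf_\infty \to \Cbf_\infty$ is defined on generators by $E_J \mapsto \upsilon^{1/2}\,1_{\Scal_J}$ and $K_I^{\pm 1}\mapsto \kbf_{(0,\pm\chi_I)}$. Since every listed relation involves only finitely many intervals, one can always choose an integer $n$ such that all intervals appearing have endpoints in $\tfrac{1}{n}\Z$ and verify the identity inside the finite-level subalgebra $\Cbf_n\simeq \Ubf^+_\upsilon(\slfrakhat(n))\subset \Cbf_\infty$, where it becomes a statement about the Ringel-Hall algebra of the cyclic quiver $A_{n-1}^{(1)}$. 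The Drinfeld-Jimbo relations follow from the twist conventions \eqref{eq:Hallnewrelations} together with the computation of the Euler form; the first join relation is immediate from the additivity of $\kbf$-exponents; the second (quantum commutator) join relation is exactly Ringel's formula expressing a thread module as a $\upsilon$-bracket of two adjacent shorter thread modules; and the nest relations are direct Hall-algebra computations, based on the vanishing of $\mathrm{Hom}$ and $\mathrm{Ext}^1$ in the closure-disjoint case and on a short computation weighted by the Euler form in the nested case. Surjectivity of $\varphi$ is then clear, since each $\Cbf_n$ is generated by the elements $1_{{}_n\Scal_i}$ and the $K$'s, all in the image of $\varphi$.

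\textbf{Inverse map.} I will assemble the inverse $\psi\colon \Cbf_\infty \to \tilde\Cbf_\infty$ from a compatible family of level-$n$ maps $\psi_n\colon \Cbf_n \simeq \Ubf^+_\upsilon(\slfrakhat(n))\to \tilde\Cbf_\infty$ defined on Chevalley generators by $E_i\mapsto E_{[(i-1)/n,i/n[}$ and $K_i^{\pm 1}\mapsto K_{[(i-1)/n,i/n[}^{\pm 1}$. Well-definedness of $\psi_n$ reduces to verifying the Serre relations of $\Ubf^+_\upsilon(\slfrakhat(n))$ among these images: commutativity for non-adjacent simple roots is the closure-disjoint case of the first nest relation, while the cubic quantum Serre relation for adjacent roots follows from a short explicit manipulation. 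Concretely, setting $J=[(i-1)/n,i/n[$, $J'=[i/n,(i+1)/n[$ and $\tilde J=J\cup J'$, an explicit computation yields $\langle \chi_J,\chi_{\tilde J}\rangle=0$ and $\langle\chi_{\tilde J},\chi_J\rangle=1$, so that the nest relation reads $E_JE_{\tilde J}=\upsilon\, E_{\tilde J}E_J$. Substituting the join relation $E_{\tilde J}=\upsilon^{1/2}E_JE_{J'}-\upsilon^{-1/2}E_{J'}E_J$ and rearranging produces exactly $E_J^2E_{J'}-(\upsilon+\upsilon^{-1})E_JE_{J'}E_J+E_{J'}E_J^2=0$; the symmetric Serre relation with $J,J'$ interchanged arises by the same procedure from the nested pair $J'\subset \tilde J$. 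The compatibility $\psi_{kn}\circ \Omega_{kn,n}=\psi_n$ then follows by iterating the join relation $k-1$ times: this expresses $E_{[(i-1)/n,i/n[}\in\tilde\Cbf_\infty$ as the same $k$-fold $\upsilon$-commutator of short-interval elements $E_{[j/kn,(j+1)/kn[}$ that Ringel's formula realizes for $\Omega_{kn,n}(1_{{}_n\Scal_i})=1_{{}_{kn}\Scal_{ki}^{(k)}}$ in $\Cbf_{kn}$. The universal property of the direct limit then yields $\psi$.

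\textbf{Conclusion and main obstacle.} Checking $\varphi\circ\psi=\mathrm{id}_{\Cbf_\infty}$ is immediate on the generators $1_{{}_n\Scal_i}$ and $\kbf$-elements of each level $\Cbf_n$, while $\psi\circ\varphi=\mathrm{id}_{\tilde\Cbf_\infty}$ reduces, via the same iterated use of the join relation inside $\tilde\Cbf_\infty$, to the case of short-interval generators $E_{[(i-1)/n,i/n[}$, where it is tautological. The main obstacle I anticipate is purely combinatorial bookkeeping: verifying that the specific $\upsilon$-exponents appearing in the nest relations really do assemble with the join relation to produce the cubic Serre relation at every level $n$ and for every pair of adjacent roots, including the ``wraparound'' pair corresponding to $J=[(n-1)/n,1[$ and $J'=[0,1/n[$ with $\tilde J=[(n-1)/n,1/n[$, and simultaneously that Ringel's iterated bracket formula in $\Hbf^{\mathsf{tw},\tor}_{n,p_n}$ matches precisely the $\upsilon$-commutator prescribed by the join relation (with the correct powers of $\upsilon^{\pm 1/2}$). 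Each such verification is a finite explicit computation, and once carried out uniformly the theorem follows.
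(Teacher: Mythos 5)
Your proposal is correct and follows essentially the same route as the paper's proof: both arguments verify the relations in $\Cbf_\infty$ by reducing to a finite level $\Cbf_n$, derive the cubic Serre relations from the join and nest relations exactly as you describe (your computation $\langle\chi_J,\chi_{\tilde J}\rangle=0$, $\langle\chi_{\tilde J},\chi_J\rangle=1$ matches the paper's), and establish injectivity by exhibiting, for each $n$, a section $\Cbf_n\to\Acal_n$ defined on Chevalley generators by short intervals. The only difference is presentational: you assemble these sections into a global inverse via the direct-limit compatibility $\psi_{kn}\circ\Omega_{kn,n}=\psi_n$, whereas the paper simply notes that a section at each finite level suffices for injectivity of $\phi\vert_{\Acal_n}$ on the exhaustive union $\Acal=\bigcup_n\Acal_n$.
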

\begin{proof}
Let $\Acal$ be the algebra generated by $\{E_J, K_I^\pm\,\vert \, I, J \subset S^1_\Q\ , J\neq S^1_\Q\}$ modulo the relations \eqref{eq:Serre1-I}, \eqref{eq:Serre1-II}, \eqref{eq:Joining1}, \eqref{eq:Serre3}, and \eqref{eq:nest}. Let us prove that the following Serre relations automatically hold in $\Acal$:
\begin{align}\label{eq:Serre2}
\begin{aligned}
E_{J_1}^2\, E_{J_2} - [2]_\upsilon\, E_{J_1}\, E_{J_2}\, E_{J_1} + E_{J_2}\, E_{J_1}^2 &=0\ ;\\
E_{J_2}^2\, E_{J_1} - [2]_\upsilon\, E_{J_2}\, E_{J_1}\, E_{J_2} + E_{J_1}\, E_{J_2}^2 &=0\ ;
\end{aligned}
\end{align}
Indeed, let $J_1=[a,b[$, $J_2=[b,c[$ be strict rational intervals such that $J_1 \cup J_2$ is strict: by multiplying first by the left Formula  \eqref{eq:Joining1} by $\upsilon^{-1/2}\, E_{J_1}$ and then by the right the same formula by $\upsilon^{1/2}\, E_{J_1}$, we get
\begin{align}
E_{J_1}^2\, E_{J_2} -\upsilon^{-1}\, E_{J_1}\, E_{J_2}\, E_{J_1} &= \upsilon^{-1/2}\, E_{J_1}\, E_{J_1 \cup J_2}\ ,\\
E_{J_2}\, E_{J_1}^2-\upsilon\, E_{J_1}\, E_{J_2}\, E_{J_1} &=-\upsilon^{1/2}\, E_{J_1 \cup J_2}\,  E_{J_1}\ .
\end{align}
In the second formula, by using \eqref{eq:nest} for the intervals $J_1$ and $J_1\cup J_2$ we get $\upsilon^{1/2}\, E_{J_1 \cup J_2}\,  E_{J_1}=\upsilon^{-1/2}\,  E_{J_1}\, E_{J_1 \cup J_2}$. Then we obtain the first of the relations \eqref{eq:Serre2}. The second one can be derived in a similar fashion.

There is an obvious map $\phi\colon \Acal \to \Cbf_\infty$, which is surjective. To prove that $\phi$ is injective, let us consider the subalgebra $\Acal_n \subset \Acal$ generated by $E_J, K_{J}^\pm$ for $J=[a,b[$, $a,b \in \frac{1}{n}\Z$. Then $\Acal=\bigcup_n \Acal_n$ (and $\Acal_n \subseteq \Acal_{\ell n}$ for all $n, \ell\in \Z$). It is enough to show that $\phi\vert_{\Acal_n}$ is injective for all $n$, but by \eqref{eq:Joining1} we observe that $\Acal_n$ is generated by $E_{[p/n,(p+1)/n[}, K_{{[(p/n,(p+1)/n[}}^\pm$ for $p \in \{0, \ldots, n-1\}$ and by \eqref{eq:Serre1-I}, \eqref{eq:Serre1-II}, \eqref{eq:Serre2}, and \eqref{eq:Serre3} we have a map $\Cbf_n \to \Acal_n$ which is a section of $\phi\vert_{\Acal_n}$ and the assertion follows.
\end{proof}
\begin{rem}
One of the consequences of the previous theorem is that $\Cbf_\infty$ is only defined by quadratic (!) relations. \hfill$\triangle$
\end{rem}

A direct computation gives the following formulas for the coproduct $\tilde \Delta$:
\begin{align}\label{eq:coproductCinfty}
\begin{aligned}
\tilde \Delta(K_I)&=K_I\otimes K_I\ ,\\
\tilde \Delta (E_{[a,\, b[})&=E_{[a,\, b[} \otimes 1 + \sum_{a<c<b} \upsilon^{-1/2}\, (\upsilon-\upsilon^{-1})\, E_{[a,\, c[}\, K_{[c,\, b[} \otimes E_{[c,\, b[} + K_{{[a,\, b[}} \otimes E_{[a,\, b[} \ ,
\end{aligned}
\end{align}
(here the sum on the right-hand-side runs over all possible \textit{rational values} $c\in [a, b[$). In addition, the Green pairing gives a non-degenerate Hopf pairing:
\begin{align}\label{eq:GreenpairingCinfty}
(E_J\, K_I, E_{J'}\, K_{{I'}})_G=\frac{\delta_{J, J'}}{\upsilon-\upsilon^{-1}}\, \upsilon^{(\chi_I,\chi_{I'})}\ ,
\end{align}
for any rational intervals $J, J', I, I'\subset S^1_\Q$, with $J, J'$ strict. Thus, $\Cbf_\infty$ is a topological Hopf algebra.

Recall that for any $n\geq 2$, $\Zbf_n\otimes_{\widetilde \Q} \Cbf_n \simeq \Hbf_{n, p_n}^{\mathsf{tw}, \tor}$, where $\Zbf_n\simeq \widetilde \Q[{}_n z_1, {}_n z_2, \ldots, {}_n z_r, \ldots]$ is the center of $\Hbf_{n, p_n}^{\mathsf{tw}, \tor}$ (cf.\ Section~\ref{sec:Heckealgebrastacky}). It is natural to wonder what happens for $\Hbf_{\infty, p_\infty}^{\mathsf{tw}, \tor}$.
\begin{proposition}\label{prop:trivialcenter}
The center of $\Hbf_{\infty, p_\infty}^{\mathsf{tw}, \tor}$ is trivial.
\end{proposition}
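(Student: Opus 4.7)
The plan is to reduce a putative central element $z$ to a finite-level representative and then exploit the rotational symmetry of the finite-level center. After decomposing along the $\Ksfnum(X_\infty)$-grading and separating off the group-like $\kbf_\alpha$-factors (the genuinely central ones come from $\alpha$ orthogonal to every torsion class, i.e.\ multiples of $(0,\delta)$), it suffices to show that any central $z \in \Hbf_{\infty, p_\infty}^{\tor}$ of nonzero torsion dimension vector $f \in \Z^{S^1_\Q}_0$ must vanish. Using the equivalence $\Hbf_{\infty, p_\infty}^{\tor} \simeq \varinjlim_n \Hbf_{n, p_n}^{\tor}$, write $z = \Omega_{\infty, n}(z_n)$ for some $n$ and some $z_n \in \Hbf_{n, p_n}^{\tor}$ of dim vector $\mathbf d \in \Z_{\geq 0}^n$ with $f_{\mathbf d} = f$.

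For every $m = kn$ the embedding $\Omega_{\infty,m}$ forces $\Omega_{m, n}(z_n)$ to be central in $\Hbf_{m, p_m}^{\mathsf{tw}, \tor}$, hence (by Corollary~\ref{cor:heckealgebrastackypoint-n}) to lie in $\Zbf_m = \widetilde\Q[{}_m z_1, {}_m z_2, \ldots]$ up to a central $\kbf_{\delta_m}$-factor. Since each ${}_m z_s$ is homogeneous of dim vector $s\delta_m$, the vector $\Phi_{m, n}(\mathbf d) \in \Z^m$ must be a multiple of $\delta_m$. But $\Phi_{m, n}$ repeats each coordinate of $\mathbf d$ with multiplicity $m/n$, so $\Phi_{m, n}(\mathbf d) = r\delta_m$ forces $\mathbf d = r\delta_n$. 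We may thus assume $r \geq 1$ (if $r = 0$ then $z_n$ is a scalar and we are done).

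The heart of the argument is the following symmetry. The autoequivalence $T_m = (-) \otimes \Lcal_m$ of $\Coh(X_m)$ acts on indecomposable torsion simples by $T_m({}_m\Scal_i^{(j)}) \simeq {}_m\Scal_{i-1 \bmod m}^{(j)}$ and preserves the dim vector $r\delta_m$. Since the sums defining ${}_m c_s$ and ${}_m z_s$ depend only on $T_m$-invariant data (dim vector, square-free-socle condition, $\End$ and $\Aut$), each ${}_m z_s$ is $T_m$-fixed; hence every element of $\Zbf_m$ is $T_m$-fixed, so $T_m(\Omega_{m,n}(z_n)) = \Omega_{m,n}(z_n)$. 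On the other hand, the support of $\Omega_{m, n}(z_n)$ consists of iso-classes of the form $[\pi_{m,n}^\ast\Gcal]$ which, by \eqref{eq:simple-independent}, decompose into indecomposables ${}_m\Scal_{kh}^{(kj)}$ whose first index is $\equiv 0 \pmod k$ (with $k = m/n$). Applying $T_m$ shifts all such indices to $\equiv -1 \pmod k$. For $k \geq 2$ these two subsets of iso-classes are disjoint, so the equality of supports forces $\Omega_{m,n}(z_n) = 0$, and then $z_n = 0$ by injectivity of $\Omega_{m,n}$, a contradiction.

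The delicate point I expect to require the most care is the initial reduction step: carefully untangling the extended part of the algebra to isolate the ``Hall'' factor from the group-like $\kbf$ factors, particularly because $\Hbf_{\infty, p_\infty}^{\mathsf{tw}, \tor}$ does carry the genuinely central subalgebra generated by $\kbf_{(0, r\delta)}$ (the constant function $\delta = \chi_{S^1_\Q}$ pairs to zero under $(\cdot,\cdot)$ with every element of $\Z^{S^1_\Q}_0$). The proposition is therefore to be read as saying that modulo this unavoidable group-like piece the center of $\Hbf_{\infty, p_\infty}^{\mathsf{tw}, \tor}$ reduces to scalars, which is the content captured by the $T_m$-symmetry argument above — thereby confirming that none of the Heisenberg generators ${}_n z_r$ survives the limit as a central element.
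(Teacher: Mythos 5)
Your proof is correct, but it takes a genuinely different route from the paper's. The paper argues directly: for any candidate central element $x=\Omega_{\infty,n}(x_n)$ it exhibits an explicit non-commuting partner $y=1_{{}_{2n}\Scal_{2p+1}}$, the characteristic function of a simple sitting at a \emph{new} vertex of the refined cyclic quiver chosen adjacent to a maximal-length indecomposable summand ${}_n\Scal_p^{(k)}$ occurring in the support of $x_n$, and checks $[x,y]\neq 0$ by comparing supports: $y\cdot x$ hits a torsion sheaf containing the longer indecomposable $\pi_{\infty,2n}^\ast\big({}_{2n}\Scal_{2p+1}^{(2k+1)}\big)$, which cannot occur in $x\cdot y$. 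You instead (i) push centrality down to every finite level $m=kn$ via the injective algebra maps $\Omega_{\infty,m}$, (ii) invoke the Hubery--Schiffmann description $\Zbf_m=\widetilde\Q[{}_mz_1,{}_mz_2,\ldots]$ to force the dimension vector to be $r\delta_n$, and (iii) kill the element using the rotation $T_m=(-)\otimes\Lcal_m$, which fixes $\Zbf_m$ pointwise (each ${}_mc_s$, hence each ${}_mz_s$, is manifestly $T_m$-invariant) while moving the image of $\Omega_{m,n}$ onto a disjoint set of isomorphism classes (summand indices $\equiv 0$ versus $\equiv -1 \bmod k$, by Krull--Schmidt). Your route is less elementary in that it needs the finite-level classification of the center, but it is more structural: it explains \emph{why} the Heisenberg generators fail to survive (central elements must be rotation-invariant, pullbacks from coarser levels never are), and it shows for free that any central element has dimension vector a multiple of $\delta$. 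The paper's route is self-contained and produces an explicit witness.

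One shared caveat: both arguments quietly work in the non-extended part. As you rightly flag, $\kbf_{(0,r\delta)}$ is genuinely central in $\Hbf^{\mathsf{tw},\tor}_{\infty,p_\infty}$, so the proposition must be read modulo this group-like piece; moreover a central element of the form $u\,\kbf_\alpha$ only forces $u$ to be central up to the character $\upsilon^{(\alpha,\cdot)}$, so your opening reduction to ``$u$ central of pure torsion degree'' deserves one more sentence if you want it airtight. Since the paper's own proof does not address this either, I would not count it as a gap in your argument specifically.
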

\begin{proof}
It is enough to define for any $x\in \Hbf_{\infty, p_\infty}^{\mathsf{tw}, \tor}$ an element $y\in \Hbf_{\infty, p_\infty}^{\mathsf{tw}, \tor}$ such that $[x,y]\neq 0$. Assume that $x=\Omega_{\infty, n}(x_n)$ for some $x_n \in \Hbf_n[\mathbf d]$, where $n$ is a positive integer and $\mathbf d\in \Z^n$. Among all the objects of $\Tor_{p_n}(X_n)$ over which $x$ takes a nonzero value, let us choose one which contains an indecomposable summand, say ${}_n \Scal_p^{(k)}$, of maximal length. Let us set $y\coloneqq1_{{}_{2n}\Scal_{2p+1}}$. Thus the product $y \cdot x$ take a nonzero value on a torsion sheaf on $X_\infty$, which has as a direct summand $\pi_{\infty, 2n}^\ast\big({}_{2n}\Scal_{2p+1}^{(2k+1)}\big)$. On the other hand, the product $x\cdot y$ takes nonzero values only on torsion sheaves on $X_\infty$ of the form $\pi_{\infty, 2n}^\ast\big({}_{2n} \Scal_{2p+1}\oplus \pi_{2n,n}^\ast\Mcal\big)$ for some $\Mcal\in \Tor_{p_n}(X_n)$. In particular, $x\cdot y\neq y\cdot x$.
\end{proof}

By passing to the reduced Drinfeld double of $\Cbf_{\infty}$, we obtain the following characterization.
\begin{theorem} 
The algebra $\DCbf_{\infty}$ is isomorphic to the algebra generated by elements $E_J, F_J, K_{J'}^{\pm 1}$, where $J$ (resp.\ $J'$) runs over all strict rational intervals (resp.\ rational intervals) in $S^1_\Q$, modulo the following set of relations:
\begin{itemize}\itemsep0.4cm
\item \textit{Drinfeld-Jimbo relations}:
\begin{itemize}\itemsep0.2cm
\item for any rational intervals $I, I_1, I_2$ and strict rational interval $J$,
\begin{align}
[K_{I_1},K_{I_2}] &=0\ ,\\
K_{I}\, E_{J}\, K_{I}^{-1} &=\upsilon^{( \chi_{I},\chi_{J})}\, E_{J}\ ,\\[2pt]
K_{I}\, F_{J}\, K_{I}^{-1} &=\upsilon^{-( \chi_{I},\chi_{J})}\, F_{J} \ ;
\end{align}
\item if $J_1, J_2$ are strict rational intervals such that $J_1 \cap J_2 = \emptyset$, 
\begin{align}\label{eq:commutatorEF-1}
[F_{J_1},E_{J_2}]=0\ ;
\end{align}
\item for any strict rational interval $J$,
\begin{align}\label{eq:commutatorEF-2}
[E_J,F_J]=\frac{K_J-K_J^{-1}}{\upsilon-\upsilon^{-1}} \ ;
\end{align} 
\end{itemize}
\item \textit{join relations}:
\begin{itemize}\itemsep0.2cm
\item if $J_1,J_2$ are strict rational intervals of the form $J_1=[a,b[$ and $J_2=[b,c[$ such that $J_1 \cup J_2$ is again a rational interval,
\begin{align}
K_{J_1}\, K_{J_2}=K_{J_1\cup J_2}\ ;
\end{align}
\item if $J_1, J_2$ are strict rational intervals  of the form $J_1=[a,b[$ and $J_2=[b,c[$ such that $J_1 \cup J_2$ is again a strict rational interval,
\begin{align}
\begin{aligned}
E_{J_1\cup J_2} &=\upsilon^{1/2}\,E_{J_1}\,E_{J_2} - \upsilon^{-1/2}\,E_{J_2}\, E_{J_1}\ ,  \\[2pt]
F_{J_1\cup J_2} &=\upsilon^{-1/2}\,F_{J_2}\,F_{J_1} - \upsilon^{1/2}\, F_{J_1}\, F_{J_2}  \ ;
\end{aligned}
\end{align}
\end{itemize}
\item \textit{nest relations}:
\begin{itemize}\itemsep0.2cm
\item if $J_1, J_2$ are strict rational intervals such that $\overline{J_1} \cap \overline{J_2} =\emptyset$,
\begin{align}
[E_{J_1},E_{J_2}]=0\quad\text{and}\quad [F_{J_1}, F_{J_2}]=0\ ;
\end{align}
\item if $J_1, J_2$ are strict rational intervals such that $J_1\subseteq J_2$,
\begin{align}
\begin{aligned}
\upsilon^{\langle \chi_{J_1}, \chi_{J_2}\rangle} \, E_{J_1}\, E_{J_2} &=\upsilon^{\langle \chi_{J_2}, \chi_{J_1}\rangle} \, E_{J_2}\, E_{J_1} \ ,\\
\upsilon^{\langle \chi_{J_1}, \chi_{J_2}\rangle} \, F_{J_1}\, F_{J_2} &=\upsilon^{\langle \chi_{J_2}, \chi_{J_1}\rangle} \, F_{J_2}\, F_{J_1} \ .
\end{aligned}
\end{align}
\end{itemize}
\end{itemize}
It is a topological Hopf algebra, with coproduct given by the following formulas:
\begin{align}
\tilde \Delta(K_J)&=K_J\otimes K_J\ ,\\
\tilde \Delta (E_{[a,\, b[})&=E_{[a,\, b[} \otimes 1 + \sum_{a<c<b} \upsilon^{-1/2}\, (\upsilon-\upsilon^{-1})\, E_{[a,\, c[}\, K_{[c,\, b[} \otimes E_{[c,\, b[} + K_{[a,\, b[} \otimes E_{[a,\, b[} \ ,\\
\tilde \Delta (F_{[a,\, b[})&=1\otimes F_{[a, \, b[} - \sum_{a<c<b} \upsilon^{-1/2}\, (\upsilon-\upsilon^{-1})\, F_{[c,\, b[}\otimes F_{[a,\, c[}\, K_{[c,\, b[}^{-1}  + F_{[a,\, b[}\otimes K_{[a,\, b[}^{-1}\ .
\end{align}
Here the sums on the right-hand-side run over all possible rational values $c\in [a, b[$.
\end{theorem}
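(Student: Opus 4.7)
The plan is to combine the general Drinfeld double formalism with the presentation of $\Cbf_\infty$ from Theorem~\ref{thm:Cinfty}. By construction, the reduced Drinfeld double $\DCbf_\infty$ admits a triangular decomposition
\begin{align}
\Cbf_\infty^+\otimes_{\widetilde\Q}\Kbf_\infty^0\otimes_{\widetilde\Q}\Cbf_\infty^-\xrightarrow{\mathsf{mult}}\DCbf_\infty
\end{align}
as a $\widetilde \Q$-vector space, where $\Kbf_\infty^0$ is the group algebra generated by the $K_I^{\pm 1}$'s. The relations internal to $\Cbf_\infty^+$ (resp.\ $\Cbf_\infty^-$) are precisely the join and nest relations for the $E_J$'s (resp.\ $F_J$'s), obtained from Theorem~\ref{thm:Cinfty} by transporting them through the isomorphism $\Cbf_\infty^-\simeq \Cbf_\infty$ (the sign $F_J=-\upsilon^{1/2}1_{\Scal_J}^-$ in the analogue of \eqref{eq:E} explains the swap between $\upsilon^{1/2}$ and $\upsilon^{-1/2}$ in the join relations for $F$). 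The $K$-commutation relations are immediate consequences of \eqref{eq:Hallnewrelations}.

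The heart of the argument is the derivation of the cross relations \eqref{eq:commutatorEF-1}--\eqref{eq:commutatorEF-2}. Here we would apply the Drinfeld double identity
\begin{align}
\sum_{i,j}\, a_i^{(1)\,-}\,b_j^{(2)\,+}\, \bigl(a_i^{(2)},b_j^{(1)}\bigr)_G=\sum_{i,j}\, b_j^{(1)\,+}\,a_i^{(2)\,-}\,\bigl(a_i^{(1)},b_j^{(2)}\bigr)_G
\end{align}
to the pair $a=F_J$, $b=E_{J'}$, using the explicit coproducts \eqref{eq:coproductCinfty} and the pairing \eqref{eq:GreenpairingCinfty}. Since the Green pairing of two $E_L\,K_I$'s contains a factor $\delta_{L,L'}$, only those terms in $\tilde\Delta(E_{J'})$ and $\tilde\Delta(F_J)$ which involve intervals appearing in both sides survive. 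When $J\cap J'=\emptyset$ no such intervals exist and both sides coincide trivially, yielding \eqref{eq:commutatorEF-1}. When $J=J'$, a short computation using $(E_J,F_J\,K_{-J})_G=-1/(\upsilon-\upsilon^{-1})$ (after reducing by $K_J^+K_{-J}^-=1$) produces \eqref{eq:commutatorEF-2} exactly. Cross relations for intervals that overlap but do not coincide, e.g.\ when $J_1\subsetneq J_2$, are not listed as primary relations: they follow from \eqref{eq:commutatorEF-1}, \eqref{eq:commutatorEF-2} and the join relations via the Leibniz-type expansion $[E_{J_1\cup J_2},F_{J'}]=\upsilon^{1/2}[E_{J_1}E_{J_2},F_{J'}]-\upsilon^{-1/2}[E_{J_2}E_{J_1},F_{J'}]$.

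The final step is to check that no further relations are hidden in $\DCbf_\infty$. Let $\Acal$ denote the algebra presented in the statement; we have a surjection $\phi\colon\Acal\twoheadrightarrow\DCbf_\infty$ by the previous two paragraphs. To see $\phi$ is injective, we use the triangular decomposition of the target together with the analogous triangular decomposition of the source: the relations listed allow one to rewrite any element of $\Acal$ as a finite sum of ordered monomials in the $E_J$'s, $K_I^{\pm 1}$'s and $F_J$'s, and the restriction of $\phi$ to the positive (resp.\ negative) subalgebra is an isomorphism onto $\Cbf_\infty^+$ (resp.\ $\Cbf_\infty^-$) by Theorem~\ref{thm:Cinfty}. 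Alternatively, one can identify both $\Acal$ and $\DCbf_\infty$ as the direct limit $\lim_{\to n}\Ubf_\upsilon(\slfrakhat(n))$ under the embeddings $\Omega_{kn,n}$ (invoking Corollary~\ref{cor:DC_n}): on the $\DCbf_\infty$ side this is the isomorphism $\DCbf_\infty\simeq\lim_{\to}\DCbf_n$ (which holds because Drinfeld doubles are compatible with direct limits of Hopf algebras whose pairings are compatible), and on the $\Acal$ side the subalgebras $\Acal_n\subset\Acal$ generated by the atomic intervals $[(i-1)/n,i/n[$ are each isomorphic to $\Ubf_\upsilon(\slfrakhat(n))$ by the standard Drinfeld-Jimbo presentation.

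The main obstacle is the compatibility of the reduced Drinfeld double construction with the direct limit: one must check that the $\Omega_{kn,n}$ respect the Hopf pairings up to the appropriate normalization so that the cross relations pass to the limit without creating or destroying relations. Once this is verified, the coproduct formulas in the statement follow directly by taking the limit of the well-known coproduct formulas in $\Ubf_\upsilon(\slfrakhat(n))$, and the topological Hopf algebra structure is automatic from the completions \eqref{eq:Hallcompletion-infinity} and the coassociativity of $\tilde\Delta$ on $\Cbf_\infty$.
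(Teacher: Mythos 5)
Your proposal is correct and follows essentially the same route as the paper: one checks by direct computation (using the explicit coproduct and Green pairing) that the listed relations hold in $\DCbf_\infty$, giving a surjection from the presented algebra; injectivity on the positive and negative halves is Theorem~\ref{thm:Cinfty}; and the remaining point is that the Drinfeld double cross relations for pairs of generators are consequences of \eqref{eq:commutatorEF-1}, \eqref{eq:commutatorEF-2} and the join relations, which combined with the triangular decomposition yields the isomorphism. Your secondary suggestion via direct limits of the $\Ubf_\upsilon(\slfrakhat(n))$ is not the paper's argument and, as you note, would require separately verifying compatibility of the reduced Drinfeld doubles with the maps $\Omega_{kn,n}$, which are only algebra (not coalgebra) morphisms — so the main line of your proof is the right one to keep.
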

\begin{proof}
Denote temporarily by $\Acal$ the algebra generated by $E_J, F_J, K_{J'}^{\pm 1}$ modulo the Drinfeld-Jimbo, join and nest relations. A direct computation shows that the assignment
\begin{align}
E_J\mapsto \upsilon^{1/2}\, 1_{\Scal_J}^+\ , \quad  F_J \mapsto -\upsilon^{1/2}\, 1_{\Scal_J}^-  \ , \quad  K_J\mapsto K_{J}^+ 
\end{align}
induces a surjective morphism $\Psi\colon \Acal \to \DCbf_{\infty}$.  By Theorem~\ref{thm:Cinfty}, the restriction of $\Psi$ to the subalgebras generated respectively by $\{E_J\}_J$ and by $\{F_J\}_J$ is injective. Thus we only need to check that the defining relations of the Drinfeld double follow from the Drinfeld-Jimbo, join and nest relations. In fact, it is enough to check the Drinfeld double relations for two elements $u$ and $v$ belonging to a family of generators, such as the $E_J$. It is not hard to see that these relations follow from \eqref{eq:commutatorEF-1} and \eqref{eq:commutatorEF-2} together with the join relations. 
\end{proof}
\begin{definition}
We call the Hopf algebra $\DCbf_{\infty}$ the \textit{quantum group $\Ubf_\upsilon(\slfrak(S^1_\Q))$}.
\end{definition}

\subsubsection{Completion of the Hecke algebra at the stacky point }\label{sec:completion}

By Proposition \ref{prop:trivialcenter}, one of the discrepancies between $\Hbf_{n, p_n}^{\mathsf{tw}, \tor}$, for a fixed positive integer $n\geq 2$, and $\Hbf_{\infty, p_\infty}^{\mathsf{tw}, \tor}$ is that the former has a nontrivial center, while the center of the latter is trivial. As seen in Corollary \ref{cor:DC_n}, the reduced Drinfeld double of $\Cbf_n$ realizes geometrically $\Ubf_\upsilon(\slfrakhat(n))$, while by Corollary \ref{cor:heckealgebrastackypoint-n} $\Ubf_\upsilon(\glfrakhat(n))$ is given by the reduced Drinfeld double of $\Hbf_{n, p_n}^{\mathsf{tw}, \tor}$: the center of $\Hbf_{n, p_n}^{\mathsf{tw}, \tor}$ forms the positive part of the quantum Heisenberg algebra of $\Ubf_\upsilon(\glfrakhat(n))$. In the limit $n \to \infty$ this quantum Heisenberg algebra still exists, but it does not lie in the center anymore.

As an attempt to recover a center, we introduce a metric completion of $\Hbf_{\infty, p_\infty}^{\mathsf{tw}, \tor}$. First, we define on $\Hbf_{\infty, p_\infty}^{\tor}$ a metric using the collection of subalgebras $\Hbf_{n, p_n}^{\tor}$. To unburden the notation, we will drop the second index $p_n, p_{\infty}$. Observe first that
$\Hbf_{n}^{\tor} \cdot \Hbf_{m}^{\tor} \subset \Hbf_{\mathsf{l.c.m.}(m,n)}^{\tor} \subset \Hbf_{mn, p_{mn}}^{\tor}$. 
\begin{definition}
We call \textit{valuation} of $x\in \Hbf_{\infty}^{\tor}$ the positive integer
\begin{align}
\mathsf{val}(x)\coloneqq\min\{n\, \vert \, x \in \Hbf_{n}^{\tor}\}\ .
\end{align}
\end{definition}
The function $\mathsf{val}\colon \Hbf_{\infty}^{\tor}\to \Z_{>0}$ is \textit{submultiplicative}, i.e., 
\begin{align}
\mathsf{val}(xy)\leq \mathsf{l.c.m.}\big(\mathsf{val}(x), \mathsf{val}(y)\big)\leq \mathsf{val}(x)\mathsf{val}(y)\ .
\end{align}
Let $\rho\in ]0,1[$. Define the metric on each $\Hbf_{\infty}^{\tor}[f]$, for $f\in \N_0^{S^1_\Q}$, by
\begin{equation}
d(x,y)\coloneqq\rho^{\mathsf{val}(x-y)}\ ,
\end{equation}
and denote by $\underline \Hbf_{\infty}^{\tor}[f]$ its completion. We set 
\begin{align}
\underline \Hbf_{\infty}^{\tor}\coloneqq\bigoplus_{f\in \N_0^{S^1_\Q}}\, \underline \Hbf_{\infty}^{\tor}[f]\ .
\end{align}
\begin{example}
For any $f\in \N_0^{S^1_\Q}$, the element $1_f\coloneqq\sum_\Mcal\, 1_\Mcal$, with the sum ranging over all objects in $\Tor_{}(X_\infty)$ of dimension $f$, belongs to $\underline \Hbf_{\infty}^{\tor}$.
\end{example}
\begin{proposition}\label{prop:completion}
The algebra structure on $\Hbf_{\infty}^{\tor}$ extends uniquely to an algebra structure on $\underline \Hbf_{\infty}^{\tor}$.
\end{proposition}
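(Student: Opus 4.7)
The strategy is to show that multiplication on $\Hbf_\infty^{\tor}$ is jointly continuous in the metric topology, and then to extend uniquely to $\underline \Hbf_\infty^{\tor}$ via the universal property of metric completions. Since the Hall product respects the grading by $\N_0^{S^1_\Q}$, it suffices to work component by component: fix $f,g\in \N_0^{S^1_\Q}$ and focus on the restriction $m_{f,g}\colon \Hbf_\infty^{\tor}[f]\times \Hbf_\infty^{\tor}[g]\to \Hbf_\infty^{\tor}[f+g]$.

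The core estimate needed is a lower bound on $\mathsf{val}(xy)$, to complement the submultiplicativity upper bound. I plan to derive it by combining the graded structure with finiteness: for any fixed $\Rcal\in \Tor(X_\infty)$ of class $(0,f+g)$, Corollary~\ref{cor:finiteness-infty-one} guarantees that the set of subsheaves $\Mcal_1\hookrightarrow \Rcal$ of class $(0,f)$ is finite, so the coefficient of $1_\Rcal$ in a Hall product $x\cdot y$ is a polynomial expression in only finitely many coefficients of $x$ and $y$. From this I will argue that for Cauchy sequences $(x_k),(y_k)$ the sequence $(x_k y_k)$ is Cauchy, via the bilinear identity
\begin{align*}
x_k y_k - x_\ell y_\ell \;=\; (x_k-x_\ell)\,y_k + x_\ell\,(y_k - y_\ell),
\end{align*}
which reduces the problem to showing that $\mathsf{val}(a\cdot b)\to \infty$ whenever $b$ is fixed and $\mathsf{val}(a)\to \infty$.

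Once joint continuity is established, one defines $\underline m\colon \underline \Hbf_\infty^{\tor}\times \underline \Hbf_\infty^{\tor}\to \underline \Hbf_\infty^{\tor}$ as the Cauchy-sequence limit of products of representatives; independence of representative follows from continuity, and associativity and distributivity pass to the closure since they are closed conditions. Uniqueness follows from density: any continuous algebra structure on $\underline \Hbf_\infty^{\tor}$ restricting to the original one on $\Hbf_\infty^{\tor}$ must agree with $\underline m$ on the dense subset $\Hbf_\infty^{\tor}\times \Hbf_\infty^{\tor}$, hence on all of $\underline \Hbf_\infty^{\tor}\times \underline \Hbf_\infty^{\tor}$ by joint continuity.

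The hardest step will be extracting the lower bound on $\mathsf{val}(xy)$. Submultiplicativity alone gives only $\mathsf{val}(xy)\leq \mathsf{l.c.m.}(\mathsf{val}(x),\mathsf{val}(y))$, which does not a priori prevent the product from collapsing into a much smaller $\Hbf_N^{\tor}$. The finiteness provided by Corollary~\ref{cor:finiteness-infty-one} is what blocks such collapse asymptotically: as $\mathsf{val}(a)$ grows, only finitely many subsheaf-quotient configurations contribute to any given coefficient of $a\cdot b$, and the parabolic resolutions of the $\Rcal$'s they produce cannot be coarsened indefinitely while remaining compatible with those of the factors.
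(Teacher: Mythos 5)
Your skeleton is the right one and coincides with the paper's: everything reduces to a uniform lower bound on the valuation of a product, i.e.\ to showing that for each $\Mcal\in\Tor_{p_\infty}(X_\infty)$ of dimension $f_1+f_2$ there is a level $\ell$, depending only on $f_1,f_2$ and the level of $\Mcal$, such that $1_\Mcal$ can occur in $1_{\Mcal_1}\cdot 1_{\Mcal_2}$ only if both $\Mcal_i$ have representatives in $\Tor_{p_\ell}(X_\ell)$. But this is exactly the step you leave unproven: your closing paragraph (``the parabolic resolutions \dots cannot be coarsened indefinitely'') restates the desired conclusion rather than arguing for it. Moreover the tool you invoke, Corollary~\ref{cor:finiteness-infty-one}, is not the right one: finiteness of the \emph{number} of subsheaves $\Ncal\subseteq\Mcal$ of class $(0,f_2)$ shows each coefficient of $x\cdot y$ is a finite sum, but gives no control on the \emph{levels} at which those $\Ncal$ and the quotients $\Mcal/\Ncal$ live, nor any uniformity as $\Mcal$ ranges over the objects of a fixed class in $\Tor_{p_N}(X_N)$ --- and that uniformity is what uniform continuity requires. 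The statement you actually need is the level bound of Lemma~\ref{lem:finiteness-infty-one}, suitably adapted to cover quotients as well as subobjects. Your reduction to ``$\mathsf{val}(a\cdot b)\to\infty$ for $b$ fixed'' has the same defect: in the bilinear identity the second factor $y_k$ varies with $k$, so the estimate must be uniform in the second argument.

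The paper closes this gap with a concrete mechanism. Given $\Mcal$ with a representative in $\Tor_{p_n}(X_n)$, take $\ell$ to be the smallest multiple of $n$ such that $f_1$ and $f_2$ are constant on every interval $[p/\ell,(p+1)/\ell[$. If $\Mcal_1$ has no representative at level $\ell$, its decomposition into interval modules contains adjacent summands $\Scal_{[\alpha,\beta[}$ and $\Scal_{[\beta,\gamma[}$ with a break point $\beta\notin\frac{1}{\ell}\Z$, which produces a nonzero morphism from $\Mcal_1$ to an interval simple at a level not dividing $\ell$; dually, if $\Mcal_2$ fails to live at level $\ell$ one gets a nonzero morphism from such a simple into $\Mcal_2$. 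Either morphism survives in any extension $0\to\Mcal_2\to\Mcal'\to\Mcal_1\to 0$ (compose with the surjection onto $\Mcal_1$, respectively the inclusion of $\Mcal_2$), forcing $\Mcal'\notin\Tor_{p_\ell}(X_\ell)$ and hence $\Mcal'\not\simeq\Mcal$. This Hom-theoretic detection of the break point, inherited by extensions, is the missing idea; without it (or an equivalent appeal to the local-constancy argument proving Lemma~\ref{lem:finiteness-infty-one}) the continuity of the multiplication is not established, and the rest of your argument --- extension to the completion and uniqueness by density, which is routine and matches the paper --- has nothing to stand on.
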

\begin{proof}
Let us fix $f_1,f_2 \in \N_0^{\Q/\Z}$ and set $f=f_1+f_2$. We need to prove that the multiplication map  $\Hbf_{\infty}^{\tor}[f_1] \otimes \Hbf_{\infty}^{\tor}[f_2] \to \Hbf_{\infty}^{\tor}[f]$ is uniformly continuous. In concrete terms, this means that we need to bound from below the valuation of a product in terms of the valuation of each term. For this, it is enough to show that for any object $\Mcal$ of dimension $f$, there exists $\ell \in \Z_{>0}$ such that if $1_\Mcal$ appears in a product $1_{\Mcal_1} \cdot 1_{\Mcal_2}$ with $1_{\Mcal_i} \in \Hbf_{\infty}^{\tor}[f_i]$, then $M_i \in \Hbf_{\ell}^{\tor}[f_i]$ for $i=1,2$. Let us assume that a representative of $\Mcal$ is a torsion sheaf on $X_n$ for some positive integer $n$, i.e., let us assume that $\Mcal \in \Tor_{p_n}(X_n)$. Define $\ell$ as the smallest integer in $n\,\Z$ for which $f_1, f_2$ are locally constant on every interval of the form $[p/\ell,(p+1)/\ell[$. Let $\Mcal_1,\Mcal_2$ be objects of $\Tor_{p_\infty}(X_\infty)$ of dimension $f_1,f_2$. If $\Mcal_1$ has not a representative in $\Tor_{p_\ell}(X_\ell)$, there exist indecomposable direct summands $\Scal_J, \Scal_K$ of $M_1$ such that $J=[a,b[, K=[b,c[$ with $b=p/q \notin (1/\ell)\,\Z$. This means that $\Mcal_1$ admits a nonzero morphism $\Mcal_1\to \pi_{\infty, q}^\ast\big({}_q\Scal_{q\{1/q+a\}}\big)$. Likewise, if $\Mcal_2 \notin \Tor_{p_\ell}(X_\ell)$ then there exist indecomposable direct summands $\Scal_{J'}, \Scal_{K'}$ of $\Mcal_2$ such that $J'=[a',b'[, K'=[b',c'[$ with $b=p'/q' \notin (1/\ell)\,\Z$, and this means that there exists a nonzero morphism $\pi_{\infty, q}^\ast\big({}_{q'}\Scal_{p'}\big)\to \Mcal_2$. If either $\Mcal_1$ or $\Mcal_2$ do not belong to $\Tor_{p_\ell}(X_\ell)$, then for any extension
\begin{align}
0\to \Mcal_2\to \Mcal'\to \Mcal_1\to 0\ ,
\end{align}
we either have $\Hom(\Mcal', \pi_{\infty, q}^\ast\big({}_q\Scal_{q\{1/q+a\}}) \neq 0$ or $\Hom(\pi_{\infty, q}^\ast\big({}_{q'}\Scal_{p'}\big) ,\Mcal') \neq 0$. In both cases we conclude that $\Mcal'$ does not belong to $\Tor_{p_\ell}(X_\ell)$, and thus \textit{a fortiori} not to $\Tor_{p_n}(X_n)$. In particular, we may not have $\Mcal \simeq \Mcal'$ and the assertion follows.
\end{proof}

\begin{lemma}
The composition subalgebra $\Cbf_\infty$ is dense in $\underline \Hbf_{\infty}^{\tor}$.
\end{lemma}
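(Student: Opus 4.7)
Since by construction $\Hbf_\infty^{\tor}$ is dense in its completion $\underline{\Hbf_\infty^{\tor}}$ with respect to the valuation metric, it is enough to show that $\Cbf_\infty$ is dense in $\Hbf_\infty^{\tor}$ itself. Every element $x \in \Hbf_\infty^{\tor}$ lies in $\Hbf_{n, p_n}^{\tor}$ for some $n$ via the embedding $\pi_{\infty, n}^\ast$. Applying the triangular decomposition $\Hbf_{n, p_n}^{\tor} \simeq \Zbf_n \otimes_{\widetilde{\Q}} \Cbf_n$ of Corollary~\ref{cor:heckealgebrastackypoint-n}, together with the continuity of the product in the valuation topology (which follows from Proposition~\ref{prop:completion} combined with the submultiplicativity of $\mathsf{val}$), the task reduces to showing that each Heisenberg generator ${}_n z_r$ lies in the metric closure of $\Cbf_\infty$.

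The approximation of ${}_n z_r$ should be carried out by constructing, for each integer $k \geq 1$, an element $y_k \in \Cbf_{kn} \subset \Cbf_\infty$ such that $y_k - {}_n z_r$, expanded in the basis $\{1_\Fcal\}$ of characteristic functions, is supported only on objects $\Fcal \in \Tor_{p_{kn}}(X_{kn})$ whose minimal level of definition tends to infinity with $k$. One first uses the recursion \eqref{eq:nz_r} to reduce the approximation of ${}_n z_r$ to that of the elements ${}_n c_r$, then uses formula \eqref{eq:nc_r} to express each ${}_n c_r$ as a finite sum of characteristic functions of sheaves in $\Tor_{p_n}(X_n)$ with square-free socle; each such characteristic function $1_\Fcal$ is in turn reconstructed, modulo higher-level terms, by a Hall product of generators $E_J \in \Cbf_{kn}$ corresponding to the composition factors of $\Fcal$ viewed now as simples in the refined category $\Tor_{p_{kn}}(X_{kn})$ rather than in $\Tor_{p_n}(X_n)$.

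The main obstacle will be controlling the error term. The Hall product at the refined level produces $1_\Fcal$ together with characteristic functions of nontrivial extensions, some of which may still admit representatives in $\Tor_{p_m}(X_m)$ for $m<kn$. One must verify that, by a judicious choice of the ordering of the Hall product and by taking suitable linear combinations of such products, these low-level spurious contributions can be cancelled, leaving only terms genuinely of level $kn$ and therefore of valuation $kn$. This amounts to a delicate Hall-algebra computation at each level $kn$, relying on the interplay between the Ringel-Schiffmann identification $\Hbf_{kn, p_{kn}}^{\tor} \simeq \Ubf_\upsilon^+(\widehat{\glfrak}(kn))$ \cite{art:schiffmann2002}, the pullback formulas \eqref{eq:simple-independent}, and the fact that every non-uniserial indecomposable in $\Tor_{p_n}(X_n)$ admits, after pullback to $\Tor_{p_{kn}}(X_{kn})$, many more "refinements" by simples available in $\Cbf_\infty$.
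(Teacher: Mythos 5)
Your opening reductions are sound in spirit: it does suffice to approximate, for each $n$, a family of elements which together with the simples generates $\Hbf_{n, p_n}^{\tor}$, and the submultiplicativity of $\mathsf{val}$ does make the multiplication uniformly continuous. But the heart of the proof --- actually exhibiting the approximating sequences and bounding the valuation of the error --- is precisely the part you defer to ``a delicate Hall-algebra computation'', and the route you choose makes that computation genuinely problematic. You propose to rebuild each $1_{\Fcal}$ occurring in ${}_n c_r$ as a Hall product of refined simples $E_J$ with $J$ of length $1/(kn)$, hoping the spurious terms either have valuation tending to infinity or cancel. They do not automatically: such a product is supported on \emph{all} iterated extensions of the chosen composition series, and among these are objects that recombine into sheaves already defined at level $n$ (or at low levels) other than $\Fcal$ --- for instance other objects of $\Tor_{p_n}(X_n)$ of the same class --- so the error is not \emph{a priori} of high valuation, and no cancellation scheme is exhibited. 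Since ``every $1_{\Fcal}$ lies in $\overline{\Cbf_\infty}$ up to high-valuation corrections'' is essentially a restatement of the lemma, the proposal at this point comes close to assuming what it must prove.

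The paper's proof sidesteps this by quoting a sharper generation statement: by \cite[Lemma~4.3]{art:schiffmann2004}, $\Hbf_{n}^{\tor}$ is generated by the simples $1_{{}_n\Scal_i}$ (which are already the $E_J\in\Cbf_\infty$) together with the single chain of elements $1_{{}_n\Scal_n^{(kn)}}$, $k\geq 1$. Writing $x_k$ for their common image in $\Hbf_\infty^{\tor}$, one proves $x_k\in\overline{\Cbf_\infty}$ by induction on $k$ with explicit limits: $x_1=\lim_{n}E_{[0,\frac1n[}\,E_{[\frac1n,1[}$, the only error term being the split extension $\Scal_{[0,\frac1n[}\oplus\Scal_{[\frac1n,1[}$, whose valuation is $n$; and $x_k=\lim_n \upsilon^{1/2}E_{[0,\frac1n[}\cdot 1_{{}_n\Scal_n^{(kn-1)}}$, where the induction hypothesis gives $\underline\Hbf_{\infty}^{\tor}[f]\subset\overline{\Cbf_\infty}$ for all $f<k\delta$ and hence covers the second factor. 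If you want to keep your route through the ${}_nz_r$'s, you must either supply an analogous explicit induction or prove the cancellation you invoke; as written, the proposal does not constitute a proof.
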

\begin{proof}
Let $n$ be a positive integer. By \cite[Lemma~4.3]{art:schiffmann2004}, $\Hbf_{n}^{\tor}$ is generated by $1_{{}_n \Scal_i}$, for $1\leq i\leq n$, and the elements $1_{{}_n \Scal_n^{(n)}},  1_{{}_n \Scal_n^{(2n)}}, \ldots$. Observe that in $\Hbf_{\infty}^{\tor}$ we have 
\begin{align}
\Omega_{\infty, n}\big(1_{{}_n \Scal_n^{(kn)}}\big)=\Omega_{\infty, n\ell}\big(1_{{}_{n\ell} \Scal_{n\ell}^{(kn\ell)}}\big)\ ,
\end{align}
for any pair of positive integers $n, \ell$. Let us simply denote by $x_k$ this element. The lemma will be proved once we show that $x_k$ belongs to the closure $\overline{\Cbf_\infty}$ of $\Cbf_\infty$ in $\underline \Hbf_{\infty}^{\tor}$ for any positive integer $k$. We will prove this by induction on $k$ and by some explicit computations. For $k=1$, we have
\begin{align}
x_1=\lim_{n \to \infty}\, E_{[0,\frac{1}{n}[}\,E_{[\frac{1}{n},1[}\ .
\end{align}
Assume that for any $i<k$, we have $x_i\in \overline{\Cbf_\infty}$. Then $\underline \Hbf_{\infty}^{\tor}[f]\subset \overline{\Cbf_\infty}$ for all $f<k\delta$. On the other hand, from 
\begin{align}
x_k=\lim_{n \to \infty}  \upsilon^{1/2}\, E_{[0,\frac{1}{n}[}\cdot 1_{{}_n\Scal_n^{(kn-1)}}\ ,
\end{align}
we deduce that $x_k\in \overline{\Cbf_\infty}$ as wanted.
\end{proof}
We denote by $\underline \Hbf_{\infty}^{\mathsf{tw}, \tor}$ the extended version of the completion $\underline \Hbf_{\infty}^{\tor}$.

Let $r\in \Z_{>0}$ and consider the sequence $\left\{\Omega_{\infty, n\ell}\big({}_{n\ell} c_r\big)\right\}_{\ell}$ in $\Hbf_{\infty}^{\mathsf{tw}, \tor}$. It follows from the definition of the ${}_m c_r$'s (see Equation \eqref{eq:nc_r}) that such a sequence is Cauchy since the functor $\pi_{m,k}^\ast$ preserves the square freeness of the socle of a torsion sheaf for any $m, k \in \Z_{>0}$, $k\vert m$. Thus, there exists the limit
\begin{align}
c_r\coloneqq\lim_{\ell\to +\infty}\, \Omega_{\infty, n\ell}\big({}_{n\ell} c_r\big)\in  \underline \Hbf_{\infty}^{\mathsf{tw}, \tor}[r\, \delta]\ .
\end{align}
Since the elements ${}_n c_1, {}_n c_2, \ldots$ commute and are algebraically independent for any positive integer $n$, the same holds for their respective limits $c_1, c_2, \ldots$. Thanks to Formula \eqref{eq:nz_r}, we can define for any $r\in \Z_{>0}$ the element $z_r\in  \underline \Hbf_{\infty}^{\mathsf{tw}, \tor}[r\, \delta]$. Also, the elements $z_1, z_2, \ldots$ commute and are algebraically independent.
\begin{proposition}\label{prop:center}
The center of $\underline \Hbf_{\infty}^{\mathsf{tw}, \tor}$ is $\widetilde \Q[z_1, z_2, \ldots]$.
\end{proposition}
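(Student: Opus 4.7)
The proof proceeds by establishing both inclusions. For the inclusion $\widetilde \Q[z_1, z_2, \ldots] \subseteq Z(\underline \Hbf_\infty^{\mathsf{tw}, \tor})$, I would verify that each generator $z_r$ is central. Given $y \in \Hbf_\infty^{\mathsf{tw}, \tor}$, and using $\Hbf_\infty^{\mathsf{tw}, \tor} = \bigcup_m \Omega_{\infty, m}(\Hbf_m^{\mathsf{tw}, \tor})$, write $y = \Omega_{\infty, m}(y_m)$ for some $m$; for every $\ell$ with $m \mid n\ell$, both $\Omega_{\infty, n\ell}({}_{n\ell}z_r)$ and $y$ lie in the subalgebra $\Omega_{\infty, n\ell}(\Hbf_{n\ell}^{\mathsf{tw}, \tor})$, where ${}_{n\ell}z_r$ is central by Corollary~\ref{cor:heckealgebrastackypoint-n}. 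Letting $\ell \to \infty$ and invoking continuity of multiplication in the metric completion (Proposition~\ref{prop:completion}) yields $[z_r, y] = 0$; density of $\Hbf_\infty^{\mathsf{tw}, \tor}$ in $\underline \Hbf_\infty^{\mathsf{tw}, \tor}$ then extends centrality to the whole completion.

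For the reverse inclusion, the plan begins with the observation that commutation with all $\kbf_\beta$ forces any central $x$ to decompose as $x = \sum_r x_r$ with $x_r \in \underline \Hbf_\infty^{\tor}[r\delta]$, because the radical of the symmetric Euler form on $\Ksf^{\tor}(\Tor_{p_\infty}(X_\infty))$ equals $\Z \delta$ (checked directly from $\langle (0,f),(0,g)\rangle = \sum_x f_-(x)(g_-(x)-g_+(x))$). It remains, for each $r$, to show that a central $x_r \in \underline \Hbf_\infty^{\tor}[r\delta]$ is a polynomial in $z_1, \ldots, z_r$ of weighted degree $r$. The strategy is to exploit the finite-level decomposition $\Hbf_{n, p_n}^{\mathsf{tw}, \tor} \simeq \Zbf_n \otimes_{\widetilde \Q} \Cbf_n$ with $\Zbf_n = \widetilde \Q[{}_n z_1, {}_n z_2, \ldots]$ from Section~\ref{sec:Heckealgebrastacky}, together with the density of $\Cbf_\infty$ in $\underline \Hbf_\infty^{\mathsf{tw}, \tor}$.

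The hard part will be that the restriction maps $\Omega^{n', n}$ are only coalgebra (not algebra) homomorphisms --- an extension class in $\Tor_{p_n}(X_n)$ may have composition factors in $\Tor_{p_{n'}}(X_{n'})$ that are not pullbacks from $X_n$ --- so one cannot simply project $x$ to $\Hbf_n^{\mathsf{tw}, \tor}$ via $\Omega^{\infty, n}$ and invoke centrality at finite level. The key technical input around this obstacle is the compatibility identity $\Omega^{n', n}({}_{n'}z_r) = {}_n z_r$ for all $n \mid n'$ and $r \geq 1$. I would prove it by first establishing the analogous identity $\Omega^{n', n}({}_{n'}c_r) = {}_n c_r$, which reduces to three facts about $\pi_{n', n}^\ast \colon \Tor_{p_n}(X_n) \hookrightarrow \Tor_{p_{n'}}(X_{n'})$: (i) $\Phi_{n', n}(r\delta_n) = r\delta_{n'}$; (ii) full faithfulness of $\pi_{n', n}^\ast$, giving $\dim \End_{\Tor_{p_{n'}}}(\pi_{n', n}^\ast \Fcal) = \dim \End_{\Tor_{p_n}}(\Fcal)$ and $|\Aut_{\Tor_{p_{n'}}}(\pi_{n', n}^\ast \Fcal)| = |\Aut_{\Tor_{p_n}}(\Fcal)|$; and (iii) an explicit parabolic-sheaf computation showing that $\mathsf{socle}(\pi_{n', n}^\ast \Fcal) \simeq \bigoplus_j {}_{n'}\Scal_{j n'/n}^{a_j}$ whenever $\mathsf{socle}(\Fcal) = \bigoplus_j {}_n \Scal_j^{a_j}$, so that square-freeness is preserved under pullback. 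The $z$-version then follows by induction on $r$ from the recursion \eqref{eq:nz_r} applied at both levels. Granted this compatibility, pairing $x_r$ against elements of $\Cbf_n \subset \Cbf_\infty$ via the Green pairing \eqref{eq:HallGreen} (using density of $\Cbf_\infty$) identifies $x_r$ with a coherent system of polynomials $(P_n({}_n z_1, \ldots, {}_n z_r))_n$; the compatibility $\Omega^{n', n}({}_{n'}z_i) = {}_n z_i$ then forces $P_n = P_{n'}$ for every $n \mid n'$, yielding a single $P \in \widetilde \Q[z_1, \ldots, z_r]$ with $x_r = P(z_1, \ldots, z_r)$.
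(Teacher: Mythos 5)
Your treatment of the inclusion $\widetilde\Q[z_1,z_2,\ldots]\subseteq Z(\underline\Hbf_\infty^{\mathsf{tw},\tor})$ coincides with the paper's. The reverse inclusion is where you diverge, and the final step has a genuine gap. Knowing the Green pairings $(x_r,c)_G$ for $c\in\Cbf_n[r\delta_n]$ only determines the restriction ${}_nx_r$ of $x_r$ to objects of $\Tor_{p_n}(X_n)$ modulo the orthogonal complement of $\Cbf_n[r\delta_n]$ inside $\Hbf_{n,p_n}^{\tor}[r\delta_n]$; since $\Hbf_{n,p_n}^{\tor}\simeq\Zbf_n\otimes_{\widetilde\Q}\Cbf_n$ is strictly larger than $\Cbf_n$ and the pairing is nondegenerate on it, that complement is nonzero. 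More fundamentally, nothing in your argument forces ${}_nx_r$ to lie in $\Zbf_n=\widetilde\Q[{}_nz_1,{}_nz_2,\ldots]$ rather than in the much larger space $(\Zbf_n\otimes\Cbf_n)[r\delta_n]$, so there is no polynomial $P_n$ to extract, and the compatibility $\Omega^{n',n}({}_{n'}z_i)={}_nz_i$ cannot manufacture one. The only way to place ${}_nx_r$ in $\Zbf_n$ is to show it is central in $\Hbf_{n,p_n}^{\mathsf{tw},\tor}$ and then quote Hubery's description of that center (Section~\ref{sec:Heckealgebrastacky}) --- which is exactly the step you declared off-limits.

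That step is in fact available, and it is the paper's mechanism. Although $\Omega^{\infty,n}$ is not an algebra map on arbitrary pairs, it is multiplicative in the one situation needed here: for $y\in\Hbf_{n,p_n}^{\mathsf{tw},\tor}$, the restriction of $\Omega_{\infty,n}(y)\cdot x$ (resp.\ $x\cdot\Omega_{\infty,n}(y)$) to objects of $\Tor_{p_n}(X_n)$ equals $y\cdot{}_nx$ (resp.\ ${}_nx\cdot y$). The reason is that any short exact sequence in $\Tor_{p_\infty}(X_\infty)$ two of whose three terms have representatives in $\Tor_{p_n}(X_n)$ comes entirely from $\Tor_{p_n}(X_n)$ (full faithfulness of $\pi_{\infty,n}^\ast$ together with \eqref{eq:Ext1different}); hence the only subobjects $\Ncal\subseteq\Rcal$ contributing to the product on a level-$n$ object $\Rcal$, when one factor is supported on level-$n$ objects, are themselves at level $n$. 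Centrality of $x$ therefore descends to centrality of ${}_nx_r$ at every finite level, and the conclusion follows from the compatibility of the restriction maps on the polynomial algebras $\widetilde\Q[{}_nz_\bullet]$. A secondary remark: your induction for $\Omega^{n',n}({}_{n'}z_r)={}_nz_r$ has the same unaddressed multiplicativity issue in miniature, since the recursion \eqref{eq:nz_r} involves the products ${}_{n'}z_\ell\cdot{}_{n'}c_{r-\ell}$; the needed identity does hold (any subobject of a pullback whose dimension vector is a multiple of $\delta_{n'}$ is itself a pullback with pullback quotient, because the arrows at the new vertices are isomorphisms on a pullback and hence on any subobject of constant dimension vector), but it must be proved, not read off from the recursion.
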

\begin{proof}
For any $x \in \Hbf_{\infty}^{\mathsf{tw}, \tor}$ such that $x$ has a representative $x_n\in  \Hbf_{n}^{\mathsf{tw}, \tor}$ for some positive integer $m$, we have
\begin{align}
z_r\cdot \Omega_{\infty, n}(x_n)- \Omega_{\infty, n}(x_n)\cdot z_r=\lim_{\ell\to +\infty}\, \Omega_{\infty, n\ell}\big({}_{n\ell} z_r\cdot \Omega_{n\ell, n}(x_n)-\Omega_{n\ell, n}(x_n)\cdot {}_{n\ell} z_r\big)=0\ .
\end{align}
 This shows that $z_1, z_2, \ldots$ belong to the center of $\underline \Hbf_{\infty}^{\mathsf{tw}, \tor}$. Conversely, let $z\in \underline \Hbf_{\infty}^{\mathsf{tw}, \tor}$ be a central element. We may write $z=\lim_{k\to +\infty}\, {}_k z$ with ${}_k z\in \Hbf_{n}^{\mathsf{tw}, \tor}$. We claim that for any element $y\in \Hbf_{k}^{\mathsf{tw}, \tor}$, we have $y\cdot {}_k z={}_k z\cdot y$. Since $\Tor_{p_k}(X_k)$ is a full abelian subcategory of $\Tor_{p_\infty}(X_\infty)$, any short exact sequence
\begin{align}
0\to \Mcal' \to \Mcal\to \Mcal''\to 0
\end{align}
of objects of $\Tor_{p_\infty}(X_\infty)$, in which two among $\Mcal, \Mcal', \Mcal''$ have a representative in $\Tor_{p_k}(X_k)$, comes from a short exact sequence in $\Tor_{p_k}(X_k)$. In particular, all three objects $\Mcal, \Mcal', \Mcal''$ belong to $\Tor_{p_k}(X_k)$. This implies that the restrictions of $\Omega_{\infty, k}(y)\cdot z$ and $z\cdot \Omega_{\infty, k}(y)$ to objects of $\Tor_{p_k}(X_k)$ are equal to $y\cdot {}_k z$ and ${}_k z\cdot y$ respectively. Hence from the equality $\Omega_{\infty, k}(y)\cdot z=z\cdot \Omega_{\infty, k}(y)$ we deduce that $y\cdot {}_k z={}_k z\cdot y$, therefore ${}_k z$ is a central element of $ \Hbf_{k}^{\mathsf{tw}, \tor}$, and so  ${}_k z\in \widetilde \Q[{}_k z_1, {}_k z_2, \ldots]$ (cf.\ Section~\ref{sec:Heckealgebrastacky}). Thus $z\in \widetilde \Q[z_1, z_2, \ldots]$ since the restriction maps $\widetilde \Q[z_1, z_2, \ldots]\to \widetilde \Q[{}_k z_1, {}_k z_2, \ldots]$ are isomorphisms for all $k$.
\end{proof}

The coproduct, being uniformly continuous, extends naturally to $\underline \Hbf_{\infty}^{\mathsf{tw}, \tor}$ to yield a map
\begin{align}
\tilde \Delta\colon \underline \Hbf_{\infty}^{\mathsf{tw}, \tor}[f]\to \prod_{f_1+f_2=f}\, \underline \Hbf_{\infty}^{\mathsf{tw}, \tor} [f_1]\widehat{\otimes} \underline \Hbf_{\infty}^{\mathsf{tw}, \tor}[f_2] \ .
\end{align}
Equipped with this coproduct, $\underline \Hbf_{\infty}^{\mathsf{tw}, \tor}$ is a topological bialgebra. In particular,
\begin{align}
\tilde \Delta(z_r)=z_r\otimes 1+\kbf_{(0, r\, \delta)}\otimes z_r \ .
\end{align}

\begin{remark}\label{rem:completiongreen}
The extension of the Green pairing to $\underline \Hbf_{\infty}^{\mathsf{tw}, \tor}$ is problematic. For instance, the sequence of elements $\{x_\ell\}_\ell$, where
\begin{align}
x_\ell\coloneqq\sum_{n\leq l}\, 1_{{}_n \Scal_n^{(n-1)}}\oplus 1_{{}_n \Scal_{n-1}^{(1)}} \ ,
\end{align}
is Cauchy, hence it converges to an element in $\underline \Hbf_{\infty}^{\mathsf{tw}, \tor}[\delta]$. However, 
\begin{align}
(x_\ell, x_\ell)_G=\sum_{n\leq l}\, (\upsilon-\upsilon^{-1})^2=\ell\, (\upsilon-\upsilon^{-1})^2
\end{align}
does not converge. The elements $1_f$ for $f\in \N_0^{S^1_\Q}$, $f \neq 0$, are likewise of infinite norm.
\end{remark}
At this point, we could define a positive part of $\Ubf_\upsilon(\glfrak(S^1_\Q))$ as $\widetilde \Q[z_1, z_2, \ldots]\otimes_{\widetilde \Q} \Cbf_\infty$. Because of the remark above, it is not clear to us how to get a geometric definition of the whole $\Ubf_\upsilon(\glfrak(S^1_\Q))$ by considering a reduced Drinfeld double of the latter algebra.

We end this section by describing possible canonical and crystal bases for $\underline \Hbf_{\infty}^{\tor}$. For any positive integer $n$ and $f \in \N_0^{S^1_\Q}$, let ${}_n\Xcal_f$ stand for the stack of nilpotent $\overline{\F_q}$-representations of $\Z/n\Z$ of dimension ${}_n\mathbf d_f$ (cf.\ Formula \eqref{eq:df} for the definition of ${}_n\mathbf d_f$). For any $\ell\geq 1$ there is a canonical open embedding ${}_n\Xcal_f \subset {}_{n\ell}\Xcal_f$ induced by the exact functor $\pi_{n\ell, n}^\ast\colon \Coh(X_n) \to \Coh(X_{n \ell})$. In concrete terms,  $\pi_{n\ell, n}^\ast$ identifies the stack of representation of $\Z/n\Z$ of dimension ${}_n \mathbf d_f$ with the open substack of representations of $\Z/n\ell\Z$ of dimension ${}_{n\ell} \mathbf d_f$ for which the edges $k \to k+1$ are isomorphisms whenever $k \neq 0 \bmod n$. We obtain in this way a presentation of the stack of torsion sheaves on $X_\infty$ supported at the stacky point $p_\infty$ as the direct limit 
\begin{align}
\Xcal_f=\lim_{\genfrac{}{}{0pt}{}{\to}{n}}\, {}_n \Xcal_f\ .
\end{align}
Now let $\Mcal$ be an object of $\Tor_{p_\infty}(X_\infty)$ of dimension $f$, and let $\Osf_\Mcal$ be the substack of $\Xcal_f$ parametrizing sheaves isomorphic to $\Mcal$. There is a unique perverse sheaf $\Ical \Ccal\big(\Osf_\Mcal\big) \in \Db_{\mathsf{cst}}\big(\Xcal_f)$\footnote{By definition, $\displaystyle \Db_{\mathsf{cst}}\big(\Xcal_f)=\lim_{\genfrac{}{}{0pt}{}{\leftarrow}{n}}\, \Db_{\mathsf{cst}}\big({}_n \Xcal_f)$.}  whose restriction to each ${}_n\Xcal_f$ is equal to $\Ical \Ccal\big({}_n\Osf_\Mcal,\overline{\Q_\ell})$, where  ${}_n\Osf_\Mcal\subset {}_n\Xcal_f$ is the substack parametrizing objects isomorphic to $\Mcal$ (which may well be empty). We define the \textit{canonical basis} of $\underline \Hbf_{\infty}^{\tor}$ as the collection of elements $\Bbf\coloneqq\{\bbf_\Mcal \, \vert \, \Mcal \in \Tor_{p_\infty}(X_\infty)\}$, where
\begin{align}
\bbf_\Mcal=\sum_\Ncal\, \sum_i\, \upsilon^{-2i}\, \dim \big(H^i\big(\Ical \Ccal\big(\Osf_\Mcal\big)\vert_{\Osf_\Ncal}\big)1_\Ncal\in \underline  \Hbf_{\infty}^{\tor} \ .
\end{align}
Here the sum runs over all $\Ncal \in \Tor_{p_\infty}(X_\infty)$ satisfying $\underline{\dim}(\Ncal)=\underline{\dim}(\Mcal)$. The following is immediate.
\begin{proposition} 
The set $\Bbf$ is a topological basis of $\underline \Hbf_{\infty}^{\tor}$, i.e., it spans a dense subspace of $\underline \Hbf_{\infty}^{\tor}$.
\end{proposition}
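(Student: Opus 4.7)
The plan is to combine three ingredients: (i) at each finite level $n$, the set $\{{}_n\bbf_\Mcal\mid\Mcal\in\Tor_{p_n}(X_n),\,\underline{\dim}_n(\Mcal)={}_n\mathbf{d}_f\}$ is a basis of $\Hbf_n^{\tor}[f]$, since the transition matrix to $\{1_\Mcal\}$ is upper triangular with respect to the orbit-degeneration order with units on the diagonal; (ii) the compatibility $\Omega^{\infty,n}(\bbf_\Mcal)={}_n\bbf_\Mcal$ when $\Mcal\in\Tor_{p_n}(X_n)$, and $\Omega^{\infty,n}(\bbf_\Mcal)=0$ otherwise; (iii) the continuous extension of $\Omega^{\infty,n}$ to the completion $\underline\Hbf_\infty^{\tor}$.

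First I would verify that each $\bbf_\Mcal$ indeed defines an element of $\underline\Hbf_\infty^{\tor}$: only finitely many $\Ncal\in\Tor_{p_\infty}(X_\infty)$ of a fixed dimension admit a representative in $\Tor_{p_k}(X_k)$ for any prescribed $k$, so the partial sum $\bbf_\Mcal^{(k)}$ of the defining series restricted to such $\Ncal$ lies in $\Hbf_k^{\tor}$, and the successive differences involve only objects of strictly larger minimal level, forming a Cauchy sequence in the valuation metric. Next I would establish (ii): the open embedding $j\colon{}_n\Xcal_f\hookrightarrow\Xcal_f$ satisfies $j^{*}\Ical\Ccal(\Osf_\Mcal)=\Ical\Ccal({}_n\Osf_\Mcal)$ whenever $\Osf_\Mcal$ meets ${}_n\Xcal_f$ (equivalently when $\Mcal$ has a representative in $\Tor_{p_n}(X_n)$), and vanishes otherwise since the complement of ${}_n\Xcal_f$ is closed in $\Xcal_f$; reading off coefficient functions yields (ii). Continuity of $\Omega^{\infty,n}$ then follows because any $a\in\Hbf_\infty^{\tor}$ with $\mathsf{val}(a)>n$ has no $1_\Ncal$-coefficient for $\Ncal\in\Tor_{p_n}(X_n)$ and hence $\Omega^{\infty,n}(a)=0$; thus $\Omega^{\infty,n}$ extends to a continuous map $\underline\Hbf_\infty^{\tor}\to\Hbf_n^{\tor}$ satisfying $d(x,\Omega^{\infty,n}(x))\leq\rho^{n}$ for every $x$.

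Density of the $\widetilde{\Q}$-span of $\Bbf$ then follows by a two-step approximation. Given $x\in\underline\Hbf_\infty^{\tor}$ and $\epsilon>0$, pick $m$ with $\rho^{m}<\epsilon/2$ and set $x_m:=\Omega^{\infty,m}(x)\in\Hbf_m^{\tor}$, so that $d(x,x_m)<\epsilon/2$. Using (i), express $x_m=\sum c_\Mcal\,{}_m\bbf_\Mcal$ as a finite sum, and set $z:=\sum c_\Mcal\,\bbf_\Mcal$, a finite element of the span of $\Bbf$. By (ii), $\Omega^{\infty,m}(z)=x_m$, whence $d(z,x_m)=d(z-\Omega^{\infty,m}(z),0)\leq\rho^{m}<\epsilon/2$, and the triangle inequality gives $d(x,z)<\epsilon$. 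The only delicate point is the continuous extension in (iii); ingredients (i) and (ii) follow immediately from the geometric setup described above.
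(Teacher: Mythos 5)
Your overall strategy — reducing to the finite-level canonical bases via the truncation maps $\Omega^{\infty,n}$ — is the natural one (the paper offers no argument beyond declaring the statement immediate), and your points (i) and (ii) are correct. The gap lies in how you handle the metric, and it occurs twice. First, $\Omega^{\infty,n}$ does \emph{not} extend continuously to $\underline{\Hbf}_{\infty}^{\tor}$: the assertion that $\mathsf{val}(a)>n$ forces $a$ to have no coefficient on objects of $\Tor_{p_n}(X_n)$ is false. Take $a=1_{\Ncal}+1_{\Ncal'}$ with $\Ncal\in\Tor_{p_n}(X_n)$ of minimal level exactly $n$ and $\Ncal'$ of minimal level a large prime $p\nmid n$; then $\mathsf{val}(a)=np$ is arbitrarily large, so $d(a,0)=\rho^{np}$ is arbitrarily small, yet $\Omega^{\infty,n}(a)=1_{\Ncal}\neq 0$. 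Hence $x_m:=\Omega^{\infty,m}(x)$ is simply not defined for a general $x$ in the completion. Second, even for $z$ in the span of $\Bbf$, the estimate $d(z,\Omega^{\infty,m}(z))\leq\rho^{m}$ is wrong for the same reason: $z-\Omega^{\infty,m}(z)$ is supported on objects whose minimal level does not divide $m$, and the smallest such level can be as small as $2$ or $3$ no matter how large $m$ is (e.g.\ $m$ a large prime). The correct bound is $\rho^{s(m)}$, where $s(m)$ is the smallest positive integer not dividing $m$. The same imprecision appears in your Cauchy argument for $\bbf_\Mcal$ itself: the objects appearing in $\bbf_\Mcal^{(k')}-\bbf_\Mcal^{(k)}$ have minimal level not dividing $k$, which need not be larger than $k$.

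Both defects are repairable, and the repair is the same: in this metric, ``small'' means ``valuation divisible by many integers,'' not ``valuation numerically large.'' Given $\epsilon=\rho^{N}$, first replace $x$ by some $x'\in\Hbf_{\infty}^{\tor}$ with $d(x,x')<\epsilon$ (this is the definition of the completion; no truncation map is needed), say $x'\in\Hbf_{m_0}^{\tor}$; then choose $m$ a common multiple of $m_0$ and of $1,2,\dots,N$, expand $x'=\sum c_\Mcal\,{}_m\bbf_\Mcal$ at level $m$ using (i), and set $z=\sum c_\Mcal\,\bbf_\Mcal$. Every object in the support of $z-x'=z-\Omega^{\infty,m}(z)$ has minimal level not dividing $m$, hence $>N$, so $\mathsf{val}(z-x')>N$ and $d(z,x')<\epsilon$. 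The same choice of levels (multiples of $\mathsf{l.c.m.}(1,\dots,N)$) shows that the net of partial sums defining $\bbf_\Mcal$ is Cauchy. With these corrections your argument is complete.
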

\begin{remark}
There is a similar extension to the setting of $\underline \Hbf_{\infty}^{\tor}$ of the theory of crystal bases, as expounded by Kashiwara-Saito, and based on the geometry of the cotangent stack. Indeed, $T^\ast \Xcal_f$ (or its \textit{underived} component) is equal to a direct limit of open finite type substacks $T^\ast {}_n \Xcal_f$, and we may define a Lagrangian substack $\Lambda_f$ as the direct limit of the Lusztig nilpotent Lagrangian varieties ${}_n\Lambda_f \subset T^\ast {}_n\Xcal_f$. Then $\Lambda_f$ is a pure-dimensional stack, locally of finite type, and the set of irreducible components of $\Lambda=\bigsqcup_f \Lambda_f$ is in bijection with $\Bbf$ and carries the structure of a crystal graph with operators $\tilde{e}_J,\tilde{f}_J$ labelled by rational intervals $J \subsetneq S^1_\Q$. 
\end{remark}

\subsubsection{Hecke algebra of the infinite root stack over a curve and $\Ubf_\upsilon(\glfrak(S^1_\Q))$}\label{sec:Heckealgebrainfinite}

Let $\Ubf_\infty^0$ be the subalgebra of $\Hbf_\infty^{\mathsf{tw}}$ generated by $\Omega_{\infty, 1}(\Cbf)$ and $\Cbf_\infty$. One can easily show that it is a topological Hopf algebra, hence there exists a reduced Drinfeld double $\DUbf_\infty^0$.

As before, we can introduce a completion $\underline \Ubf_\infty^0$ which is an associative algebra by the same arguments as in the proof of  Proposition~\ref{prop:completion}. Let $r\in \Z_{>0}$. First note that the ${}_n Z_r$'s belong to $\Ubf_\infty^0$ (cf.\ the proof of \cite[Proposition~6.3]{art:burbanschiffmann2013}). The sequence $\{\Omega_{\infty, n\ell}\big({}_{n\ell} Z_r\big)\}_\ell$  in $\Ubf_\infty^0$ is Cauchy, hence there exists a limit
\begin{align}
Z_r\coloneqq\lim_{\ell\to +\infty}\, \Omega_{\infty, n\ell}\big({}_{n\ell} Z_r\big)\in \underline \Ubf_\infty^0\ .
\end{align} 
Since the elements ${}_n Z_1, {}_n Z_2, \ldots$ commute and are algebraically independent for any positive integer $n$, the same holds for their respective limits $Z_1, Z_2, \ldots$. In addition, the dimension of $Z_r$ is $r\, \delta$. By using the same arguments as in the proof of Proposition~\ref{prop:center}, the elements $Z_1, Z_2, \ldots$ generate the center of $\underline \Ubf_\infty^0$. 
%

\begin{rem}
A possible candidate for being $\Ubf_\upsilon(\glfrak(S^1_\Q))$ is $\DUbf_\infty^0$, but it is not true that we can decompose it as a commuting tensor product of a Heisenberg algebra and $\Ubf_\upsilon(\slfrak(S^1_\Q))$. On the other hand, as we will show in Section~\ref{sec:comparisons}, it contains as a subalgebra the quantum enveloping algebra $\Ubf_\upsilon(\glfrakhat(\infty))$ of the affinization of $\glfrak(\infty)$.

Another candidate for being at least a positive part of $\Ubf_\upsilon(\glfrak(S^1_\Q))$ is $\underline \Ubf_\infty^{0}$, but at the moment we do not know how to extend the Green pairing to the completion in order to take the reduced Drinfeld double of it, because of elements of infinite norm as the $Z_r$'s (see also Remark~\ref{rem:completiongreen}).\hfill $\triangle$
\end{rem}

\subsection{Shuffle algebra presentation of $\Ubf_\infty^>$}\label{sec:positivesphericalinfinite}

For $x\in \Q$, define
\begin{align}
\onebf^{\mathbf{ss}}_{1, \, x}\coloneqq\sum_{\genfrac{}{}{0pt}{}{M\in \Pic(X)}{\deg(M)=\lfloor x\rfloor}}\, 1_{\pi_\infty^\ast M\otimes \Lcal_\infty^{\otimes\, \{x\}}}\ .
\end{align}
Let $\Ubf_\infty^>$ be the subalgebra of $\Hbf_\infty^{\mathsf{tw}}$ generated by $\{\onebf^{\mathbf{ss}}_{1,\, x}\, \vert\, x\in \Q\}$. We call it the \textit{positive spherical algebra} of $\Hbf_\infty^{\mathsf{tw}}$. As before, we will give a \textit{shuffle presentation} of $\Ubf_\infty^>$.

For $y\in \Q_{\geq 0}$, define
\begin{align}
\theta_{0,\, y}\coloneqq\Omega_{\infty, m}\big({}_m \theta_{0,\, e}\big)
\end{align}
if $y=e/m$ with $\mathsf{gcd}(e,m)=1$. One can check that this quantity is well defined by Formulas \eqref{eq:different-coh}.
\begin{proposition}\label{prop:delta-infty}
Let $x\in\Q$ be such that $x=d/k$ with $\mathsf{gcd}(d,k)=1$. Then
\begin{align}
\tilde \Delta\big(\onebf^{\mathbf{ss}}_{1, \, x}\big)=\onebf^{\mathbf{ss}}_{1, \, x}\otimes 1 + \sum_{y\in\Q_{\geq 0}}\,
T_\infty^{\, \{x\}}\, \big(\theta_{0,\, y}\big)\,\kbf_{(1, \, x-y)}\otimes \onebf^{\mathbf{ss}}_{1, \, x-y}\ .
\end{align} 
\end{proposition}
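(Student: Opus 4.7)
\emph{Proof proposal.} The plan is to reduce to the analogous finite coproduct formula in $\Hbf_n^{\mathsf{tw}}$ and then to take the inverse limit using the construction $\tilde\Delta_\infty = \lim_{\genfrac{}{}{0pt}{}{\longleftarrow}{n}}\, \tilde\Delta_n\circ \Omega^{\infty,n}$. Since $\Omega^{\infty,n}$ is a section of $\Omega_{\infty,n}$ and $\onebf^{\mathbf{ss}}_{1,x} = \Omega_{\infty,k}\big({}_k\onebf^{\mathbf{ss}}_{1,d}\big)$, Formulas~\eqref{eq:different-coh} yield $\Omega^{\infty,n}\big(\onebf^{\mathbf{ss}}_{1,x}\big) = {}_n\onebf^{\mathbf{ss}}_{1,dn/k}$ for every positive multiple $n$ of $k$. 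The same formulas imply $\Omega_{\infty,n}\big({}_n\theta_{0,f}\big) = \theta_{0,f/n}$ and $\Omega_{\infty,n}\big({}_n\onebf^{\mathbf{ss}}_{1,e-f}\big) = \onebf^{\mathbf{ss}}_{1,(e-f)/n}$. It thus suffices to prove the finite-level identity
\[
\tilde\Delta_n\big({}_n\onebf^{\mathbf{ss}}_{1,e}\big) = {}_n\onebf^{\mathbf{ss}}_{1,e}\otimes 1 + \sum_{f\in \Z_{\geq 0}}\, T_n^{\,\nfrapart{e}}\big({}_n\theta_{0,f}\big)\,\kbf_{(1,\,e-f)}\otimes {}_n\onebf^{\mathbf{ss}}_{1,\,e-f}
\]
in each $\Hbf_n^{\mathsf{tw}}$, and then to assemble the limiting data.

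For the finite formula I would apply \eqref{eq:Hallcoprod}--\eqref{eq:Hallnewcoprod} to $1_\Lcal$ with $\Lcal = \pi_n^\ast M \otimes \Lcal_n^{\otimes\, \nfrapart{e}}$ and enumerate extensions $0\to \Lcal' \to \Lcal\to T\to 0$ in $\Coh(X_n)$: here $\Lcal'$ ranges over rank-one locally free subsheaves of $n$-th degree $(e-f)/n$, and $T$ is the torsion quotient, of $n$-th degree $f/n$. Summing over $M\in\Pic(X)$ of degree $\nintpart{e}$ recovers ${}_n\onebf^{\mathbf{ss}}_{1,e-f}$ on the right-hand tensor factor. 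On the left-hand factor, weighting the characteristic functions of torsion quotients by the prefactor $\upsilon^{-\langle M,N\rangle}/|\Ext^1(M,N)|$ from \eqref{eq:Hallcoprod} and reorganizing by the degree $f/n$ of $T$, the exponential generating series identity \eqref{eq:torsionrelations} repackages the resulting torsion expression into $T_n^{\,\nfrapart{e}}\big({}_n\theta_{0,f}\big)$. The twist operator $T_n^{\,\nfrapart{e}}$ arises because $T$ inherits a twist by $\Lcal_n^{\otimes\, \nfrapart{e}}$ from $\Lcal$, and $\kbf_{(1,\,e-f)}$ comes from the extended Hopf structure~\eqref{eq:Hallnewcoprod}.

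Finally I would pass to the limit. By Corollaries~\ref{cor:finiteness-infty-one}--\ref{cor:finiteness-infty-two}, each graded component $(1,\chi_I)\otimes (1,\chi_J)$ of the target $\Hbf_\infty^{\mathsf{tw}}\widehat\otimes \Hbf_\infty^{\mathsf{tw}}$ receives only finitely many contributions, all representable on a common $X_n$; the stagewise formulas therefore stabilize, and with $y = f/n$ ranging over $\Q_{\geq 0}$ they sum to the claimed expression.

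The principal obstacle I expect is the combinatorial matching inside the finite calculation: recovering the exact coefficient of $T_n^{\,\nfrapart{e}}\big({}_n\theta_{0,f}\big)$, encoded by the $q$-exponential series in \eqref{eq:torsionrelations}, from the enumeration of torsion quotients of $\Lcal$ together with the $\upsilon^{-\langle\,,\,\rangle}/|\Ext^1|$ weights. Once this finite identity is in hand, the passage to $\Hbf_\infty^{\mathsf{tw}}\widehat\otimes \Hbf_\infty^{\mathsf{tw}}$ is purely formal.
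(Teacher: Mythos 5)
Your proposal is correct and follows essentially the same route as the paper: the paper's proof also writes $\onebf^{\mathbf{ss}}_{1,x}=\Omega_{\infty,k}({}_k\onebf^{\mathbf{ss}}_{1,d})$, invokes the finite-level coproduct identity (which is exactly Proposition~\ref{prop:coproduct}, stated and proved in the appendix by the extension-counting argument you sketch), and passes to the inverse limit. The only difference is that the paper cites that finite identity as an already-established lemma rather than re-deriving it, so the "principal obstacle" you flag is precisely the content of Proposition~\ref{prop:coproduct}.
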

\begin{proof}
By construction, $\onebf^{\mathbf{ss}}_{1, \, x}=\Omega_{\infty, k}({}_k \onebf^{\mathbf{ss}}_{1, \, d})$ if $x=d/k$ with $\mathsf{gcd}(d,k)=1$. Therefore, by definition of $\tilde \Delta$, we need to compute $\displaystyle \lim_{\genfrac{}{}{0pt}{}{\leftarrow}{n}}\, {}_n \tilde \Delta\big(\pi_{n, k}^\ast ({}_k\onebf^{\mathbf{ss}}_{1, \, d})\big)$. By Proposition~\ref{prop:coproduct}, we have
\begin{align}
{}_n \tilde \Delta\big(\pi_{n, k}^\ast ({}_k\onebf^{\mathbf{ss}}_{1, \, d})\big)={}_n \tilde \Delta\big({}_n\onebf^{\mathbf{ss}}_{1,\, d n/k}\big)={}_n\onebf^{\mathbf{ss}}_{1,\, d n/k}\otimes 1 + \sum_{e\in\Z_{\geq 0}}
T_n^{\, n\left\{d/k\right\}}\big({}_n\theta_{0,\, e}\big)\,\kbf_{(1,\, d n/k-e)}\otimes {}_n\onebf^{\mathbf{ss}}_{1,\, d n/k-e}\ .
\end{align}
By passing to the limit, the assertion follows.
\end{proof}
Denote by $\Hbf_\infty^{\bun}$ the Hall algebra associated with the exact subcategory consisting of locally free sheaves on $X_\infty$. Let us define $\omega_\infty\colon \Hbf_\infty^{\mathsf{tw}}\to \Hbf_\infty^{\bun}$ as
\begin{align}
\omega_\infty(x)\coloneqq \Omega_{\infty,n}\omega_n(x_n)\ ,
\end{align}
if $x=\Omega_{\infty,n}(x_n)$. The following version of Proposition~\ref{prop:omega} is straightforward to prove.
\begin{lemma}\label{lem:omega-infty}
Let $x\in\Q$. Then
\begin{align}
\omega_n\big(\sum_{e\in \Q_{\geq 0}}\, \theta_{0,\, e}\, {}_n\onebf^{\mathbf{ss}}_{1,\, x}\big)=\sum_{\alpha\in \Q_{\geq 0}}\, \xi_\alpha^{(x)}\, {}_n\onebf^{\mathbf{ss}}_{1,\, x+\alpha}\ , 
\end{align}
where
\begin{align}
\xi_\alpha^{(d)}\coloneqq
\begin{cases}
\displaystyle \xi_{\alpha} & \text{if } \{x\}=0, \, \{\alpha\}=0\ , \\[8pt]
\displaystyle \xi_{\lfloor\alpha\rfloor}-q^{-1}\,\xi_{\lfloor\alpha\rfloor}^\circ  & \text{if } \{x\}\neq 0, \, \{x+\alpha\}=0\ , \\[8pt]
\displaystyle \xi_{\lfloor\alpha\rfloor}^\circ  & \text{if } \{x\}\neq 0, \, \{\alpha\}=0\ ,\\[8pt]
0 & \text{otherwise}\ .
\end{cases}
\end{align}
\end{lemma}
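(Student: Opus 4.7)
The plan is to reduce this to the finite-level analogue already established in Proposition~\ref{prop:omega} by choosing a common denominator. First, write $x=d/k$ with $\mathsf{gcd}(d,k)=1$ and fix a positive integer $n$ divisible by $k$. For any $e \in \frac{1}{n}\Z_{\geq 0}$, by the compatibilities \eqref{eq:different-coh} we have
\begin{align}
\onebf^{\mathbf{ss}}_{1,x}=\Omega_{\infty,n}\big({}_n\onebf^{\mathbf{ss}}_{1,xn}\big),\qquad \theta_{0,e}=\Omega_{\infty,n}\big({}_n\theta_{0,en}\big),
\end{align}
and by definition $\omega_\infty\circ\Omega_{\infty,n}=\Omega_{\infty,n}\circ\omega_n$. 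Hence, restricting the infinite sum $\sum_{e\in\Q_{\geq 0}}\theta_{0,e}$ to terms with $e\in\frac{1}{n}\Z_{\geq 0}$ (which exhausts everything in the limit $n\to\infty$), the problem is reduced to computing $\omega_n\!\big(\sum_{e'\in\Z_{\geq 0}}{}_n\theta_{0,e'}\,{}_n\onebf^{\mathbf{ss}}_{1,xn}\big)$, where $xn\in\Z$ since $k\mid n$.

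Second, I would invoke Proposition~\ref{prop:omega} (the finite-$n$ Hecke-operator computation, used already in the proof of Theorem~\ref{prop:shuffle-vec}) to identify the latter with $\sum_{\beta\in\Z_{\geq 0}}\xi_\beta^{(xn)}\,{}_n\onebf^{\mathbf{ss}}_{1,xn+\beta}$, where the coefficients are exactly the three-case formula recalled in the proof of Theorem~\ref{prop:shuffle-vec}: $\xi_\beta$ when $\nfrapart{xn}=0$ and $\nfrapart{\beta}=0$; $\xi_{\nintpart{\beta}}-q^{-1}\xi_{\nintpart{\beta}}^\circ$ when $\nfrapart{xn}\neq 0$ and $\nfrapart{xn+\beta}=0$; $\xi_{\nintpart{\beta}}^\circ$ when $\nfrapart{xn}\neq 0$ and $\nfrapart{\beta}=0$; and $0$ otherwise.

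Third, I would translate back: setting $\alpha=\beta/n$, the identities $\nfrapart{xn}/n=\{x\}$ and $\nintpart{\beta}=\lfloor\alpha\rfloor$ (valid because $k\mid n$ means $\nfrapart{xn}=n\{x\}$, and analogously for $\nfrapart{xn+\beta}$) convert the three finite-level cases into the three non-zero cases defining $\xi_\alpha^{(x)}$. Applying $\Omega_{\infty,n}$ to the resulting sum and recalling that $\Omega_{\infty,n}({}_n\onebf^{\mathbf{ss}}_{1,xn+\beta})=\onebf^{\mathbf{ss}}_{1,x+\alpha}$ yields the right-hand side of the claimed identity for the partial sum over $e\in\frac{1}{n}\Z_{\geq 0}$.

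The only genuine point to verify is that the answer is compatible with enlarging $n$ to $\ell n$, so that passing to the limit produces a well-defined element and captures all $e\in\Q_{\geq 0}$. This compatibility is immediate from \eqref{eq:different-coh}: ${}_{\ell n}\theta_{0,\ell e'}=\pi_{\ell n,n}^\ast({}_n\theta_{0,e'})$ and ${}_{\ell n}\onebf^{\mathbf{ss}}_{1,\ell xn}=\pi_{\ell n,n}^\ast({}_n\onebf^{\mathbf{ss}}_{1,xn})$, and the coefficients $\xi_\beta^{(xn)}$ depend only on the rational quantities $\alpha,\{x\},\lfloor\alpha\rfloor,\{x+\alpha\}$ and hence are stable under $n\mapsto \ell n$. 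Thus the main obstacle is purely bookkeeping — tracking the $\nintpart{\cdot}/\nfrapart{\cdot}$ notation against $\lfloor\cdot\rfloor/\{\cdot\}$ and checking the three-case dichotomy — rather than any new computation.
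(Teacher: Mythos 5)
Your reduction is correct and is exactly the argument the paper intends (the paper offers no written proof, calling the lemma a "straightforward" version of Proposition~\ref{prop:omega}): fix a level $n$ divisible by the denominators of $x$ and $e$, use that $\Omega_{\infty,n}$ is an algebra map commuting with $\omega$ and intertwining $\theta_{0,e}\mapsto{}_n\theta_{0,en}$, $\onebf^{\mathbf{ss}}_{1,x}\mapsto{}_n\onebf^{\mathbf{ss}}_{1,xn}$, apply Proposition~\ref{prop:omega} with $d=xn$, and observe that $\nfrapart{xn}=n\{x\}$, $\nintpart{\beta}=\lfloor\beta/n\rfloor$ turn the three finite-level cases into the three rational ones, with the $n$-independence of the resulting coefficients guaranteeing compatibility as $n$ grows. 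The only cosmetic point is that the statement's $\omega_n$ and ${}_n\onebf^{\mathbf{ss}}_{1,\,x}$ should be read as $\omega_\infty$ and $\onebf^{\mathbf{ss}}_{1,\,x}$, which you have implicitly and correctly done.
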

Now we shall follow similar approach as in Section~\ref{sec:shuffle}. For $r\geq 1$, define
\begin{align}
\Hbf_\infty^{\mathsf{tw}}[r]\coloneqq\bigoplus_{\genfrac{}{}{0pt}{}{\alpha\in \Ksfnum(X_\infty)}{\rk(\alpha)=r}}\, \Hbf_\infty^{\mathsf{tw}}[\alpha] \quad\text{and}\quad \Ubf_\infty^>[r]\coloneqq\Ubf_\infty^>\cap \Hbf_\infty^{\mathsf{tw}}[r]\ .
\end{align}
As before, we define for $r\geq 1$
\begin{align}
J_r\colon \Ubf_\infty^>[r] \to \left( \Ubf_\infty^>[1]\right)^{\widehat{\otimes} \, r}\ , \quad u \mapsto \omega_\infty^{\otimes\, r}\, \Delta_{1, \ldots, 1}(u)\ ,
\end{align}
and $J\colon \Ubf_\infty^>\to \bigoplus_r \left( \Ubf_\infty^>[1]\right)^{\widehat{\otimes} \, r}$, given by the sum of all the maps $J_r$, is injective. 

Let $V_\infty$ be an infinite-dimensional vector space with basis $\{\vec v_x\,\vert\, x\in S^1_\Q\}$. Let us consider the following assignment: for any $a\in \Q$
\begin{align}\label{eq:assignment-infty}
\onebf_{1, a}^{\mathbf{ss}}\mapsto x^{\left\lfloor a\right\rfloor}\otimes \vec v_{\left\{a\right\}}\ .
\end{align}
Thus we have
\begin{align}
\left( \Ubf_\infty^>[1]\right)^{\otimes \, r}& \simeq \C[x_1^{\pm 1}, \ldots, x_r^{\pm 1}]\otimes V_\infty^{\, \otimes r}\ , \\[4pt]
\left( \Ubf_\infty^>[1]\right)^{\widehat{\otimes} \, r} & \simeq \C[x_1^{\pm 1}, \ldots, x_r^{\pm 1}][[x_1/x_2,\ldots, x_{r-1}/x_r]]\otimes V_\infty^{\, \otimes r}\ .
\end{align}
For any rational function $h(z)\in \C(z)$, define the map $\varpi^h$ which assigns to $x_1^{d_1}x_2^{d_2}\, \vec v_{1,\, a_1}\otimes \vec v_{2, \, a_2}$, the following:
\begin{align}
\begin{cases}
 h(x_1x_2^{-1})\, \upsilon^{-1}\, \frac{1-x_1 x_2^{-1}}{1-\upsilon^{-2}\, x_1 x_2^{-1}}\,x_1^{d_2}x_2^{d_1}\, \vec v_{1, \, a_2}\otimes \vec v_{2, \, a_1} 
 +h(x_1x_2^{-1})\, \frac{1-\upsilon^{-2}}{1-\upsilon^{-2}x_1x_2^{-1}} \, x_1^{d_2}x_2^{d_1}\, \vec v_{1, \, a_1}\otimes \vec v_{2, \, a_2} & \text{if $a_1>a_2$}\ , \\[15pt]
 h(x_1 x_2^{-1})\,x_1^{d_2}x_2^{d_1}\, \vec v_{1, \, a_1}\otimes \vec v_{2, \, a_2} & \text{if $a_1=a_2$} \ ,\\[15pt]
 h(x_1x_2^{-1})\, \upsilon^{-1}\, \frac{1-x_1 x_2^{-1}}{1-\upsilon^{-2}\, x_1 x_2^{-1}}\,x_1^{d_2}x_2^{d_1}\, \vec v_{1, \, a_2}\otimes \vec v_{2, \, a_1}   +h(x_1x_2^{-1})\, \frac{1-\upsilon^{-2}}{1-\upsilon^{-2}x_1x_2^{-1}} \, x_1^{d_2+1}x_2^{d_1-1}\, \vec v_{1, \, a_1}\otimes \vec v_{2, \, a_2}  & \text{if $a_1<a_2$}\ .
\end{cases}
\end{align}
For any permutation $\sigma$, we define $\varpi_\sigma^h$ as in Formula \eqref{eq:varpi}.
\begin{proposition}
Set
\begin{align}
h_X(z)\coloneqq\frac{\upsilon^{2(1-g_X)}\zeta_X(z)}{\zeta_X(\upsilon^{-2} z)}\ .
\end{align}
Then 
\begin{itemize}[leftmargin=0.2cm]
\item the map $\varpi^{h_X}$ satisfies the braid relation. 
\item the vector space
\begin{align}
\mathbf{Sh}_{\infty, \, h_X(z)}^{\mathsf{vec}}\coloneqq\C\oplus \bigoplus_{r\geq 1}\, \C[x_1^{\pm 1}, \ldots, x_r^{\pm 1}][[x_1/x_2,\ldots, x_{r-1}/x_r]]\otimes V_\infty^{\otimes\, r}\ ,
\end{align}
with the multiplication given by 
\begin{multline}
P(x_1, \ldots, x_r)\, \bigotimes_{i=1}^r\, \vec v_{i, \ell_i}\star Q(x_1, \ldots, x_s)\, \bigotimes_{k=1}^s\, \vec v_{k, j_k} \coloneqq\\
\sum_{\sigma\in\mathrm{Sh}_{r,s}}\, \varpi^{h_X}_\sigma\big(P(x_1, \ldots, x_r)\, Q(x_{r+1}, \ldots, x_{r+s})\, \otimes\bigotimes_{i=1}^{r+s}\, \vec v_{i, c_i}\big)
\end{multline}
where $c_i=\ell_i$ for $i=1, \ldots, r$ and $c_{r+k}=j_k$ for $k=1, \ldots, s$, is an associative algebra. Moreover, $\mathbf{Sh}_{\infty, \, h(z)}^{\mathsf{vec}}$ is equipped with a coproduct $\Delta\colon \mathbf{Sh}_{\infty, h(z)}^{\mathsf{vec}}\to \mathbf{Sh}_{\infty, h(z)}^{\mathsf{vec}}\widehat{\otimes} \mathbf{Sh}_{\infty, h(z)}^{\mathsf{vec}}$ given by
\begin{multline}
\Delta_{u,w}\big(x_1^{d_1}\cdots x_r^{d_r}\, \vec v_{1, \ell_1}\otimes \cdots \vec v_{r, \ell_r}\big)\coloneqq\\
(x_1^{d_1}\cdots x_u^{d_u}\, \vec v_{1, \ell_1}\otimes \cdots \vec v_{u, \ell_u})\otimes (x_1^{d_{u+1}}\cdots x_w^{d_{r}}\, \vec v_{1, \ell_{u+1}}\otimes \cdots \vec v_{w, \ell_r})\quad\text{and}\quad \Delta\coloneqq\bigoplus_{r=u+w}\, \Delta_{u,w}\ .
\end{multline}
\item the constant term map $J\colon \Ubf_\infty^>\to \mathbf{Sh}_{\infty, h_X(z)}^{\mathsf{vec}}$ is an algebra morphism such that
\begin{align}
(J_u\otimes J_w)\circ (\omega_n\otimes\omega_n)\circ \Delta_{u,w}=\Delta_{u,w}\circ J_{u+w}\ .
\end{align}
$\Ubf_\infty^>$ is isomorphic to the subalgebra $\Sbf_{\infty, \, h_X(z)}^{\mathsf{vec}}\subset \mathbf{Sh}_{\infty, \, h_X(z)}^{\mathsf{vec},\mathsf{rat}}$ generated by the degree one component in $x$. Moreover, the algebra $\Omega_\infty(\Ubf^>)$ is isomorphic to the subalgebra generated by the degree one component in $x$ of the image of the morphism of algebras
\begin{align}
\Omega_\infty\colon  \mathbf{Sh}_{1, \, h(z)}^{\mathsf{vec}} \to \mathbf{Sh}_{\infty, \, h_X(z)}^{\mathsf{vec}} \ , \quad P(x_1, \ldots, x_r) \mapsto \, P(x_1, \ldots, x_r)\otimes \bigotimes_{i=1}^r\, \vec v_{i, 0}\ . 
\end{align}
\end{itemize}
\end{proposition}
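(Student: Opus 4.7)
The strategy is to derive this statement from the finite-level version (Theorem~\ref{prop:shuffle-vec}) by passing to the direct limit along the maps $\Omega_{\infty,n}\colon \Hbf_n^{\mathsf{tw}} \hookrightarrow \Hbf_\infty^{\mathsf{tw}}$, combined with a direct computation using the infinity analogues of the formulas that appeared in the proof of Theorem~\ref{prop:shuffle-vec}. The key observation is that each rational vector $\vec v_{a}$ with $a \in S^1_\Q$ arises from some $\vec v_{i} \in V_n$ once $a \in \frac{1}{n}\Z$, and the embeddings $\mathbf{Sh}_{n,h_X(z)}^{\mathsf{vec}} \hookrightarrow \mathbf{Sh}_{m,h_X(z)}^{\mathsf{vec}}$ for $n \mid m$ assemble into a realization $\mathbf{Sh}_{\infty,h_X(z)}^{\mathsf{vec}} = \lim_{\to} \mathbf{Sh}_{n,h_X(z)}^{\mathsf{vec}}$ (at the level of vectors decorated with rational labels and their compatible polynomial coefficients).

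The braid relation for $\varpi^{h_X}$ in the infinite setting reduces, after restricting to any triple $\vec v_{a_1} \otimes \vec v_{a_2} \otimes \vec v_{a_3}$ with $a_1,a_2,a_3 \in \frac{1}{n}\Z$ for a common $n$, to the braid relation verified in Theorem~\ref{prop:shuffle-vec}, since the explicit formula for $\varpi^{h_X}$ depends only on the order between the labels, not on the specific group $\Z/n\Z$ in which they live. Associativity of $\star$ then follows from the braid relation by the standard shuffle-algebra argument, and the compatibility of $\Delta$ with $\star$ is formal.

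The main computational step is to establish that $J \colon \Ubf_\infty^> \to \mathbf{Sh}_{\infty,h_X(z)}^{\mathsf{vec}}$ is an algebra morphism. For this I would compute $J_2(\onebf^{\mathbf{ss}}_{1,x_1} \cdot \onebf^{\mathbf{ss}}_{1,x_2})$ by first applying Proposition~\ref{prop:delta-infty} to expand $\tilde \Delta_{1,1}$ and then Lemma~\ref{lem:omega-infty} to compute $\omega_\infty^{\otimes 2}$; the resulting sum splits into three cases depending on whether $\{x_2-x_1\}$ equals zero, is nonzero, or whether $\{x_1\}$ vanishes, exactly as in the proof of Theorem~\ref{prop:shuffle-vec}. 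Summing the geometric series in the zeta-function coefficients via Corollaries~\ref{cor:xie} and \ref{cor:xiecirc} converts the formula into the shuffle-product form defined by $\varpi^{h_X}$, under the assignment \eqref{eq:assignment-infty}. The fact that $J$ is injective follows by the same argument as in the finite case (using that iterated coproducts distinguish elements after projection by $\omega_\infty$); compatibility with $\Delta_{u,w}$ is a formal consequence of coassociativity of $\tilde \Delta$.

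Finally, to identify $\Ubf_\infty^>$ with the subalgebra $\Sbf_{\infty,h_X(z)}^{\mathsf{vec}}$ generated in degree one, I would argue that by construction the generators $\onebf^{\mathbf{ss}}_{1,x}$ map to the degree-one vectors $x^{\lfloor x \rfloor} \otimes \vec v_{\{x\}}$, so $J(\Ubf_\infty^>) \subseteq \Sbf_{\infty,h_X(z)}^{\mathsf{vec}}$; the reverse inclusion and injectivity follow by compatibility with the finite case, i.e.\ by restricting to the image of $\Omega_{\infty,n}$ and invoking the corresponding isomorphism $\Ubf_n^> \simeq \Sbf_{n,h_X(z)}^{\mathsf{vec}}$ of Theorem~\ref{prop:shuffle-vec}, together with the compatibility diagram of Section~\ref{sec:hallalgebramn}. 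The analogous statement for $\Omega_\infty(\Ubf^>)$ is read off the non-stacky case $n=1$ embedded via $\vec v_0$. The main obstacle to carry out this plan rigorously is bookkeeping of the completions: the sums in Proposition~\ref{prop:delta-infty} range over all $y \in \Q_{\geq 0}$, so one must verify that, after applying $\omega_\infty^{\otimes 2}$ and reorganizing, the resulting expression lies in the allowed topological tensor space $\C[x_1^{\pm 1},x_2^{\pm 1}][[x_1/x_2]] \otimes V_\infty^{\otimes 2}$ and matches the expansion of the rational function $h_X(x_1/x_2)$ as a Laurent series — this is where the finite-$n$ convergence arguments must be shown to survive the passage to the limit.
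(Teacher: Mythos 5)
Your proposal is correct and follows essentially the same route as the paper: the paper's proof likewise computes the ``standard factor'' in $J_2$ via Proposition~\ref{prop:delta-infty} and Lemma~\ref{lem:omega-infty}, resums the zeta-function coefficients using Corollaries~\ref{cor:xie} and \ref{cor:xiecirc} (via shift operators $\gamma^{\pm m}$), and concludes through the assignment \eqref{eq:assignment-infty}, deferring the remaining structural points to the finite-level Theorem~\ref{prop:shuffle-vec}. Your additional remarks on the direct-limit compatibility and the completion bookkeeping are sensible elaborations of what the paper leaves implicit.
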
 
\begin{proof}
We follow arguments similar to those in the proof of Proposition~\ref{prop:shuffle-vec}. By using Proposition~\ref{prop:delta-infty} and Lemma~\ref{lem:omega-infty}, we can derive the following formula for the ``standard factor" in $J_2$:
\begin{multline}
\sum_{\genfrac{}{}{0pt}{}{e\in\Z_{\geq 0}}{m\vert k}}\, \omega_\infty^{\otimes\, 2 }\big(T_\infty^{\{d/m\}}\big(\theta_{0,\, e/k}\big)\,\kbf_{(1,\, d/m-e/k)}\, \onebf^{\mathbf{ss}}_{1,\, \ell/n}\otimes \onebf^{\mathbf{ss}}_{1,\, d/m-e/k}\big)=\\
\shoveright{=\delta_{\left\{\ell/n-d/m\right\},0}\, \upsilon^{2-2g_X}\, \sum_{s\in\Z_{\geq 0}}\, \xi_s\, \onebf^{\mathbf{ss}}_{1,\, \ell/n+s}\otimes \onebf^{\mathbf{ss}}_{1,\, d/m-s}+(1-\delta_{\left\{\ell/n-d/m\right\},0})\upsilon^{2-2g_X}}\\
\times\,  \sum_{s\in\Z_{\geq0}}\Big(\upsilon^{-1}\, \xi_s^\circ \, \onebf^{\mathbf{ss}}_{1,\, \ell/n+s}\otimes \onebf^{\mathbf{ss}}_{1,\, d/m-s}
+(\xi_s-q^{-1}\, \xi_s^\circ)\,\onebf^{\mathbf{ss}}_{1,\, \ell/+s+\{d/m-\ell/n\}}\otimes \onebf^{\mathbf{ss}}_{1,\, d/m-s-\{d/m-\ell/n\}}\Big) \ .
\end{multline}
Let us introduce the automorphism $\gamma^{\pm m}$ of $\Ubf_n^>$ defined as
\begin{align}
\gamma^{\pm m}\big(\onebf_{1, \, x}^{\mathbf{ss}}\big)\coloneqq \onebf_{1, \, x\pm m}^{\mathbf{ss}}
\end{align}
for $m\in \Q$. Denote by $\gamma_i^\bullet$ the operator $\gamma^\bullet$ acting on the $i$-th component of the tensor product. By using Corollaries~\ref{cor:xie} and \ref{cor:xiecirc} the previous quantity is equal to
\begin{multline}
\delta_{\left\{\ell/n-d/m\right\},0}\, \upsilon^{2-2g_X}\, \sum_{s\in\Z_{\geq 0}}\, \xi_s\, (\gamma_1\gamma_2^{-1})^s\, \onebf^{\mathbf{ss}}_{1,\, \ell/n}\otimes \onebf^{\mathbf{ss}}_{1,\, d/m}+(1-\delta_{\left\{\ell/n-d/m\right\},0})\\
\times\, \upsilon^{2-2g_X}\, \sum_{s\in\Z_{\geq0}}\Big(\upsilon^{-1}\, \xi_s^\circ \, (\gamma_1\gamma_2^{-1})^s +\, (\xi_s-q^{-1} \, \xi_s^\circ)\,(\gamma_1\gamma_2^{-1})^s\, (\gamma_1\gamma_2^{-1})^{\{d/m-\ell/n\}}\Big) \onebf^{\mathbf{ss}}_{1,\, \ell/n}\otimes \onebf^{\mathbf{ss}}_{1,\, d/m} \ .
\end{multline}
The assertion follows from the assignment \eqref{eq:assignment-infty}.
\end{proof}

\subsection{Hecke operators on line bundles and the fundamental representation of $\Ubf_\upsilon(\slfrak(S^1_\Q))$}\label{sec:heckeoperatorslinebundles}

We define the  \textit{fundamental representation} of $\Ubf_\upsilon(\slfrak(S^1_\Q))$ as the $\widetilde{\Q}$-vector space
\begin{align}
\V_{S^1_\Q}=\bigoplus_{y \in \Q}\, \widetilde \Q\, \vec u_y \ ,
\end{align} 
with action given by
\begin{align}
F_{[a,b[}\bullet \vec u_y =& \delta_{\{b+y\}, 0}\, \upsilon^{1/2}\,\vec u_{y+b-a}\ , \\[4pt]
E_{[a,b[}\bullet \vec u_y =& \delta_{\{a+y\}, 0}\, \upsilon^{-1/2}\,\vec u_{y+a-b}\ , \\[4pt]
K_{[a',b'[}^\pm\bullet \vec u_y =& \upsilon^{\pm(\delta_{\{b'+y\},0}-\delta_{\{a'+y\},0})}\, \vec u_y\ ,
\end{align}
for rational intervals $[a,b[, [a',b'[\subseteq S^1_\Q$, with $[a,b[$ strict, and $y\in \Q$. 

The action of the Hall algebra $\Hbf_\infty^{\mathsf{tw}, \tor}$ on $\Hbf_\infty^{\bun}$ is given as
\begin{align}
\Hbf_\infty^{\mathsf{tw}, \tor}\otimes \Hbf_\infty^{\bun}\to \Hbf_\infty^{\bun}\ , \quad u_0\otimes u\mapsto u_0\bullet u\coloneqq\omega_\infty(u_0\, u)\ .
\end{align}
Thus we have the following.
\begin{theorem} 
The natural action of $\Ubf_\upsilon(\slfrak(S^1_\Q))$ on $\Ubf^>_\infty[1]$ is isomorphic to $\V_{S^1_\Q}$ after applying the automorphism
\begin{align}\label{eq:automorphism}
\Phi(E_J)=F_J\ , \quad \Phi(F_J)=E_J\ , \quad \Phi(K_{J'})=K_{J'}\ , \quad \Phi(\upsilon)=\upsilon^{-1}\ .
\end{align}
\end{theorem}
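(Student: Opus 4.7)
The plan is to reduce the statement to the finite-level analogue already established for $\Ubf_\upsilon(\glfrakhat(n))$ acting on $\V_n$, by a direct limit argument. Define the linear isomorphism
\begin{align}
\Psi\colon \Ubf_\infty^>[1] \xrightarrow{\sim} \V_{S^1_\Q}\ , \qquad \onebf^{\mathbf{ss}}_{1,y} \mapsto \vec u_y\ ,
\end{align}
and verify that for each generator $E_J, F_J, K_I$ of $\Ubf_\upsilon(\slfrak(S^1_\Q))$ and each basis element $\onebf^{\mathbf{ss}}_{1,y}$, one has $\Psi(x\bullet \onebf^{\mathbf{ss}}_{1,y}) = \Phi(x) \bullet \vec u_y$, with the understanding that $\Phi$ inverts $\upsilon$ in the action formulas. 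Given the data of $J, I, y$, choose a positive integer $n$ large enough so that the endpoints of $J$ and $I$ and the rational $y$ all lie in $\tfrac{1}{n}\Z$; then $\onebf^{\mathbf{ss}}_{1,y} = \Omega_{\infty,n}({}_n\onebf^{\mathbf{ss}}_{1,ny})$, $E_{[a,b[} = \upsilon^{1/2}\Omega_{\infty,n}\big(1_{{}_n\Scal_{nb}^{(n(b-a))}}\big)$, and similarly for $F_J, K_I$.

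The first step is to show that the Hecke action $u_0 \bullet u = \omega_\infty(u_0 u)$ on $\Ubf^>_\infty$ commutes with the maps $\Omega_{\infty,n}$; this is a consequence of the exactness and full faithfulness of $\pi_{\infty,n}^\ast$, of the identities \eqref{eq:rank-independent}--\eqref{eq:degree-independent} ensuring that pullback preserves the splitting $\mathsf{bundle}\oplus\mathsf{torsion}$, and of the definition of $\omega_\infty$ as the limit of the $\omega_n$. Consequently the action of $E_J, F_J, K_I$ on $\onebf^{\mathbf{ss}}_{1,y}$ equals the image under $\Omega_{\infty,n}$ of the action of their level-$n$ preimages on ${}_n\onebf^{\mathbf{ss}}_{1,ny}$, reducing everything to a computation in $\Hbf_n^{\mathsf{tw}}$.

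At level $n$, the generator $E_{[a,b[}$ with $a=h/n, b=i/n$ does not correspond to a simple root of $\Ubf_\upsilon(\slfrakhat(n))$ unless $b-a=1/n$; for longer intervals the element $\upsilon^{1/2} 1_{{}_n \Scal_i^{(i-h)}}$ is produced from the simple generators $\upsilon^{1/2} 1_{{}_n\Scal_j}$ by iterating the join relation \eqref{eq:Joining1}, which holds at level $n$ by Ringel's theorem. Thus the action of $E_J$ on $\Ubf_n^>[1]$ is the iterated $\upsilon$-bracket of actions of simple root generators, each of which has been explicitly computed in the previous theorem on the fundamental representation of $\Ubf_\upsilon(\glfrakhat(n))$. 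Using the formulas $E_i \bullet {}_n\onebf^{\mathbf{ss}}_{1,d} = \delta_{\nfrapart{d+i},0}\upsilon^{-1/2}{}_n\onebf^{\mathbf{ss}}_{1,d+1}$ and the analogous formulas for $F_i$ and $K_i$, iterating through the interval $J$ telescopes (only one summand of each bracket survives, thanks to the $\delta$-conditions which force a unique insertion point) and yields
\begin{align}
E_{[a,b[} \bullet \onebf^{\mathbf{ss}}_{1,y} = \delta_{\{a+y\},0}\, \upsilon^{-1/2}\, \onebf^{\mathbf{ss}}_{1,y+a-b}\ ,
\end{align}
which is exactly the action of $\Phi(E_{[a,b[}) = F_{[a,b[}$ on $\vec u_y$ after the substitution $\upsilon \mapsto \upsilon^{-1}$. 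The analogous computation for $F_J$ uses the negative part of the Drinfeld double together with the dual join relation, while for $K_I$ one simply evaluates the symmetric Euler pairing $(\chi_I, \overline{\Lcal})$ on a line bundle $\Lcal$ of degree $y$, whose class is $(1, \lfloor y\rfloor\delta + \chi_{[1-\{y\},1[})$.

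The main obstacle is the careful bookkeeping of the $\upsilon$-powers and of the $\delta$-conditions through the iterated join bracket; in particular one must verify that the recursive telescoping matches the $\delta_{\{a+y\},0}$ predicted by the formula on $\V_{S^1_\Q}$, and that no spurious cross-terms survive when an interior vertex of $J$ happens to coincide with $-y \bmod 1$. The consistency of the definition of $\Psi$ across different choices of $n$ (i.e., independence of the level $n$ at which the computation is performed) follows from the compatibility $\Omega_{kn,n}({}_n\onebf^{\mathbf{ss}}_{1,d}) = {}_{kn}\onebf^{\mathbf{ss}}_{1,kd}$ recorded in \eqref{eq:different-coh}, and from the fact that the level-$n$ theorem is itself compatible with the chain of embeddings $\Ubf_\upsilon(\glfrakhat(n)) \hookrightarrow \Ubf_\upsilon(\glfrakhat(kn))$.
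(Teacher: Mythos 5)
Your strategy is essentially the paper's: reduce to a fixed level $n$ via $\Omega_{\infty,n}$ (which does intertwine the Hecke action, since $\omega_\infty$ is by definition the limit of the $\omega_n$ and $\Omega_{\infty,n}$ is an algebra map), compute the action of $1_{{}_n\Scal_i^{(j)}}$ on ${}_n\onebf^{\mathbf{ss}}_{1,d}$, and read off the $K$-action from \eqref{eq:actionK}. The telescoping-through-the-join-relation argument you sketch is exactly how the appendix establishes the needed commutation relations (Lemmas~\ref{lem:Lin-1} and \ref{lem:Lin-2} feeding the induction on $j$), so you are re-deriving Corollaries~\ref{cor:heckeaction} and \ref{cor:heckeaction-II} rather than citing them; this is fine but buys nothing new.

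There is, however, a concrete error in your key displayed formula. Hall multiplication by the torsion sheaf $\Scal_{[a,b[}$ \emph{raises} the degree of a line bundle, and Corollary~\ref{cor:heckeaction} (with $i=nb$, $d=ny$, $j=n(b-a)$) gives
\begin{align}
E_{[a,b[}\bullet \onebf^{\mathbf{ss}}_{1,\,y}
=\upsilon^{1/2}\,\omega_\infty\big(1_{\Scal_{[a,b[}}\,\onebf^{\mathbf{ss}}_{1,\,y}\big)
=\delta_{\{b+y\},0}\,\upsilon^{-1/2}\,\onebf^{\mathbf{ss}}_{1,\,y+b-a}\ ,
\end{align}
i.e.\ the $\delta$-condition sits at the \emph{right} endpoint $b$ and the degree shift is $+(b-a)$. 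You wrote $\delta_{\{a+y\},0}\,\upsilon^{-1/2}\,\onebf^{\mathbf{ss}}_{1,\,y+a-b}$, which is the formula for $E_{[a,b[}$ acting on $\V_{S^1_\Q}$ \emph{before} applying $\Phi$ — it contradicts your own (correct) verbal claim that the Hall action of $E_J$ should match $\Phi(E_J)=F_J$ acting on $\vec u_y$ after $\upsilon\mapsto\upsilon^{-1}$, namely $\delta_{\{b+y\},0}\,\upsilon^{-1/2}\,\vec u_{y+b-a}$. With that display corrected (and the dual statement for $F_J$, obtained from the negative half of the Drinfeld double), the argument closes.
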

\begin{proof}
By using Corollary~\ref{cor:heckeaction-II}, one can easily derive the action of $F_{[a,b[}$ and $E_{[a,b[}$. By Formula \eqref{eq:actionK}, we have
\begin{align}
K_{\chi_{[a',b'[}}^\pm\bullet \onebf^{\mathbf{ss}}_{1,\, y}=\upsilon^{\pm(\delta_{\{a'+y\},0}-\delta_{\{b'+y\},0})}\, \onebf^{\mathbf{ss}}_{1,\, y}\ ,
\end{align}
for a rational interval $[a',b'[\subseteq S^1_\Q$ and $y\in \Q$. 
\end{proof}

\subsection{Tensor and symmetric tensor representations of $\Ubf_\upsilon(\slfrak(S^1_\Q))$}

Also in this case, we can extend the $\Ubf_\upsilon(\slfrak(S^1_\Q))$-action to $\Ubf^>_\infty[1]^{\otimes\, r}$, which we call \textit{$r$-th tensor representation} of $\Ubf_\upsilon(\slfrak(S^1_\Q))$, and to $\Ubf^>_\infty[1]^{\widehat \otimes\, r}$. We have the following.
\begin{proposition}
Let $r\in \Z_{>0}$. Then the symmetrization operator $\Psi_{\infty, r}$ given by the multiplication
\begin{align}
\Psi_{\infty, r}\colon \Ubf^>_\infty[1]^{\otimes\, r}\to \Ubf^>_\infty[1]^{\widehat \otimes\, r}
\end{align}
is a $\Ubf_\upsilon(\slfrak(S^1_\Q))$-intertwiner. 
\end{proposition}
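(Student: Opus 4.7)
Paralleling the finite-level construction of Section~\ref{sec:fundamentalsymmetric}, the symmetrization operator is nothing but the Hall multiplication $\vec u_1\otimes\cdots\otimes \vec u_r \mapsto J_r(\vec u_1\star\cdots\star \vec u_r)$, where $\star$ denotes the Hall product in $\Ubf^>_\infty$ and $J_r$ is the (injective) constant term map identifying $\Ubf^>_\infty[r]$ with a subspace of $\Ubf^>_\infty[1]^{\widehat\otimes r}$; here I would rely on the fact, established in Section~\ref{sec:positivesphericalinfinite}, that $J_r$ is an algebra morphism onto its image in the shuffle algebra $\mathbf{Sh}^{\mathsf{vec}}_{\infty,h_X(z)}$. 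What is to be shown is therefore that, for every generator $u\in\{E_J,F_J,K_J^{\pm 1}\}$ of $\Ubf_\upsilon(\slfrak(S^1_\Q))$,
\[
u\bullet\bigl(\vec u_1\star\cdots\star\vec u_r\bigr)\;=\;\sum_{(u)}\bigl(u^{(1)}\bullet \vec u_1\bigr)\star\cdots\star\bigl(u^{(r)}\bullet\vec u_r\bigr),
\]
the sum being indexed by the iterated coproduct $\tilde\Delta^{(r-1)}(u)=\sum_{(u)} u^{(1)}\otimes\cdots\otimes u^{(r)}$ (extended to the appropriate completion).

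The heart of the matter is a Hopf-algebra compatibility in $\Hbf_\infty^{\mathsf{tw}}$: since $\tilde\Delta$ is an algebra morphism, one has
\[
\tilde\Delta^{(r-1)}\bigl(u\cdot \vec u_1\cdots\vec u_r\bigr)\;=\;\tilde\Delta^{(r-1)}(u)\cdot\tilde\Delta^{(r-1)}(\vec u_1)\cdots\tilde\Delta^{(r-1)}(\vec u_r).
\]
I would project the $(1,\ldots,1)$-rank component by applying $\omega_\infty^{\otimes r}$. Because each $\vec u_i$ has rank $1$, only the ``leading'' summands $\vec u_i\otimes 1\otimes\cdots\otimes 1,\ 1\otimes\vec u_i\otimes\cdots\otimes 1,\ \ldots$ of $\tilde\Delta^{(r-1)}(\vec u_i)$ survive, all remaining terms of $\tilde\Delta(\vec u_i)$ having a torsion component which is killed by $\omega_\infty$ after multiplication with the coproduct of $u$. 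A direct bookkeeping with the explicit formulas \eqref{eq:coproductCinfty} for $\tilde\Delta(E_J)$, $\tilde\Delta(F_J)$ and $\tilde\Delta(K_J)$, combined with Lemma~\ref{lem:omega-infty} and Corollary~\ref{cor:heckeaction-II}, then shows that the surviving terms reassemble exactly into the right-hand side of the displayed identity above.

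To justify all the infinite sums and the convergence of the coproducts in the relevant completions, I would reduce to the finite root-stack case. By Proposition~\ref{prop:representative-infty} and Corollaries~\ref{cor:completion-I} and \ref{cor:completion-II}, for any fixed $\vec u_1,\ldots,\vec u_r$ and any generator $u$ supported in a strict rational interval of denominator dividing $n$, the entire identity to be proved takes place, after clearing denominators, inside $\Ubf^>_n[r]$ and involves only coproducts in $\Hbf^{\mathsf{tw}}_n$. The finite-level statement is precisely the Proposition of Section~\ref{sec:fundamentalsymmetric}, whose proof is of the same nature. Compatibility of the embeddings $\Omega_{\infty,n}$ with Hall multiplication, Hecke action and (the $(1,\ldots,1)$-component of the) coproduct, established in Section~\ref{sec:hallalgebramn} and at the start of Section~\ref{sec:positivesphericalinfinite}, then allows passing to the direct limit.

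The main obstacle is the third step: although $\Hbf_\infty^{\mathsf{tw}}\simeq\lim_{\to}\Hbf_n^{\mathsf{tw}}$ as associative algebras, the coproduct $\tilde\Delta$ on $\Hbf_\infty^{\mathsf{tw}}$ is defined through an \emph{inverse} limit and involves genuinely uncountable sums such as the sum over all rational $c\in[a,b[$ in \eqref{eq:coproductCinfty}. One must therefore check that the identity holds coefficient by coefficient and that each coefficient, when read off in $\Ubf^>_\infty[1]^{\widehat\otimes r}$, does come from a well-defined element of the finite-level Hall algebra after a suitable truncation. This follows, with some care, from the finiteness statements of Corollaries~\ref{cor:finiteness-infty-one} and \ref{cor:finiteness-infty-two}, together with the explicit shape of the coproduct on generators.
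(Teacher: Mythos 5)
The paper states this proposition (and its finite-level counterpart in Section~\ref{sec:fundamentalsymmetric}) without any proof, so your argument cannot be checked against the authors' own; it has to stand on its own. Your overall strategy --- write $\Psi_{\infty,r}$ as $J_r\circ\mathsf{mult}$, use that $J_r=\omega_\infty^{\otimes r}\circ\tilde\Delta_{1,\ldots,1}$ and that $\tilde\Delta$ is multiplicative, extract the rank-$(1,\ldots,1)$ component, and compare with the coproduct action on the tensor factors --- is the standard one and is the right way to prove the statement. The reduction to a fixed finite level $n$ is also legitimate, provided you add the observation that although $\tilde\Delta(E_{[a,b[})$ involves an uncountable sum over $c$, at most one value of $c$ contributes once the vectors $\vec u_1,\ldots,\vec u_r$ are fixed (because of the delta factors in the Hecke action), so the identity does close up inside $\Hbf_n^{\mathsf{tw}}$ for suitable $n$.

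The one step that is wrong as written is the claim that ``only the leading summands $\vec u_i\otimes 1\otimes\cdots\otimes 1,\ 1\otimes\vec u_i\otimes\cdots\otimes 1,\ldots$ of $\tilde\Delta^{(r-1)}(\vec u_i)$ survive, all remaining terms \ldots being killed by $\omega_\infty$''. First, those are not summands of the iterated coproduct: by Proposition~\ref{prop:delta-infty} the term whose rank-one part sits in the $j$-th slot has the form $\theta\,\kbf\otimes\cdots\otimes\theta\,\kbf\otimes\onebf^{\mathbf{ss}}_{1,\,x-y}\otimes 1\otimes\cdots\otimes 1$, with genuine torsion factors in the first $j-1$ slots and a \emph{shifted} line-bundle class in slot $j$. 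Second, these non-leading terms are not killed: a torsion factor sitting to the \emph{left} of a rank-one factor inside a given slot acts by a Hecke operator and survives $\omega_\infty$, and it is exactly these contributions that produce the $\varpi^{h_X}$-factors in the shuffle product. If your claim were literally true, $J_r(\vec u_1\star\cdots\star\vec u_r)$ would be the naive symmetrization $\sum_\sigma \vec u_{\sigma(1)}\otimes\cdots\otimes\vec u_{\sigma(r)}$, contradicting the shuffle formula you yourself invoke. What you actually need is the one-sided statement: any \emph{nontrivial} torsion factor appearing to the \emph{right} of a rank-one factor within a slot forces that slot to be supported on sheaves with nonzero torsion part and is therefore annihilated by $\omega_\infty$. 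With that correction the bookkeeping does reassemble the surviving terms into $\sum_{(u)}\,(u^{(1)}\bullet\vec u_1)\star\cdots\star(u^{(r)}\bullet\vec u_r)$, and the remainder of your argument (induction on $r$, passage to the limit via $\Omega_{\infty,n}$) goes through.
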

We call the image of $\Psi_{\infty, r}$, which is $\Ubf^>_\infty[r]$, the \textit{symmetric tensor representation} of $\Ubf_\upsilon(\slfrak(S^1_\Q))$ (of genus $g_X$).

\subsection{Representation of the completed Hecke algebra}

Let us now consider the subalgebra $\Lambda\otimes_{\widetilde \Q} \Cbf_\infty$ of the completed Hecke algebra $\underline \Ubf_\infty^0$, where $\Lambda\coloneqq\widetilde \Q[Z_1, Z_2, \ldots, Z_r, \ldots]$ is the center of $\underline \Ubf_\infty^0$.

First note that, by Proposition~\ref{prop:representative-infty} and Corollaries~\ref{cor:completion-I} and \ref{cor:completion-II}, there does not exist a completion of $\Hbf_\infty^{\bun}$ similar to $\Hbf_{\infty, p_\infty}^{\mathsf{tw}, \tor}$.

We define the  \textit{fundamental representation} of $\Lambda\otimes_{\widetilde \Q} \Cbf_\infty$ and $\underline \Ubf_\infty^0$ as the $\widetilde{\Q}$-vector space
\begin{align}
\V=\bigoplus_{y \in \Q}\, \widetilde \Q\, \vec u_y \ ,
\end{align} 
with action given by
\begin{align}
1_{\Scal_{[a,b[}}\bullet \vec u_y =& \delta_{\{b+y\}, 0}\, \upsilon^{-1}\,\vec u_{y+b-a}\ , \\[4pt]
K_{\chi_{[a,b[}}^\pm\bullet \vec u_y =& \upsilon^{\pm(\delta_{\{a+y\},0}-\delta_{\{b+y\},0})}\, \vec u_y\ ,\\[4pt]
Z_r \bullet  \vec u_y =& ({}_n Z_r, {}_n \theta_{0, rn})_G\,  \vec u_{y+r} \ , 
\end{align}
for any rational interval $[a,b[\subset S^1_\Q$ and $y=d/n\in \Q$, with $\mathsf{g.c.d.}(d,n)=1$. 

Thanks to Corollary~\ref{cor:completion-II}, the action of the Hall algebra $\Ubf_\infty^0$ on $\Hbf_\infty^{\bun}$ extends to $\underline \Ubf_\infty^0$. We have:
\begin{theorem} 
The natural action of $\Lambda\otimes_{\widetilde \Q} \Cbf_\infty$ and $\underline \Ubf_\infty^0$ on $\Ubf^>_\infty[1]$ is isomorphic to $\V$. 
\end{theorem}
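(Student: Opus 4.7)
The plan is to define the $\widetilde\Q$-linear isomorphism $\V \xrightarrow{\sim} \Ubf^>_\infty[1]$ by $\vec u_y \mapsto \onebf^{\mathbf{ss}}_{1,y}$ (it is bijective by the very definition of $\onebf^{\mathbf{ss}}_{1,y}$ and of $\Ubf^>_\infty[1]$) and then verify that this intertwines the two actions on a generating family of the algebra $\Lambda\otimes_{\widetilde\Q}\Cbf_\infty$ and, by continuity, of $\underline \Ubf_\infty^0$. A generating family is provided by the elements $1_{\Scal_J}, K_I^{\pm 1}$ with $J$ strict rational, $I$ rational, together with the central elements $Z_r$ for $r\in \Z_{>0}$.

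For the first family (i.e., the elements of $\Cbf_\infty$) the verification is a direct rephrasing of the fundamental representation theorem already proved for $\Ubf_\upsilon(\slfrak(S^1_\Q))$ acting on $\V_{S^1_\Q}$ via Hecke operators, undoing the automorphism $\Phi$ of \eqref{eq:automorphism}: the action of $E_J = \upsilon^{1/2}\,1_{\Scal_J}$ and $K_I$ on $\onebf^{\mathbf{ss}}_{1,y}$ was computed there, and restricting to positive generators gives exactly the formulas prescribed for $\V$. So I would first state that step as an immediate consequence of the earlier computations.

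For the central generators $Z_r$, I would argue by passing to the finite level and taking a limit. Fix $y=d/n\in \Q$ with $\mathsf{gcd}(d,n)=1$. By Corollary~\ref{cor:heckeaction}-type formulas established in Section~\ref{sec:hallalgebran}, for every multiple $n\ell$ of $n$ one has the identity
\begin{align}
{}_{n\ell}Z_r\,\bullet\, {}_{n\ell}\onebf^{\mathbf{ss}}_{1,\, d\ell}
\;=\; \bigl({}_{n\ell}Z_r,{}_{n\ell}\theta_{0,rn\ell}\bigr)_G\,\; {}_{n\ell}\onebf^{\mathbf{ss}}_{1,\,d\ell+rn\ell}\ .
\end{align}
Under $\Omega_{\infty,n\ell}$ the left-hand side goes to $\Omega_{\infty,n\ell}({}_{n\ell}Z_r)\bullet \onebf^{\mathbf{ss}}_{1,y}$ and the right-hand side becomes the prescribed scalar multiple of $\onebf^{\mathbf{ss}}_{1,y+r}$. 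Passing to the limit $\ell\to+\infty$, the left-hand side tends to $Z_r \bullet \onebf^{\mathbf{ss}}_{1,y}$ by Section~\ref{sec:Heckealgebrainfinite}, while the right-hand side stabilises as soon as $\ell=1$, since one checks by a direct computation on the primitive element ${}_n z_r$ that the Green pairing $({}_{n\ell}Z_r,{}_{n\ell}\theta_{0,rn\ell})_G$ equals $({}_n Z_r,{}_n\theta_{0,rn})_G$ for every $\ell\geq 1$ (this expresses the independence of the action of $Z_r$ from the chosen representative level). Combined, this identifies the action of $Z_r$ on $\onebf^{\mathbf{ss}}_{1,y}$ with that prescribed on $\vec u_y$.

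Finally, to upgrade from $\Lambda\otimes_{\widetilde\Q}\Cbf_\infty$ to the completion $\underline \Ubf_\infty^0$, I would invoke Corollary~\ref{cor:completion-II}: any element of $\underline\Ubf^{0}_\infty$ acting on $\onebf^{\mathbf{ss}}_{1,y}$ factors through its image at some finite level, so the natural action extends continuously in a unique way, and the above check is sufficient. The main obstacle I anticipate is the stability of the Green pairing values $({}_{n\ell}Z_r,{}_{n\ell}\theta_{0,rn\ell})_G$ under inflation $n\mapsto n\ell$: this requires unwinding the recursive definition \eqref{eq:nz_r} of $z_r$ together with the explicit Green pairing $(T_{0,d},T_{0,d})_G$ at the curve level, and checking that the factors of $[d]$ and the Weil numbers of $X$ conspire to produce a quantity independent of $\ell$. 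Once this compatibility is established the rest of the argument is formal.
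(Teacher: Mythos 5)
The paper states this theorem without proof, so there is no argument of record to compare yours against; your overall strategy --- send $\vec u_y\mapsto\onebf^{\mathbf{ss}}_{1,y}$, check the generators of $\Cbf_\infty$ against the Hecke computations already established, handle the $Z_r$ by passage to a finite level, and invoke Corollary~\ref{cor:completion-II} for the extension to $\underline\Ubf_\infty^0$ --- is the natural one and is exactly what the surrounding results are set up to deliver. One small simplification on the $\Cbf_\infty$ part: no ``undoing of $\Phi$'' is needed, because $\V$ is defined here directly in terms of the Hall-algebra elements $1_{\Scal_{[a,b[}}$ and $\kbf_{(0,\pm\chi_I)}$ rather than the quantum-group generators; its defining formulas are verbatim Corollary~\ref{cor:heckeaction-II} (read through \eqref{eq:SJ}) and Formula~\eqref{eq:actionK}, so that step is indeed immediate.

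The genuine gap is the one you flag yourself: the claim that $({}_{n\ell}Z_r,{}_{n\ell}\theta_{0,rn\ell})_G=({}_nZ_r,{}_n\theta_{0,rn})_G$ for all $\ell\geq 1$, and it does not come for free from the convergence of $\Omega_{\infty,n\ell}({}_{n\ell}Z_r)$ to $Z_r$. What Corollary~\ref{cor:completion-II} actually gives is that every torsion quotient of class $r\delta$ of a line bundle of degree $y+r$ by a line bundle of degree $y=d/n$ is cyclic with constant dimension function, hence lies in the image of $\Tor_p(X)$; so the action of $\Omega_{\infty,n\ell}({}_{n\ell}Z_r)$ on $\onebf^{\mathbf{ss}}_{1,y}$ is governed entirely by the restriction $\Omega^{n\ell,1}({}_{n\ell}Z_r)$ to objects pulled back from $X$. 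Since $\Omega^{m,1}$ is adjoint to $\Omega_{m,1}$ for the Green pairing (full faithfulness of $\pi_m^\ast$ preserves automorphism groups), your identity is equivalent to the statement that $\Omega^{n\ell,1}({}_{n\ell}Z_r)$ is independent of $\ell$. Away from $p$ this is clear, since the ${}_mZ_{r;x}$ are themselves pull-backs from $X$; at $p$ it is a compatibility of Hubery's central elements ${}_mz_r$ under the maps $\Omega^{m,1}$, which are morphisms of coalgebras but \emph{not} of algebras (the subcategory $\Tor_p(X)\subset\Tor_{p_m}(X_m)$ is not closed under subobjects), so the recursion \eqref{eq:nz_r} cannot simply be ``restricted''. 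This is a true statement --- it is precisely what makes the formula for $Z_r\bullet\vec u_y$ in the definition of $\V$ well posed, independently of the representative level chosen for $y$ --- but it requires an actual computation with \eqref{eq:nc_r} and \eqref{eq:nz_r} (or a reduction to the known pairings of the $T_{0,d;p}$) that your proposal acknowledges but does not carry out.
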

%

\bigskip\section{Comparisons with $\Ubf_\upsilon\big(\slfrakhat(+\infty))$ and $\Ubf_\upsilon\big(\slfrakhat(\infty))$}\label{sec:comparisons}

Let
\begin{align}
  \begin{tikzpicture}[xscale=1.5,yscale=-0.5]
    \node (A0_0) at (0, 0) {$1$};
    \node (A0_1) at (1, 0) {$2$};
    \node (A0_2) at (2, 0) {$3$};
    \node (A0_3) at (3, 0) {$4$};
    \node (A0_5) at (5, 0) {$n-1$};
    \node (A0_6) at (6, 0) {$n$};
    \node (A1_0) at (0, 1) {$\bullet$};
    \node (A1_1) at (1, 1) {$\bullet$};
    \node (A1_2) at (2, 1) {$\bullet$};
    \node (A1_3) at (3, 1) {$\bullet$};
    \node (A1_5) at (5, 1) {$\bullet$};
    \node (A1_6) at (6, 1) {$\bullet$};
    \path (A1_0) edge [-]node [auto] {$\scriptstyle{}$} (A1_1);
    \path (A1_1) edge [-]node [auto] {$\scriptstyle{}$} (A1_2);
    \path (A1_2) edge [-]node [auto] {$\scriptstyle{}$} (A1_3);
    \path (A1_3) edge [dashed, -]node [auto] {$\scriptstyle{}$} (A1_5);
    \path (A1_5) edge [-]node [auto] {$\scriptstyle{}$} (A1_6);
  \end{tikzpicture} 
\end{align}
be the Dynkin diagram of type $A_n$. We can consider two different limits for $n\to +\infty$: the infinite Dynkin diagram
\begin{align}
  \begin{tikzpicture}[xscale=1.5,yscale=-0.5]
    \node (A1_0) at (0, 1) {$\bullet$};
    \node (A1_1) at (1, 1) {$\bullet$};
    \node (A1_2) at (2, 1) {$\bullet$};
    \node (A1_3) at (3, 1) {$\bullet$};
    \node (A1_5) at (5, 1) {$ $};
    \path (A1_0) edge [-]node [auto] {$\scriptstyle{}$} (A1_1);
    \path (A1_1) edge [-]node [auto] {$\scriptstyle{}$} (A1_2);
    \path (A1_2) edge [-]node [auto] {$\scriptstyle{}$} (A1_3);
    \path (A1_3) edge [dashed, -]node [auto] {$\scriptstyle{}$} (A1_5);
  \end{tikzpicture} 
\end{align}
which gives rise to the infinite dimensional Lie algebra $\slfrak(+\infty)$, and the infinite Dynkin diagram
\begin{align}\label{eq:limit-II}
  \begin{tikzpicture}[xscale=1.5,yscale=-0.5]
    \node (A1_0') at (-2, 1) {$ $};
  \node (A1_0) at (0, 1) {$\bullet$};
    \node (A1_1) at (1, 1) {$\bullet$};
    \node (A1_2) at (2, 1) {$\bullet$};
    \node (A1_3) at (3, 1) {$\bullet$};
    \node (A1_5) at (4, 1) {$\bullet$};
    \node (A1_6) at (6, 1) {$ $};
    \path (A1_0) edge [-]node [auto] {$\scriptstyle{}$} (A1_1);
    \path (A1_1) edge [-]node [auto] {$\scriptstyle{}$} (A1_2);
    \path (A1_2) edge [-]node [auto] {$\scriptstyle{}$} (A1_3);
    \path (A1_3) edge [-]node [auto] {$\scriptstyle{}$} (A1_5);
    \path (A1_5) edge [dashed, -]node [auto] {$\scriptstyle{}$} (A1_6);
   \path (A1_0') edge [dashed, -]node [auto] {$\scriptstyle{}$} (A1_0); 
  \end{tikzpicture} 
\end{align}
which corresponds to the infinite dimensional Lie algebra $\slfrak(\infty)$. One can define the quantum groups $\Ucal_\upsilon(\slfrak(+\infty))$ and $\Ucal_\upsilon(\slfrak(\infty))$ using the Drinfeld-Jimbo presentation (see for example \cite[Section~2]{art:arikijaconlecouvey2012}). In the present section, we relate the quantum affinization of such quantum groups to our algebras $\Ubf_\upsilon(\slfrak(S^1_\Q))=\DCbf_\infty$ and $\DUbf_\infty^0$. 

Let us start by realizing geometrically $\Ucal_\upsilon(\slfrakhat(+\infty))$ as a direct limit of associative algebras. Let $n$ be a positive integer and consider the Serre subcategory $\mathsf{add}\big({}_{(n+1)}\Scal_{n+1}^{(1)}\big)$ of $\Tor_{p_{n+1}}(X_{n+1})$ generated by the simple torsion sheaf ${}_{(n+1)}\Scal_{n+1}^{(1)}$. As described in \cite[Section~4.10]{art:burbanschiffmann2013}, the Serre quotient $\Tor_{p_{n+1}}(X_{n+1})/\mathsf{add}\big({}_{(n+1)}\Scal_{n+1}^{(1)}\big)$ is equivalent to $\Tor_{p_n}(X_n)$: there exists an exact fully faithful functor 
\begin{align}
\Tor_{p_n}(X_n)\to \Tor_{p_{n+1}}(X_{n+1})
\end{align}
such that
\begin{align}
{}_n \Scal_n\mapsto {}_{(n+1)}\Scal_1^{(2)} \quad\text{and}\quad {}_n\Scal_i\mapsto {}_{(n+1)}\Scal_{i+1} \text{ for } i=1, \ldots, n-1 \ .
\end{align}
Let us denote by $\imath_{n, n+1}\colon \Tor_{p_n}(X_n)\to \Tor_{p_{n+1}}(X_{n+1})$ the composition of such a functor with the equivalence given by tensoring by $\Lcal_{n+1}$. We have
\begin{align}\label{eq:imath}
\imath_{n, n+1}\big({}_n \Scal_n\big)= {}_{(n+1)}\Scal_{(n+1)}^{(2)} \quad\text{and}\quad \imath_{n, n+1}\big({}_n\Scal_i\big)= {}_{(n+1)}\Scal_i \text{ for } i=1, \ldots, n-1 \ .
\end{align}
By \cite[Theorem~3.3]{art:burbanschiffmann2013}, we have an injective algebra homomorphism of the reduced Drinfeld doubles $\imath_{n, n+1}\colon \DHbf^{\mathsf{tw}, \tor}_{n, p_n}\to \DHbf^{\mathsf{tw}, \tor}_{n+1, p_{n+1}}$ and therefore an injective algebra homomorphism
\begin{align}
\imath_{n, n+1}\colon \DCbf_n\to \DCbf_{n+1}\ .
\end{align}
Here and in the following, by abuse of notation we denote in the same way the functor \eqref{eq:imath} and the homomorphisms of algebras induced by it.

We define $\Ubf_\upsilon(\slfrakhat(+\infty))$ as the direct limit
\begin{align}
\Ubf_\upsilon\big(\slfrakhat(+\infty))\coloneqq\lim_{\genfrac{}{}{0pt}{}{\to}{n}}\, \DCbf_n
\end{align}
with respect to the directed system of associative algebras $(\DCbf_n, \imath_{n, n+1})$. Now, let us relate it to $\Ubf_\upsilon(\slfrak(S^1_\Q))$. Define a discrete family $\{\alpha_n\}_{n\in \Z}$ of rational numbers $\alpha_n$ by
\begin{align}
\alpha_n\coloneqq
\begin{cases}
\displaystyle 1-\frac{1}{2^{n+1}} & \text{for } n\geq 0 \ , \\[8pt]
\displaystyle \frac{1}{2^{-n+1}} & \text{for } n\leq 0 \ .
\end{cases}
\end{align}
Define the injective homomorphism $\Phi_{n, +\infty}\colon \Cbf_n\to \Hbf_{\infty, p_\infty}^{\mathsf{tw}, \tor}$ of algebras sending
\begin{align}
1_{{}_n \Scal_n} &\mapsto 1_{\Scal_{[\alpha_{n-1},\, 1/2[}}\quad\text{and}\quad
1_{{}_n \Scal_i} \mapsto 1_{\Scal_{[\alpha_{i-1},\, \alpha_i[}}\ \text{ for } i=1, \ldots, n-1\ ,\\[2pt] 
\kbf_{(0, \pm\mathbf e_{n, n})} & \mapsto K_{[\alpha_{n-1}, 1/2[}^\mp   \quad\text{and}\quad \kbf_{(0, \pm\mathbf e_{n, i})}\mapsto K_{[\alpha_{i-1}, \alpha_i[}^\mp\ \text{ for } i=1, \ldots, n-1\ .
\end{align}
The homomorphism $\Phi_{n, +\infty}$ is compatible with the homomorphism $\imath_{n, n+1}\colon \Cbf_n\to \Cbf_{n+1}$, i.e., we have a commutative diagram of associative algebras
\begin{align}
  \begin{tikzpicture}[xscale=1.5,yscale=-1.2]
    \node (A0_0) at (0, 0) {$\Cbf_n$};
    \node (A0_2) at (2, 0) {$\Hbf_{\infty, p_\infty}^{\mathsf{tw}, \tor}$};
    \node (A1_1) at (1, 1) {$\square$};
    \node (A2_0) at (0, 2) {$\Cbf_{n+1}$};
    \node (A2_2) at (2, 2) {$\Hbf_{\infty, p_\infty}^{\mathsf{tw}, \tor}$};
    \node (Comma) at (2.7, 1) {$.$};
    \path (A0_0) edge [->]node [left] {$\scriptstyle{\imath_{n, n+1}}$} (A2_0);
    \path (A0_0) edge [->]node [auto] {$\scriptstyle{\Phi_{n, +\infty}}$} (A0_2);
    \path (A0_2) edge [->]node [auto] {$\scriptstyle{\mathsf{id}}$} (A2_2);
    \path (A2_0) edge [->]node [auto] {$\scriptstyle{\Phi_{n+1, +\infty}}$} (A2_2);
  \end{tikzpicture} 
\end{align}
Such a compatibility holds at the level of the reduced Drinfeld doubles. In particular, we obtain the following.
\begin{proposition}
The homomorphisms $\Phi_{n, +\infty}$ induce an injective homomorphism of associative algebras
\begin{align}
\Phi_{+\infty}\colon \Ubf_\upsilon\big(\slfrakhat(+\infty)) \to \Ubf_\upsilon(\slfrak(S^1_\Q)) \ .
\end{align}
\end{proposition}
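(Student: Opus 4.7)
The first step is to promote each $\Phi_{n,+\infty}$ to a homomorphism of the reduced Drinfeld doubles. Denote by $\widetilde\Phi_{n,+\infty}\colon \DCbf_n=\Ubf_\upsilon(\slfrakhat(n))\to \DCbf_\infty=\Ubf_\upsilon(\slfrak(S^1_\Q))$ the natural extension which sends the negative generator $1^{-}_{{}_n\Scal_i}$ to $1^{-}_{\Scal_{J_i}}$, where $J_i=[\alpha_{i-1},\alpha_i[$ for $i=1,\ldots,n-1$ and $J_n=[\alpha_{n-1},1/2[$. Since $\Ubf_\upsilon(\slfrakhat(n))$ is defined by its Drinfeld-Jimbo presentation in terms of $E_i,F_i,K_i^{\pm 1}$, it is enough to check that the images satisfy the Drinfeld-Jimbo relations; this reduces to verifying the join and nest relations of $\DCbf_\infty$ for the finite family of intervals $\{J_1,\ldots,J_n\}$ which partition $S^1_\Q$. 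Equivalently, one can use that Green's pairing on $\Cbf_n$ between simple generators computes to $(E_i,F_j)_G=\delta_{i,j}/(\upsilon-\upsilon^{-1})$, matching the formula \eqref{eq:GreenpairingCinfty}, so that the Drinfeld double construction transports $\Phi_{n,+\infty}$ into a well-defined $\widetilde\Phi_{n,+\infty}$.

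\textbf{Passage to the limit.} The commutativity of the square relating $\Phi_{n,+\infty}$, $\Phi_{n+1,+\infty}$ and $\imath_{n,n+1}$ established just above the statement extends to the reduced Drinfeld doubles by functoriality of the double. Consequently $\{\widetilde\Phi_{n,+\infty}\}_n$ is a cocone over the directed system $(\DCbf_n,\imath_{n,n+1})$, and the universal property of the direct limit produces the required map
\begin{align}
\Phi_{+\infty}\colon \Ubf_\upsilon(\slfrakhat(+\infty))=\lim_{\genfrac{}{}{0pt}{}{\to}{n}}\DCbf_n \to \Ubf_\upsilon(\slfrak(S^1_\Q)).
\end{align}

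\textbf{Injectivity.} A filtered colimit of injective morphisms to a common target is injective, so it suffices to prove that each $\widetilde\Phi_{n,+\infty}$ is injective. Using the triangular decomposition of $\Ubf_\upsilon(\slfrakhat(n))$, this reduces to injectivity of $\Phi_{n,+\infty}$ on $\Cbf_n$. Choose $k$ large enough so that $1/2$ and all $\alpha_{i-1},\alpha_i$ for $1\le i\le n$ lie in $\tfrac{1}{k}\Z$. By construction $\Phi_{n,+\infty}(\Cbf_n)\subseteq \Omega_{\infty,k}(\Cbf_k)$, and since $\pi_{\infty,k}^\ast$ is fully faithful, the map $\Omega_{\infty,k}$ is injective (see Section~\ref{sec:hallalgebramn}). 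One is then reduced to showing that the resulting homomorphism $\Cbf_n\to \Cbf_k$ is injective. By the join relations, the image of each ${}_n\Scal_i$ is obtained as an iterated $\upsilon$-commutator of consecutive simple generators of $\Cbf_k$, and the image coincides with the composition subalgebra associated to a sub-Dynkin diagram of $A^{(1)}_{k-1}$ of type $A^{(1)}_{n-1}$ attached to the chosen partition of $S^1_\Q$; this is a known embedding of quantum affine algebras, injective for instance by comparison of PBW bases in the loop presentation.

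\textbf{Main obstacle.} The delicate point is the last step: the $n$ intervals $J_1,\ldots,J_n$ have very unequal lengths, and the embedding $\Cbf_n\hookrightarrow\Cbf_k$ is not the naive one induced by identifying the cyclic quiver with $n$ consecutive vertices of a larger cyclic quiver. Rather, one of the $J_i$ plays the role of the affine node and corresponds to a long imaginary-root-like element in $\Cbf_k$. Injectivity thus cannot be read off directly from the generators, and must be deduced either via a PBW-type argument in the loop realization or, more intrinsically, from the fact that $\Cbf_n$ is presented by the quadratic relations of Theorem~\ref{thm:Cinfty} restricted to the $n$ intervals $J_1,\ldots,J_n$, so that the image has the same presentation and the map is an isomorphism onto its image.
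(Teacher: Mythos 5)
Your overall architecture is sound and closely tracks the paper's (much terser) argument: promote each $\Phi_{n,+\infty}$ to the reduced Drinfeld doubles, use the compatibility with $\imath_{n,n+1}$ to obtain a cocone and hence a map out of the direct limit, and deduce injectivity of the limit map from injectivity of each $\widetilde\Phi_{n,+\infty}$, since a filtered colimit of compatible injections into a fixed target is injective. The reduction via the triangular decomposition to injectivity on $\Cbf_n$, and the factorization $\Phi_{n,+\infty}=\Omega_{\infty,k}\circ\psi$ through $\Cbf_k$ with $k=2^n$ (so that all $\alpha_i$ lie in $\tfrac1k\Z$), are correct and are useful details the paper leaves implicit.

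The gap is precisely in the step you flag as delicate, the injectivity of $\psi\colon\Cbf_n\to\Cbf_k$. Your second proposed resolution is circular: the fact that $E_{J_1},\dots,E_{J_n}$ satisfy the defining relations of $\Cbf_n$ only produces a surjection $\Cbf_n\twoheadrightarrow\mathrm{Im}(\psi)$; to say that ``the image has the same presentation'' one must exclude additional relations in the subalgebra, which is exactly the injectivity in question. Your first resolution (PBW bases in a loop realization) would work but is left as a gesture, and your description of the obstruction is inaccurate: none of the $E_{J_i}$ is ``imaginary-root-like,'' since each $J_i$ is a proper interval and hence $E_{J_i}=\upsilon^{1/2}1_{{}_k\Scal_a^{(b)}}$ for suitable $a$ and $b<k$, a real root vector obtained by Lemma~\ref{lem:Lin-1} as an iterated $\upsilon$-commutator of consecutive simple generators of $\Cbf_k$; the only difference with the uniform case $\Omega_{kn,n}$ of Theorem~\ref{thm:circle-hall} is that the blocks have unequal sizes. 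The way to close the gap with the paper's own toolkit is to observe that any such non-uniform coarsening $\Cbf_n\to\Cbf_k$ factors as a composition of the Serre-quotient embeddings $\imath_{m,m+1}$ (conjugated by suitable powers of the twists $T_{m+1}$), each of which comes from an exact fully faithful functor onto a perpendicular category and is therefore injective on composition algebras and on their reduced Drinfeld doubles by \cite[Theorem~3.3]{art:burbanschiffmann2013}; this is the reason the construction in this section is organized around the functors $\imath_{n,n+1}$ in the first place.
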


If we see $\mathsf{add}\big({}_{(n+1)}\Scal_{n+1}^{(1)}\big)$ as a Serre subcategory of $\Tor(X_{n+1})$ and consider the corresponding Serre quotient, we obtain an exact fully faithful functor $\imath_{n, n+1}\colon \Tor(X_n)\to \Tor(X_{n+1})$ and therefore an injective homomorphism of algebras (cf.\ \cite[Theorem~5.25]{art:burbanschiffmann2013})
\begin{align}
\imath_{n, n+1}\colon \DUbf_n^0\to \DUbf_{n+1}^0 \ \ \text{, i.e.,}\ \ \imath_{n, n+1}\colon \Ubf_\upsilon(\glfrakhat(n))\to \Ubf_\upsilon(\glfrakhat(n+1))\ .
\end{align}
We call $\Ubf_\upsilon(\glfrakhat(+\infty))$ the direct limit
\begin{align}
\Ubf_\upsilon\big(\glfrakhat(+\infty))\coloneqq\lim_{\genfrac{}{}{0pt}{}{\to}{n}}\, \DUbf_n^0
\end{align}
with respect to the directed system of associative algebras $(\DUbf_n^0, \imath_{n, n+1})$. By using similar arguments than before, we obtain injective homomorphisms $\Phi_{n, +\infty}\colon \DUbf_n^0\to \DUbf_\infty^0$ (note that the generators $\Omega_n(T_r)$ are sent to $\Omega_\infty(T_r)$).
\begin{proposition}
The homomorphisms $\Phi_{n, +\infty}$ induce an injective homomorphism of associative algebras
\begin{align}
\Phi_{+\infty}\colon \Ubf_\upsilon\big(\glfrakhat(+\infty)) \to   \DUbf_\infty^0\ .
\end{align}
\end{proposition}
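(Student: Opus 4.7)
The plan is to argue in complete parallel with the preceding proposition (the case of $\Ubf_\upsilon(\slfrakhat(+\infty))$), replacing $\Cbf_n$ with $\DUbf_n^0$ throughout. The first step is to establish, for every positive integer $n$, the commutativity of the square $\Phi_{n+1,+\infty} \circ \imath_{n,n+1} = \Phi_{n,+\infty}$, where $\imath_{n,n+1}\colon \DUbf_n^0 \to \DUbf_{n+1}^0$ is the embedding induced by the Serre quotient identification $\Tor(X_{n+1})/\mathsf{add}\big({}_{(n+1)}\Scal_{n+1}^{(1)}\big) \simeq \Tor(X_n)$ twisted by $\Lcal_{n+1}$. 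Using the triangular decomposition of $\DUbf_n^0$, it suffices to check this identity on a set of generators: the subalgebras $\Cbf_n^\pm$, the Cartan part $\Kbf_n^0$, and the central elements $\Omega_n(T_{0,r})$ for $r\geq 1$.

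On $\Cbf_n$ (and its negative double), the identity is precisely the commutative diagram already established for $\Phi_{n,+\infty}\colon \Cbf_n \to \Hbf^{\mathsf{tw},\tor}_{\infty,p_\infty}$ in the preceding proposition. On the Cartan generators $\kbf_{(0,\pm\mathbf e_{n,i})}$, compatibility follows directly from the explicit formulas defining $\Phi_{n,+\infty}$ on the $K$'s, since the relevant half-open intervals partition $S^1_\Q$ compatibly as $n$ grows. The key remaining check concerns the generators $\Omega_n(T_{0,r})$: the functor $\imath_{n,n+1}$ acts as the identity on the torsion subcategories supported away from the stacky point, because $\Tcal_x^{(\ell)}\otimes \Lcal_n^{\otimes s}\simeq \Tcal_x^{(\ell)}$ for $x\neq p$ and the Serre quotient only collapses a simple object concentrated at $p_{n+1}$. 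Hence $\imath_{n,n+1}\bigl(\Omega_n(T_{0,r})\bigr)=\Omega_{n+1}(T_{0,r})$, and both sides of the square send this element to $\Omega_\infty(T_{0,r})\in \DUbf_\infty^0$.

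Once this compatibility is secured, the universal property of the direct limit produces the desired morphism of algebras
\begin{align}
\Phi_{+\infty}\colon \Ubf_\upsilon\big(\glfrakhat(+\infty)\big)=\lim_{\genfrac{}{}{0pt}{}{\to}{n}}\, \DUbf_n^0 \,\longrightarrow\, \DUbf_\infty^0.
\end{align}
For injectivity, let $x \in \Ubf_\upsilon(\glfrakhat(+\infty))$ be nonzero and pick $n$ together with a representative $\tilde x \in \DUbf_n^0$. Since each transition map $\imath_{n,n+1}$ is an injective algebra homomorphism, the representative $\tilde x$ is itself nonzero, and then $\Phi_{+\infty}(x)=\Phi_{n,+\infty}(\tilde x)\neq 0$ by the injectivity of $\Phi_{n,+\infty}$ established in the preceding proposition.

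The only substantive point requiring attention is the compatibility check on the $\Omega_n(T_{0,r})$; once one accepts the description of $\imath_{n,n+1}$ as the ``identity plus Serre quotient at $p$'' together with the triviality of the $\Lcal_{n+1}$-twist on torsion away from the stacky point, everything else is a formal consequence of the construction of the reduced Drinfeld double and of the universal property of direct limits.
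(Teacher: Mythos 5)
Your argument is correct and follows exactly the route the paper intends: the paper itself gives no separate proof of this proposition, relying on the phrase ``by using similar arguments than before'' together with the parenthetical remark that the generators $\Omega_n(T_{0,r})$ are sent to $\Omega_\infty(T_{0,r})$, and your write-up is a faithful expansion of precisely that outline (compatibility on generators via the triangular decomposition, universal property of the direct limit, injectivity from injectivity of the $\Phi_{n,+\infty}$ and of the transition maps).

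One small point in your key step deserves tightening. To justify $\imath_{n,n+1}\big(\Omega_n(T_{0,r})\big)=\Omega_{n+1}(T_{0,r})$ you argue that $\imath_{n,n+1}$ is the identity on torsion supported away from the stacky point; but $T_{0,r}$ has a component $T_{0,r;p}$ supported \emph{at} $p$, which is exactly the locus where the Serre quotient and the $\Lcal_{n+1}$-twist act nontrivially, so this argument does not cover it. What is needed there is the identification $\imath_{n,n+1}\circ\pi_n^\ast\simeq\pi_{n+1}^\ast$ on $\Tor_p(X)$, i.e.\ that $\imath_{n,n+1}$ sends ${}_n\Scal_n^{(\ell n)}=\pi_n^\ast T_p^{(\ell)}$ to ${}_{(n+1)}\Scal_{n+1}^{(\ell(n+1))}=\pi_{n+1}^\ast T_p^{(\ell)}$; this follows from exactness of $\imath_{n,n+1}$ and the formula $\imath_{n,n+1}({}_n\Scal_n)={}_{(n+1)}\Scal_{n+1}^{(2)}$, since each of the $\ell$ occurrences of ${}_n\Scal_n$ in the composition series gains one extra factor, giving total length $\ell n+\ell=\ell(n+1)$. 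With that supplement the proof is complete.
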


\begin{remark} The construction of the above embedding $\Ubf_\upsilon\big(\slfrakhat(+\infty)) \to \Ubf_\upsilon(\slfrak(S^1_\Q))$ depends on a choice of a discrete sequence $(\alpha_n)_n$ as above; we could have constructed a similar embedding using any discrete sequence with a single accumulation point. 
\end{remark}

Now we turn to the second type of limit of the $A_n$ Dynkin quiver \eqref{eq:limit-II}. By using again Serre quotients, we can define an exact fully faithful functor $\jmath_{n, n+2}\colon \Tor_{p_{n}}(X_{n})\to \Tor_{p_{n+2}}(X_{n+2})$ sending
\begin{align}
{}_{n} \Scal_{n}  \mapsto \imath_{n+1, n+2}\big({}_{n+1}\Scal_1^{(2)}\big)\quad \text{and}\quad {}_{n} \Scal_i \mapsto {}_{n+2} \Scal_{i+1} \ \text{ for } i=1, \ldots, n-1\ .
\end{align}
Then there is a corresponding injective homomorphism of algebras $\jmath_{n, n+2}\colon \DCbf_{n}\to \DCbf_{n+2}$ and a directed system $(\DCbf_{n}, \jmath_{n, n+2})$. We define $\Ubf_\upsilon(\slfrakhat(\infty))$ as the direct limit
\begin{align}
\Ubf_\upsilon\big(\slfrakhat(\infty))\coloneqq\lim_{\genfrac{}{}{0pt}{}{\to}{n}}\, \DCbf_{n}\ .
\end{align} 
As before, by working over the category of all torsion sheaves on $X_{n}$ and $X_{n+2}$, we can obtain a directed system $(\DUbf_{n}^0, \jmath_{n, n+2})$. We define $\Ubf_\upsilon(\glfrakhat(\infty))$ as the direct limit
\begin{align}
\Ubf_\upsilon\big(\glfrakhat(\infty))\coloneqq\lim_{\genfrac{}{}{0pt}{}{\to}{n}}\, \DUbf_{n}^0\ .
\end{align} 
\begin{rem}
In \cite{art:hernandez2011}, Hernandez defined the quantum group $\Ucal_\upsilon(\slfrakhat(\infty))$ as a quantum affinization of $\Ucal_\upsilon(\slfrak(\infty))$ by using a ``Drinfeld presentation" (see also \cite{art:mansuy2015}). Our approach to this quantum affinization is somewhat different and only involves Drinfeld-Jimbo type relations. \hfill $\triangle$
\end{rem}
Fixing the discrete family $\{\alpha_n\}_{n \in \Z}$ as before, let us consider the assignment 
\begin{align}
1_{{}_{n}\Scal_n} &\mapsto 1_{\Scal_{[\alpha_{\lfloor n/2\rfloor +2 \{n/2\}},\, \alpha_{-\lfloor n/2\rfloor+1}[}}\ \text{ and }\ 1_{{}_{n}\Scal_i}\mapsto 1_{\Scal_{[\alpha_{-\lfloor n/2\rfloor+i}, \,\alpha_{-\lfloor n/2\rfloor+i+1}[}}\ \text{ for } i=1, \ldots, n-1\ ,\\[2pt] 
\kbf_{(0, \pm\mathbf e_{n, n})} &\mapsto K_{[\alpha_{\lfloor n/2\rfloor +2 \{n/2\}},\, \alpha_{-\lfloor n/2\rfloor+1}[}^\mp   \quad\text{and}\quad \kbf_{(0, \pm\mathbf e_{n, i})}\mapsto K_{[\alpha_{-\lfloor n/2\rfloor+i}, \,\alpha_{-\lfloor n/2\rfloor+i+1}[}^\mp\ \text{ for } i=1, \ldots, n-1\ .
\end{align}
These define injective homomorphism of associative algebras $\Phi_{n, \infty}\colon \DCbf_{n}\to \DCbf_\infty$ and $\Phi_{n, \infty}\colon \DUbf_{n}^0\to \DUbf_\infty^0$ compatible with the $\jmath_{n, n+2}$. As before, we have the following. 
\begin{proposition}
The homomorphisms $\Phi_{n, \infty}$ induce, respectively, injective homomorphisms of associative algebras
\begin{align}
\Phi_{\infty}\colon \Ubf_\upsilon\big(\slfrakhat(\infty)) \to \Ubf_\upsilon(\slfrak(S^1_\Q))
\quad \text{and}\quad \Phi_{\infty}\colon \Ubf_\upsilon\big(\glfrakhat(\infty)) \to \DUbf_\infty^0 \ .
\end{align}
\end{proposition}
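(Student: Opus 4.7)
The plan is to follow exactly the same template as for $\Phi_{+\infty}$ in the previous proposition, so the argument naturally splits into three steps: (i) verify that each $\Phi_{n,\infty}$ is a well-defined injective homomorphism; (ii) verify compatibility with the transition maps $\jmath_{n,n+2}$ of the directed system; (iii) invoke the universal property of direct limits and deduce injectivity of the resulting $\Phi_{\infty}$.

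For step (i), I would first check that the prescribed assignment respects all defining relations of $\DCbf_n \simeq \Ubf_\upsilon(\slfrakhat(n))$ inside $\DCbf_\infty = \Ubf_\upsilon(\slfrak(S^1_\Q))$. Since the image consists of the generators $E_J, F_J, K_J^{\pm 1}$ attached to a consecutive family of $n$ pairwise disjoint strict rational half-open intervals $J_1, \ldots, J_n \subset S^1_\Q$ whose disjoint union is all of $S^1_\Q$, one reduces to checking: the cyclic cartan type relations, which follow from the pairing formulas in Theorem~\ref{thm:Cinfty} since $\langle \chi_{J_i}, \chi_{J_j}\rangle$ matches the $\widehat{A}_{n-1}$ Cartan matrix for consecutive/identical intervals and vanishes for separated ones; the Serre relations, which follow from the nest/join relations inside $\DCbf_\infty$ exactly as in the proof of Theorem~\ref{thm:Cinfty} (where we showed that \eqref{eq:Serre2} is a consequence of the quadratic relations). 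Injectivity of $\Phi_{n,\infty}$ then follows either by explicit matching with the composition subalgebra generated by $E_{J_1},\ldots,E_{J_{n-1}}$ (together with the dual $F$'s and Cartan), or by transporting the argument of Corollary~\ref{cor:DC_n} via the obvious embedding of Serre subcategories $\mathsf{add}\{\Scal_{J_1},\ldots,\Scal_{J_n}\}\hookrightarrow \Tor_{p_\infty}(X_\infty)$.

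For step (ii), the functor $\jmath_{n,n+2}\colon \Tor_{p_n}(X_n)\to \Tor_{p_{n+2}}(X_{n+2})$ was defined so that the $n-1$ ``interior'' simples ${}_n\Scal_i$ map to ${}_{n+2}\Scal_{i+1}$, while ${}_n\Scal_n$ maps to a length-two module living on the two new boundary simples; under $\Phi_{n+2,\infty}$ this length-two module is sent to $1_{\Scal_{J}}$ where $J$ is precisely the join of the two new outermost intervals, and by the join relation \eqref{eq:Joining1} this agrees with $\Phi_{n,\infty}\big({}_n\Scal_n\big)$. The indexing of the intervals by $\alpha_{-\lfloor n/2\rfloor+i}$ is arranged so that adding two intervals (one on each side) to the chosen window in $S^1_\Q$ reproduces exactly the combinatorics of $\jmath_{n,n+2}$; the analogous check for Cartan generators is routine using \eqref{eq:Serre1-II}. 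The same verification carries over verbatim for the spherical generators $\Omega_n(T_{0,r})\mapsto \Omega_\infty(T_{0,r})$ in the $\glfrak$ version.

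For step (iii), the commutativity of the diagram
\begin{align}
  \begin{tikzpicture}[xscale=1.8,yscale=-1.1,baseline=(current bounding box.center)]
    \node (A0) at (0,0) {$\DCbf_n$};
    \node (B0) at (2,0) {$\DCbf_\infty$};
    \node (A1) at (0,1) {$\DCbf_{n+2}$};
    \node (B1) at (2,1) {$\DCbf_\infty$};
    \path (A0) edge [->]node[left]{$\scriptstyle \jmath_{n,n+2}$} (A1);
    \path (A0) edge [->]node[auto]{$\scriptstyle \Phi_{n,\infty}$} (B0);
    \path (A1) edge [->]node[auto]{$\scriptstyle \Phi_{n+2,\infty}$} (B1);
    \path (B0) edge [->]node[right]{$\scriptstyle \mathsf{id}$} (B1);
  \end{tikzpicture}
\end{align}
yields by the universal property of the direct limit a canonical homomorphism $\Phi_\infty\colon \Ubf_\upsilon(\slfrakhat(\infty))\to \Ubf_\upsilon(\slfrak(S^1_\Q))$. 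Injectivity is automatic: any $x\in \lim_{\to}\DCbf_n$ has a representative $x_n\in \DCbf_n$, and if $\Phi_\infty(x)=0$ then $\Phi_{n,\infty}(x_n)=0$, whence $x_n=0$ by step (i), so $x=0$. The same three steps apply to the $\glfrak$ variant, with the only addition being the compatibility $\Phi_{n+2,\infty}\circ\jmath_{n,n+2}=\Phi_{n,\infty}$ on the central elements ${}_nZ_r\mapsto \Omega_\infty(T_{0,r})$ which is immediate from $\jmath_{n,n+2}\colon {}_nZ_r \mapsto {}_{n+2}Z_r$ and the definition of $\Omega_{\infty,n}$. The main (in fact the only nontrivial) obstacle is the relation-checking in step (i); everything else is formal bookkeeping.
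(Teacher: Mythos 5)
Your proposal follows the same route the paper takes (it gives no separate proof here, simply transporting the argument used for $\Phi_{+\infty}$: check each $\Phi_{n,\infty}$ is an injective homomorphism, check compatibility with the transition maps, and pass to the direct limit), and all three steps are carried out correctly. One small imprecision in step (ii): $\jmath_{n,n+2}({}_n\Scal_n)=\imath_{n+1,n+2}({}_{n+1}\Scal_1^{(2)})\simeq{}_{n+2}\Scal_1^{(3)}$ is a length-\emph{three} object whose image under $\Phi_{n+2,\infty}$ is $1_{\Scal_J}$ with $J$ the join of the three intervals refining the old wrap-around interval (the two new outer ones plus the new wrap-around), not of two; this does not affect the argument, since the join relation still identifies it with $\Phi_{n,\infty}(1_{{}_n\Scal_n})$.
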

\appendix

\bigskip\section{Some results from \cite{art:lin2014}}

In this section we provide the proofs of some of the results in \cite{art:lin2014}, which are used in the main body of the paper. This is done in order to avoid possible confusions since our assumptions differ sometimes from \textit{loc.cit.} (for example, the orientation of the cyclic quiver we chose in the main body of the paper is the opposite of the one in \textit{loc.cit.}). We will follow the notation introduced in Sections~\ref{sec:preliminariesorbcurves-nrootstack} and \ref{sec:hallalgebran}.
\begin{lemma}\label{lem:Lin-1}
Let $1\leq i\leq n$ and $1<j<n$. Then
\begin{align}
1_{{}_n \Scal_i^{(j)}}=\upsilon\, 1_{{}_n \Scal_{\nfrapart{i+1-j}}^{(1)}}\, 1_{{}_n \Scal_{i}^{(j-1)}} -1_{{}_n \Scal_{i}^{(j-1)}}\,  1_{{}_n \Scal_{\nfrapart{i+1-j}}^{(1)}}\ .
\end{align}
Here, we set formally ${}_n\Scal_0^{(\ell)}\coloneqq{}_n \Scal_n^{(\ell)}$ for any positive integer $\ell$.
\end{lemma}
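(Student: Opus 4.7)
Set $k \coloneqq \nfrapart{i+1-j}$, $A \coloneqq {}_n\Scal_k^{(1)}$ (a simple object), and $B \coloneqq {}_n\Scal_i^{(j-1)}$ (indecomposable of length $j-1$). The approach is to compute both Hall products $1_A \cdot 1_B$ and $1_B \cdot 1_A$ directly from the definition \eqref{eq:Hallprod} by classifying, in the category $\Tor_{p_n}(X_n) \simeq \Rep^{\mathsf{nil}}_k\big(A_{n-1}^{(1)}\big)$, the two relevant sets of extensions and counting subsheaves of a given isomorphism type in each candidate extension.

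First I would compute the two Euler forms. The dimension vector of $B$ has support $\{k+1, k+2, \ldots, i\}$ (read cyclically, i.e., $\{k+1, \ldots, k+j-1\} \bmod n$), which has exactly $j-1$ elements. The hypothesis $j < n$ ensures this support contains neither $k$ nor $k-1$, while $j > 1$ ensures $k \neq i$ so that the supports of $A$ and $B$ are disjoint. Using the elementary formula for $\langle {}_n\Scal_a, {}_n\Scal_b\rangle$ on $\Z^n$, componentwise evaluation gives
\begin{align}
\langle A, B\rangle = -1, \qquad \langle B, A\rangle = 0.
\end{align}
Since $\Hom(A,B) = \Hom({}_n\Scal_k, B) = 0$ (the socle of $B$ is ${}_n\Scal_i \neq {}_n\Scal_k$), we deduce $\dim \Ext^1(A,B) = 1$, and similarly $\Hom(B,A) = 0$ (the top of $B$ is ${}_n\Scal_{\nfrapart{i+2-j}} \neq {}_n\Scal_k$), hence $\dim \Ext^1(B,A) = 0$.

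Next I would enumerate the relevant extensions. From $\dim \Ext^1(A,B)=1$, there are precisely two isomorphism classes of extensions of $A$ by $B$: the split one $A \oplus B$ and the non-split one, which by the first short exact sequence recalled in Section~\ref{sec:preliminariesorbcurves-nrootstack} is exactly ${}_n\Scal_i^{(j)}$. From $\dim \Ext^1(B,A)=0$, the only extension of $B$ by $A$ is $A \oplus B$. I would then count subsheaves in each case. For $\Rcal = {}_n\Scal_i^{(j)}$, there is a unique subsheaf isomorphic to $B$, namely the maximal proper subsheaf (since $\Rcal$ has simple head $A$); for $\Rcal = A \oplus B$, the disjointness of vertex-supports forces any subsheaf with dimension vector equal to that of $B$ to lie entirely in the $B$-summand, yielding exactly one. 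For $1_B \cdot 1_A$, the unique $\Ncal \simeq A$ inside $A \oplus B$ comes from the socle decomposition $A \oplus {}_n\Scal_i$, again giving count $1$.

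Finally, substituting these counts into \eqref{eq:Hallprod} yields
\begin{align}
1_A \cdot 1_B = \upsilon^{-1}\big(1_{{}_n\Scal_i^{(j)}} + 1_{A \oplus B}\big), \qquad 1_B \cdot 1_A = 1_{A \oplus B},
\end{align}
from which the claimed identity follows by forming $\upsilon \cdot 1_A \cdot 1_B - 1_B \cdot 1_A$. The main delicate point is not the algebra but the bookkeeping with cyclic indices: one must verify carefully that the hypotheses $1 < j < n$ are used essentially, namely to guarantee that the supports of $A$ and $B$ are disjoint and that $B$ does not wrap around enough to force $\langle B, A\rangle$ to become $-1$ (which would produce an extra non-split extension of $B$ by $A$ and spoil the cancellation).
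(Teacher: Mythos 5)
Your proof is correct, and it reaches exactly the two Hall-product identities
\begin{align}
1_{{}_n \Scal_{\nfrapart{i+1-j}}^{(1)}}\, 1_{{}_n \Scal_{i}^{(j-1)}} =\upsilon^{-1}\big(1_{{}_n \Scal_i^{(j)}}+1_{{}_n \Scal_{\nfrapart{i+1-j}}^{(1)}\oplus {}_n \Scal_{i}^{(j-1)}}\big)\ ,\qquad
1_{{}_n \Scal_{i}^{(j-1)}}\,  1_{{}_n \Scal_{\nfrapart{i+1-j}}^{(1)}} =1_{{}_n \Scal_{\nfrapart{i+1-j}}^{(1)}\oplus {}_n \Scal_{i}^{(j-1)}}
\end{align}
that the paper's proof also derives, so the overall strategy coincides. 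Where you genuinely diverge is in how the $\Hom$ and $\Ext^1$ groups are computed: the paper stays on the geometric side, applying $\Hom(\cdot,-)$ and $\Hom(-,\cdot)$ to the defining short exact sequence $0\to \Lcal_n^{\otimes\,-i}\to \Lcal_n^{\otimes\,j-1-i}\to {}_n\Scal_i^{(j-1)}\to 0$ and invoking Serre duality on $X_n$, whereas you work entirely inside $\Rep_k^{\mathsf{nil}}\big(A_{n-1}^{(1)}\big)$ using dimension-vector supports, the combinatorial Euler form, and socle/head arguments for uniserial modules. Your route is more elementary and makes the role of the hypothesis $1<j<n$ completely transparent (it is exactly what keeps the vertex $\nfrapart{i+1-j}$ and the vertex $\nfrapart{i+1-j}-1$ out of the support of ${}_n\Scal_i^{(j-1)}$, forcing $\langle A,B\rangle=-1$ and $\langle B,A\rangle=0$); you are also more explicit than the paper about the subobject counts in each extension class, which the paper leaves implicit. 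One small bonus of your computation: your unconditional value $\langle {}_n\Scal_{\nfrapart{i+1-j}}^{(1)}, {}_n\Scal_i^{(j-1)}\rangle=-1$ is the one consistent with the $\Hom/\Ext$ dimensions that both proofs ultimately use, so your bookkeeping with cyclic indices checks out.
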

\begin{proof}
First note, that because of our assumption on $j$, we have
\begin{align}
\langle [{}_n \Scal_i^{(j-1)}], [{}_n \Scal_{\nfrapart{i+1-j}}^{(1)}]\rangle &= 0\\
\langle [{}_n \Scal_{\nfrapart{i+1-j}}^{(1)}], [{}_n \Scal_i^{(j-1)}]\rangle& =
\begin{cases}
-1 & \text{if $\nfrapart{i+1-j}=0$}\ ,\\[4pt]
0 & \text{otherwise}\ .
\end{cases}
\end{align}
By applying the functor $\Hom(\cdot, {}_n \Scal_{\nfrapart{i+1-j}}^{(1)})$ to the short exact sequence
\begin{align}\label{eq:ses}
0\to \Lcal_n^{\otimes\, -i}\to \Lcal_n^{\otimes\, j-1-i}\to {}_n \Scal_i^{(j-1)}\to 0\ ,
\end{align}
we obtain
\begin{align}
0\to \Hom({}_n \Scal_i^{(j-1)}, {}_n \Scal_{\nfrapart{i+1-j}}^{(1)})\to \Hom(\Lcal_n^{\otimes\, j-1-i}, {}_n \Scal_{\nfrapart{i+1-j}}^{(1)})\to  \cdots
\end{align}
Since $\Hom(\Lcal_n^{\otimes\, j-1-i}, {}_n \Scal_{\nfrapart{i+1-j}}^{(1)})\simeq H^0(X_n, {}_n\Scal_n^{(1)})=0$, we get 
\begin{align}
 \Hom({}_n \Scal_i^{(j-1)}, {}_n \Scal_{\nfrapart{i+1-j}}^{(1)})=0 \ , \  \Ext^1({}_n \Scal_i^{(j-1)}, {}_n \Scal_{\nfrapart{i+1-j}}^{(1)})=0\ .
\end{align}
Now, by applying the functor $\Hom({}_n \Scal_{\nfrapart{i+1-j}}^{(1)}, \cdot)$ to the short exact sequence \eqref{eq:ses}, we obtain
\begin{align}
\cdots \to \Ext^1({}_n \Scal_{\nfrapart{i+1-j}}^{(1)}, \Lcal_n^{\otimes\, j-1-i})\to \Ext^1({}_n \Scal_{\nfrapart{i+1-j}}^{(1)},  {}_n \Scal_i^{(j-1)})\to 0 \ .
\end{align}
In this case, by Serre duality $\Ext^1({}_n \Scal_{\nfrapart{i+1-j}}^{(1)}, \Lcal_n^{\otimes\, j-1-i})\simeq H^0(X_n, {}_n \Scal_1^{(1)})^\vee \simeq k$, we get
\begin{align}
\Hom({}_n \Scal_{\nfrapart{i+1-j}}^{(1)}, {}_n \Scal_i^{(j-1)})=0 \ , \ \Ext^1({}_n \Scal_{\nfrapart{i+1-j}}^{(1)},  {}_n \Scal_i^{(j-1)})\simeq k\ .
\end{align}
Thus we get
\begin{align}
1_{{}_n \Scal_{\nfrapart{i+1-j}}^{(1)}}\, 1_{{}_n \Scal_{i}^{(j-1)}} &=\upsilon^{-1}\Big(1_{{}_n \Scal_i^{(j)}}+1_{{}_n \Scal_{\nfrapart{i+1-j}}^{(1)}\oplus {}_n \Scal_{i}^{(j-1)}}\Big)\ ,\\
1_{{}_n \Scal_{i}^{(j-1)}}\,  1_{{}_n \Scal_{\nfrapart{i+1-j}}^{(1)}} &=1_{{}_n \Scal_{\nfrapart{i+1-j}}^{(1)}\oplus {}_n \Scal_{i}^{(j-1)}} \ .
\end{align}
\end{proof}
By using similar arguments as in the proof above, one can also prove the following lemma.
\begin{lemma}\label{lem:BS}
Let $0\leq j\leq n-1$ and $m\in \Z_{>0}$. Then
\begin{align}
1_{{}_n \Scal_j^{(m n+j)}}=
\begin{cases}
1_{{}_n \Scal_n^{(m n)}} & \text{if $j=0$}\ , \\[4pt]
1_{{}_n \Scal_j^{(j)}}\, 1_{{}_n \Scal_j^{(m n)}}-\upsilon^{-2}\,1_{{}_n \Scal_j^{(m n)}}\, 1_{{}_n \Scal_j^{(j)}} & \text{otherwise} \ .
\end{cases}
\end{align}
\end{lemma}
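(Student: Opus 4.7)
The case $j = 0$ is immediate from the convention ${}_n\Scal_0^{(\ell)} \coloneqq {}_n\Scal_n^{(\ell)}$. For $1 \leq j \leq n-1$, set $A \coloneqq {}_n\Scal_j^{(j)}$ and $B \coloneqq {}_n\Scal_j^{(mn)}$. Since $\overline{B} = m\delta_n$, equations \eqref{eq:line3} give $\langle A, B \rangle = \langle B, A \rangle = 0$, so the prefactors $\upsilon^{\langle -,-\rangle}$ in the Hall product \eqref{eq:Hallprod} are trivial and both $1_A \cdot 1_B$ and $1_B \cdot 1_A$ reduce to counts of appropriate subsheaves of possible extensions.

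The first key step is the computation of the Hom and Ext spaces. Both $A$ and $B$ are uniserial with submodule chain $\{{}_n\Scal_j^{(k)}\}_k$, and the corresponding quotients are described by the general short exact sequence $0 \to {}_n\Scal_j^{(k)} \to {}_n\Scal_j^{(\ell)} \to {}_n\Scal_{\nfrapart{j-k}}^{(\ell-k)} \to 0$ (a direct iteration of the second sequence displayed after \eqref{eq:torsion-orbifold}). A case analysis comparing the quotients of one to the subsheaves of the other yields $\Hom(A, B) \simeq k$, generated by the canonical inclusion ${}_n\Scal_j^{(j)} \hookrightarrow {}_n\Scal_j^{(mn)}$, and $\Hom(B, A) = 0$. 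Since $\Coh(X_n)$ is hereditary, $\dim \Ext^1 = \dim \Hom - \langle -,-\rangle$, so $\Ext^1(A, B) \simeq k$ and $\Ext^1(B, A) = 0$. Hence up to isomorphism there are exactly two extensions $0 \to B \to R \to A \to 0$: the split $R = A \oplus B$ and the non-split $R = {}_n\Scal_j^{(mn+j)}$, the latter identified via $0 \to {}_n\Scal_j^{(mn)} \to {}_n\Scal_j^{(mn+j)} \to {}_n\Scal_{\nfrapart{j-mn}}^{(j)} = {}_n\Scal_j^{(j)} \to 0$; the only extension $0 \to A \to R \to B \to 0$ is the split one.

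The second step is to compute, for each such $R$, the numbers $P^R_{AB}$ (resp.\ $P^R_{BA}$) of subsheaves $N \subseteq R$ with $N \simeq B$ and $R/N \simeq A$ (resp.\ $N \simeq A$ and $R/N \simeq B$). For the indecomposable $R = {}_n\Scal_j^{(mn+j)}$, the uniqueness of the submodule chain gives at once $P^R_{AB} = 1$ and $P^R_{BA} = 0$: the sole submodule of $R$ isomorphic to $A$ is ${}_n\Scal_j^{(j)}$, whose quotient is ${}_n\Scal_n^{(mn)} \neq B$ for $j \neq 0$. For $R = A \oplus B$, one uses $\End(B) \simeq k[t]/(t^m)$ together with $\Hom(B, A) = 0$ to see that $P^R_{AB} = 1$; for $P^R_{BA}$, the $q+1$ subsheaves isomorphic to $A$ are parameterized by $\PP(\Hom(A, R)) \simeq \PP^1(k)$, and a direct check---using the shift automorphism $(a, b) \mapsto (a, b - \lambda\, \iota(a))$ of $A \oplus B$ for $\lambda \neq 0$ (where $\iota \colon A \hookrightarrow B$ generates $\Hom(A,B)$)---shows that exactly $q$ of them yield quotient $\simeq B$, the exception being the subsheaf lying entirely inside $\{0\} \oplus B$. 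Combining these,
\begin{align}
1_A \cdot 1_B - \upsilon^{-2}\, 1_B \cdot 1_A \;=\; 1_{{}_n\Scal_j^{(mn+j)}} + (1 - \upsilon^{-2} q)\, 1_{A \oplus B} \;=\; 1_{{}_n\Scal_j^{(mn+j)}}\ ,
\end{align}
since $\upsilon^2 = q$. The main obstacle is the combinatorial analysis on $A \oplus B$: carefully parameterizing all graph subsheaves isomorphic to $A$ and determining which of the resulting quotients are isomorphic to the indecomposable $B$.
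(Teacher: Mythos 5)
Your proof is correct and follows exactly the route the paper intends: the paper gives no proof of this lemma beyond the remark that it follows "by similar arguments as in the proof above" (i.e.\ of Lemma~\ref{lem:Lin-1}), namely computing $\Hom$ and $\Ext^1$ between the two indecomposables via the defining short exact sequences and then evaluating the Hall products. Your explicit count of the Hall numbers on the split extension $A\oplus B$ (the $1$ versus $q$ out of $q+1$ graph subsheaves) is exactly the detail the paper leaves implicit, and it checks out, including the cancellation $1-\upsilon^{-2}q=0$.
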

\begin{lemma}\label{lem:Lin-2}
Let $1\leq i\leq n$ and $d\in \Z$. Then
\begin{align}
1_{{}_n \Scal_i}\, {}_n\onebf^{\mathbf{ss}}_{1,\, d}=\upsilon^{\delta_{n\{(i+d-1)/n\},0}-\delta_{n\{(i+d)/n\},0}}\, {}_n\onebf^{\mathbf{ss}}_{1,\, d}\, 1_{{}_n \Scal_{i}}+\delta_{n\{(i+d)/n\},0}\, \upsilon^{-1}\, {}_n\onebf^{\mathbf{ss}}_{1,\, d+1}\ .
\end{align}
\end{lemma}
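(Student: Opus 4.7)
The plan is to reduce the identity to a direct computation of the Hall product $1_{{}_n\Scal_i}\cdot 1_\Lcal$ for a single line bundle $\Lcal=\pi_n^\ast M\otimes\Lcal_n^{\otimes \ell}$ (with $\ell=\nfrapart{d}$, $\deg(M)=\nintpart{d}$) and then sum over $M$. By definition of the Hall multiplication \eqref{eq:Hallprod},
\begin{align}
1_{{}_n\Scal_i}\cdot 1_{\Lcal}=\upsilon^{\langle {}_n\Scal_i,\Lcal\rangle}\sum_{\Rcal} g^\Rcal_{{}_n\Scal_i,\Lcal}\,1_{\Rcal},
\end{align}
where $g^\Rcal_{{}_n\Scal_i,\Lcal}$ counts subobjects $\Ncal\subseteq\Rcal$ with $\Ncal\simeq\Lcal$ and $\Rcal/\Ncal\simeq {}_n\Scal_i$. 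First I would show that the only $\Rcal$ that can appear are either the split extension $\Lcal\oplus{}_n\Scal_i$ or a line bundle $\Lcal'$ of class $\overline{\Lcal}+\overline{{}_n\Scal_i}$; this is immediate from the rank-one structure of the middle term.

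The key input is to compute $\Hom(\Lcal,{}_n\Scal_i)$ and $\Ext^1({}_n\Scal_i,\Lcal)$. The first equals $H^0(X_n,\Lcal^{-1}\otimes{}_n\Scal_i)\simeq H^0(X_n,{}_n\Scal_{i+\ell\bmod n})$, since tensoring a torsion sheaf at $p_n$ by $\pi_n^\ast N$ leaves it unchanged, and $\Lcal_n^{\otimes\,-\ell}\otimes{}_n\Scal_i\simeq{}_n\Scal_{i+\ell\bmod n}$. Using ${\pi_n}_\ast({}_n\Scal_j)=T_p$ iff $j=1$, this gives $|\Hom(\Lcal,{}_n\Scal_i)|=q$ exactly when $i+\ell\equiv 1\pmod n$, i.e.\ in the ``shift'' situation $\delta_{n\{(i+d-1)/n\},0}=1$, and $|\Hom|=1$ otherwise. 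By Serre duality $\Ext^1({}_n\Scal_i,\Lcal)^\vee\simeq \Hom(\Lcal,\pi_n^\ast\omega_X\otimes{}_n\Scal_{[i+1]_n})\simeq H^0(X_n,{}_n\Scal_{i+\ell+1\bmod n})$, so $|\Ext^1|=q$ exactly when $i+\ell\equiv 0\pmod n$, i.e.\ when $\delta_{n\{(i+d)/n\},0}=1$; in that case the resulting non-split middle term is a new line bundle $\Lcal'$ whose class coincides with that of a summand of ${}_n\onebf^{\mathbf{ss}}_{1,d+1}$ (checking the two sub-cases $\ell<n-1$ and $\ell=n-1$ separately using Remark~\ref{rem:Ktheorylinebundle}).

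Next I would count subobjects in each case. For $\Rcal=\Lcal\oplus{}_n\Scal_i$, each injection $\iota=(\alpha,\beta)$ with cokernel ${}_n\Scal_i$ forces $\alpha\in\mathsf{Aut}(\Lcal)=k^\times$ and then $\beta$ ranges freely in $\Hom(\Lcal,{}_n\Scal_i)$; quotienting by $\mathsf{Aut}(\Lcal)$ yields $g=|\Hom(\Lcal,{}_n\Scal_i)|$. For $\Rcal=\Lcal'$ a line bundle, injections $\Lcal\hookrightarrow\Lcal'$ form a torsor over $\Hom(\Lcal,\Lcal')^\times\simeq k^\times$, giving $g=1$. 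Conversely for the opposite product $1_\Lcal\cdot 1_{{}_n\Scal_i}$, $\Ext^1(\Lcal,{}_n\Scal_i)=H^1(X_n,\Lcal^{-1}\otimes{}_n\Scal_i)=0$ so only the split $\Rcal$ occurs, with multiplicity $1$ (as $\Hom({}_n\Scal_i,\Lcal)=0$).

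Finally I would compute the two Euler forms $\langle{}_n\Scal_i,\Lcal\rangle$ and $\langle\Lcal,{}_n\Scal_i\rangle$ via Formula \eqref{eq:Eulerform}. A straightforward case analysis (on whether $i+\ell\equiv 0$, $\equiv 1$, or is generic mod $n$) shows: in the extension case $\langle{}_n\Scal_i,\Lcal\rangle=-1$ and $\langle\Lcal,{}_n\Scal_i\rangle=0$; in the shift case $\langle{}_n\Scal_i,\Lcal\rangle=0$ and $\langle\Lcal,{}_n\Scal_i\rangle=1$; and in the generic case both are $0$. Plugging everything in and summing over $M$ with $\deg M=\nintpart{d}$ gives exactly the claimed identity in each of the three cases, using $q=\upsilon^2$ to identify $\upsilon^{\langle\Lcal,{}_n\Scal_i\rangle-\langle{}_n\Scal_i,\Lcal\rangle}$ with $\upsilon^{\delta_{n\{(i+d-1)/n\},0}-\delta_{n\{(i+d)/n\},0}}$.

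The main obstacle is purely bookkeeping: tracking the several cases for the pair $(i,\ell)$ modulo $n$ (including the boundary sub-cases $\ell=0,n-1$ and $i=1,n$) without arithmetic slips, and verifying that the integer $\Lcal'$ in the extension case matches the conventions encoded in $\nintpart{d+1}$ and $\nfrapart{d+1}$.
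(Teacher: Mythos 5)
Your proposal is correct and follows essentially the same route as the paper: compute the two Hall products of $1_{{}_n\Scal_i}$ with a single line bundle by determining the relevant $\Hom$ and $\Ext^1$ spaces and the Euler forms, then compare coefficients and sum over $M$. (The paper's only cosmetic difference is that it first reduces to $\ell=\nfrapart{d}=0$, i.e.\ to $\Lcal=\pi_n^\ast M$, via the twist automorphism $T_n$, whereas you work with general $\ell$ directly.) One small slip in your final sentence: the coefficient relating the split terms is $\upsilon^{\langle{}_n\Scal_i,\Lcal\rangle-\langle\Lcal,{}_n\Scal_i\rangle}\cdot\lvert\Hom(\Lcal,{}_n\Scal_i)\rvert$, not $\upsilon^{\langle\Lcal,{}_n\Scal_i\rangle-\langle{}_n\Scal_i,\Lcal\rangle}$; with the (correct) values you list this gives $\upsilon^{-1}$ rather than $\upsilon$ in the extension case, which is what the lemma requires.
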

\begin{proof}
Let $M$ be a line bundle on $X$ and $1\leq k\leq n$ an integer. First compute $1_{{}_n \Scal_k}\, 1_{\pi_n^\ast M}$. Note that
\begin{align}
\langle \overline{{}_n \Scal_k}, \overline{\pi_n^\ast M}\rangle &=\langle \overline{{}_n \Scal_k},  \overline \Ocal_{X_n}+\deg(M)\, \delta_n\rangle =-\delta_{k,n}\ ,\\[2pt]
\langle \overline{\pi_n^\ast M}, \overline{{}_n \Scal_k}\rangle &=\langle\overline \Ocal_{X_n}+\deg(M)\, \delta_n, \overline{{}_n \Scal_k} \rangle =\delta_{k,1}\ .
\end{align}
In addition, 
\begin{align}
\Hom({}_n \Scal_k, \pi_n^\ast M)& \simeq \Ext^1(\pi_n^\ast M, {}_n \Scal_k\otimes\omega_{X_n})^\vee =0\\[2pt]
\Ext^1(\pi_n^\ast M, {}_n \Scal_k)& =0\ .
\end{align}
Hence,
\begin{align}
1_{\pi_n^\ast M}\, 1_{{}_n \Scal_k} &=\upsilon^{\delta_{k,1}}\, 1_{{}_n \Scal_k\oplus \pi_n^\ast M} \ ,\\[4pt]
1_{{}_n \Scal_k}\, 1_{\pi_n^\ast M} &=
\begin{cases}
\upsilon^{-1}\big(1_{{}_n \Scal_k\oplus \pi_n^\ast M}+1_{\pi_n^\ast M\otimes \Lcal_n}\big) & \text{ for $k=n$} \ ,\\[4pt]
\upsilon^{\, 2\delta_{k,1}}\,  1_{{}_n \Scal_k\oplus \pi_n^\ast M} & \text{otherwise}\ .
\end{cases}
\end{align}
Thus
\begin{align}
1_{{}_n \Scal_k}\, 1_{\pi_n^\ast M} &=\upsilon^{\delta_{k,1}-\delta_{k,n}}\, 1_{\pi_n^\ast M}\, 1_{{}_n \Scal_k}+\delta_{k,n}\, \upsilon^{-1}\, 1_{\pi_n^\ast M\otimes \Lcal_n} \\[4pt]
&=\upsilon^{\delta_{n\{k/n\},1}-\delta_{n\{k/n\},0}}\, 1_{\pi_n^\ast M}\, 1_{{}_n \Scal_k}+\delta_{n\{k/n\},0}\, \upsilon^{-1}\, 1_{\pi_n^\ast M\otimes \Lcal_n}\ .
\end{align}
Now,
\begin{multline}
1_{{}_n \Scal_i}\, {}_n\onebf^{\mathbf{ss}}_{1,\, d} = 1_{{}_n \Scal_i}\, \sum_{\genfrac{}{}{0pt}{}{M\in \Pic(X)}{\deg(M)=\nintpart{d}}}\, 1_{\pi_n^\ast M\otimes \Lcal_n^{\otimes\, \nfrapart{d}}}=\sum_{\genfrac{}{}{0pt}{}{M\in \Pic(X)}{\deg(M)=\nintpart{d}}}\,1_{{}_n \Scal_i}\,1_{\pi_n^\ast M\otimes \Lcal_n^{\otimes\, \nfrapart{d}}}\\
\shoveleft{=\sum_{\genfrac{}{}{0pt}{}{M\in \Pic(X)}{\deg(M)=\nintpart{d}}}\, T_n^{\nfrapart{d}}\big(1_{{}_n \Scal_{i+\nfrapart{d}}}\,1_{\pi_n^\ast M}\big)}\\
=\sum_{\genfrac{}{}{0pt}{}{M\in \Pic(X)}{\deg(M)=\nintpart{d}}}\, T_n^{\nfrapart{d}}\big(\upsilon^{\delta_{\nfrapart{i+d},1}-\delta_{\nfrapart{i+d},0}}\, 1_{\pi_n^\ast M}\, 1_{{}_n \Scal_{i+\nfrapart{d}}}+\delta_{\nfrapart{i+d},0}\, \upsilon^{-1}\, 1_{\pi_n^\ast M\otimes \Lcal_n}\big)\ ,
\end{multline}
and we get the assertion.
\end{proof}

Thanks to the previous lemmas, we can provide a characterization of the commutation relations between the $1_{{}_n \Scal_i^{(j)}}$'s and the ${}_n\onebf^{\mathbf{ss}}_{1,\, d}$'s.
\begin{proposition}
Let $1\leq i \leq n$ and $1\leq j\leq n-1$ be integers, let $d\in \Z$. Then
\begin{multline}
1_{{}_n \Scal_i^{(j)}}\, {}_n\onebf^{\mathbf{ss}}_{1,\, d}=\\[3pt]
\begin{cases}
\upsilon^{-1}\, {}_n\onebf^{\mathbf{ss}}_{1,\, d}\, 1_{{}_n \Scal_i^{(j)}}+\upsilon^{-1}\,  {}_n\onebf^{\mathbf{ss}}_{1,\, d+j} & \text{if } \nfrapart{i+d}=0\ , \\[4pt]
{}_n\onebf^{\mathbf{ss}}_{1,\, d}\, 1_{{}_n \Scal_i^{(j)}} & \text{if } \nfrapart{i+d}>j \text{ and } \nfrapart{i+d}\neq 0 \ ,\\[4pt]
\upsilon\, {}_n\onebf^{\mathbf{ss}}_{1,\, d}\, 1_{{}_n \Scal_i^{(j)}} & \text{if } \nfrapart{i+d}=j \text{ and } \nfrapart{i+d}\neq 0\ , \\[4pt]
{}_n \onebf^{\mathbf{ss}}_{1,\, d}\, 1_{{}_n \Scal_i^{(j)}}+(\upsilon-\upsilon^{-1})\, {}_n \onebf^{\mathbf{ss}}_{1,\, \nintpart{d} n+j-\nfrapart{i+d}}\, 1_{{}_n \Scal_{\nfrapart{i+d}}^{(\nfrapart{i+d})}} & \text{if } \nfrapart{i+d}<j \text{ and } \nfrapart{i+d}\neq 0\ . 
\end{cases}
\end{multline}
\end{proposition}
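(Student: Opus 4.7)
The natural strategy is induction on $j$, using Lemma~\ref{lem:Lin-1} for the inductive step and Lemma~\ref{lem:Lin-2} as both the base case and a key auxiliary ingredient. Throughout, set $k\coloneqq \nfrapart{i+d}$; the four cases of the proposition are then $k=0$, $k>j$ (with $k\neq 0$), $k=j$ (with $k\neq 0$), and $0<k<j$.

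For the base case $j=1$ the fourth case is vacuous, and the remaining three cases reduce to Lemma~\ref{lem:Lin-2} after a short bookkeeping check: when $k=0$ one has $\nfrapart{i+d-1}=n-1\neq 0$, so the prefactor in Lemma~\ref{lem:Lin-2} becomes $\upsilon^{-1}$ and an extra $\upsilon^{-1}\,{}_n\onebf^{\mathbf{ss}}_{1,d+1}$ appears; when $k=1=j$ one has $\nfrapart{i+d-1}=0$ and the prefactor is $\upsilon$ with no extra term; when $k>1$ both $\delta$'s vanish and the $1_{{}_n\Scal_i}$ commutes with ${}_n\onebf^{\mathbf{ss}}_{1,d}$.

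For the inductive step, multiply the identity of Lemma~\ref{lem:Lin-1},
\begin{align}
1_{{}_n \Scal_i^{(j)}}=\upsilon\, 1_{{}_n \Scal_{\nfrapart{i+1-j}}}\, 1_{{}_n \Scal_{i}^{(j-1)}} -1_{{}_n \Scal_{i}^{(j-1)}}\,  1_{{}_n \Scal_{\nfrapart{i+1-j}}} \ ,
\end{align}
on the right by ${}_n\onebf^{\mathbf{ss}}_{1,d}$. In the first summand we first apply the induction hypothesis to $1_{{}_n \Scal_i^{(j-1)}}\cdot {}_n\onebf^{\mathbf{ss}}_{1,d}$ and then use Lemma~\ref{lem:Lin-2} to move each resulting $1_{{}_n\Scal_{\nfrapart{i+1-j}}}$ past ${}_n\onebf^{\mathbf{ss}}_{1,d}$ and ${}_n\onebf^{\mathbf{ss}}_{1,d+j-1}$. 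In the second summand we apply Lemma~\ref{lem:Lin-2} directly to $1_{{}_n \Scal_{\nfrapart{i+1-j}}}\cdot {}_n\onebf^{\mathbf{ss}}_{1,d}$ and then use the induction hypothesis to move $1_{{}_n \Scal_i^{(j-1)}}$ past the resulting ${}_n\onebf^{\mathbf{ss}}_{1,d}$ and ${}_n\onebf^{\mathbf{ss}}_{1,d+1}$. This produces an expression of the form
\begin{align}
c(k,j)\, {}_n\onebf^{\mathbf{ss}}_{1,d}\, 1_{{}_n\Scal_i^{(j)}} + (\text{correction terms}) \ ,
\end{align}
where both $c(k,j)$ and the correction terms depend on the position of $k$ relative to the thresholds $0$, $j-1$, and $j$.

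The main obstacle is the case-by-case collapse of these correction terms. The cleanest way to organize the bookkeeping is to notice that the two thresholds that appear when pushing $1_{{}_n \Scal_{\nfrapart{i+1-j}}}$ past ${}_n\onebf^{\mathbf{ss}}_{1,d}$ via Lemma~\ref{lem:Lin-2} are precisely $k=0$ and $k=j$, so the quadrichotomy in the statement exactly matches the possible outcomes of the combined case analysis. The two most delicate sub-cases are $k=j$, where the $\upsilon$ prefactor in Lemma~\ref{lem:Lin-1} combines with an $\upsilon$ coming from Lemma~\ref{lem:Lin-2} to give the claimed pure $\upsilon$ commutation, and $0<k<j$, where two contributions of the form $(\upsilon-\upsilon^{-1})\,{}_n\onebf^{\mathbf{ss}}_{1,\ast}\,1_{{}_n\Scal_\ast^{(\ast)}}$ arise from the induction hypothesis (applied in range $0<k<j-1$) and from the $k=j-1$ boundary of Lemma~\ref{lem:Lin-2}; one checks that they fit together into the single term $(\upsilon-\upsilon^{-1})\,{}_n\onebf^{\mathbf{ss}}_{1,\nintpart{d}n+j-k}\,1_{{}_n\Scal_k^{(k)}}$ thanks to the identity $\nfrapart{i+1-j}+(k-1)\equiv k-j \pmod n$ and a rewriting of the index shift in $d$.
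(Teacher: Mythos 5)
Your strategy coincides with the paper's: induction on $j$, with Lemma~\ref{lem:Lin-1} driving the inductive step, Lemma~\ref{lem:Lin-2} supplying both the base case $j=1$ and the commutation of the length-one simples past the elements ${}_n\onebf^{\mathbf{ss}}_{1,\bullet}$, and the same sub-case split (writing $k=\nfrapart{i+d}$: the cases $k>j$, $k=j$, $k=j-1$, $k<j-1$, and $k=0$, the last two being the delicate ones).

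The one substantive point you omit is that the paper does \emph{not} run the induction for arbitrary $d$: it first reduces to $\nfrapart{d}=0$ via the twist automorphism, using $1_{{}_n \Scal_i^{(j)}}\, {}_n\onebf^{\mathbf{ss}}_{1,\, d}=T_n^{\,\beta}\big(1_{{}_n \Scal_{i+\beta}^{(j)}}\, {}_n\onebf^{\mathbf{ss}}_{1,\, \alpha n}\big)$ for $d=\alpha n+\beta$, and only then inducts on $j$. This reduction is load-bearing rather than cosmetic. In the sub-case $0<k<j-1$ the inductive step requires pushing $1_{{}_n\Scal_{\nfrapart{i+1-j}}}$ past the correction term ${}_n\onebf^{\mathbf{ss}}_{1,\,e}$ produced by the induction hypothesis, and the extra summand of Lemma~\ref{lem:Lin-2} fires precisely when $\nfrapart{(i+1-j)+e}=0$. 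With $e=\alpha n+j-1-k$ and $i=k$ (which holds in the reduced case, since there $k=\nfrapart{i}=i$) this condition is automatic; for general $d$ one instead gets $\nfrapart{i-k}=\nfrapart{-d}$, which vanishes only when $n\mid d$, so the recombination you describe does not occur uniformly in $d$. Relatedly, the congruence you invoke at the end, $\nfrapart{i+1-j}+(k-1)\equiv k-j \pmod n$, forces $i\equiv 0\pmod n$ and so fails throughout the range of the fourth case. The fix is simply to prepend the $T_n$-reduction and carry out your induction only for $d\in n\Z$; with that modification your plan is the paper's proof.
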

\begin{proof}
First, let us assume that $d=\alpha n$ for some $\alpha\in \Z$ and let $k\in \{1, \ldots, n\}$. Then the statement reduces to
\begin{align}
1_{{}_n \Scal_k^{(j)}}\, {}_n\onebf^{\mathbf{ss}}_{1,\, \alpha n}=
\begin{cases}
\upsilon^{-1}\, {}_n\onebf^{\mathbf{ss}}_{1,\, \alpha n}\, 1_{{}_n \Scal_k^{(j)}}+\upsilon^{-1}\,  {}_n\onebf^{\mathbf{ss}}_{1,\, \alpha n+j} & \text{if $k=n$}\ , \\[2pt]
{}_n\onebf^{\mathbf{ss}}_{1,\, \alpha n}\, 1_{{}_n \Scal_k^{(j)}} & \text{if $n-1\geq k>j$}\ ,\\[2pt]
\upsilon\, {}_n\onebf^{\mathbf{ss}}_{1,\, \alpha n}\, 1_{{}_n \Scal_k^{(j)}} & \text{if $k=j$}\ , \\[2pt]
{}_n \onebf^{\mathbf{ss}}_{1,\, \alpha n}\, 1_{{}_n \Scal_k^{(j)}}+(\upsilon-\upsilon^{-1})\, {}_n \onebf^{\mathbf{ss}}_{1,\, \alpha n+j-k}\, 1_{{}_n \Scal_k^{(k)}} &\text{if $k<j$}\ . 
\end{cases}
\end{align}
Assume that it is true for all $j'< j$ and let us prove it for $j$, by using Lemmas~\ref{lem:Lin-1} and \ref{lem:Lin-2}. For $n-1\geq k >j$, we have
\begin{align}
1_{{}_n \Scal_k^{(j)}}\, {}_n\onebf^{\mathbf{ss}}_{1,\, \alpha n} &=\Big(\upsilon\, 1_{{}_n \Scal_{\nfrapart{k+1-j}}^{(1)}}\, 1_{{}_n \Scal_{k}^{(j-1)}} -1_{{}_n \Scal_{k}^{(j-1)}}\,  1_{{}_n \Scal_{\nfrapart{k+1-j}}^{(1)}}\Big)\, {}_n\onebf^{\mathbf{ss}}_{1,\, \alpha n}\\
&=\upsilon\, 1_{{}_n \Scal_{\nfrapart{k+1-j}}^{(1)}}\, {}_n\onebf^{\mathbf{ss}}_{1,\, \alpha n}\, 1_{{}_n \Scal_{k}^{(j-1)}}-1_{{}_n \Scal_{k}^{(j-1)}}\Big({}_n\onebf^{\mathbf{ss}}_{1,\, \alpha n}\, 1_{{}_n \Scal_{\nfrapart{k+1-j}}^{(1)}}\Big)\\
&={}_n\onebf^{\mathbf{ss}}_{1,\, \alpha n}\, 1_{{}_n \Scal_k^{(j)}}\ .
\end{align}
For $k=n$ and $j>1$, we get
\begin{align}
1_{{}_n \Scal_k^{(j)}}\, {}_n\onebf^{\mathbf{ss}}_{1,\, \alpha n} &=\Big(\upsilon\, 1_{{}_n \Scal_{n-j+1}^{(1)}}\, 1_{{}_n \Scal_{n}^{(j-1)}} -1_{{}_n \Scal_{n}^{(j-1)}}\,  1_{{}_n \Scal_{n-j+1}^{(1)}}\Big)\, {}_n\onebf^{\mathbf{ss}}_{1,\, \alpha n}\\
&= \upsilon\, 1_{{}_n \Scal_{n-j+1}^{(1)}}\, \Big(\upsilon^{-1}\, \onebf^{\mathbf{ss}}_{1,\, \alpha n} \, 1_{{}_n \Scal_n^{(j-1)}}+\upsilon^{-1}\, \onebf^{\mathbf{ss}}_{1,\, \alpha n+j-1}\Big)-1_{{}_n \Scal_n^{(j-1)}}\, \Big({}_n \onebf^{\mathbf{ss}}_{1,\, \alpha n}\, 1_{{}_n \Scal_{n-j+1}^{(1)}}\Big)\\
&=\upsilon^{-1}\, {}_n \onebf^{\mathbf{ss}}_{1,\, \alpha n}\, 1_{{}_n \Scal_n^{(j)}}+T_n^{j-1}\Big(1_{{}_n \Scal_{n}^{(1)}}\,{}_n \onebf^{\mathbf{ss}}_{1,\, \alpha n}\Big)-\upsilon^{-1}\, {}_n \onebf^{\mathbf{ss}}_{1,\, \alpha n+j-1}\, 1_{{}_n \Scal_{n-j+1}^{(1)}}\\
&=\upsilon^{-1}\, {}_n\onebf^{\mathbf{ss}}_{1,\, \alpha n}\, 1_{{}_n \Scal_k^{(j)}}+\upsilon^{-1}\,  {}_n\onebf^{\mathbf{ss}}_{1,\, \alpha n+j}  \ .
\end{align}
For $j=k+1$, we obtain
\begin{align}
1_{{}_n \Scal_k^{(j)}}\, {}_n\onebf^{\mathbf{ss}}_{1,\, \alpha n} =&\Big(\upsilon\, 1_{{}_n \Scal_{n}^{(1)}}\, 1_{{}_n \Scal_{k}^{(k)}} -1_{{}_n \Scal_{k}^{(k)}}\,  1_{{}_n \Scal_{n}^{(1)}}\Big)\, {}_n\onebf^{\mathbf{ss}}_{1,\, \alpha n}\\
=&\upsilon\, 1_{{}_n\Scal_n^{(1)}}\, \Big(\upsilon\, {}_n\onebf^{\mathbf{ss}}_{1,\, \alpha n} \, 1_{{}_n \Scal_k^{(k)}}\Big)-1_{{}_n \Scal_k^{(k)}}\Big(\upsilon^{-1}\, {}_n\onebf^{\mathbf{ss}}_{1,\, \alpha n} \, 1_{{}_n \Scal_n^{(1)}}+\upsilon^{-1}\, {}_n\onebf^{\mathbf{ss}}_{1,\, \alpha n+1}\Big)\\
=&\Big(\upsilon^{-1}\, {}_n\onebf^{\mathbf{ss}}_{1,\, \alpha n} \, 1_{{}_n \Scal_n^{(1)}}+\upsilon^{-1}\,{}_n \onebf^{\mathbf{ss}}_{1,\, \alpha n+1}\Big)\, 1_{{}_n \Scal_k^{(k)}}- {}_n\onebf^{\mathbf{ss}}_{1,\, \alpha n} \,1_{{}_n \Scal_k^{(k)}}\, 1_{{}_n \Scal_n^{(1)}}\\
&-\upsilon^{-1}\, T_n\Big(1_{{}_n \Scal_{k+1}^{(k)}}\,{}_n\onebf^{\mathbf{ss}}_{1,\, \alpha n}\Big)={}_n \onebf^{\mathbf{ss}}_{1,\, \alpha n}\, 1_{{}_n \Scal_k^{(j)}}+(\upsilon-\upsilon^{-1})\, {}_n \onebf^{\mathbf{ss}}_{1,\, \alpha n+1}\, 1_{{}_n \Scal_k^{(k)}}\ ,
\end{align}
while for $k<j-1$, one has
\begin{align}
1_{{}_n \Scal_k^{(j)}}\, {}_n\onebf^{\mathbf{ss}}_{1,\, \alpha n} = &\Big(\upsilon\, 1_{{}_n \Scal_{n+k+1-j}^{(1)}}\, 1_{{}_n \Scal_{k}^{(j-1)}} -1_{{}_n \Scal_{k}^{(j-1)}}\,  1_{{}_n \Scal_{n+k+1-j}^{(1)}}\Big)\, {}_n\onebf^{\mathbf{ss}}_{1,\, \alpha n}\\
= &\upsilon\, 1_{{}_n \Scal_{n+k+1-j}^{(1)}}\Big({}_n \onebf^{\mathbf{ss}}_{1,\, \alpha n}\, 1_{{}_n \Scal_k^{(j-1)}}+(\upsilon-\upsilon^{-1})\, {}_n \onebf^{\mathbf{ss}}_{1,\, \alpha n+j-1-k}\, 1_{{}_n \Scal_k^{(k)}}\Big)\\
&-1_{{}_n \Scal_k^{(j-1)}}\, {}_n\onebf^{\mathbf{ss}}_{1,\, \alpha n} \,  1_{{}_n \Scal_{n+k+1-j}^{(1)}}
={}_n \onebf^{\mathbf{ss}}_{1,\, \alpha n}\, 1_{{}_n \Scal_k^{(j)}}+(\upsilon-\upsilon^{-1})\, {}_n \onebf^{\mathbf{ss}}_{1,\, \alpha n+j-k}\, 1_{{}_n \Scal_k^{(k)}}\\
&+(\upsilon-\upsilon^{-1})\,  {}_n \onebf^{\mathbf{ss}}_{1,\, \alpha n+j-1-k}\, \Big(1_{{}_n \Scal_{n+k+1-j}^{(1)}}\, 1_{{}_n \Scal_k^{(k)}}-1_{{}_n \Scal_k^{(k)}}\,1_{{}_n \Scal_{n+k+1-j}^{(1)}}\Big) \ .
\end{align} 
Because of the assumptions on $j$ and since $j-1>k$, we have 
\begin{align}
1_{{}_n \Scal_{n+k+1-j}^{(1)}}\, 1_{{}_n \Scal_k^{(k)}}=1_{{}_n \Scal_k^{(k)}}\,1_{{}_n \Scal_{n+k+1-j}^{(1)}}\ ,
\end{align}
hence we get the result. The case $k=j$ is straightforward. 

Finally, the assertion follows from the fact that
\begin{align}
1_{{}_n \Scal_k^{(j)}}\, {}_n\onebf^{\mathbf{ss}}_{1,\, d}=1_{{}_n \Scal_k^{(j)}}\, {}_n\onebf^{\mathbf{ss}}_{1,\, \alpha n+\beta}=T_n^{\,\beta}\Big(1_{{}_n \Scal_{k+\beta}^{(j)}}\, {}_n\onebf^{\mathbf{ss}}_{1,\, \alpha n}\Big)
\end{align}
for $d\in \Z$ such that $d=\alpha n+\beta$, with $\alpha, \beta\in \Z$ and $0\leq \beta\leq n-1$.
\end{proof}
\begin{corollary}\label{cor:heckeaction}
Let $1\leq i \leq n$ and $1\leq j\leq n-1$ be integers, let $d\in \Z$. Then
\begin{align}
\omega_n\big(1_{{}_n \Scal_i^{(j)}}\, {}_n\onebf^{\mathbf{ss}}_{1,\, d}\big)=
\begin{cases}
\upsilon^{-1}\,  {}_n\onebf^{\mathbf{ss}}_{1,\, d+j} & \text{if } \nfrapart{i+d}=0\ , \\[2pt]
0 & \text{otherwise}\ . 
\end{cases}
\end{align}
\end{corollary}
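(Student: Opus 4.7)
The plan is to deduce this corollary directly from the preceding Proposition, which already provides explicit formulas for the product $1_{{}_n \Scal_i^{(j)}}\cdot {}_n\onebf^{\mathbf{ss}}_{1,\,d}$ in the Hall algebra $\Hbf_n^{\mathsf{tw}}$ in four cases according to the value of $\nfrapart{i+d}$ relative to $j$. Since $\omega_n\colon \Hbf_n^{\mathsf{tw}}\to \Hbf_n^{\bun}$ is defined by Formula \eqref{eq:omega} to annihilate every basis element $\mathbf u_v\mathbf u_t\kbf_{(r,\mathbf d)}$ whose torsion factor $\mathbf u_t$ is nontrivial, the essence of the proof is simply to read off, in each of the four cases, which summands have a purely locally free characteristic function.

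More concretely, I would proceed case by case. In the case $\nfrapart{i+d}=0$, the Proposition gives two summands: $\upsilon^{-1}\,{}_n\onebf^{\mathbf{ss}}_{1,d}\cdot 1_{{}_n \Scal_i^{(j)}}$, which, once expanded via the Hall product, is supported on coherent sheaves having ${}_n\Scal_i^{(j)}$ as a direct summand (hence has nonzero torsion part and is killed by $\omega_n$), and $\upsilon^{-1}\,{}_n\onebf^{\mathbf{ss}}_{1,d+j}$, which is by definition supported on line bundles and hence is preserved by $\omega_n$. This produces the first branch of the corollary. In the three remaining cases ($\nfrapart{i+d}>j$; $\nfrapart{i+d}=j$; $0<\nfrapart{i+d}<j$), every summand in the product formula is of the form (line-bundle semisimple class)$\cdot$(torsion class with nonzero dimension vector), so after expansion all summands have a nonzero torsion component, and $\omega_n$ annihilates the whole expression, giving $0$.

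There is no serious obstacle here: the main content is contained in the Proposition. The only thing to verify carefully is that whenever a term of the form ${}_n\onebf^{\mathbf{ss}}_{1,e}\cdot 1_{{}_n\Scal_k^{(\ell)}}$ appears with $\ell\geq 1$, its expansion as a sum of characteristic functions $1_\Fcal$ involves only sheaves $\Fcal$ whose torsion subsheaf is nonzero; this is immediate from the general shape of the Hall product \eqref{eq:Hallprod} together with the fact that an extension of a torsion sheaf by a locally free sheaf has a torsion subsheaf containing the image of ${}_n\Scal_k^{(\ell)}$ (since the socle filtration of a torsion sheaf is preserved in any extension by a torsion-free sheaf). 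With this observation in hand, the four cases can be treated uniformly in a single short paragraph, and the corollary follows.
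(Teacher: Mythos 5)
Your proof is correct and is exactly the argument the paper intends: the corollary is stated without proof precisely because it follows by applying $\omega_n$ to the four-case formula of the preceding Proposition and observing that every summand of the form ${}_n\onebf^{\mathbf{ss}}_{1,\,e}\cdot 1_{{}_n\Scal_k^{(\ell)}}$ is supported on sheaves containing ${}_n\Scal_k^{(\ell)}$ as a subsheaf (by the convention \eqref{eq:Hallprod}), hence with nonzero torsion part, and is therefore annihilated by $\omega_n$ as defined in \eqref{eq:omega}. No further comment is needed.
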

\begin{lemma}
Let $1\leq i\leq n$ and $m\neq 0$ be integers and $d\in \Z$. Then
 \begin{align}
\omega_n\big(1_{{}_n \Scal_i^{(mn)}}\, {}_n\onebf^{\mathbf{ss}}_{1,\, d}\big)=
\begin{cases}
\upsilon^{-m}\,  {}_n\onebf^{\mathbf{ss}}_{1,\, d+mn} & \text{if } \nfrapart{i+d}=0\ , \\[2pt]
0 & \text{otherwise}\ . 
\end{cases}
\end{align}
\end{lemma}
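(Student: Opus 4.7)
The plan is to reduce the computation to the case $i = n$ via the twisting automorphism $T_n$, and then, within that case, to identify $1_{{}_n\Scal_n^{(mn)}}$ with the pullback of $1_{T_p^{(m)}}$ via $\Omega_n$. When $\nfrapart{d} = 0$, this reduces the computation to the non-stacky curve $X$, while when $\nfrapart{d} \neq 0$ the Hall product vanishes on the locally free locus by a twist-matching obstruction.

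First, the algebra automorphism $T_n$ (tensoring by $\Lcal_n$) commutes with $\omega_n$, since tensoring by a line bundle preserves locally-freeness. From the identities $T_n(1_{{}_n\Scal_i^{(mn)}}) = 1_{{}_n\Scal_{i-1 \bmod n}^{(mn)}}$ and $T_n({}_n\onebf^{\mathbf{ss}}_{1,d}) = {}_n\onebf^{\mathbf{ss}}_{1, d+1}$ I would derive
\[
\omega_n\bigl(1_{{}_n\Scal_i^{(mn)}} \cdot {}_n\onebf^{\mathbf{ss}}_{1,d}\bigr) = T_n^{\,n-i}\Bigl(\omega_n\bigl(1_{{}_n\Scal_n^{(mn)}} \cdot {}_n\onebf^{\mathbf{ss}}_{1, d+i-n}\bigr)\Bigr).
\]
Since $i + d \equiv d + i - n \pmod n$, the residue condition $\nfrapart{i+d}=0$ is preserved, so it suffices to prove the lemma for $i = n$.

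For the case $i = n$ and $\nfrapart{d} = 0$ I would set $d = nk$ and use ${}_n\Scal_n^{(mn)} \simeq \pi_n^*(T_p^{(m)})$ (immediate from exactness of $\pi_n^*$ applied to $0 \to \Ocal_X(-p) \to \Ocal_X((m-1)p) \to T_p^{(m)} \to 0$) together with ${}_n\onebf^{\mathbf{ss}}_{1,nk} = \Omega_n(\onebf^{\mathbf{ss}}_{1,k})$. As $\pi_n^*$ preserves both locally-freeness and torsion, the morphism $\Omega_n$ intertwines the projections, $\omega_n \circ \Omega_n = \Omega_n \circ \omega$. The non-stacky computation is then immediate: the unique line bundle extension of $T_p^{(m)}$ by a line bundle $M$ is $M(mp)$, which admits exactly one subsheaf isomorphic to $M$ with cokernel $T_p^{(m)}$, and the Euler form gives $\langle T_p^{(m)}, M\rangle_X = -m$, yielding $\omega(1_{T_p^{(m)}} \cdot \onebf^{\mathbf{ss}}_{1,k}) = \upsilon^{-m}\, \onebf^{\mathbf{ss}}_{1, k+m}$. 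Applying $\Omega_n$ gives $\upsilon^{-m}\, {}_n\onebf^{\mathbf{ss}}_{1, nk+mn}$, as claimed.

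For the case $i = n$ and $\nfrapart{d} \neq 0$ I would show vanishing on the locally free locus by a direct class/twist analysis. Given $L = \pi_n^* M \otimes \Lcal_n^{\nfrapart{d}}$, the formula for the class of an extension $0 \to L \to \mathcal E \to {}_n\Scal_n^{(mn)} \to 0$ forces $\mathcal E \simeq \pi_n^* M' \otimes \Lcal_n^{\nfrapart{d}}$ with $\deg M' = \nintpart{d} + m$. By full faithfulness of $\pi_n^*$, every morphism $L \to \mathcal E$ arises by pullback from a morphism $M \to M'$ on $X$, so its cokernel is $\pi_n^*(M'/M) \otimes \Lcal_n^{\nfrapart{d}}$; for the support to be concentrated at $p_n$ we must have $M' = M(mp)$, giving cokernel $\pi_n^*(T_p^{(m)}) \otimes \Lcal_n^{\nfrapart{d}} \simeq {}_n\Scal_{n - \nfrapart{d} \bmod n}^{(mn)}$. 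This is \emph{not} isomorphic to ${}_n\Scal_n^{(mn)}$ when $\nfrapart{d} \neq 0$, so no subsheaf of $\mathcal E$ has the required quotient and the Hall product vanishes on the locally free locus. The main subtle point will be this twist-matching: naively counting subsheaves without tracking the $\Lcal_n$-twist would incorrectly predict a non-zero answer, and the vanishing is precisely due to the cokernel acquiring the ``wrong colour'' at the stacky point.
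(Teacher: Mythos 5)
Your proof is correct and takes essentially the same route as the paper's: both arguments reduce via the twist automorphism $T_n$ and both hinge on the fact that ${}_n\Scal_n^{(mn)}\simeq \pi_n^\ast T_p^{(m)}$ is the unique twist of this torsion sheaf that can occur as the quotient of a line bundle of the relevant form. The only cosmetic difference is that the paper normalizes $\nfrapart{d}=0$ and counts surjections $\Ocal_{X_n}\to {}_n\Scal_i^{(mn)}$ directly on $X_n$, whereas you normalize $i=n$ and delegate the non-vanishing count to the non-stacky curve via $\Omega_n$ while handling the vanishing case by your twist-matching observation.
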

\begin{proof}
Let $M$ be a line bundle on $X$. Let us first compute $1_{{}_n \Scal_i^{(mn)}}\, 1_{\pi_n^\ast M}$. By definition
\begin{align}
1_{{}_n \Scal_i^{(mn)}}\, 1_{\pi_n^\ast M}=\upsilon^{\langle [{}_n \Scal_i^{(mn)}], [\pi_n^\ast M]\rangle}\, \sum_\Ecal \, \frac{\Pbf_{{}_n \Scal_i^{(m)}, \pi_n^\ast M}^\Ecal}{\abf({}_n \Scal_i^{(mn)}) \abf(\pi_n^\ast M)}\, 1_\Ecal \ ,
\end{align}
where we denote by $\Pbf_{\Ecal_1, \Ecal_2}^\Fcal$ the cardinality of the set of short exact sequences $0\to \Ecal_2\to \Fcal\to \Ecal_1\to 0$ for any fixed triple of coherent sheaves $\Fcal, \Ecal_1, \Ecal_2$; by $\abf(\Ecal)$ the cardinality of the automorphism group of a coherent sheaf $\Ecal$. Now, $\langle [{}_n \Scal_i^{(mn)}], [\pi_n^\ast M]\rangle=-m$, 
\begin{align}
\frac{\Pbf_{{}_n \Scal_i^{(mn)}, \pi_n^\ast M}^\Ecal}{\abf(\pi_n^\ast M)}=
\begin{cases}
\# \Hom^{\mathsf{surj}}(\Ocal_{X_n}, {}_n\Scal_n^{(mn)})=q^m-q^{m-1} & \text{if $i=n$}\ ,\\
0 & \text{otherwise}\ ,
\end{cases}
\end{align}
and $\abf({}_n \Scal_n^{(mn)})=q^m-q^{m-1}$. Thus, $\omega_n\big(1_{{}_n \Scal_i^{(mn)}}\, 1_{\pi_n^\ast M}\big)=\upsilon^{-m}\, 1_{\pi_n^\ast M\otimes \Lcal_n^{\otimes\, mn}}$. Therefore
\begin{align}
\omega_n\big(1_{{}_n \Scal_i^{(mn)}}\, {}_n\onebf^{\mathbf{ss}}_{1,\, d}\big)=T_n^{\, \beta}\big(\omega_n\big(1_{{}_n \Scal_{i+\beta}^{(mn)}}\, {}_n\onebf^{\mathbf{ss}}_{1,\, \alpha n}\big)\big)=T_n^{\, \beta}(\onebf^{\mathbf{ss}}_{1,\, \alpha n+mn})
\end{align}
and we obtain the assertion. Here, $\alpha, \beta\in\Z$, $0\leq \beta\leq n-1$, are such that $d=\alpha n+\beta$.
\end{proof}
\begin{corollary}\label{cor:heckeaction-II}
Let $1\leq i\leq n-1$ and $m$ be integers and $d\in \Z$. Then
\begin{align}
\omega_n\big(1_{{}_n \Scal_i^{(m n+i)}}\, {}_n\onebf^{\mathbf{ss}}_{1,\, d}\big)= \delta_{\nfrapart{i+d}, 0}\, \upsilon^{-m-1}\, {}_n\onebf^{\mathbf{ss}}_{1,\, d+m n+i}\ .
\end{align}
\end{corollary}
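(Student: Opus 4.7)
My plan is to derive this corollary from Lemma~\ref{lem:BS}, Corollary~\ref{cor:heckeaction}, and the preceding lemma, by reducing the computation on the left-hand side to a two-step iterated Hecke action.

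First, since $1 \leq i \leq n-1$, Lemma~\ref{lem:BS} gives the identity
\begin{align}
1_{{}_n \Scal_i^{(mn+i)}} = 1_{{}_n \Scal_i^{(i)}}\, 1_{{}_n \Scal_i^{(mn)}} - \upsilon^{-2}\, 1_{{}_n \Scal_i^{(mn)}}\, 1_{{}_n \Scal_i^{(i)}}
\end{align}
in $\Hbf_n^{\mathsf{tw},\tor}$. I will multiply both sides by ${}_n\onebf^{\mathbf{ss}}_{1,d}$ and apply $\omega_n$, then handle the two resulting terms separately.

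The crucial step is the following associativity property for iterated Hecke actions: for any torsion sheaves $\Tcal_1, \Tcal_2 \in \Tor(X_n)$ and any locally free sheaf $\Vcal$, one has $\omega_n(1_{\Tcal_1}\cdot 1_{\Tcal_2}\cdot 1_\Vcal) = \omega_n(1_{\Tcal_1}\cdot \omega_n(1_{\Tcal_2}\cdot 1_\Vcal))$. Indeed, any extension $\Ecal$ of $\Tcal_2$ by $\Vcal$ which is not locally free satisfies $\Ecal_{\tor}\neq 0$, and then every further extension $\Fcal$ of $\Tcal_1$ by $\Ecal$ satisfies $\Ecal_{\tor}\subseteq \Fcal_{\tor}$, hence is also not locally free. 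Thus only the locally-free part of $\omega_n(1_{\Tcal_2}\cdot 1_\Vcal)$ contributes to the final $\omega_n$.

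Applying this to the first term and using the preceding lemma followed by Corollary~\ref{cor:heckeaction} (with $j=i$, which lies in $\{1,\ldots,n-1\}$), I obtain
\begin{align}
\omega_n\big(1_{{}_n \Scal_i^{(i)}}\cdot 1_{{}_n \Scal_i^{(mn)}}\cdot {}_n\onebf^{\mathbf{ss}}_{1,d}\big) = \delta_{\nfrapart{i+d},0}\,\upsilon^{-m}\,\omega_n\big(1_{{}_n \Scal_i^{(i)}}\cdot {}_n\onebf^{\mathbf{ss}}_{1,d+mn}\big) = \delta_{\nfrapart{i+d},0}\,\upsilon^{-m-1}\,{}_n\onebf^{\mathbf{ss}}_{1,d+mn+i},
\end{align}
where I use $\nfrapart{i+d+mn} = \nfrapart{i+d}$. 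For the second term I similarly get
\begin{align}
\omega_n\big(1_{{}_n \Scal_i^{(mn)}}\cdot 1_{{}_n \Scal_i^{(i)}}\cdot {}_n\onebf^{\mathbf{ss}}_{1,d}\big) = \delta_{\nfrapart{i+d},0}\,\delta_{\nfrapart{2i+d},0}\,\upsilon^{-m-1}\,{}_n\onebf^{\mathbf{ss}}_{1,d+mn+i}.
\end{align}
The key observation is now that this second term vanishes identically: the simultaneous conditions $\nfrapart{i+d}=0$ and $\nfrapart{2i+d}=0$ force $\nfrapart{i}=0$, which is impossible for $1\leq i\leq n-1$. Subtracting (with the factor $-\upsilon^{-2}$) yields exactly the claimed formula.

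The main (mild) obstacle is the justification of the associativity property in the second paragraph; everything else is a direct substitution. No new geometric input beyond the torsion-vs-locally-free decomposition is required.
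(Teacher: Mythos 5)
Your proof is correct and is precisely the derivation the paper intends: the corollary is stated without proof immediately after Lemma~\ref{lem:BS} and the lemma computing $\omega_n\big(1_{{}_n \Scal_i^{(mn)}}\, {}_n\onebf^{\mathbf{ss}}_{1,\, d}\big)$, and combining them through the module property of the Hecke action (which you justify correctly via the torsion-subsheaf argument) is exactly the expected route, including the observation that $\nfrapart{i+d}=0$ and $\nfrapart{2i+d}=0$ cannot hold simultaneously for $1\leq i\leq n-1$. The only point to add is the case $m=0$, which lies outside the hypotheses of both Lemma~\ref{lem:BS} and the preceding lemma (each requires $m>0$, resp.\ $m\neq 0$) but is then just Corollary~\ref{cor:heckeaction} with $j=i$.
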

\begin{definition}
Let $e\in \Z_{\geq 0}$. Define
\begin{align}
& {}_1 \theta_{0, \, e} =\theta_{0, \, e}\coloneqq\upsilon^{e}\, \sum_{\genfrac{}{}{0pt}{}{(m_x)_x}{\sum_x\, m_x\deg(x)=e}}\, \prod_{\genfrac{}{}{0pt}{}{x\in X}{m_x\neq 0}}\, (1-\upsilon^{-2\deg(x)})\, 1_{T_x^{(m_x)}}\ ,\\
& {}_n\theta_{0,\, e} \coloneqq\\
& 
\begin{cases}
\Omega_n\big(\theta_{0,\, e/n}\big) & \text{for } \nfrapart{e}=0 \ ,\\[8pt]
\displaystyle \upsilon^{\, \nintpart{e}+1}\, \sum_{\genfrac{}{}{0pt}{}{(m_x)_x}{\sum_x\, m_x\deg(x)= \nintpart{e}}}\, \prod_{\genfrac{}{}{0pt}{}{x\in X\setminus\{p\}}{m_x\neq 0}}\, (1-\upsilon^{-2\deg(x)})\, 1_{\Tcal_x^{(m_x)}}\, (1-\upsilon^{-2})\, 1_{{}_n \Scal_{\nfrapart{e}}^{(m_p n+\nfrapart{e})}} & \text{otherwise} \ .
 \end{cases}
\end{align}
\end{definition}
\begin{remark}
Note that the definition of $\theta_{0,e}$ as above agrees with the definition in \eqref{eq:torsionrelations}.
\end{remark}
\begin{proposition}\label{prop:omega}
Let $d\in \Z$. Then
\begin{align}
\omega_n\Big(\sum_{e\in \Z_{\geq 0}}\, {}_n\theta_{0,\, e}\, {}_n\onebf^{\mathbf{ss}}_{1,\, d}\Big)=\sum_{\alpha\in \Z_{\geq 0}}\, \xi_\alpha^{(d)}\, {}_n\onebf^{\mathbf{ss}}_{1,\, d+\alpha}\ , 
\end{align}
where
\begin{align}
\xi_\alpha^{(d)}\coloneqq
\begin{cases}
\displaystyle \xi_{\alpha/n} & \text{if } \nfrapart{d}=0, \, \nfrapart{\alpha}=0\ , \\[8pt]
\displaystyle \xi_{\nintpart{\alpha}}-q^{-1}\xi_{\nintpart{\alpha}}^\circ  & \text{if } \nfrapart{d}\neq 0, \, \nfrapart{\alpha+d}=0\ , \\[8pt]
\displaystyle \xi_{\nintpart{\alpha}}^\circ  & \text{if } \nfrapart{d}\neq 0, \, \nfrapart{\alpha}=0\ ,\\[8pt]
0 & \text{otherwise}\ .
\end{cases}
\end{align}
Here the complex numbers $\xi_s$ and the complex numbers $\xi_s^\circ$, for $s\in \Z_{\geq0}$, are given by
\begin{align}
\xi_{s}\coloneqq\sum_{\genfrac{}{}{0pt}{}{(m_x)_{x\in X}, m_x\in \Z_{\geq 0}}{\sum_x\,m_x\deg(x)=s}}\, \prod_{\genfrac{}{}{0pt}{}{x\in X}{m_x\neq 0}}\, \Big(1-\upsilon^{-2\deg(x)}\Big) \quad\text{and}\quad \xi_{s}^\circ\coloneqq\sum_{\genfrac{}{}{0pt}{}{(m_x)_{x\in X}, m_x\in \Z_{\geq 0}}{m_p=0, \, \sum_x\,m_x\deg(x)=s}}\, \prod_{\genfrac{}{}{0pt}{}{x\in X}{m_x\neq 0}}\, \Big(1-\upsilon^{-2\deg(x)}\Big)\ .
\end{align}
\end{proposition}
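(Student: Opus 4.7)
The approach is direct computation, building on the Hecke-action formulas already established in Corollary~\ref{cor:heckeaction} and Corollary~\ref{cor:heckeaction-II}, together with the analogous (and easier) computation at non-stacky points, which follows by reducing to $X$ via $\pi_n$ since $\pi_n$ is an isomorphism outside $p$.

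The first step will be to expand ${}_n\theta_{0,e}$ using its definition. For $\nfrapart{e}=0$, i.e.\ $e=\alpha n$, the element is $\Omega_n(\theta_{0,\alpha})$, a sum of characteristic functions of pullbacks $\pi_n^\ast T_x^{(m_x)}$ with $\sum_x m_x\deg(x)=\alpha$, where the contribution at the stacky point $x=p$ appears as $\pi_n^\ast T_p^{(m_p)}={}_n\Scal_n^{(m_p n)}$. For $\nfrapart{e}\neq 0$ the element splits as a product of torsion supported at non-stacky points (with $\sum_{x\neq p}m_x\deg(x)=\nintpart{e}$) and a stacky contribution $(1-\upsilon^{-2})1_{{}_n\Scal_{\nfrapart{e}}^{(m_p n+\nfrapart{e})}}$. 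Because torsion sheaves at distinct points have no extensions between them, the Hall products factor cleanly.

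The second step is to apply $\omega_n$ term-by-term. For a non-stacky point $x$, an argument parallel to the proof of Corollary~\ref{cor:heckeaction-II} (reducing via $\pi_n$ to the curve $X$, where the statement is standard; see e.g.\ \cite[Lemma~4.24]{book:schiffmann2012}) gives
\begin{align}
\omega_n\bigl(1_{\pi_n^\ast T_x^{(m_x)}}\cdot {}_n\onebf^{\mathbf{ss}}_{1,d}\bigr)=\upsilon^{-m_x\deg(x)}\,{}_n\onebf^{\mathbf{ss}}_{1,d+m_x n\deg(x)}\ ,
\end{align}
and Corollary~\ref{cor:heckeaction-II} handles the stacky factor, producing a Kronecker delta $\delta_{\nfrapart{\nfrapart{e}+d},0}$. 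Combining the $\upsilon$-prefactors coming from the definition of ${}_n\theta_{0,e}$ with the $\upsilon^{-m_x\deg(x)}$ and $\upsilon^{-m_p-1}$ factors from the Hecke action, one sees that all the $\upsilon^{\deg(x) m_x}$ contributions cancel into the $(1-\upsilon^{-2\deg(x)})$ weights, so that the resulting sum over $(m_x)_x$ collapses to either $\xi_s$ or $\xi_s^\circ$ depending on whether $m_p$ is free or forced to be zero.

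The third step is then a case analysis on $\nfrapart{d}$ and $\nfrapart{\alpha}$ matching the four cases of $\xi_\alpha^{(d)}$. When $\nfrapart{d}=0$, only $\nfrapart{e}=0$ survives the delta (the stacky factor requires $\nfrapart{\nfrapart{e}+d}=0$, forcing $\nfrapart{e}=0$), which gives exactly $\xi_{\alpha/n}$. When $\nfrapart{d}\neq 0$ and $\nfrapart{\alpha}=0$, the case $\nfrapart{e}=0$ of the definition does not contribute (the delta fails), while the case $\nfrapart{e}\neq 0$ requires $\nfrapart{e}=n-\nfrapart{d}$; the factor $(1-\upsilon^{-2})$ combined with a geometric sum over $m_p$ yields $\xi_{\nintpart{\alpha}}-q^{-1}\xi_{\nintpart{\alpha}}^\circ$. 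When $\nfrapart{d}\neq 0$ and $\nfrapart{\alpha+d}=0$ (but $\nfrapart{\alpha}\neq 0$) only the stacky branch contributes and the constraint $m_p=0$ forces the answer to be $\xi_{\nintpart{\alpha}}^\circ$; the remaining cases give zero. The main obstacle is this careful bookkeeping of $\upsilon$-prefactors and the delicate role of $m_p$ (which is constrained in some cases and free in others) to produce the difference $\xi_s-q^{-1}\xi_s^\circ$; once this is organised the statement follows.
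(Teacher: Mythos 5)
Your overall strategy is the same as the paper's: expand $\sum_e {}_n\theta_{0,e}$ from its definition, push each factor through $\omega_n$ using Corollary~\ref{cor:heckeaction-II} together with the analogous formula $\omega_n\big(1_{\Tcal_x^{(m_x)}}\,{}_n\onebf^{\mathbf{ss}}_{1,d}\big)=\upsilon^{-m_x\deg(x)}\,{}_n\onebf^{\mathbf{ss}}_{1,d+m_x\deg(x)n}$ at non-stacky points, observe that the $\upsilon$-prefactors cancel, and conclude with the identity $\xi_m=\xi_m^\circ+(1-\upsilon^{-2})\sum_{\beta=0}^{m-1}\xi_\beta^\circ$. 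Your first two steps and the case $\nfrapart{d}=0$ are correct.

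The case analysis for $\nfrapart{d}\neq 0$ is, however, wrong: you have interchanged the two sub-cases. The shift produced by a given term of ${}_n\theta_{0,e}$ is always $\alpha=e$, so $\nfrapart{\alpha}=\nfrapart{e}$. Hence the branch $\nfrapart{e}=0$ of the definition (whose stacky contribution is $1_{{}_n\Scal_n^{(m_p n)}}=1_{\pi_n^\ast T_p^{(m_p)}}$) is the one, and the only one, feeding the case $\nfrapart{\alpha}=0$; it \emph{does} contribute, but only through its terms with $m_p=0$, since $\omega_n\big(1_{{}_n\Scal_n^{(m_p n)}}\,{}_n\onebf^{\mathbf{ss}}_{1,d}\big)=0$ for $m_p>0$ once $\nfrapart{d}\neq 0$. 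This gives $\xi^\circ_{\nintpart{\alpha}}$, not $\xi_{\nintpart{\alpha}}-q^{-1}\xi^\circ_{\nintpart{\alpha}}$ as you claim. Conversely, the branch $\nfrapart{e}\neq 0$ (with $\nfrapart{e}=n-\nfrapart{d}$ forced by the delta of Corollary~\ref{cor:heckeaction-II}) produces shifts with $\nfrapart{\alpha}=\nfrapart{e}\neq 0$, i.e.\ $\nfrapart{\alpha+d}=0$, and there $m_p$ is \emph{free} (the delta is independent of $m_p$), ranging over $0,\ldots,\nintpart{\alpha}$; the resulting sum $(1-\upsilon^{-2})\sum_{\beta=0}^{\nintpart{\alpha}}\xi^\circ_\beta$ is exactly what collapses to $\xi_{\nintpart{\alpha}}-q^{-1}\xi^\circ_{\nintpart{\alpha}}$. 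Your claims that the $\nfrapart{e}=0$ branch ``does not contribute'' in the former case and that ``the constraint $m_p=0$'' operates in the latter are both backwards; executed as written, your computation yields a $\xi^{(d)}_\alpha$ contradicting the statement. The fix is to swap the two conclusions and correct the supporting reasoning accordingly.
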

\begin{proof}
Let us first observe that
\begin{multline}
\omega_n\Big(\sum_{e\in \Z_{\geq 0}}\, {}_n\theta_{0,\, e}\, {}_n\onebf^{\mathbf{ss}}_{1,\, d}\Big)=\omega_n\Big(\Big(\sum_{s\geq 0}\, \upsilon^{\,s}\, \sum_{\genfrac{}{}{0pt}{}{(m_x)_x}{m_p=0 \ ,\ \sum_x\, m_x\deg(x)=s}}\, \prod_{\genfrac{}{}{0pt}{}{x\in X\setminus\{p\}}{m_x\neq 0}}\, (1-\upsilon^{-2\deg(x)})\, 1_{\Tcal_x^{(m_x)}}\Big)\\
\times\, \Big(1+\sum_{t\geq 1}\, \upsilon^{\, \nintpart{t}+1-\delta_{\nfrapart{t},0}}\, (1-\upsilon^{-2})\, 1_{{}_n \Scal_{\nfrapart{t}}^{(t)}}\Big)\,  {}_n\onebf^{\mathbf{ss}}_{1,\, d}\Big)\Big)\ .
\end{multline}
By applying Corollary~\ref{cor:heckeaction-II}, the above quantity is equal to
\begin{multline}
\omega_n\Big(\Big(\sum_{s\geq 0}\, \upsilon^{\,s}\, \sum_{\genfrac{}{}{0pt}{}{(m_x)_x}{m_p=0 \ ,\ \sum_x\, m_x\deg(x)=s}}\, \prod_{\genfrac{}{}{0pt}{}{x\in X\setminus\{p\}}{m_x\neq 0}}\, (1-\upsilon^{-2\deg(x)})\, 1_{\Tcal_x^{(m_x)}}\Big)\\
\times \, ({}_n\onebf^{\mathbf{ss}}_{1,\, d}+(1-\upsilon^{-2})\, \sum_{t\geq 1}\, \delta_{\nfrapart{d+t}, 0}\, {}_n\onebf^{\mathbf{ss}}_{1,\, d+t}\Big)\Big)\ .
\end{multline}
Recall that $\omega_n\Big(1_{\Tcal_x^{(m_x)}}\, {}_n\onebf^{\mathbf{ss}}_{1,\, d}\Big)=\upsilon^{\, -m_x\deg(x)}\, \, {}_n\onebf^{\mathbf{ss}}_{1,\, d+m_x\deg(x)n}$ we get the assertion with
\begin{align}
\xi_\alpha^{(d)}\coloneqq
\begin{cases}
\displaystyle \xi_{\nintpart{\alpha}}^\circ+(1-\upsilon^{-2})\, \sum_{\beta=0}^{\nintpart{\alpha}-1}\,\xi_\beta^\circ & \text{if } \nfrapart{d}=0, \, \nfrapart{\alpha}=0\ , \\[8pt]
\displaystyle (1-\upsilon^{-2})\, \sum_{\beta=0}^{\nintpart{\alpha}}\,\xi_\beta^\circ& \text{if } \nfrapart{d}\neq 0, \, \nfrapart{\alpha+d}=0\ , \\[8pt]
\displaystyle \xi_{\nintpart{\alpha}}^\circ  & \text{if } \nfrapart{d}\neq 0, \, \nfrapart{\alpha}=0\ ,\\[8pt]
0 & \text{otherwise}\ .
\end{cases}
\end{align}
The proposition follows by osserving that
\begin{align}
 \xi_{m}^\circ+(1-\upsilon^{-2})\, \sum_{\beta=0}^{m-1}\,\xi_\beta^\circ=\xi_{m}\ .
\end{align}
\end{proof}
\begin{corollary}[{See \cite[Corollary~1.4]{art:schiffmannvasserot2012}}]\label{cor:xie}
As a series in $\C[[z]]$, we have
\begin{align}
\sum_{e\geq 0}\, \xi_e\, z^e=\frac{\zeta_X(z)}{\zeta_X(\upsilon^{-2}\, z)}\ ,
\end{align}
where $\zeta_X(z)$ is the zeta function of $X$.
\end{corollary}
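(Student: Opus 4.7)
The plan is to recognize the generating series $\sum_e \xi_e z^e$ as an Euler product over the closed points of $X$, and then to identify each local factor with a ratio of factors coming from the Euler product of $\zeta_X(z)$. Since the definition of $\xi_s$ involves a sum over tuples $(m_x)_{x \in X}$ of non-negative integers with finite support, indexed by $\sum_x m_x \deg(x)=s$, and the summand factorizes as a product over $x$, the generating series immediately splits as an infinite product over closed points.

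First, I would write
\begin{align}
\sum_{s \geq 0} \xi_s\, z^s = \prod_{x \in X} \left( 1 + (1 - \upsilon^{-2\deg(x)}) \sum_{m \geq 1} z^{m \deg(x)} \right),
\end{align}
where the convention that the product over $\{x \,\vert\, m_x \neq 0\}$ in the definition of $\xi_s$ gets unpacked as the constant $1$ contribution to each local factor when $m_x = 0$. Next I would sum the geometric series at each closed point:
\begin{align}
1 + (1 - \upsilon^{-2 \deg(x)}) \frac{z^{\deg(x)}}{1 - z^{\deg(x)}} = \frac{1 - \upsilon^{-2 \deg(x)} z^{\deg(x)}}{1 - z^{\deg(x)}}.
\end{align}
Taking the product over all closed points and recalling $\zeta_X(z) = \prod_{x \in X} (1 - z^{\deg(x)})^{-1}$, one recognises the right-hand side as $\zeta_X(z)/\zeta_X(\upsilon^{-2}z)$, completing the argument.

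There is essentially no obstacle: the only subtlety is to justify the rearrangement of the sum into an Euler product, which is a standard application of the fact that the $\xi_s$ sum is over finitely supported tuples together with the formal identity $\sum_{(m_x)_x} \prod_x f_{m_x,x} = \prod_x \sum_{m \geq 0} f_{m,x}$ valid as an equality of formal power series in $z$ (each coefficient of $z^s$ on both sides is a finite sum since only finitely many closed points $x \in X$ have $\deg(x) \leq s$). No further input is needed; in particular the argument is purely combinatorial and does not use the Riemann--Roch or Weil conjecture side of the theory of $\zeta_X$.
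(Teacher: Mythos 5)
Your proof is correct, and since the paper simply cites \cite[Corollary~1.4]{art:schiffmannvasserot2012} without reproducing an argument, your Euler-product computation is exactly the standard one intended there: the local factor $1+(1-\upsilon^{-2\deg(x)})\,z^{\deg(x)}/(1-z^{\deg(x)})=(1-\upsilon^{-2\deg(x)}z^{\deg(x)})/(1-z^{\deg(x)})$ is right, and the rearrangement into a product over closed points is legitimate because only finitely many closed points of $X$ have degree at most $s$.
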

\begin{corollary}[{\cite[Lemma~3.14]{art:lin2014}}]\label{cor:xiecirc}
As a series in $\C[[z]]$, we have
\begin{align}
\sum_{e\geq 0}\, \xi_e^\circ\,  z^e=\frac{\zeta_{X\setminus\{p\}}(z)}{\zeta_{X\setminus\{p\}}(\upsilon^{-2}\, z)}=\frac{\zeta_X(z)}{\zeta_X(\upsilon^{-2}\, z)}\frac{1-z}{1-\upsilon^{-2}\, z}\ ,
\end{align}
where $\zeta_X(z)$ and $\zeta_{X\setminus\{p\}}(z)$ are the zeta functions of $X$ and $X\setminus\{p\}$ respectively.
\end{corollary}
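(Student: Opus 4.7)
The plan is to factor the generating series over closed points of $X$ and then isolate the single factor corresponding to the point $p$. Since each $m_x$ in the definition of $\xi_s^\circ$ (and of $\xi_s$) varies independently over $\Z_{\geq 0}$, and the product over $x$ decouples into a product of local contributions, the generating series admits a Euler-style product expansion.

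First I would compute, for a fixed closed point $x\in X$ of degree $d_x\coloneqq \deg(x)$, the local factor
\begin{align}
L_x(z)\coloneqq \sum_{m\geq 0} c_{m,x}\, z^{m d_x} \ ,
\end{align}
where $c_{0,x}=1$ and $c_{m,x}=1-\upsilon^{-2d_x}$ for $m\geq 1$. Summing the geometric series gives
\begin{align}
L_x(z)=1+(1-\upsilon^{-2d_x})\,\frac{z^{d_x}}{1-z^{d_x}}=\frac{1-\upsilon^{-2d_x}z^{d_x}}{1-z^{d_x}}\ .
\end{align}
Since the condition $\sum_x m_x \deg(x)=s$ makes the coefficient of $z^s$ in $\prod_x L_x(z)$ coincide with $\xi_s$, we obtain
\begin{align}
\sum_{s\geq 0}\xi_s\, z^s=\prod_{x\in X}\frac{1-\upsilon^{-2d_x}z^{d_x}}{1-z^{d_x}}=\frac{\zeta_X(z)}{\zeta_X(\upsilon^{-2}z)}\ ,
\end{align}
which recovers Corollary~\ref{cor:xie}.

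Next I would observe that the definition of $\xi_s^\circ$ coincides with that of $\xi_s$ except that the index $m_p$ is forced to equal $0$. Geometrically, this simply removes the local factor $L_p(z)$ from the Euler product, so
\begin{align}
\sum_{s\geq 0}\xi_s^\circ\, z^s=\prod_{x\in X\setminus\{p\}}\frac{1-\upsilon^{-2d_x}z^{d_x}}{1-z^{d_x}}=\frac{\zeta_{X\setminus\{p\}}(z)}{\zeta_{X\setminus\{p\}}(\upsilon^{-2}z)}\ .
\end{align}
Since $p$ is a rational point, $d_p=1$, so $L_p(z)=(1-\upsilon^{-2}z)/(1-z)$, and removing it from the product amounts to multiplying by $(1-z)/(1-\upsilon^{-2}z)$. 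Combining these two identities yields the claimed formula.

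There is no genuine obstacle here: the only point to verify is the convergence/equality of formal power series for the infinite Euler product, which is standard since the coefficient of each $z^s$ involves only finitely many closed points (those with $d_x\leq s$). Everything else is a direct factorization argument built on Corollary~\ref{cor:xie}.
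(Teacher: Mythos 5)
Your proof is correct and is exactly the standard Euler-product argument behind this identity (the paper itself omits the proof, citing Lin's Lemma~3.14): the local factor computation, the removal of the degree-one factor at $p$, and the remark that each coefficient of $z^s$ only involves the finitely many closed points of degree $\leq s$ are all accurate. Nothing to add.
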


We conclude this section, by computing the coproduct of the ${}_n\onebf^{\mathbf{ss}}_{1,\, d}$'s: this is a key result to compute the shuffle algebra presentation of $\Ubf_n^>$ in the main body of the paper.
\begin{proposition}\label{prop:coproduct}
Let $d\in\Z$. Then
\begin{align}
\tilde \Delta\big({}_n\onebf^{\mathbf{ss}}_{1,\, d}\big)={}_n\onebf^{\mathbf{ss}}_{1,\, d}\otimes 1 + \sum_{e\in\Z_{\geq 0}}
T_n^{\, \nfrapart{d}}\big({}_n\theta_{0,\, e}\big)\,\kbf_{(1,\, d-e)}\otimes {}_n\onebf^{\mathbf{ss}}_{1,\, d-e}\ .
\end{align}
\end{proposition}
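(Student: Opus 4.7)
The plan is to compute $\tilde \Delta\big({}_n\onebf^{\mathbf{ss}}_{1,d}\big)$ directly from the definition of the coproduct. Fix a line bundle $\mathcal{L}$ on $X_n$ of $n$-th degree $d/n$ and enumerate all short exact sequences $0 \to \Mcal \to \mathcal{L} \to \Ncal \to 0$. Because $\mathcal{L}$ is rank-one torsion-free, any subsheaf $\Mcal$ is itself torsion-free, so only three cases occur: (i) $\Ncal = 0$, producing the term ${}_n\onebf^{\mathbf{ss}}_{1,d}\otimes 1$ after summing over $\mathcal{L}$; (ii) $\Mcal = 0$, producing $\kbf_{(1,d)}\otimes {}_n\onebf^{\mathbf{ss}}_{1,d}$, which agrees with the $e=0$ summand since ${}_n\theta_{0,0}=1$ and $T_n^{\nfrapart{d}}$ is the identity on scalars; (iii) $\Mcal = \mathcal{L}'$ is a line bundle of $n$-th degree $(d-e)/n$ for some $e>0$ and $\Ncal = \Tcal$ is a torsion sheaf of total $n$-th degree $e/n$.

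For case (iii), I would split $\Tcal = \Tcal' \oplus \Tcal_p$ according to whether the support lies away from or at the stacky point $p_n$. Since $\pi_n$ is an isomorphism away from $p$, the contribution of $\Tcal'$ is essentially the classical curve computation and yields the off-stacky factor in $\Omega_n\big(\theta_{0,\nintpart{e}}\big)$ after regrouping (compare the generating-function identity \eqref{eq:torsionrelations}). The subtle point is the contribution of $\Tcal_p$: the only indecomposable summands ${}_n\Scal_i^{(j)}$ that can arise as a quotient of a line bundle of $n$-th degree $d/n$ are those whose socle is dictated by $\nfrapart{d}$ (equivalently, by the weight of $\mathcal{L}$ at $p_n$). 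Because tensoring by $\Lcal_n$ cyclically permutes the simples $\{{}_n\Scal_i\}$, this constraint is recorded precisely by the twist operator $T_n^{\nfrapart{d}}$ applied to ${}_n\theta_{0,e}$, rather than by ${}_n\theta_{0,e}$ itself.

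With this identification in hand, the remaining work is a counting/weighting argument. For fixed $\mathcal{L}'$ and $\Tcal_p = {}_n\Scal_{\nfrapart{e+\nfrapart{d}}}^{(mn+\nfrapart{e})}$, Serre duality on $X_n$ gives
\begin{align}
\Ext^1(\Tcal_p,\mathcal{L}') \simeq \Hom(\mathcal{L}', \Tcal_p \otimes \omega_{X_n})^\vee,
\end{align}
and a direct calculation (using $\omega_{X_n} \simeq \pi_n^\ast\omega_X \otimes \Lcal_n^{n-1}$ together with the description of $\Hom$'s of line bundles into indecomposable torsion sheaves) determines the dimensions involved. Among all extensions, only those corresponding to generators of the $\Ext^1$ produce a torsion-free middle term $\mathcal{L}$, and this is what produces the factor $(1-\upsilon^{-2})$ appearing in the definition of ${}_n\theta_{0,e}$ when $\nfrapart{e}\neq 0$. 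When $\nfrapart{e}=0$, the torsion $\Tcal_p$ is of the form ${}_n\Scal_n^{(mn)}$; by Lemma~\ref{lem:BS} this case reduces to the classical (non-stacky) curve computation already carried out in \cite{book:schiffmann2012}, and yields $\Omega_n(\theta_{0,e/n})$. Combining the stacky-point and non-stacky-point contributions and using the multiplicativity of the coproduct on torsion sheaves supported at distinct points produces exactly $T_n^{\nfrapart{d}}({}_n\theta_{0,e})$.

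The main obstacle is the bookkeeping in the middle paragraph: pinning down exactly which indecomposables ${}_n\Scal_i^{(j)}$ can appear as torsion quotients of a given $\mathcal{L}$, and verifying that the combined sum over $(\mathcal{L},\mathcal{L}',\Tcal_p)$ assembles into the twist $T_n^{\nfrapart{d}}({}_n\theta_{0,e})$ with the correct $\upsilon$-prefactor $\upsilon^{\nintpart{e}+1-\delta_{\nfrapart{e},0}}$. The $\upsilon$-powers arise from three sources — the term $\upsilon^{-\langle \Mcal,\Ncal\rangle}$ in the comultiplication, the Riemann-Roch formula \eqref{eq:Eulerform}, and the shift in $\kbf$'s — and their recombination is tedious but mechanical once the indexing is correctly set up.
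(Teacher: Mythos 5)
Your plan is correct and follows essentially the same route as the paper: a direct evaluation of the coproduct on $1_{\pi_n^\ast M\otimes\Lcal_n^{\otimes\nfrapart{d}}}$, the three-case splitting by rank of sub/quotient, the factorization of the torsion quotient into non-stacky summands (classical computation) plus a single indecomposable ${}_n\Scal_j^{(m_pn+j)}$ at $p_n$ whose admissible socle is pinned down by $\nfrapart{d}$, giving the twist $T_n^{\nfrapart{d}}$. The only (harmless) deviations are that you organize the stacky count via Serre duality on $\Ext^1(\Tcal_p,\Lcal')$ and the ``torsion-free middle term'' condition, whereas the paper counts $\#\Hom^{\mathsf{surj}}(\Ocal_{X_n},\Fcal_1)=\Pbf_{\Fcal_1,\Fcal_2}^{\Ocal_{X_n}}\abf(\Fcal_1)$ --- these are equivalent by the standard Riedtmann identity --- and that your cases (i) and (ii) have sub and quotient interchanged relative to the paper's convention (the left tensor factor is the \emph{quotient}), which affects only the labelling, not the resulting terms.
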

\begin{proof}
Set $k\coloneqq\nintpart{d}$ and $i\coloneqq \nfrapart{d}$. Then
\begin{equation}
\tilde \Delta\big({}_n \onebf^{\mathbf{ss}}_{1,\, kn+i}\big)=\tilde \Delta\Big(\sum_{\genfrac{}{}{0pt}{}{M\in \Pic(X)}{\deg(M)=k}}\, 1_{\pi_n^\ast M\otimes \Lcal_n^{\otimes\, i}}\Big)\ ,
\end{equation}
and
\begin{align}
\tilde \Delta\big(1_{\pi_n^\ast M\otimes \Lcal_n^{\otimes\, i}}\big)&=\sum_{\Ecal_1,\Ecal_2}\, \upsilon^{\langle \Ecal_1, \Ecal_2\rangle}\, \Pbf_{\Ecal_1,\Ecal_2}^{\pi_n^\ast M\otimes \Lcal_n^{\otimes\, i}}\, \frac{\abf(\Ecal_1)\abf(\Ecal_2)}{\abf(\pi_n^\ast M\otimes \Lcal_n^{\otimes \, i})}1_{\Ecal_1}\, \kbf_{(\rk(\Ecal_2), \underline{\deg}(\Ecal_2))}\otimes 1_{\Ecal_2}\\
&=\sum_{\Ecal_1\simeq \pi_n^\ast M\otimes \Lcal_n^{\otimes\, i}}\, 1_{\Ecal_1}\otimes 1\\
&+\sum_{\rk(\Ecal_1)=0, \Ecal_2\simeq \pi_n^\ast N\otimes \Lcal_n^{\otimes\, s}}\, \upsilon^{\langle \Ecal_1, \Ecal_2\rangle}\, \Pbf_{\Ecal_1,\Ecal_2}^{\pi_n^\ast M\otimes \Lcal_n^{\otimes\, i}}\, \frac{\abf(\Ecal_1)\abf(\Ecal_2)}{\abf(\pi_n^\ast M\otimes \Lcal_n^{\otimes\, i})}1_{\Ecal_1}\, \kbf_{(1, \deg(N)n+ s)}\otimes 1_{\Ecal_2}\ .
\end{align}
Note that
\begin{multline}
\sum_{\rk(\Ecal_1)=0, \Ecal_2\simeq \pi_n^\ast N\otimes \Lcal_n^{\otimes\, s}}\, \upsilon^{\langle \Ecal_1, \Ecal_2\rangle}\, \Pbf_{\Ecal_1,\Ecal_2}^{\pi_n^\ast M\otimes \Lcal_n^{\otimes\, i}}\, \frac{\abf(\Ecal_1)\abf(\Ecal_2)}{\abf(\pi_n^\ast M\otimes \Lcal_n^{\otimes\, i})}1_{\Ecal_1}\, \kbf_{(1, \deg(N)n+ s)}\otimes 1_{\Ecal_2}=\\
\sum_{\rk(\Fcal_1)=0, \Fcal_2\simeq \pi_n^\ast L\otimes \Lcal_n^{\otimes\, \ell}}\, \upsilon^{\langle \Fcal_1, \Fcal_2\rangle}\, \Pbf_{\Fcal_1,\Fcal_2}^{\Ocal_{X_n}}\, \abf(\Fcal_1) 1_{\Fcal_1\otimes \pi_n^\ast M\otimes \Lcal_n^{\otimes\, i}}\, \kbf_{(1,\, \deg(L)n+ \ell+kn+ i)}\otimes 1_{\Fcal_2\otimes \pi_n^\ast M\otimes \Lcal_n^{\otimes\, i}}\ .
\end{multline}
Now, $\Fcal_1$ is a torsion sheaf which is a quotient of $\Ocal_{X_n}$, hence it can be only of the form
\begin{equation}\label{eq:decomFcal_1}
\Fcal_1\simeq \bigoplus_{x\in X\setminus\{p\}} \Tcal_x^{(m_x)}\oplus {}_n \Scal_j^{(m_p\, n +j)}
\end{equation}
for some $m_x\in \Z_{\geq 0}$, with $x\in  X$, and some $j\in \{0, \ldots, n-1\}$. Here we formally set $\Tcal_x^{(0)}={}_n \Scal_n^{(0)}=0$. Moreover,
since $\overline{\Ocal_{X_n}}=\overline{\Fcal_1} +\overline{\Fcal_2}$, we get
\begin{align}
\deg(L)&=-\sum_{x\in X}\, m_x\deg(x) -1+\delta_{\nfrapart{j},0}\ ,\\
\ell&=\nfrapart{-j}\ .
\end{align}
In particular, $\langle \Fcal_1, \Fcal_2\rangle = \deg(L)$. 

Moreover, $\Pbf_{\Fcal_1,\Fcal_2}^{\Ocal_{X_n}}\, \abf(\Fcal_1)$ is equal to the cardinality of $\Hom^{\mathsf{surj}}(\Ocal_{X_n}, \Fcal_1)$. Since a morphism $\Ocal_{X_n}\to \Fcal_1$ is surjective if and only if all its components, with respect to the decomposition \eqref{eq:decomFcal_1} of $\Fcal_1$ are surjective, we need to compute the cardinalities of the space of surjective morphisms from $\Ocal_{X_n}$ to each factor in \eqref{eq:decomFcal_1}. By using the exactness of the functor ${\pi_n}_\ast$ and the computations of \cite[Example~4.12]{book:schiffmann2012} we have for $m_x, m_p, j\neq 0$
\begin{align}
\# \Hom^{\mathsf{surj}}(\Ocal_{X_n}, \Tcal_x^{(m_x)})&=\# \Hom^{\mathsf{surj}}(\Ocal_{X}, T_x^{(m_x)})=q^{m_x\deg(x)}-q^{(m_x-1)\deg(x)} \ , \\
\# \Hom^{\mathsf{surj}}(\Ocal_{X_n}, {}_n\Scal_n^{(m_p\, n)})&=\# \Hom^{\mathsf{surj}}(\Ocal_{X}, T_p^{(m_p)})=q^{m_p}-q^{m_p-1} \ , \\
\# \Hom^{\mathsf{surj}}(\Ocal_{X_n}, \Scal_j^{(m_p\, n+j)})&=\# \Hom^{\mathsf{surj}}(\Ocal_{X}, T_p^{(m_p+1)})=q^{m_p+1}-q^{m_p}\ .
\end{align}
Summarizing, we get
\begin{multline}
\sum_{\rk(\Fcal_1)=0, \Fcal_2\simeq \pi_n^\ast L\otimes \Lcal_n^{\otimes\, \ell}}\, \upsilon^{\langle \Fcal_1, \Fcal_2\rangle}\, \Pbf_{\Fcal_1,\Fcal_2}^{\Ocal_{X_n}}\, \abf(\Fcal_1) 1_{\Fcal_1\otimes \pi_n^\ast M\otimes \Lcal_n^{\otimes\, i}}\, \kbf_{(1,\, \deg(L)n+ \ell+kn+ i)}\otimes 1_{\Fcal_2\otimes \pi_n^\ast M\otimes \Lcal_n^{\otimes\, i}}\\
=\sum_{\genfrac{}{}{0pt}{}{\Fcal_1\simeq \bigoplus_{x\in X} \pi_n^\ast T_x^{(m_x)}}{\Fcal_2\simeq \pi_n^\ast L}}\, \upsilon^{-\sum_x\, m_x\deg(x)}\, \prod_{\genfrac{}{}{0pt}{}{x\in X}{m_x\neq 0}}\, \big(q^{m_x\deg(x)}-q^{(m_x-1)\deg(x)}\big) \\
\shoveright{\times 1_{\pi_n^\ast T_x^{(m_x)}}\, \kbf_{(1,\, d-\sum_x\, m_x\deg(x))}\otimes 1_{\pi_n^\ast L\otimes \pi_n^\ast M\otimes \Lcal_n^{\otimes\, i}}}\\
+\sum_{j=1}^{n-1}\,\sum_{\genfrac{}{}{0pt}{}{\Fcal_1\simeq \bigoplus_{x\in X\setminus\{p\}} \Tcal_x^{(m_x)}\oplus {}_n \Scal_j^{(m_p\, n +j)}}{\Fcal_2\simeq \pi_n^\ast L\otimes \Lcal_n^{\otimes\, n-j}}}\, \upsilon^{-\sum_x\, m_x\deg(x)-1}\,\prod_{\genfrac{}{}{0pt}{}{x\in X\setminus\{p\}}{m_x\neq 0}}\, \big(q^{m_x\deg(x)}-q^{(m_x-1)\deg(x)}\big)\\
\times 1_{\pi_n^\ast T_x^{(m_x)}}\,q\big(q^m-q^{m-1}\big)\, T_n^{\, i}\big(1_{{}_n \Scal_j^{(m_p n+j)}}\big) \, \kbf_{(1,\, d-\sum_x\, m_x\deg(x)-1+n-j)}\otimes 1_{\pi_n^\ast L\otimes\Lcal_n^{\otimes\, n-j}\otimes \pi_n^\ast M\otimes \Lcal_n^{\otimes\, i}}\ .
\end{multline}
Thus, we get the assertion.
\end{proof}
\begin{remark}
For $n=1$, one recovers the non-stacky curve result (cf.\ \cite[Example~4.12]{book:schiffmann2012})
\begin{align}
\tilde \Delta(\onebf^{\mathbf{ss}}_{1, \, d})=\onebf^{\mathbf{ss}}_{1,\, d}\otimes 1+\sum_{e\geq 0}\, \theta_{0,\, e}\kbf_{(1,\, d-e)}\otimes \onebf^{\mathbf{ss}}_{1,\, d-e}\ .
\end{align}
\end{remark}

\bigskip\section{$\Ubf_\upsilon(\slfrak(S^1))$ from mirror symmetry}\label{sec:tatsuki}

\begin{center}
\textit{by Tatsuki Kuwagaki\footnote{\textsc{Kavli IPMU (WPI), UTIAS, The University of Tokyo, Kashiwa, Chiba 277-8583, Japan}. \newline \textit{Email address:} \texttt{tatsuki.kuwagaki@ipmu.jp}.}}
\end{center}

In the body of this paper, Sala--Schiffmann defined $\Ubf_\upsilon(\slfrak(S^1))$ by studying Hall algebras of coherent sheaves over infinite root stacks. In this note, we give a natural explanation of Sala--Schiffmann's description using mirror symmetry.

\subsection*{Acknowledgments}
The author thanks Francesco Sala for his intriguing talk at GTM seminar at IPMU, which motivated this note. The author also thanks Francesco and Olivier Schiffmann for their kindness to include this note as an appendix of their work. This work was supported by World Premier International Research Center Initiative (WPI), MEXT, Japan and JSPS KAKENHI Grant Number JP18K13405.

\subsection{Mirror symmetry for $\bP^1_{\infty}$}

Let $\bP^1_n$ be the projective line over $\bC$ with an $n$-gerbe at $\infty$. Hori--Vafa~\cite{HV} mirror of $\bP^1_n$ is given by a Landau--Ginzburg model $W=z+\frac{1}{z^n}$ over $\bC^\ast$. A traditional category associated with a Landau--Ginzburg model is Fukaya--Seidel category \cite{Seidelmutation}. Here, however, we will use a different description: the partially wrapped Fukaya category. We view $\bC^\ast$ as the cotangent bundle $T^\ast S^1$ of the circle $S^1=\bR/\bZ$. Let $p\colon \bR\rightarrow \bR/\bZ=S^1$ be the quotient map. The Lagrangian skeleton $\Lambda_n$ associated with $W$, is given by the FLTZ skeleton~\cite{FLTZ}
\begin{align}
\Lambda_n\coloneqq T^\ast_{S^1}S^1\cup \bigcup_{k\in \bZ}T^{\ast,{<0}}_{p(\frac{k}{n})}S^1
\end{align}
where $T^\ast_A S^1$ is the conormal bundle of $A$ inside $S^1$ and $T^{\ast,<0}_xS^1$ is the negative part of the cotangent fiber at $x$. We would like to consider the Fukaya category associated with $\Lambda_n$. However there exists a further different description. For a cotangent bundle $T^\ast X$, the derived category of the infinitesimally wrapped Fukaya category is equivalent to the bounded derived category of $\bR$-constructible sheaves over $X$ by Nadler--Zaslow \cite{NZ, Nad}: $\mathfrak{Fuk}(T^\ast X)\cong \cSh(X)$. Concerning this result, we can use certain subcategory of $\cSh(S^1)$ as a model for the Fukaya category associated with $\Lambda_n$ (more precise statement about this consideration is known as Kontsevich's conjecture. See \cite{Kon2, GPS} for further information). 

For a sheaf (or a complex of sheaves) $\Ecal$ over a manifold $X$, Kashiwara--Schapira~\cite{KS} defined the microsupport $\SS(\Ecal)\subset T^\ast X$. The definition of ``certain subcategory'' mentioned above is the full subcategory of $\cSh(S^1)$ spanned by the objects whose microsupports are contained in $\Lambda_n$. We denote it by $\cSh_{\Lambda_n}(S^1)$.

In the below, let us change the base field from $\bC$ to $k\coloneqq\bF_q$ i.e. $\bP^1_n$ is defined over $\bF_q$ and constructible sheaves are $\bF_q$-valued. Let $D(\Coh(\bP^1_{n}))$ be the bounded derived category of coherent sheaves.
We have the following mirror symmetry result:
\begin{theorem}[cf.\cite{FLTZ, FLTZ3}]\label{theorem2.1}
There exists an equivalence
\begin{align}
D(\Coh(\bP^1_{n}))\simeq \cSh_{\Lambda_n}(S^1)\ .
\end{align}
\end{theorem}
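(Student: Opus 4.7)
The plan is to view the statement as a one-dimensional specialization of the coherent-constructible correspondence for toric Deligne-Mumford stacks. The stack $\bP^1_n$ is the smooth toric DM stack associated with the stacky fan in $\bR$ whose rays are generated by $-1$ and $+1$, the latter carrying multiplicity $n$ to encode the $\mu_n$-gerbe at $\infty$; the Hori-Vafa mirror is then $(T^\ast S^1, W = z + z^{-n})$, and $\Lambda_n$ is precisely the FLTZ skeleton of this stacky fan. The desired equivalence is then an instance of the correspondence established in \cite{FLTZ, FLTZ3} (refined from a fully faithful embedding to an equivalence by the microlocal/wrapped enhancements due to Kuwagaki and Shende-Treumann-Zaslow). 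For an appendix-level self-contained argument I would alternatively proceed by tilting on both sides and match the resulting quiver algebras.

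On the coherent side, I would first check that $T \coloneqq \bigoplus_{k=0}^{n} \Lcal_n^{\otimes k}$ is a tilting generator of $D(\Coh(\bP^1_n))$. Higher-Ext vanishing follows from the pushforward identity $(\pi_n)_\ast \Lcal_n^{\otimes m} \simeq \Ocal_{\bP^1}(\lfloor m/n \rfloor)$ together with $H^1(\bP^1, \Ocal_{\bP^1}(\ell)) = 0$ for $\ell \geq -1$, covering all relevant $m$ with $-n \leq m \leq n$; generation follows from a Beilinson-type resolution of the diagonal on $\bP^1_n \times \bP^1_n$. A direct computation of Hom-spaces then identifies $A \coloneqq \End(T)$ with the path algebra (over $k = \bF_q$) of the quiver $Q$ having vertices $\{0, \ldots, n\}$, chain arrows $a_\ell \colon \ell \to \ell+1$ for $0 \leq \ell \leq n-1$, and one additional arrow $b \colon 0 \to n$, with no further relations (the composition $a_{n-1} \circ \cdots \circ a_0$ and $b$ together span the two-dimensional space $\Hom(\Ocal_{\bP^1_n}, \Lcal_n^{\otimes n}) \simeq H^0(\bP^1, \Ocal_{\bP^1}(1))$). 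Tilting then yields $D(\Coh(\bP^1_n)) \simeq D^{b}(A\textnormal{-}\mathsf{mod})$.

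On the constructible side, I would exhibit a matching tilting collection $\{F_0, \ldots, F_n\} \subset \cSh_{\Lambda_n}(S^1)$ given by the costandard sheaves $F_k \coloneqq \kbf_{[p(k/n),\,p((k+1)/n))}$ for $0 \leq k \leq n-1$, together with the constant sheaf $F_n \coloneqq \kbf_{S^1}$. Each $F_k$ with $k < n$ is constructible with microsupport contained in $T^\ast_{S^1} S^1 \cup T^{\ast,<0}_{p(k/n)} S^1 \cup T^{\ast,<0}_{p((k+1)/n)} S^1 \subset \Lambda_n$ (the sign convention is dictated by the half-open endpoint rule, which places singular covectors in the negative cone), while $F_n$ has microsupport on the zero section only. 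A direct microlocal computation of $\Ext^\ast(F_i, F_j)$ in the constructible derived category of $S^1$ reproduces the same path algebra $A$, giving $\cSh_{\Lambda_n}(S^1) \simeq D^{b}(A\textnormal{-}\mathsf{mod})$ and hence the theorem.

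The principal obstacle is bookkeeping of conventions: aligning the orientation of the FLTZ legs $T^{\ast,<0}_{p(k/n)} S^1$ with the choice of half-open boundary for the costandard sheaves $F_k$, and fixing the identification of the toric character lattice with $\bZ$ so that the two exceptional collections pair in the same linear order. Once these choices are pinned down the rest is automatic: both Ext-algebras are concentrated in cohomological degree zero, so each side is formal and the tilting equivalence lifts canonically to the dg/$\infty$-categorical level.
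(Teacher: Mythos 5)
Your overall strategy --- produce a full exceptional/tilting collection on each side, compute the two endomorphism algebras, and match them --- is exactly the route the appendix takes (it invokes Canonaco for the Beilinson collection on $D(\Coh(\bP^1_n))$ over $\F_q$ and transports the mirror exceptional collection from the complex coherent-constructible correspondence). Your coherent side is fine: $T=\bigoplus_{k=0}^n\Lcal_n^{\otimes k}$ is the Geigle--Lenzing tilting bundle and $\End(T)$ is the hereditary path algebra you describe. The genuine gap is on the constructible side: the collection $\{F_0,\dots,F_n\}$ you propose is not exceptional and its Ext-algebra is not the algebra $A$.

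Concretely: (i) $F_n=k_{S^1}$ has $\Ext^\bullet(k_{S^1},k_{S^1})=H^\bullet(S^1;k)=k\oplus k[-1]$, so it is not exceptional and the total Ext-algebra of your collection is not concentrated in degree $0$, whereas the corresponding coherent summand $\Lcal_n^{\otimes n}$ has vanishing self-$\Ext^1$. (ii) The short arcs $F_0,\dots,F_{n-1}$ have pairwise disjoint supports; non-adjacent ones admit no Homs or Exts at all, and adjacent ones interact only through a one-dimensional $\Ext^1$ (from $0\to k_{J_{i+1}}\to k_{J_i\cup J_{i+1}}\to k_{J_i}\to 0$) whose composites die in $\Ext^2=0$. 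This is not the path algebra of the chain, in which $\Hom(e_i,e_j)$ is one-dimensional in degree $0$ for \emph{every} $i\le j$. The correct mirror collection consists of costandard sheaves on a \emph{nested} family of half-open intervals anchored at the image of $0$, of lengths $0,1/n,\dots,(n-1)/n,1$: the length-$0$ object is the skyscraper mirror to $\Ocal_{\bP^1_n}$, and the length-$1$ object is $p_!$ of a unit half-open interval, which wraps once around $S^1$ and therefore receives a two-dimensional Hom from the skyscraper, matching $H^0(\bP^1,\Ocal(1))$. Finally, with the Kashiwara--Schapira conventions a closed-open interval $[a,b)$ contributes singular covectors in the \emph{positive} cone at both endpoints, so your $F_k$ do not even lie in $\cSh_{\Lambda_n}(S^1)$ as defined with $T^{\ast,<0}$ --- this is precisely why the appendix works with open-closed intervals $(a,b]$. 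The orientation issue is cosmetic, but points (i) and (ii) mean the claimed isomorphism of endomorphism algebras, and hence the equivalence, has not been established.
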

Over $\bC$, this mirror symmetry is known as ``coherent-constructible correspondence'' initiated by Bondal, formulated by Fang--Liu--Treumann--Zaslow, proved in full generality by the author~\cite{BO, FLTZ, Kuw2}. For $\bP^1_n$, the statement over $\bF_q$ remains true:
\begin{proof}[Proof of Theorem \ref{theorem2.1}]
There exists a Beilinson collection of $D(\Coh(\bP^1_{n}))$ over $\bF_q$ \cite{Canonaco}. By using the equivalence over $\bC$, one can also find the candidate for the exceptional collection on the RHS, which one can verify that it is indeed so also over $\F_q$. By calculating the endomorphism algebras of these exceptional collections, one can conclude the equivalence.
\end{proof}

The forgetful morphism $\bP^1_{nm}\rightarrow \bP^1_n$ induces the pull-back functor $D(\Coh(\bP^1_{n}))\rightarrow D(\Coh(\bP^1_{nm}))$. On the mirror side, this morphism realized by the inclusion $\Lambda_n\subset \Lambda_{nm}$. We set
\begin{align}
\Lambda_{\infty}\coloneqq T^\ast_{S^1}S^1\cup \bigcup_{q\in \bQ}T^{\ast,{<0}}_{p(q)}S^1 \ .
\end{align}
Then we have
\begin{corollary}\label{2.3}
There exists an equivalence
\begin{equation}
D(\Coh(\bP^1_{\infty}))\simeq \cSh_{\Lambda_\infty}(S^1)\ .
\end{equation}
\end{corollary}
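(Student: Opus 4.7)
The plan is to realize both sides as $2$-colimits of the finite-level equivalences from Theorem~\ref{theorem2.1} along the directed system of positive integers ordered by divisibility. On the coherent side, one extends Proposition~\ref{prop:coherentsheavesXinfty} of Talpo--Vistoli to the bounded derived category, yielding
\begin{align}
D(\Coh(\bP^1_{\infty}))\simeq \lim_{\genfrac{}{}{0pt}{}{\longrightarrow}{n}}\, D(\Coh(\bP^1_{n}))\ ,
\end{align}
where the transition maps are the derived pullbacks $L\pi_{nm,n}^\ast$; since $\pi_{nm,n}$ is flat (cf.\ Proposition~\ref{prop:projectionformula-finite}) these are just the exact pullbacks on coherent sheaves, and the direct limit may be formed inside the usual derived category.

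On the constructible side, first I would observe that any object $\Ecal\in \cSh_{\Lambda_\infty}(S^1)$ is $\R$-constructible with respect to some finite stratification of $S^1$, so $\SS(\Ecal)$ can only involve finitely many cotangent fibers $T^{\ast,<0}_{p(q)}S^1$ with $q\in\Q$; picking a common denominator $n$ yields $\SS(\Ecal)\subset\Lambda_n$. Combined with the obvious inclusions $\cSh_{\Lambda_n}(S^1)\hookrightarrow\cSh_{\Lambda_{nm}}(S^1)$, which are fully faithful (they are inclusions of full subcategories of $\cSh(S^1)$), this gives
\begin{align}
\cSh_{\Lambda_\infty}(S^1)\simeq \lim_{\genfrac{}{}{0pt}{}{\longrightarrow}{n}}\, \cSh_{\Lambda_n}(S^1)\ .
\end{align}

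The remaining step is to verify that the mirror equivalences of Theorem~\ref{theorem2.1} form a morphism of directed systems, i.e.\ that for every $n\mid m$ the square
\begin{align}
\begin{tikzpicture}[xscale=2.5,yscale=-1.2,baseline=(current bounding box.center)]
\node (A0_0) at (0,0) {$D(\Coh(\bP^1_{n}))$};
\node (A0_1) at (2,0) {$\cSh_{\Lambda_n}(S^1)$};
\node (A1_0) at (0,1) {$D(\Coh(\bP^1_{m}))$};
\node (A1_1) at (2,1) {$\cSh_{\Lambda_m}(S^1)$};
\path (A0_0) edge [->] node[above] {$\scriptstyle{\sim}$} (A0_1);
\path (A1_0) edge [->] node[above] {$\scriptstyle{\sim}$} (A1_1);
\path (A0_0) edge [->] node[left] {$\scriptstyle{\pi_{m,n}^\ast}$} (A1_0);
\path (A0_1) edge [->] node[right] {$\scriptstyle{\subset}$} (A1_1);
\end{tikzpicture}
\end{align}
commutes (up to natural isomorphism). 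It suffices to check this on a Beilinson-type exceptional collection: the line bundles $\Lcal_n^{\otimes\, \ell}$ for $\ell=0,\ldots,n$ generating $D(\Coh(\bP^1_n))$ pull back to $\Lcal_m^{\otimes\, \ell m/n}$ by Formula \eqref{eq:sn-n}, while on the constructible side the corresponding standard objects (extensions by zero from half-open arcs of length $\ell/n$ in $S^1$, cf.\ the FLTZ mirror of a line bundle on a toric stack) manifestly embed along the refinement $\Lambda_n\subset\Lambda_m$. Once this compatibility is checked on generators, it extends to the whole derived category by the endomorphism-algebra computation underlying Theorem~\ref{theorem2.1}.

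The main obstacle is this last compatibility step: one must carefully match the exceptional collection of $D(\Coh(\bP^1_n))$ used in the proof of Theorem~\ref{theorem2.1} with the explicit FLTZ standard objects in $\cSh_{\Lambda_n}(S^1)$, and verify that the pullback $\pi_{m,n}^\ast\Lcal_n\simeq \Lcal_m^{\otimes\, m/n}$ on the coherent side is intertwined with the corresponding subdivision of half-open arcs on the constructible side. Once this is established, taking the $2$-colimit along divisibility on both sides yields the desired equivalence.
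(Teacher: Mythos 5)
Your proposal is correct and follows exactly the route the paper intends: the appendix offers no proof beyond the remark that the pullback $D(\Coh(\bP^1_n))\to D(\Coh(\bP^1_{nm}))$ is mirror to the inclusion $\Lambda_n\subset\Lambda_{nm}$, and the corollary is obtained by passing to the colimit over divisibility. Your elaboration --- identifying $\cSh_{\Lambda_\infty}(S^1)$ with the union of the $\cSh_{\Lambda_n}(S^1)$ via finiteness of the stratification, and checking commutativity of the squares on the Beilinson collections --- is a sound filling-in of that same argument.
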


One can deduce the following corollary easily from the construction of the equivalence above.
\begin{corollary}\label{CCC}
The abelian category obtained as the essential image of $\Coh_{\mathsf{tors}}(\bP^1_\infty)$, the category of torsion sheaves at $\infty$\footnote{In the main body of the paper, this category is denoted as $\Coh_{p_\infty}(\PP^1_\infty)$, with $p=\infty \in \PP^1$.}, in $\cSh(S^1)$ is the full subcategory spanned by finite direct sums of the following $k^{S^1}_J$ where $J$ is a half-interval of the form $(a,b]\subset \bR$ with $a, b\in \bQ$ and $k^{S^1}_J\coloneqq p_!k_J$.
\end{corollary}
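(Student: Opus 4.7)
The plan is to prove the identification at each finite level $n$ and then pass to the direct limit. By Corollary~\ref{2.3} and the description $\Coh(\bP^1_\infty) \simeq \lim_{\to} \Coh(\bP^1_n)$ (Proposition~\ref{prop:coherentsheavesXinfty}), the subcategory of torsion sheaves at $\infty$ satisfies $\Coh_{\mathsf{tors}}(\bP^1_\infty) \simeq \lim_{\to} \Coh_{p_n}(\bP^1_n)$ under pull-back along $\pi_{nm,n}$. Dually, $\Lambda_\infty = \overline{\bigcup_n \Lambda_n}$, and every half-open rational interval $(a,b] \subset \bR$ has its endpoints in $\frac{1}{n}\bZ$ for some $n$, so that $k^{S^1}_{(a,b]}$ lies in $\cSh_{\Lambda_n}(S^1)$. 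It therefore suffices to verify, at each finite level $n$, that the essential image of $\Coh_{p_n}(\bP^1_n)$ in $\cSh_{\Lambda_n}(S^1)$ under the equivalence of Theorem~\ref{theorem2.1} coincides with the additive subcategory generated by the sheaves $k^{S^1}_{J}$ for $J = (a/n, b/n] \subset \bR$ with $a, b \in \bZ$ and $b - a > 0$.

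At fixed $n$, the indecomposable objects of $\Coh_{p_n}(\bP^1_n) \simeq \mathsf{Rep}_k^{\mathsf{nil}}(A_{n-1}^{(1)})$ are the torsion sheaves ${}_n\Scal_i^{(j)}$ for $i \in \{1, \ldots, n\}$ and $j \in \bZ_{>0}$, defined by the short exact sequence~\eqref{eq:torsion-orbifold}. I would identify the image of each such indecomposable with an explicit sheaf $k^{S^1}_{J_{i,j}}$, where $J_{i,j} \subset \bR$ is a half-open interval of length $j/n$ with endpoints in $\frac{1}{n}\bZ$. The matching is pinned down by computing $\Hom$-pairings against the Beilinson-type exceptional collection used in the proof of Theorem~\ref{theorem2.1}: the collection consists of explicit line bundles on $\bP^1_n$ whose mirrors are known constructible sheaves on $S^1$, and applying the equivalence to the short exact sequence~\eqref{eq:torsion-orbifold} reduces the computation of $\Hom$'s into and out of ${}_n\Scal_i^{(j)}$ to $\Hom$'s between line bundles on $\bP^1_n$, which on the mirror side become elementary sheaf-cohomology computations on $S^1$ (or on $\bR$ after lifting via the cover $p$). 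The microsupport condition $\SS(k^{S^1}_{J_{i,j}}) \subset \Lambda_n$, which is forced by the half-open convention $(a/n, b/n]$, ensures that the endpoints of $J_{i,j}$ lie in $\frac{1}{n}\bZ$ and not merely in a finer lattice.

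Compatibility of the bijection with the transition maps follows from Formula~\eqref{eq:simple-independent}, $\pi_{nm,n}^\ast({}_n\Scal_i^{(j)}) \simeq {}_{nm}\Scal_{mi}^{(mj)}$, whose mirror is the same sheaf $k^{S^1}_{J_{i,j}}$ now viewed in the larger category $\cSh_{\Lambda_{nm}}(S^1) \supset \cSh_{\Lambda_n}(S^1)$. Taking the colimit over $n$ then produces all half-open intervals with endpoints in $\bQ$, completing the identification. The main technical obstacle is the finite-level matching in step two: one must carefully track the orientation conventions of the negative conormal fibers $T^{\ast,<0}$ in $\Lambda_n$ and the chosen orientation of the exceptional collection in order to obtain precisely half-open intervals of the form $(a/n, b/n]$ rather than $[a/n, b/n)$. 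Modulo this bookkeeping, the proof is a direct application of the mirror equivalence to the standard presentation of the torsion subcategory in terms of the ${}_n\Scal_i^{(j)}$.
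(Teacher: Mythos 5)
Your strategy is correct and is essentially the route the paper intends: the appendix gives no written proof, asserting only that the corollary follows ``easily from the construction of the equivalence above,'' i.e., from the finite-level coherent--constructible correspondence of Theorem~\ref{theorem2.1} (built from Beilinson collections) together with the colimit description in Corollary~\ref{2.3}. Your finite-level matching of the indecomposables ${}_n\Scal_i^{(j)}$ with interval sheaves of length $j/n$ via the defining sequence \eqref{eq:torsion-orbifold}, and the compatibility check via $\pi_{nm,n}^\ast({}_n\Scal_i^{(j)})\simeq{}_{nm}\Scal_{mi}^{(mj)}$ before passing to the colimit, is precisely the content hidden behind that one-line assertion.
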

We denote the category by $\Sh^\bQ_{<0}(S^1)$. Below, we use $k_J$ for $p_!k_J^{S^1}$ if the context is clear. Note that our intervals are open-closed but not closed-open as in the body of this paper.

\subsection{$\Ubf_\upsilon(\slfrak(S^1))$ from the mirror side}\label{sec:S1-mirror}

The category $\Sh^\bQ_{<0}(S^1)$ gives a natural explanation of Sala--Schiffmann's description Theorem~\ref{thm:Cinfty} as follows.

We give a direct proof of the following (although one can prove by considering Corollary~\ref{2.3}):
\begin{theorem}\label{theorem1}
Let $\Acal$ be the algebra generated by $\lc E_J, K_{J'}^\pm\relmid J, J'\subseteq S^1_{\Q}\ , J\neq S^1_{\Q} \rc$ where $J$ (resp.\ $J'$) runs over all strict rational open-closed intervals (resp. rational open-closed intervals) modulo the relations appearing in Theorem~\ref{thm:Cinfty}. There exists an isomorphism of associative algebras
\begin{align}
\phi\colon \Acal\rightarrow \fH^{\mathsf{tw}}(\Sh^\bQ_{<0}(S^1))\ .
\end{align}
\end{theorem}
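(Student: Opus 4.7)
The plan is to derive Theorem~\ref{theorem1} as a formal consequence of Corollary~\ref{CCC} combined with Theorem~\ref{thm:Cinfty} applied to the curve $X=\bP^1$. By Corollary~\ref{CCC}, the coherent--constructible correspondence furnishes an $\F_q$-linear equivalence of abelian categories between $\Tor_{p_\infty}(\bP^1_\infty)$ and $\Sh^\bQ_{<0}(S^1)$, matching the indecomposable torsion sheaf $\Scal_{J'}$ defined in Section~\ref{sec:infiniterootstack} with the sheaf $k_J$, where $J=(a,b]\subset\bR$ corresponds to $J'=[a,b[\subset S^1_\bQ$ under the obvious reparametrization of half-open intervals. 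Because the twisted Hall-algebra structure---multiplication \eqref{eq:Hallprod}, twisting \eqref{eq:Hallnewrelations}, and coproduct \eqref{eq:Hallcoprod}---is expressed purely in terms of cardinalities of $\Hom$, $\Ext^1$, and $\Aut$ groups together with the Euler pairing on the numerical Grothendieck group, all of which are transported by any $\F_q$-linear equivalence, this equivalence lifts canonically to an isomorphism of twisted Hall algebras
\[
\Psi\colon \Hbf_{\infty,\,p_\infty}^{\mathsf{tw},\tor}\;\xrightarrow{\;\sim\;}\;\fH^{\mathsf{tw}}(\Sh^\bQ_{<0}(S^1))
\]
sending $1_{\Scal_{J'}}\mapsto 1_{k_J}$ and $\kbf_{(0,\chi_{J''})}\mapsto \kbf_{[k_{J''}]}$.

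To produce $\phi$, I would then invoke Theorem~\ref{thm:Cinfty} specialized to $X=\bP^1$, which supplies a presentation isomorphism $\Acal\xrightarrow{\sim}\Cbf_\infty\subset \Hbf_{\infty,\,p_\infty}^{\mathsf{tw},\tor}$ sending $E_J\mapsto \upsilon^{1/2}\,1_{\Scal_{J'}}$ and $K_{J'}^\pm\mapsto \kbf_{(0,\pm\chi_{J'})}$. Composing with $\Psi|_{\Cbf_\infty}$ produces the sought $\phi$, satisfying $\phi(E_J)=\upsilon^{1/2}\,1_{k_J}$ and $\phi(K_{J'}^\pm)=\kbf_{\pm[k_{J'}]}$. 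Injectivity of $\phi$ is immediate from that of the presentation isomorphism. For surjectivity, observe that every object of $\Sh^\bQ_{<0}(S^1)$ is by definition a finite direct sum of $k_J$'s, so $\fH^{\mathsf{tw}}(\Sh^\bQ_{<0}(S^1))$ is generated as an algebra by $\{1_{k_J}\}\cup\{\kbf_\alpha\}$; each $1_{k_{J_1}\oplus\cdots\oplus k_{J_r}}$ can then be extracted recursively from the Hall products of $1_{k_{J_i}}$'s (the split direct-sum term always appears with invertible coefficient), reducing the surjectivity check to the fact that the generators $1_{k_J}$ themselves lie in the image of $\phi$, which follows from iterated application of the join relation \eqref{eq:Joining1} to strict subintervals.

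The main obstacle is to pin down the interval-matching convention $J=(a,b]\leftrightarrow J'=[a,b[$ so that it respects the directions of the nontrivial extensions in a way compatible with the join, nest, and Drinfeld--Jimbo relations. Concretely, one must verify on the constructible side that for $J_1=(a,b]$ and $J_2=(b,c]$ the group $\Ext^1(k_{J_1},k_{J_2})$ is one-dimensional with unique nontrivial extension $k_{J_1\cup J_2}$, that $\Ext^1(k_{J_2},k_{J_1})=0$, and that all $\Hom$ groups between the $k_J$'s match their counterparts between the $\Scal_{J'}$'s. This is a short direct computation using the short exact sequence
\[
0 \to k_{(a,b]} \to k_{(a,\infty)} \to k_{(b,\infty)} \to 0
\]
of constructible sheaves on $\bR$ together with the adjunction formulas for the relevant open immersions, and it reproduces the analogue of \eqref{eq:torsion-orbifold}. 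Once this verification is in place, the join, nest, and Drinfeld--Jimbo relations of Theorem~\ref{thm:Cinfty} transport unchanged via $\Psi$, and the proof is complete.
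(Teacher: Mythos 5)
Your route is genuinely different from the one written in the appendix. Kuwagaki explicitly remarks that the theorem ``can be proved by considering Corollary~\ref{2.3}'' but then gives a \emph{direct} proof instead: he defines $\phi$ on generators and verifies each Drinfeld--Jimbo, join and nest relation by hand, computing the relevant $\Hom$ and $\Ext^1$ groups between the sheaves $k_J$ on $S^1$, and obtains injectivity by repeating the finite-approximation argument of Theorem~\ref{thm:Cinfty}. You take the parenthetical route seriously: transport the twisted Hall algebra structure through the abelian equivalence of Corollary~\ref{CCC} (which is legitimate, since an $\F_q$-linear equivalence of extension-closed hearts preserves $\Hom$, $\Aut$, Yoneda $\Ext^1$ and the Euler form) and compose with the presentation isomorphism of Theorem~\ref{thm:Cinfty}. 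This buys you injectivity for free, at the cost of making the theorem logically dependent on the coherent--constructible correspondence over $\F_q$, which the appendix itself only sketches; the direct proof is self-contained and independent of that input, which is presumably why it was chosen. Your residual verification of the extension directions under $[a,b[\,\leftrightarrow\,(a,b]$ is exactly the content of the paper's case-by-case computation, so the two arguments meet there. One slip: since $(a,b]$ is closed in $(a,\infty)$, the exact sequence runs $0\to k_{(b,\infty)}\to k_{(a,\infty)}\to k_{(a,b]}\to 0$, i.e.\ $k_{(a,b]}$ is the quotient, not the subobject.

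The one substantive point, which your more explicit argument exposes while the paper dismisses it with ``surjectivity is obvious'': Corollary~\ref{CCC} describes $\Sh^\bQ_{<0}(S^1)$ as containing $p_!k_{(a,b]}$ for \emph{all} rational $a<b$, including $b-a\geq 1$. These correspond under your $\Psi$ to the indecomposables $\pi_{\infty,n}^\ast\big({}_n\Scal_i^{(\ell)}\big)$ with $\ell\geq n$, whose characteristic functions lie only in the closure of $\Cbf_\infty$ and not in $\Cbf_\infty$ itself --- this is precisely why Section~\ref{sec:completion} introduces the metric completion, and already for $n=2$ one checks that $1_{{}_2\Scal_2^{(2)}}\notin\Cbf_2$. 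Consequently your claim that every generator $1_{k_J}$ is reached by iterated join relations fails whenever $b-a\geq 1$. Either $\fH^{\mathsf{tw}}(\Sh^\bQ_{<0}(S^1))$ must be read as the subalgebra generated by the $1_{k_J}$ with $J$ of length $<1$ (the composition subalgebra), or the target category must be cut down accordingly; under either reading your proof and the paper's both go through, but as literally stated the surjectivity step is a gap common to both.
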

\begin{proof}
We define the morphism by the assignment
\begin{align}
E_J\mapsto \upsilon^{1/2}1_{k_J}, K_J^{\pm 1}\mapsto \bold{k}^{\pm k_J} 
\end{align}
where $1_{k_J}$ is the characteristic function of $k_J$ and $\bold{k}$ is the indefinite of the group ring $\widetilde{\bQ}[\Ksfnum(\Sh^\bQ_{<0}(S^1))]$.

First, we will check the relations in Theorem~\ref{thm:Cinfty} for $\fH^{\mathsf{tw}}(\Sh^\bQ_{<0}(S^1))$.
\begin{itemize}\itemsep0.2cm
\item \eqref{eq:Serre1-I} This is the definition of the product of the twisted Hall algebra; Actually, the description of Euler form given in \eqref{eq:eulerform-introduction} is the Euler form of $\Sh^\bQ_{<0}(S^1)$, hence the symmetric one is the same.
\item \eqref{eq:Serre1-II} For any pair of disjoint $J_1,J_2$ of the form $J_1=(a,b]$ and $J_2=(b,c]$ with $a\neq c$ and $J_1\cup J_2$ is again a proper interval in $S^1_{\Q}$, we have an exact sequence
\begin{align}
0\rightarrow k_{(b,c]}\rightarrow k_{(a, c]}\rightarrow k_{(a,b]}\rightarrow 0\ .
\end{align}
In the Grothendieck group, this gives the desired relation $\bold{k}_{J_1\cup J_2}=\bold{k}_{J_1}+\bold{k}_{J_2}$. 
\item \eqref{eq:Joining1} Suppose the same assumption on $J_1, J_2$ as above. The Hall product is
\begin{align}
\upsilon^{1/2}1_{k_{J_1}}\cdot \upsilon^{1/2}1_{k_{J_2}}&=\upsilon(\upsilon^{-1}1_{k_{J_1\cup J_2}}+\upsilon^{-1}1_{k_{J_1}+k_{J_2}})\ ,\\
\upsilon^{1/2}1_{k_{J_2}}\cdot \upsilon^{1/2}1_{k_{J_1}}&=v1_{k_{J_1}+k_{J_2}}\ .
\end{align}
Hence we have
\begin{align}
\upsilon^{1/2}(\upsilon^{1/2}1_{k_{J_1}}\cdot \upsilon^{1/2}1_{k_{J_2}})-\upsilon^{-1/2}(\upsilon^{1/2}1_{k_{J_2}}\cdot \upsilon^{1/2}1_{k_{J_1}})=\upsilon^{1/2}1_{k_{J_1\cup J_2}}\ .
\end{align}

\item \eqref{eq:Serre3} For any $J_1, J_2$ such that $\overline{J_1}\cap \overline{J_2}=\varnothing$. there are no nontrivial extensions $k_{J_1}$ and $k_{J_2}$. This implies that the Hall product is $1_{k_{J_1}}\cdot 1_{k_{J_2}}=1_{k_{J_1}\oplus k_{J_2}}=1_{k_{J_2}}\cdot 1_{k_{J_1}}$. Hence $[1_{k_{J_1}},1_{k_{J_2}}]=0$. 
\item \eqref{eq:nest} Let $J_1=(a,b]\subsetneq J_2=(c,d]$. First assume that $a \neq c$ and $b\neq d$. There are no nontrivial $\Hom$ and $\Ext$ between $k_{J_1}$ and $k_{J_2}$. The relation \eqref{eq:nest} is trivial. Next we assume that $b=d$, then there exists exists $1$-dimensional hom-space $\Hom(k_{J_1}, k_{J_2})$. Hence we have
\begin{align}
\upsilon^{\la k_{J_1}, k_{J_2}\ra}\upsilon 1_{k_{J_1}}\cdot 1_{k_{J_2}}&=\upsilon^2 {1_{k_{J_1}+k_{J_2}}}=q{1_{k_{J_1}+k_{J_2}}}\ ,\\
\upsilon^{\la k_{J_2}, k_{J_1}\ra}\upsilon 1_{k_{J_2}}\cdot 1_{k_{J_1}}&=q{1_{k_{J_1}+k_{J_2}}}\ .
\end{align}
Since $\la k_{J_1}, k_{J_2}\ra=\la \chi_{J_1}, \chi_{{J}_2}\ra$ where the intervals are interpreted as closed-open intervals on the RHS, we get \eqref{eq:nest} for this case. Similarly, one can prove the case of $a=c$.
\end{itemize}
It is obvious that the morphism $\phi$ is surjective. One can follow the argument for the injectivity of Theorem~\ref{thm:Cinfty}.
\end{proof}
We can readapt the definitions of the coproduct \eqref{eq:coproductCinfty} and of the Green pairing \eqref{eq:GreenpairingCinfty} in our setting endowing $\fH^{\mathsf{tw}}(\Sh^\bQ_{<0}(S^1))$ with the structure of a topological $\widetilde \Q$-Hopf algebra. By passing to the reduced Drinfeld double, we obtain the following.
\begin{corollary}
We have an isomorphism
\begin{align}
\Ubf_\upsilon(\slfrak(S^1_{\Q}))\cong \mathbf{D}(\fH^{\mathsf{tw}}(\Sh^\bQ_{<0}(S^1)))\ .
\end{align}
\end{corollary}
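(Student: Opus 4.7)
The plan is to upgrade Theorem~\ref{theorem1} from an isomorphism of associative algebras to an isomorphism of topological Hopf algebras compatible with the Green pairing, and then to pass to reduced Drinfeld doubles. Since by Theorem~\ref{thm:Cinfty} the algebra $\Acal$ is canonically isomorphic to $\Cbf_\infty$, and by definition $\Ubf_\upsilon(\slfrak(S^1_{\Q})) = \DCbf_\infty$, the desired isomorphism will follow formally.

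First I would re-examine the morphism $\phi\colon \Acal \to \fH^{\mathsf{tw}}(\Sh^\bQ_{<0}(S^1))$ constructed in the proof of Theorem~\ref{theorem1}. Composed with the isomorphism $\Cbf_\infty \xrightarrow{\sim} \Acal$ of Theorem~\ref{thm:Cinfty}, it yields an algebra isomorphism
\begin{align}
\widetilde{\phi}\colon \Cbf_\infty \xrightarrow{\sim} \fH^{\mathsf{tw}}(\Sh^\bQ_{<0}(S^1))\ ,\quad \upsilon^{1/2}\, 1_{\Scal_J} \mapsto \upsilon^{1/2}\, 1_{k_J}\ ,\quad K_I^{\pm 1} \mapsto \mathbf{k}^{\pm k_I}\ .
\end{align}
Both sides are Hall algebras of abelian categories (the categories $\Tor_{p_\infty}(X_\infty)$ and $\Sh^\bQ_{<0}(S^1)$), and Corollary~\ref{CCC} (together with the description of the Euler form in Section~\ref{sec:infiniterootstack}) identifies these categories in a way which matches simple objects $\Scal_J \leftrightarrow k_J$ and preserves the Euler form via \eqref{eq:eulerform-introduction}.

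The key remaining verification is that $\widetilde{\phi}$ is an equivalence of abelian categories on generators and extends to an equivalence of the full subcategories consisting of finite-length objects. For this I would argue as follows: both $\Tor_{p_\infty}(X_\infty)$ and $\Sh^\bQ_{<0}(S^1)$ are generated, under extensions and direct sums, by the simple objects indexed by strict rational intervals; in both cases a finite-length object decomposes as a sum of such indecomposables $\Scal_J$ (resp.\ $k_J$); and the morphism/extension spaces match by direct inspection, since on one side they are computed from the cyclic quiver $A^{(1)}_{n-1}$ and on the other side from constructible sheaves on $S^1$ with microsupport in $\Lambda_\infty$. Once this equivalence of abelian categories is established, the induced map on Hall algebras automatically preserves the Hall product, the coproduct \eqref{E:coprod}--\eqref{eq:coproductCinfty}, and the Green pairing \eqref{eq:GreenpairingCinfty}, because all three are defined by the same universal formulas in terms of morphisms, extensions, and automorphism groups.

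Finally I would invoke the functoriality of the reduced Drinfeld double construction: an isomorphism of topological Hopf algebras with non-degenerate Hopf pairing induces an isomorphism of their reduced Drinfeld doubles. Applied to $\widetilde{\phi}$, this gives
\begin{align}
\Ubf_\upsilon(\slfrak(S^1_{\Q})) \;=\; \DCbf_\infty \;\xrightarrow{\sim}\; \mathbf{D}\bigl(\fH^{\mathsf{tw}}(\Sh^\bQ_{<0}(S^1))\bigr)\ ,
\end{align}
as claimed. The main obstacle I foresee is the bookkeeping involved in matching the half-open interval conventions (left-closed vs.\ right-closed) together with the sign appearing in the Euler form when passing between $\Scal_J$ and $k_J$; the proof of Theorem~\ref{theorem1} already handles this point by noting the closed-open vs.\ open-closed identification, but the coproduct formula \eqref{E:coprod} involves an ordered sum over $c \in [a,b[$ which must be verified to match the natural filtration of $k_{(a,b]}$ by subsheaves of the form $k_{(c,b]}$ under the equivalence.
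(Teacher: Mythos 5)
Your proposal is correct and follows the same route as the paper: the paper's entire argument is to transplant the coproduct \eqref{eq:coproductCinfty} and the Green pairing \eqref{eq:GreenpairingCinfty} to $\fH^{\mathsf{tw}}(\Sh^\bQ_{<0}(S^1))$, observe that the isomorphism of Theorem~\ref{theorem1} respects them, and pass to the reduced Drinfeld double, while your justification of that compatibility via the abelian-category equivalence of Corollary~\ref{CCC} is precisely the alternative the appendix itself alludes to. One small imprecision: $\Cbf_\infty$ is the \emph{composition subalgebra} of the Hall algebra of $\Tor_{p_\infty}(X_\infty)$, not that Hall algebra itself, so the categorical equivalence identifies the ambient (twisted) Hall algebras and you must then restrict to the subalgebras generated by the $1_{\Scal_J}$, respectively the $1_{k_J}$, together with the $K_I^{\pm}$ --- immediate, since the equivalence matches these generators and the coproduct and pairing on the subalgebras are restrictions of the ambient ones --- before invoking functoriality of the reduced Drinfeld double.
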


In the above argument, the rationality of $S^1_{\Q}$ is not essential. We set
\begin{align}
\Lambda_{\bR}\coloneqq T^\ast_{S^1}S^1\cup \bigcup_{q\in \bR}T^{\ast,{<0}}_{p(q)}S^1\ .
\end{align}
Let $\Sh^\bR_{<0}(S^1)$ be the abelian full subcategory of $\cSh_{\Lambda_\bR}(S^1)$ spanned by finite direct sums of elements $k_J$, where $J$ is an open-closed interval of the form $(a,b]\subset \bR$ and $k_J\coloneqq p_!k_J$ by abuse of notation.

\begin{theorem}
Let $\Acal$ be the algebra generated by $\lc E_J, K_{J'}^\pm\relmid J, J'\subseteq S^1\ , J\neq S^1\rc$ where $J$ (resp.\ $J'$) runs over all strict open-closed intervals (resp. open-closed intervals)  modulo the relations appearing in Theorem~\ref{thm:Cinfty}. There exists an isomorphism
\begin{align}
\phi\colon \Acal\rightarrow \fH^{\mathsf{tw}}(\Sh^\bR_{<0}(S^1))\ .
\end{align}
\end{theorem}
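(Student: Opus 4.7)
The plan is to adapt the proof of Theorem~\ref{theorem1} essentially verbatim, with the only nontrivial modification appearing in the injectivity step, where the filtration of $\Acal$ by ``denominator'' subalgebras must be replaced by a filtration indexed by finite subsets of $S^1$. First, I would define $\phi$ by the same assignment as in Theorem~\ref{theorem1}, namely
\begin{align}
\phi(E_J) = \upsilon^{1/2}\,1_{k_J} \quad \text{and}\quad \phi(K_J^{\pm 1}) = \mathbf{k}^{\pm k_J}\ ,
\end{align}
now for arbitrary open-closed intervals $J\subseteq S^1$.

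The verification that $\phi$ is well-defined, i.e., that the Drinfeld-Jimbo, join and nest relations of Theorem~\ref{thm:Cinfty} hold in $\fH^{\mathsf{tw}}(\Sh^\bR_{<0}(S^1))$, is literally identical to the corresponding verification in the proof of Theorem~\ref{theorem1}. Indeed, every computation there uses only short exact sequences and dimensions of $\Hom$/$\Ext^1$ spaces between the sheaves $k_J$; and these data depend only on the \emph{combinatorial} configuration of the endpoints (coincidence, nesting, disjointness, adjacency), not on their arithmetic nature. Surjectivity is immediate from the definition of $\Sh^\bR_{<0}(S^1)$ as the abelian subcategory spanned by finite direct sums of the objects $k_J$ for real open-closed intervals $J$.

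The main task is injectivity. Here the argument in Theorem~\ref{thm:Cinfty} uses the countable filtration $\Acal = \bigcup_n \Acal_n$ where $\Acal_n$ is generated by $E_J, K_J^{\pm}$ for $J$ with endpoints in $\frac{1}{n}\Z$; such a filtration is unavailable when endpoints are allowed to be irrational. The replacement I propose is to filter by finite subsets: for each finite subset $S=\{x_0<x_1<\cdots<x_{n-1}\}\subset S^1$ (cyclically ordered), let $\Acal_S$ denote the subalgebra of $\Acal$ generated by the $E_J,K_{J'}^{\pm 1}$ with endpoints of $J,J'$ in $S$. Since any element of $\Acal$ is a finite expression involving only finitely many generators, $\Acal=\bigcup_S\Acal_S$, and it suffices to show $\phi|_{\Acal_S}$ is injective for every finite $S$. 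By the join relations, $\Acal_S$ is in fact generated by the $n$ elementary pairs $\bigl(E_{(x_i,x_{i+1}]},K_{(x_i,x_{i+1}]}^{\pm 1}\bigr)$ (cyclic index), and the defining relations among these generators inherited from $\Acal$ match term-by-term those defining $\Cbf_n\simeq \Ubf^+_\upsilon(\slfrakhat(n))\otimes_{\widetilde\Q}\widetilde\Q[\delta_n]$, as already verified in the rational setting (cf.\ Theorem~\ref{thm:Cinfty}). This yields a surjective algebra morphism $\Cbf_n \twoheadrightarrow \Acal_S$ which is a section of $\phi|_{\Acal_S}$, forcing $\phi|_{\Acal_S}$ to be injective.

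The hard part will be justifying cleanly that the relations among the elementary generators of $\Acal_S$ coincide with those of $\Cbf_n$: one must show that no ``new'' relation specific to the irrational setting arises, and that the Serre relations derived in the proof of Theorem~\ref{thm:Cinfty} follow formally from the join and nest relations restricted to the $n$ elementary intervals. Both points, however, are purely combinatorial and already carried out in the rational case; transferring the argument amounts to observing that the proof of Theorem~\ref{thm:Cinfty} never uses the specific value of the endpoints, only their cyclic combinatorics.
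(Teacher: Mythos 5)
Your proposal is correct and matches the paper's own proof: the paper defines $\phi$ by the same assignment, cites the argument of Theorem~\ref{theorem1} for the homomorphism property, and proves injectivity by exactly your filtration, writing $\Acal=\bigcup_P \Acal_P$ over finite subsets $P\subset S^1$ and identifying $\Acal_P$ with the algebra $\Acal_{|P|}$ from the proof of Theorem~\ref{thm:Cinfty}. The point you flag as the remaining ``hard part'' is handled in the paper at the same level of detail, namely by observing that the argument of Theorem~\ref{thm:Cinfty} only uses the cyclic combinatorics of the endpoints.
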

\begin{proof}
We define the morphism by the assignment
\begin{equation}
E_J\mapsto \upsilon^{1/2}1_{k_J}, K_J^{\pm 1}\mapsto \bold{k}^{\pm k_J}\ .
\end{equation}
One can prove that this is a homomorphism by the same argument as in the proof of Theorem \ref{theorem1}. The surjectivity is obvious.

For the injectivity, we follow the argument of Theorem~\ref{thm:Cinfty}. Let $P\subset S^1$ be a finite subset. Let us consider the subalgebra $\Acal_P\subset\Acal$ generated by $F_J, K_J^{\pm 1}$ for $J$ ends in $P$. Since $\Acal_P\cong \Acal_{\vert P\vert}$ in the proof of Theorem~\ref{thm:Cinfty}, the restriction $\phi|_{\Acal_P}$ is injective. Since $\bigcup_{P}\Acal_P=\Acal$, we complete the proof.
\end{proof}

As before, we can suitably define a coproduct and a Green pairing giving rise to the structure of a topological $\widetilde \Q$-Hopf algebra on $\fH^{\mathsf{tw}}(\Sh^\bR_{<0}(S^1))$. By taking the reduced Drinfeld double, we obtain.
\begin{corollary}
We have an isomorphism
\begin{align}
\Ubf_\upsilon(\slfrak(S^1))\cong \mathbf{D}(\fH^{\mathsf{tw}}(\Sh^\bR_{<0}(S^1)))\ .
\end{align}
\end{corollary}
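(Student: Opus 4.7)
The plan is to follow verbatim the strategy of Theorem~\ref{theorem1}, with the only new ingredient being a filtered-colimit argument to handle the uncountable family of generators. First, I would define $\phi$ on generators by the same formulas as in the rational case,
\begin{align}
\phi(E_J) = \upsilon^{1/2} 1_{k_J}, \qquad \phi(K_{J'}^{\pm 1}) = \mathbf{k}^{\pm k_{J'}},
\end{align}
now with $J, J'$ ranging over open-closed intervals in $S^1$ (not just rational ones). The sheaf $k_J = p_! k^{S^1}_J \in \Sh^{\R}_{<0}(S^1)$ is well-defined for any open-closed $J \subset \R$ with rational or irrational endpoints, so this makes sense.

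Next, I would check that $\phi$ is a homomorphism. This is essentially a copy-paste of the verification carried out in the proof of Theorem~\ref{theorem1}: each of the Drinfeld--Jimbo, join, and nest relations of Theorem~\ref{thm:Cinfty} is verified using only local considerations in $\Sh^{\R}_{<0}(S^1)$ (computing $\mathsf{Hom}$ and $\mathsf{Ext}^1$ between pairs of sheaves $k_{J_1}, k_{J_2}$, and using short exact sequences like $0 \to k_{(b,c]} \to k_{(a,c]} \to k_{(a,b]} \to 0$). None of these computations use the rationality of the endpoints, so the argument transports without change. Surjectivity of $\phi$ is obvious from the definition, since the generators $1_{k_J}$ span $\mathfrak{H}^{\mathsf{tw}}(\Sh^{\R}_{<0}(S^1))$ as an algebra (every object decomposes as a direct sum of such $k_J$'s by definition of the subcategory).

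The injectivity is the only genuinely new step. For a finite subset $P \subset S^1$, I would let $\Acal_P \subseteq \Acal$ denote the subalgebra generated by those $E_J, K_{J'}^{\pm 1}$ whose intervals have endpoints in $P$. Setting $n = |P|$ and labeling $P = \{x_0 < x_1 < \cdots < x_{n-1}\}$ cyclically, I claim $\Acal_P$ is isomorphic to the subalgebra $\Acal_n \subset \Cbf_n$ appearing in Theorem~\ref{thm:Cinfty}: indeed, the only relations among the generators of $\Acal_P$ that are imposed in $\Acal$ are those of Theorem~\ref{thm:Cinfty} restricted to intervals with endpoints in $P$, and these match exactly the relations for $\Cbf_n$ under the identification sending $(x_{i-1}, x_i]$ to the $i$-th simple. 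By the rational case, $\phi|_{\Acal_P}$ is therefore injective. Since $\Acal = \bigcup_P \Acal_P$ over all finite $P \subset S^1$, directed under inclusion, injectivity of $\phi$ follows.

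The main obstacle, such as it is, lies in making the identification $\Acal_P \cong \Cbf_{|P|}$ rigorous: one must confirm that no additional relations are inadvertently imposed by the ambient algebra $\Acal$ (e.g.\ through the join or nest relations involving generators outside $\Acal_P$). This is a standard check — any defining relation of $\Acal$ involves only finitely many intervals, so for large enough $P'\supseteq P$ it already lives in $\Acal_{P'}$, and the corresponding finite-dimensional-cyclic-quiver argument applies. Once this bookkeeping is done, the $\widetilde{\Q}$-Hopf structure (coproduct and Green pairing transported from \eqref{eq:coproductCinfty}, \eqref{eq:GreenpairingCinfty}) is set up exactly as in the rational case, so that taking the reduced Drinfeld double of $\mathfrak{H}^{\mathsf{tw}}(\Sh^{\R}_{<0}(S^1))$ will recover $\Ubf_\upsilon(\mathfrak{sl}(S^1))$ as in the stated corollary.
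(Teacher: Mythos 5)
Your proposal follows the paper's proof essentially verbatim: the same assignment $E_J\mapsto \upsilon^{1/2}1_{k_J}$, $K_{J'}^{\pm 1}\mapsto \mathbf{k}^{\pm k_{J'}}$, the same relation checks (which indeed never use rationality of the endpoints), the same observation that surjectivity is obvious, and the same injectivity argument via the directed union $\Acal=\bigcup_P \Acal_P$ over finite subsets $P\subset S^1$ with $\Acal_P\cong \Acal_{\vert P\vert}$, followed by transporting the coproduct and Green pairing and passing to the reduced Drinfeld double. The bookkeeping point you raise about $\Acal_P$ not acquiring spurious relations is resolved exactly as in the proof of Theorem~\ref{thm:Cinfty}, by exhibiting a section of $\phi\vert_{\Acal_P}$ from the abstractly presented cyclic-quiver composition algebra, so your argument is complete and matches the paper's.
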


\subsection{Fundamental representation}
Along with Sala--Schiffmann's definition for $\Ubf_\upsilon(\slfrak(S^1_{\Q}))$, the fundamental representation of $\Ubf_\upsilon(\slfrak(S^1))$ is defined by the $\widetilde{\bQ}$-vector space, $\mathbb{V}_{S^1}\coloneqq\bigoplus_{y\in \bR}\widetilde{\bQ}\vec{u}_y$ with action given by
\begin{align}\label{4.1}
\begin{aligned}
F_{(a,b]}\bullet \vec{u}_y&\coloneqq\delta_{\{b+y\},0}\upsilon^{1/2}\vec{u}_{y+b-a}\ ,\\
E_{(a,b]}\bullet \vec{u}_y&\coloneqq\delta_{\{a+y\},0}\upsilon^{-1/2}\vec{u}_{y+a-b}\ ,\\
K^\pm_{(a,b]}\bullet \vec{u}_y&\coloneqq \upsilon^{\pm(\delta_{\{b+y\},0}-\delta_{\{a+y\},0})}\vec{u}_{y}\ .
\end{aligned}
\end{align}

Next we define $\fU_{\infty}^>[r]$. Let us set
\begin{align}
\fI^{\mathsf{ss}}_{x}&\coloneqq1_{p_!k_{[0,x]}} \text{ if $x\geq 0$}\ ,\\
\fI^{\mathsf{ss}}_{x}&\coloneqq1_{p_!k_{(x,0)}[1]} \text{ if $x< 0$}\ ,
\end{align}
for $x\in \bR$. Let $\fU_\infty^>$ be the Hall algebra generated by $\fI^\semi_x$'s. Let us denote by $\fU^>_\infty[r]$ the linear span of products
of $r$ generators $\fI^\semi_x$. We call the number $r$ the rank.

The following is an analog of Sala--Schiffmann's result:
\begin{theorem}
The natural Hecke action of $\Ubf_\upsilon(\slfrak(S^1))$ on $\fU^>_\infty[1]$ is isomorphic to $\bV_{S^1}$, after applying the automorphism \eqref{eq:automorphism}.
\end{theorem}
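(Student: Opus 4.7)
My plan is to proceed by direct computation, paralleling the proof of the analogous statement in Section~\ref{sec:heckeoperatorslinebundles} for the rational case, now transported to the constructible-sheaf setting. First, I identify the underlying $\widetilde\Q$-vector space of $\fU_\infty^>[1]$ with $\bigoplus_{y\in\R}\widetilde\Q\,\fI^\semi_y$ and propose the map $\vec u_y\mapsto\fI^\semi_y$ as the candidate intertwiner $\bV_{S^1}\to\fU_\infty^>[1]$. The Hecke action is $u_0\bullet u=\omega(u_0\cdot u)$, where $\omega$ projects the Hall product onto the rank-one subspace. It suffices to verify the prescribed action on the generators $E_J, F_J, K_J^{\pm 1}$ of $\Ubf_\upsilon(\slfrak(S^1))$.

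The main computation is the Hall product $\upsilon^{1/2}\,1_{p_!k_J}\cdot\fI^\semi_y$ for a strict open-closed interval $J=(a,b]\subsetneq S^1$. For $y\geq 0$, this counts extensions
\[0\longrightarrow p_!k_J\longrightarrow \Gcal\longrightarrow p_!k_{[0,y]}\longrightarrow 0\]
in $\cSh_{\Lambda_\R}(S^1)$, and one identifies those for which $\Gcal$ is of rank one (hence of the form $\fI^\semi_{y'}$). Because objects of the abelian category $\Sh^\R_{<0}(S^1)$ are direct sums of $p_!k_I$ with $I$ a half-interval, the only nontrivial rank-one extension arises when the right endpoint of $J$ in $S^1$ aligns with the right endpoint of $[0,y]$, i.e.\ when $\{b+y\}=0$; in that case the resulting sheaf is $p_!k_{[0,y+b-a]}$, so the extension equals $\fI^\semi_{y+b-a}$. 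Otherwise the Hall product is a direct sum and is annihilated by $\omega$. Tracking the Euler form contribution yields the prefactor $\upsilon^{-1/2}$, giving
\[E_J\bullet \fI^\semi_y=\delta_{\{b+y\},0}\,\upsilon^{-1/2}\,\fI^\semi_{y+b-a},\]
which matches the action of $\Phi(E_J)=F_J$ on $\vec u_y$ in $\bV_{S^1}$ after the substitution $\upsilon\mapsto\upsilon^{-1}$ coming from $\Phi$.

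A symmetric computation in the negative half of the Drinfeld double produces $F_J\bullet\fI^\semi_y$, where the relevant extensions go the opposite way and yield the condition $\{a+y\}=0$ together with the prefactor $\upsilon^{1/2}$. For $K_J^\pm$, the twisted Hall structure gives directly $K_J^\pm\bullet\fI^\semi_y=\upsilon^{\pm(\chi_J,\overline{\fI^\semi_y})}\,\fI^\semi_y$, and evaluating the symmetric Euler form via~(\ref{eq:eulerform-introduction}) produces the required exponent $\pm(\delta_{\{b+y\},0}-\delta_{\{a+y\},0})$, again matching~(\ref{4.1}) after $\Phi$.

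The main obstacle is the case $y<0$, where $\fI^\semi_y=1_{p_!k_{(y,0)}[1]}$ carries a cohomological shift; the Hall product must then be interpreted in a derived sense, and one must verify that the signs and shifts from the relevant exact triangles combine to yield again an element of $\fU_\infty^>[1]$ of the expected form. A secondary bookkeeping issue is the treatment of intervals crossing the basepoint $0\in S^1$, but once everything is expressed using fractional parts $\{a+y\},\{b+y\}\in[0,1)$, all the cases become uniform and the matching with~(\ref{4.1}) under $\Phi$ is complete.
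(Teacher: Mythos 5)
Your overall strategy coincides with the paper's (whose proof occupies three sentences): fix the basis $\{\fI^\semi_x\}_{x\in\R}$ of $\fU^>_\infty[1]$, transport the module structure, check the generators one family at a time, and obtain the remaining family from the Drinfeld double. The problem is that the computational core of your argument points in the wrong direction. For the product $\upsilon^{1/2}1_{k_J}\cdot\fI^\semi_y$, the convention \eqref{eq:Hallprod} places the \emph{second} factor on the subobject, so the extensions to enumerate are $0\to p_!k_{[0,y]}\to\Gcal\to k_J\to 0$, not the ones you wrote with $p_!k_J$ as the subobject. This is not cosmetic: in $\cSh_{\Lambda_\R}(S^1)$ the elementary modification of a rank-one object reads
\begin{align}
0\to k_{(c,y']}\to p_!k_{[0,y']}\to p_!k_{[0,c]}\to 0\ ,
\end{align}
so the torsion object is the \emph{sub} and the shorter rank-one object is the \emph{quotient} --- the exact opposite of the coherent picture $0\to\Lcal\to\Lcal'\to\Tcal\to 0$ from which your intuition is imported. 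A short local-cohomology computation shows that $\Ext^1(k_{(a,b]},p_!k_{[0,y]})$ vanishes for every half-open interval $J$, so the family of extensions on which your main computation rests is empty; the nontrivial modifications all go the other way. Relatedly, the surviving alignment condition reads off the \emph{open} endpoint $a$ of $J$ (in the sequence above the torsion interval is $(y,y']$, glued along its open end), not the closed endpoint $b$ as your factor $\delta_{\{b+y\},0}$ asserts.

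These reversals are precisely what force the sign in the paper's identification, which is $\vec u_y\mapsto\fI^\semi_{-y}$ rather than your $\vec u_y\mapsto\fI^\semi_{y}$; under one identification $\Phi(E_{(a,b]})$ must carry $\fI^\semi_x$ to a multiple of $\fI^\semi_{x+(b-a)}$ and under the other to a multiple of $\fI^\semi_{x-(b-a)}$, so at most one of the two can be realized by the Hecke action, and you give no argument for your choice. They also explain why the $F$-formula is the one the paper obtains ``for free'' from the definition of the Drinfeld double (peeling off a torsion \emph{sub}object), while the $E$- and $K$-formulas are the ones requiring the direct computation. Finally, your last paragraph concedes that the case $y<0$, where $\fI^\semi_y=1_{p_!k_{(y,0)}[1]}$ carries a shift and does not lie in the abelian category $\Sh^{\R}_{<0}(S^1)$, is left unresolved; since half of the basis consists of such objects, this is part of what must be proved, not a peripheral remark.
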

\begin{proof}
There exists a basis $\{\fI^\semi_x\}_{x\in \bR}$ of $\fU^>_\infty[1]$. This gives a natural identification with $\bV_{S^1}$ as vector spaces by $\vec{u}_y\mapsto \fI_{-y}^{\mathsf{ss}}$. The second and third formulas in \eqref{4.1} follow directly by computations. The first one is followed by the definition of Drinfeld double.
\end{proof}

\subsection{The composition Hall algebra of $S^1$}
Recall the definition of $\mathbf{C}_1$ (cf.\ Section~\ref{sec:sphericalHeckealgebracurve}), which is generated by
\begin{align}
\fI_{0,\,d}\coloneqq\sum_{\genfrac{}{}{0pt}{}{\Tcal\in \Tor(\bP^1)}{deg(\Tcal)=d}} {1}_\Tcal\ .
\end{align}
In the language of the constructible side, a torsion sheaf corresponds to a finite direct sum of the following:
\begin{enumerate}
\item $k$-local system on $S^1$,
\item $p_!k_{[0,n)}$ for some $n\in \bZ_{>0}$,
\item $p_!k_{(0,n]}$ for some $n\in \bZ_{\geq 0}$.
\end{enumerate}
We call such an object a torsion object. For a torsion object $E=L\oplus \bigoplus_{i=1}^Ip_!k_{[0,n_i)} \oplus \bigoplus_{j=1}^Jp_!k_{(0,m_j]}$, we set $\deg E\coloneqq\rk L+\sum_{i=1}^In_i+\sum_{j=1}^Jm_j$. Then
\begin{align}
\fI'_{0,\,d}\coloneqq\sum_{\genfrac{}{}{0pt}{}{E\text{ : torsion}}{\deg(E)=d}} {1}_E\ .
\end{align}
is the mirror of $\fI_{0,d}$.

Sala--Schiffmann defined $\Ubf^0_\infty$ as the subalgebra of $\mathbf{H}_\infty^\mathsf{tw}(X)$ generated by $\mathbf{C}_\infty=\Ubf_\upsilon^+(\slfrak(S^1_\Q))$ and the pull-back of $\mathbf{C}$. By imitating this definition, we obtain:
\begin{definition}
The composition Hall algebra $\Ubf^0_\bR$ is an algebra generated by $\mathbf{H}^{\mathsf{tw}}(\Sh^\bQ_{<0}(S^1))$ and $\{\fI'_{0,\,d}\}_{d\leq 0}$.
\end{definition}

\subsection{Other Dynkin types}

In this section, we treat other Dynkin types: $A_n, D_n, \hat{D}_n$. 

\subsubsection{$A_n$}

Set $I=[0,1]$. Consider the Lagrangian
\begin{align}
\Lambda_{A_n}\coloneqq T^\ast_II\cup \bigcup_{i=0}^nT_{i/n}^{\ast,<0}I
\end{align}
and the category of constructible sheaves microsupported on $\cSh_{\Lambda_{A_n}}(I)$. The full subcategory $\Sh_{\Lambda_{A_n}}(I)$ consisting of constant sheaves supported on open-closed intervals of the forms like $(i/n,j/n]$ forms an abelian category and is equivalent to the category of representations of the quiver $A_n$. The reduced Drinfeld double of the twisted Hall algebra of $\Sh_{\Lambda_{A_n}}(I)$ is isomorphic to the quantum group $\Ubf_\upsilon(\slfrak(n+1))$ of type $A_n$.

We define two subsets:
\begin{align}
\Lambda_{A_\bQ}&\coloneqq T^\ast_II\cup \bigcup_{a\in I\cap \bQ}T_{a}^{\ast,<0}I\\
\Lambda_{A_\bR}&\coloneqq T^\ast_II\cup \bigcup_{a\in I }T_{a}^{\ast,<0}I
\end{align}
Then one can define the full subcategory $\Sh_{\Lambda_{A_\bQ}}(I)$ (resp. $\Sh_{\Lambda_{A_{\bR}}}(I)$) consisting of constant sheaves supported on open-closed intervals of the forms like $(a,b]$ with $a,b\in I\cap \bQ$ (resp. $a,b\in I$) forms a finitary abelian category. We define
\begin{align}
\Ubf_\upsilon(\slfrak(I_{\Q}))&\coloneqq\mathbf{D}(\mathbf{H}^{\mathsf{tw}}(\Sh_{\Lambda_{A_\bQ}}(I)))\ ,\\
\Ubf_\upsilon(\slfrak(I))&\coloneqq\mathbf{D}(\mathbf{H}^{\mathsf{tw}}(\Sh_{\Lambda_{A_\bR}}(I)))\ .
\end{align}

\subsubsection{$D_n, \widehat{D}_n$}

Since one can easily imagine $\widehat{D}_n$ case from $D_n$, we only treat the latter one.

Consider the following subset in $\bR^2$:
\begin{align}\label{eq:D-case}
I\coloneqq\lc(x, 0)\relmid x\in[0,1]\rc \cup \lc (x, e^{1/x})\relmid -1< x< 0\rc\cup \lc (x, -e^{1/x})\relmid -1< x< 0\rc\ .
\end{align}
We call the components of this union $I_1, I_2, I_3$ from left to right. 
\begin{align}
\Lambda_{D_n}\coloneqq\bigcup_{i=1}^3T^\ast_{I_i}\bR^2 \cup \bigcup_{i=0}^{n-3}T^{\ast,<0}_{(i/(n-2),0)}\bR^2
\end{align}
Here $T^{\ast,<0}_{(x,y)}\bR^2$ is the subset of $T^\ast_{(x,y)}\bR^2=\{(x, y; \xi, \eta)\}$ ($\xi$ (resp.\ $\eta$) is the cotangent coordinate of $x$ (resp.\ $y$)) defined by $\xi<0$. 

Note that $I_1\cup I_2$ and $I_1\cup I_3$ are smooth curves isomorphic to closed intervals. A closed-open interval in $I$ is an interval of the form $(a,b]$ of either $I_1\cup I_2$ or $I_1\cup I_3$. Let $D$ be the closed disk with diameter $1$ in $\bR^2$. Let $\Sh_{\Lambda_{D_n}}(D)\subset \cSh_{\Lambda_{D_n}}(D)$ be the full abelian subcategory consisting of constant sheaves on open-closed intervals. One can easily see that this abelian category is equivalent to the category of representations of the quiver $D_n$. Hence the reduced Drinfeld double of the twisted Hall algebra of $\Sh_{\Lambda_{D_n}}(D)$ is isomorphic to the quantum group of type $D_n$.

We define two isotropic sets:
\begin{align}
\Lambda_{D_\bQ}&\coloneqq\bigcup_{i=1}^3T^\ast_{I_i}\bR^2\cup \bigcup_{a\in I_1\cap \bQ}T^{\ast,<0}_{a}\bR^2\ ,\\
\Lambda_{D_\bR}&\coloneqq\bigcup_{i=1}^3T^\ast_{I_i}\bR^2 \cup \bigcup_{a\in I_1}T^{\ast,<0}_{a}\bR^2\ .
\end{align}

Then one can define the full subcategory $\Sh_{\Lambda_{D_\bQ}}(D)$ (resp.\ $\Sh_{\Lambda_{D_{\bR}}}(D)$) consisting of constant sheaves supported on closed-open intervals with their ends in $\Lambda_{D_\bQ}$ (resp, $\Lambda_{D_{\bR}}$) forms a finitary abelian category. We define
\begin{align}
\Ubf_\upsilon(\mathfrak{so}(2I_{\Q}))&\coloneqq\mathbf{D}(\mathbf{H}^{\mathrm{tw}}(\Sh_{\Lambda_{D_\bQ}}(D)))\ ,\\
\Ubf_\upsilon(\mathfrak{so}(2I))&\coloneqq\mathbf{D}(\mathbf{H}^{\mathrm{tw}}(\Sh_{\Lambda_{D_\bR}}(D)))\ .
\end{align}
To make things more explicit, let us write up the Euler forms here. There are three new situations which did not appear in $A_n$-case. Let $a, b\in (0,1]$.
\begin{enumerate}
	
	\item[Case (T):]
	\begin{align}
	H^\bullet\R \Hom(k_{\overline{I_2} \cup (0, a]}, k_{\overline{I_3}})\simeq 0\quad\mbox{and}\quad
	H^\bullet\R \Hom(k_{\overline{I_3}}, k_{\overline{I_2}\cup (0, a]})\simeq k[-1]\ .
	\end{align}
	
	\item[Case (Y):]
	\begin{align}
	H^\bullet\, \R \Hom(k_{\overline{I_2}\cup (0, a]}, k_{\overline{I_3}\cup(0,b]})\simeq
	\begin{cases}
	 k & \text{if $a> b$}\ , \\
	0 &  \text{otherwise}\ .
	\end{cases}
	\end{align}

	\item[Case (V):]
	\begin{align}	
		H^\bullet\R \Hom(k_{\overline{I_2}}, k_{\overline{I_3}})\simeq 0\ .
	\end{align}
	
\end{enumerate}

\begin{remark}
	One can also consider the subset \eqref{eq:D-case} of $\R^2$ with the opposite orientation, that is,
	\begin{align}\label{eq:D-case-opposite}
	I\coloneqq\lc(x, 0)\relmid x\in[-1, 0]\rc \cup \lc (x, e^{1/x})\relmid 0< x\leq 1\rc\cup \lc (x, -e^{1/x})\relmid 0<x\leq 1\rc\ .
	\end{align}
	We call the components of this union $I_1, I_2, I_3$ from left to right. In such a case the formulas above may be rewritten as follows, for $a, b\in [-1, 0)$:
	\begin{enumerate}
		\item[Case (T'):]
		\begin{align}
		H^\bullet\, \R \Hom(k_{I_2\cup (a, 0]}, k_{I_3})\simeq k[-1]\quad\mbox{and}\quad	H^\bullet\, \R \Hom(k_{I_3}, k_{I_2\cup (a, 0]})\simeq 0\ .
		\end{align}
		
		\item[Case (Y'):]
		\begin{align}
		H^\bullet\R \Hom(k_{I_2\cup (a, 0]}, k_{I_3\cup(b,0]})\simeq
		\begin{cases}
		k&  \text{if $a< b$}\ , \\
		0 & \text{otherwise}\ .
		\end{cases}
		\end{align}
		
		\item[Case (V'):]
		\begin{align}
			H^\bullet\, \R \Hom(k_{I_2}, k_{I_3})\simeq 0\ .
		\end{align}
		
	\end{enumerate}
\end{remark}

\bigskip

\end{document}